\documentclass[a4paper,12pt]{article}
\usepackage{amsthm,amsfonts,amssymb,euscript}
\usepackage{latexsym, multicol, fancybox}
\usepackage{graphicx}
\usepackage{color}
\usepackage{amsmath, amsthm, amssymb, bm}
\usepackage{epstopdf}
\usepackage{caption}
\usepackage{psfrag}
\usepackage{mathrsfs}
\usepackage{xcolor}
\usepackage{comment}
\usepackage{authblk}
\usepackage{indentfirst}
\usepackage[colorlinks=true,linkcolor=blue,citecolor=blue]{hyperref}
\DeclareFontFamily{U}{mathx}{\hyphenchar\font45}
\DeclareFontShape{U}{mathx}{m}{n}{
      <5> <6> <7> <8> <9> <10>
      <10.95> <12> <14.4> <17.28> <20.74> <24.88>
      mathx10
      }{}
\DeclareSymbolFont{mathx}{U}{mathx}{m}{n}
\DeclareFontSubstitution{U}{mathx}{m}{n}
\DeclareMathAccent{\widecheck}{0}{mathx}{"71}

\def\ombc{\widecheck{\omb}}

\def\ombcheck{\widecheck{\omb}}
\def\rhocheck{\widecheck{\rho}}
\def\mucheck{\widecheck{\mu}}

\def\bc{\widecheck{b}}
\def\Gb{\underline{G}}
\newtheorem{theorem}{Theorem}[section]
\newtheorem{lemma}[theorem]{Lemma}
\newtheorem{proposition}[theorem]{Proposition}
\newtheorem{corollary}[theorem]{Corollary}
\newtheorem{definition}[theorem]{Definition}
\newtheorem{remark}[theorem]{Remark}

\setlength{\textwidth}{16cm} \setlength{\oddsidemargin}{0cm}
\setlength{\evensidemargin}{0cm}
\numberwithin{equation}{section}
\parindent = 0 pt
\parskip = 12 pt
\newcommand{\bea}{\begin{eqnarray}}
\newcommand{\eea}{\end{eqnarray}}
\def\beaa{\begin{eqnarray*}}
\def\eeaa{\end{eqnarray*}}
\def\ba{\begin{array}}
\def\ea{\end{array}}
\def\be#1{\begin{equation} \label{#1}}
\def\beq{\begin{equation}}
\def\eeq{\end{equation}}

\def\lab{\label}
\def\les{\lesssim}

\def\c{\cdot}

\DeclareMathOperator{\lot}{l.o.t.}
\def\dual{{\,^\star \mkern-2mu}}
\def\tr{\mbox{tr}}

\newcommand{\hch}{\hat{\chi}}
\newcommand{\hchb}{\underline{\hat{\chi}}}
\def\ov{\overline}
\renewcommand{\div}{\sdiv}
\newcommand{\curl}{\scurl}
\DeclareMathOperator{\sdiv}{div}
\DeclareMathOperator{\scurl}{curl}
\def\nab{\nabla}
\def\lap{\Delta}
\def\pr{\partial}
\def\dkb{\,\mathfrak{d}\mkern-9mu /}
\def\dk{\mathfrak{d}}
\def\dkt{{\widetilde{\dk}}}

\def\dddS{\ddd^{\,\S}}

\def\muS{\mu^\S}

\def\ddd{ \,  d \hspace{-2.4pt} \mkern-6mu /\,}

\def\ddsSone{{\ddd_1^{\,\S,\star}}}
\def\ddsStwo{{\ddd_2^{\,\S,\star}}}

\def\rhod{ \,^\star  \hspace{-2.2pt} \rho}


\def\ovu{\overset{\circ}{ u}}

\def\ovs{\overset{\circ}{ s}}

\def\ovr{\overset{\circ}{ r}}
\def\ovI{\overset{\circ}{ I}}
\def\ovla{\protect\overset{\circ}{ \la}\,}
\def\ovb{\protect\overset{\circ}{b}\,}
\def\ovS{\overset{\circ}{ \S}}

\def\epg{\protect\overset{\circ}{\ep}}
\def\ug{\overset{\circ}{u}}
\def\sg{\overset{\circ}{s}}
\def\rg{\overset{\circ}{r}}

\def\mg{\overset{\circ}{m}}

\def\ovu{\overset{\circ}{ u}}

\def\ovs{\overset{\circ}{ s}}

\def\ovr{{\overset{\circ}{ r}\,}}
\def\ovm{{\overset{\circ}{ m}\,}}
\def\ovS{\overset{\circ}{ S}}

\def\ovg{\overset{\circ}{g}}

  \def\muc{\widecheck{\mu}}

    \def\hb{\underline{h}}

\def\BS{{B^\S}}
\def\BbS{{\Bb^\S}}
\def\DS{{D^\S}}

\def\etaS{\eta^\S}
\def\etabS{\etab^\S}
\def\kaS{\ka^\S}
\def\kabS{\kab^\S}
\def\omS{\om^\S}
\def\ombS{\omb^\S}
\def\zeS{\ze^\S}

\def\bS{\b^\S}

\def\rhoS{\rho^\S}

\def\UpS{{\Up^\S}}
\def\rS{{r^\S}}
\def\uS{{u^\S}}
\def\nabS{\nab^\S}
\def\xibS{\xib^\S}
\def\GagS{{\Ga_g^\S}}
\def\GabS{{\Ga_b^\S}}

\def\divS{{\div^\S}}
\def\chihS{{\chih^\S}}
\def\chibhS{{\chibh^\S}}
\def\mS{{m^\S}}
\def\bS{{b^\S}}
\def\bcS{\bc^\S}

\def\zS{{z^\S}}
\def\zcS{{\widecheck{z}^\S}}
\def\OmbS{{\Omb^\S}}
\def\OmbcS{{\Ombc^\S}}
\def\Abr{{\breve{A}}}
\def\Bbr{{\breve{B}}}

\def\a{\alpha}
\def\b{\beta}
\def\ga{\gamma}
\def\Ga{\Gamma}
\def\de{\delta}
\def\De{\Delta}
\def\ep{\epsilon}
\def\la{\lambda}
\def\La{\Lambda}

\def\Si{\Sigma}
\def\om{\omega}
\def\Om{\Omega}

\def\rhod{\dual \rho}
\def\th{{\theta}}

\def\ka{\kappa}
\def\ze{\zeta}
\def\Up{\Upsilon}
\def\ks{\kappa}


\def\vep{\varepsilon}
\def\vphi{{\varphi}}

\def\vsi{\varsigma}


\renewcommand{\aa}{\protect\underline{\a}}
\newcommand{\bb}{\protect\underline{\b}}
\def\omb{\protect\underline{\om}}

\def\Omb{\underline{\Omega}}
\def\ub{{\underline{u}} }
\newcommand{\chib}{\protect\underline{\chi}}
\newcommand{\xib}{\protect\underline{\xi}}
\newcommand{\etab}{\protect\underline{\eta}}
\def\ksb{\underline{\kappa}}
\def\kab{\protect\underline{\kappa}}
\def\Nb{\underline{N}}
\def\epgf{\epg^\f12}
\DeclareMathOperator{\good}{Good}
\def\AA{{\mathcal A}}

\def\CC{{\mathcal C}}

\def\EE{{\mathcal E}}

\def\MM{{\mathcal M}}
\def\NN{{\mathcal N}}

\def\RR{{\mathcal R}}
\def\SS{{\mathcal S}}
\def\TT{{\mathcal T}}
\def\UU{{\mathcal U}}
\def\YY{{\mathcal Y}}
\def\XX{{\mathcal X}}


\def\D{{\bf D}}

\def\R{{\bf R}}

\def\S{{\bf S}}

\def\g{{\bf g}}


\def\Bb{\underline{B}}
\def\Eb{\underline{E}}
\def\Fb{\underline F}

\def\fb{\protect\underline{f}}
\def\CCb{\underline{\CC}}
\def\Cb{{\underline{C}}}

\def\Bbbr{\breve{\Bb}}
\def\BBbr{\breve{B}}
\def\DDbr{\breve{D}}
\def\Labr{\breve{\La}}
\def\Labbr{\breve{\Lab}}
\def\psibr{\breve{\psi}}
\def\Nbr{\breve{N}}
\def\Nbbr{\breve{\Nb}}
\def\Mbr{\breve{M}}
\def\Psibr{\breve{\Psi}}
\def\Pbr{\breve{P}}

\def\RRR{{\Bbb R}}
\def\SSS{{\mathbb{S}}}

\def\hk{\mathfrak{h}}


\def\rhoc{\widecheck{\rho}}
\def\Gac{\check \Gamma}

\def\kac{\widecheck \ka}
\def\kabc{\widecheck{\underline{\ka}}}

\def\Ombc{\widecheck{\underline{\Omega}}}

\def\yc{\widecheck{y}}
\def\zc{\widecheck{z}}
\def\chih{\widehat{\chi}}
\def\chibh{\widehat{\chib}}
\def\trch{\tr \chi}
\def\trchb{\tr \chib}
\def\hot{\widehat{\otimes}}
\def\c{\cdot}
\def \f12{\frac 1 2 }
\def\ov{\overline}
\def\err{\mbox{Err}}

\def\err{\mbox{Err}}

\def\Lab{{\protect\underline{\La}}}

\def\epg{\protect\overset{\circ}{\ep}}
\def\dg{\overset{\circ}{\de}}
\def\undB{\underline{B}}

\def\lapzero{{\overset{\circ}{ \lap}}}

\def\Jp{J^{(p)}}
\def\JpS{{J^{(\S, p)}}}
\def\Cbp{\Cb^{(p)}}
\def\Mp{M^{(p)}}
\def\CbpS{\Cb^{(\S, p)}}
\def\MpS{M^{(\S, p)}}

\def\kadot{\dot{\ka}}
\def\kabdot{\dot{\kab}}
\def\mudot{\dot{\mu}}
\def\Cbdot{\dot{\Cb}}
   \def\Mdot{\dot{M}}
   \def\Cbpdot{{\dot{\Cb}^{(p)}}}
   \def\Mpdot{{\dot{M}^{(p)}}}

\newcommand{\Mext}{\,{}^{(ext)}\mathcal{M}}

\newcommand{\Mint}{\,{}^{(int)}\mathcal{M}}

\def\Jt{{\widetilde{J}}}
\def\Jh{{\widehat{J}}}
\def\St{{\widetilde{\S}}}

\begin{document}
\title{Construction of GCM hypersurfaces in perturbations of Kerr}
\author{Dawei Shen\footnote{Email adress: dawei.shen@polytechnique.edu\\ \indent\hspace{0.175cm} Laboratoire Jacques-Louis Lions, Sorbonne Universit\'e, 75252 Paris, France}}
\maketitle
{\bf Abstract.} \textit{This is a follow-up of \cite{KS:Kerr1} on the general covariant modulated (GCM) procedure in perturbations of Kerr. In this paper, we construct GCM hypersurfaces, which play a central role in extending GCM admissible spacetimes in \cite{KS:main} where decay estimates are derived in the context of nonlinear stability of Kerr family for $|a|\ll m$. As in \cite{KS}, the central idea of the construction of GCM hypersurfaces is to concatenate a $1$--parameter family of GCM spheres of \cite{KS:Kerr1} by solving an ODE system. The goal of this paper is to get rid of the symmetry restrictions in the GCM procedure introduced in \cite{KS} and thus remove an essential obstruction in extending the results to a full stability proof of the Kerr family.}\\ \\
{\bf Keywords.} \textit{Frame transformations, GCM spheres, GCM hypersurfaces, Kerr stability.}
\tableofcontents
\section{Introduction}
\subsection{Stability of Kerr conjecture}
In this paper, we construct general covariant modulated (GCM) hypersurfaces, which play a central role in the proof of the nonlinear stability of Kerr family for $|a|\ll m$. \\ \\
{\bf Conjecture} (Stability of Kerr conjecture).\,\,{\it  Vacuum initial data sets,
   sufficiently close to Kerr initial data, have a maximal development with complete
   future null infinity\footnote{This means, roughly, that observers  which are  far away
    from the black hole  may live forever. } and with domain of outer communication which
   approaches  (globally)  a nearby Kerr solution.} \\ \\
The paper  builds on the strategy  laid out in \cite{KS} in the context of the nonlinear stability of Schwarzschild for axially symmetric polarized  perturbations. The central new idea of \cite{KS} was the introduction and construction of GCM spheres and GCM hypersurfaces on which specific geometric quantities take Schwarzschildian values. This was made possible by taking into account the full  general covariance of the Einstein vacuum equations.  The  construction, however,  also  made essential use of the polarization assumption.\\ \\
The goal of this, and its companion papers \cite{KS:Kerr1} and \cite{KS:Kerr2}, is to get rid of the symmetry restriction in the GCM procedure and thus  remove an  essential obstruction  in extending the  result in \cite{KS} to a full stability proof of the Kerr family.


\subsection{Stability of Schwarzschild in the polarized case}


\subsubsection{GCM admissible spacetimes in \texorpdfstring{\cite{KS}}{}}


In \cite{KS}, Klainerman and Szeftel proved the nonlinear stability of the Schwarzschild space under axially symmetric polarized perturbations. The  final spacetime in \cite{KS} was constructed as the limit of a continuous family of finite GCM admissible spacetimes as represented in Figure \ref{fig1-introd} below, whose future boundaries consist of the union $\AA\cup \CCb_* \cup \CC_* \cup \Si_*$ where $\AA$ and $ \Si_*$ are spacelike,  $\CCb_*$ is incoming null, and $\CC_*$ outgoing null.  The boundary $\AA$ is chosen so that, in the limit when $\MM$ converges to the final state, it is included inside the black hole region of the limit spacetime. The spacetime $\MM$ also contains a timelike hypersurface  $\TT$  which divides $\MM$ into  an exterior region we call $\Mext$ and an  interior one $\Mint$. Both $\Mext$ and $\Mint$  are foliated by $2$--surfaces as follows.
\begin{figure}[h!]
\centering
\includegraphics[scale=0.5]{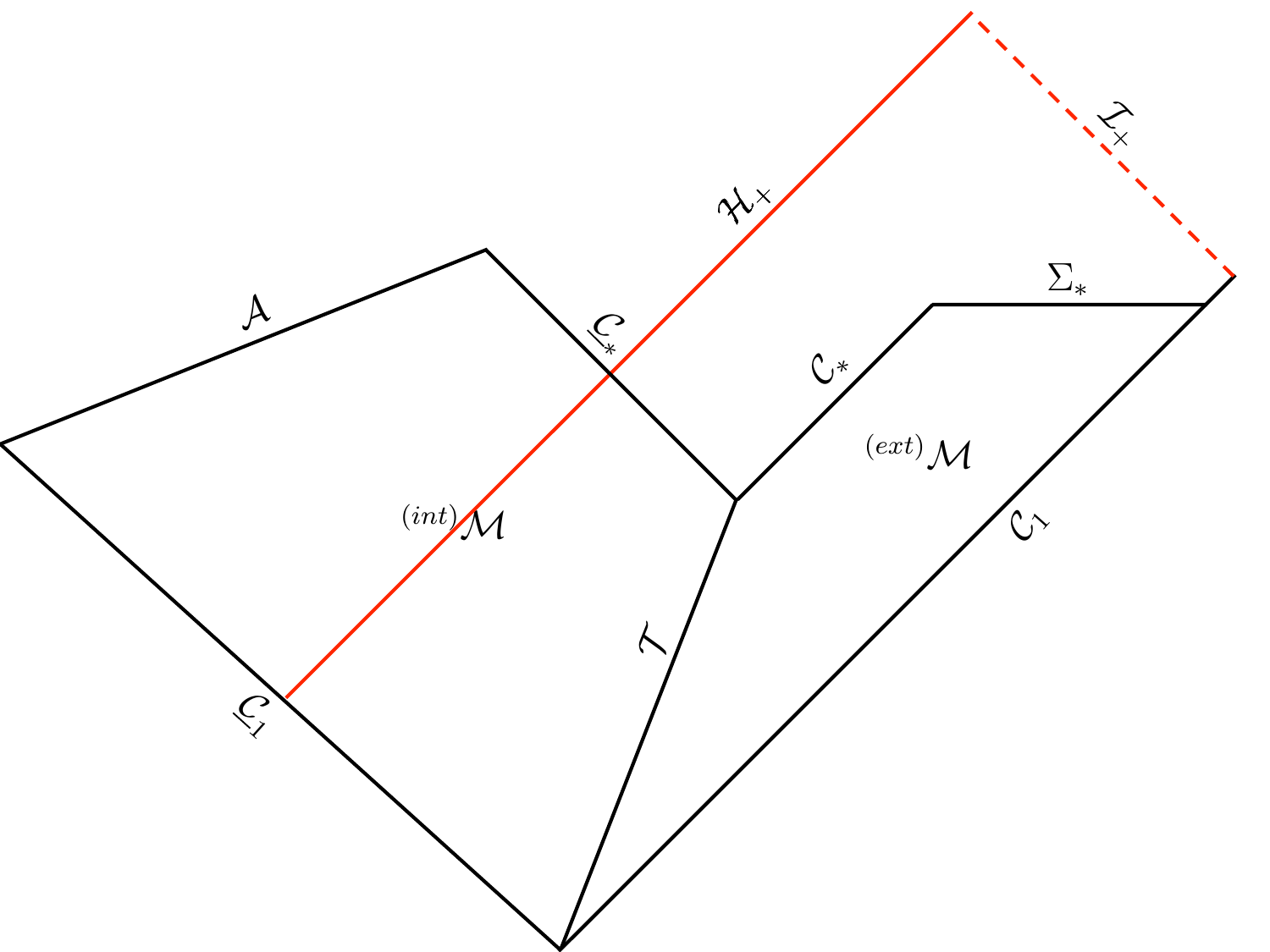}
\caption{The GCM admissible space-time $\mathcal{M}$ of \cite{KS}}
\label{fig1-introd}
\end{figure}
\begin{enumerate}
\item[(i)]  The  far region  $\Mext$ is foliated by a geodesic foliation $S(u, s)$ induced by an outgoing  optical function $u$  initialized on  $\Si_*$ with $s$ the affine parameter along the null geodesic generators of $\Mext$.  We denote by $r=r(u, s)$ the area radius of $S(u,s)$. On the boundary $\Si_*$ of $\Mext  $ we also assume that $r$ is sufficiently large.
\item[(ii)]  The  near region $\Mint $ is foliated by a geodesic foliation induced by an incoming optical function $\ub$ initialized at $\TT$ such that its level sets on $\TT$ coincide with  those of $u$.
\end{enumerate}
To prove convergence to the final state one has to establish precise decay estimates  for all Ricci  and curvature coefficients    decomposed relative  to the null geodesic  frames associated  to the foliations in $\Mext$ and $\Mint$. The decay properties of both  Ricci and curvature  coefficients in $\Mext$  depend heavily on the choice of the boundary $\Si_*$  as well as on the choice of  the cuts of the optical function $u$ on it. As such, the central idea of \cite{KS}  was  the  introduction and construction of GCM hypersurfaces on which specific geometric quantities take Schwarzschildian values.
\subsubsection{The role played by GCM admissible spacetimes}
As mentioned  above the  final spacetime was constructed as the limit of a continuous family of finite GCM admissible  spacetimes. At every stage one assumes that all Ricci and curvature coefficients of a fixed  GCM admissible spacetime $\MM$  verify precise  bootstrap assumptions.  One  makes use of the GCM admissibility properties of $\Si_*$ and the smallness of the initial conditions to show that all the bounds of the Ricci and curvature coefficients of $\MM$ depend only on the size of the initial data and thus, in particular, improve the bootstrap assumptions. This allows to extend the spacetime to a larger one $\MM'$ in which the bootstrap assumptions are still valid. To make sure that the extended spacetime is admissible, one has to construct a new GCM hypersurface $\widetilde{\Si}_*$ in $\MM'\setminus\MM$  and use it to define a new extended GCM admissible spacetime $\widetilde{\MM}$.

The goal of the present work is to extend the construction of $\Si_*$ of Section 9.8 of \cite{KS}, for polarized perturbations of Schwarzschild, to the case of general perturbations of Kerr.
\subsection{Review of the main results of \texorpdfstring{\cite{KS:Kerr1}}{}}
\label{reviewsection}
The main building block of our GCM hypersurface are the GCM spheres constructed in \cite{KS:Kerr1} which we now review.
\subsubsection{Background space}
As in \cite{KS:Kerr1}, we consider spacetime regions $\RR$ foliated by a geodesic foliation $S(u, s)$ induced by an outgoing optical function $u$ with $s$ a properly normalized affine parameter along the null geodesic generators of $L=-\g^{\a\b}\pr_\b  u\pr_\a $ where $\g$ is the spacetime metric. We denote by $r=r(u,s)$ the area radius of $S(u,s)$ and  let $(e_3, e_4, e_1, e_2)$ be an adapted null frame with $e_4$ proportional to $L$ and $e_1, e_2$ tangent to spheres $S=S(u, s)$, see Section \ref{sec:backgroundspacetime}.
The main assumptions made in \cite{KS:Kerr1}   were that    the Ricci   and curvature coefficients,  relative to the  adapted   null frame,  have the  same asymptotics in powers of $r$ as  in  Schwarzschild space.  Note that these  assumptions  hold true in the far region of Kerr and is  expected to hold true for  the far region of realistic perturbations of Kerr.   The actual size of the perturbation from   Kerr   is measured with respect to a small parameter $\epg>0$, see  sections  \ref{subsubsect:regionRR1} and \ref{subsubsect:regionRR2} for  precise definitions.
\subsubsection{Null frame transformation}
In general, two null frames $(e_3, e_4, e_1, e_2)$ and $(e_3', e_4', e_1', e_2')$ are related by a frame transformation of the following form:\footnote{See Lemma \ref{Lemma:Generalframetransf} for a precise statement.}
 \begin{align}\label{eq:Generalframetransf-intro}
 \begin{split}
  e_4'&=\la\left(e_4 + f^b  e_b +\frac 1 4 |f|^2  e_3\right),\\
  e_a'&= \left(\de_{ab} +\frac{1}{2}\fb_af_b\right) e_b +\frac 1 2  \fb_a  e_4 +\left(\frac 1 2 f_a +\frac{1}{8}|f|^2\fb_a\right)   e_3,\\
 e_3'&=\la^{-1}\left( \left(1+\frac{1}{2}f\c\fb  +\frac{1}{16} |f|^2  |\fb|^2\right) e_3 + \left(\fb^b+\frac 1 4 |\fb|^2f^b\right) e_b  + \frac 1 4 |\fb|^2 e_4 \right),
 \end{split}
 \end{align}
where the scalar $\la$ and the 1-forms $f$ and $\fb$ are called the transition coefficients from $(e_3,e_4,e_1,e_2)$ to $(e_3',e_4',e_1',e_2')$.
\subsubsection{Basis of \texorpdfstring{$\ell=1$}{} modes}
We introduce the following generalization of the $\ell=1$ spherical harmonics of the standard sphere\footnote{Recall that on the standard sphere $\SSS^2$, in spherical coordinates $(\th, \vphi)$,  these are $J^{(0, \SSS^2)}=\cos\th$, $J^{(+,\SSS^2)}=\sin\th\cos\vphi$, $J^{(-,\SSS^2)}=\sin\th\sin\vphi$.}.
\begin{definition}\label{jpdefintro}
On a sphere $S$, an $\epg$--approximated basis of $\ell=1$ modes is a triplet of functions $\Jp$ on $S$ verifying
 \begin{align}
\begin{split} \label{def:Jpsphericalharmonicsintro}
  (r^2\De+2) \Jp  &= O(\epg),\qquad p=0,+,-,\\
\frac{1}{|S|} \int_{S}\Jp J^{(q)} &=\frac{1}{3}\de_{pq} +O(\epg),\qquad p,q=0,+,-,\\
\frac{1}{|S|}\int_{S}\Jp&=O(\epg),\qquad p=0,+,-,
\end{split}
\end{align}
where $\epg>0$ is a sufficiently small constant.
\end{definition}
\begin{remark}
    For simplicity, throughout this paper, $J^{(p)}$ is called a basis of $\ell=1$ modes.
\end{remark}
Assuming  the existence of such a basis $\Jp$, $p\in\big\{ -, 0, +\big\}$, we   define, for a scalar function $h$,
\bea\label{defl=1intro}
( h)^S_{\ell=1} &:=&\left\{\int_{S} h \Jp, \quad  p=-, 0, +\right\}.
\eea
A scalar function $h$ is said to be supported on $\ell\le 1$ modes, i.e.  $(f)^S_{\ell\ge 2}=0$, if there exist constants $A_0, B_{-}, B_0, B_{+} $ such that
\bea
h=A_0+B_{-}J^{(-)}+B_0J^{( 0)}+B_{+}J^{(+)}.
\eea
\subsubsection{Definition of GCM spheres}
The null expansions $\ka:=\trch$ and $\kab:=\trchb$ relative to the adapted null frame $(e_3, e_4, e_1, e_2)$ are defined by
\begin{equation*}
    \trch:=\g^{ab}\chi_{ab},\qquad \trchb:=\g^{ab}\chib_{ab},
\end{equation*}
where
\begin{equation*}
    \chi_{ab}:=\g\left(\D_{e_a}e_4,e_b\right),\qquad \chib_{ab}:=\g\left(\D_{e_a}e_3,e_b\right).
\end{equation*}
The mass aspect function $\mu$ is defined by 
\begin{equation*}
    \mu:=-\div\ze-\rho+\f12\hch\cdot\hchb,
\end{equation*}
where the shears $\hch$, $\hchb$, the torsion $\ze$ and the curvature components $\rho$ are defined by
\begin{align*}
    \hch_{ab}&:=\chi_{ab}-\f12 \de_{ab} \ka,\qquad\quad\;\hchb_{ab}:=\chib_{ab}-\f12\de_{ab}\kab,\\
    \ze_a&:=\f12\g\left(\D_{e_a}e_4,e_3\right),\quad\qquad \rho:=\frac 1 4  \R(e_3,e_4,e_3,e_4).
\end{align*}
In an outgoing geodesic foliation of Schwarzschild spacetime, we have:
\bea\label{Introd:GCMspheres1}
 \ka=\frac{2}{r},\qquad\quad \kab=-\frac{2\Up}{r}, \quad\qquad \mu=\frac{2m}{r^3},
\eea
where $\Up=1-\frac{2m}{r}$ and $r$, $m$ denote the area radius and Hawking mass of $S$, i.e.
\begin{equation}
    r:=\sqrt{\frac{|S|}{4\pi}},\qquad\qquad \frac{2m}{r}:=1+\frac{1}{16\pi}\int_\S\ka\kab.
\end{equation}
The idea to construct GCM spheres is to mimic the condition \eqref{Introd:GCMspheres1} in the perturbed spacetimes. More precisely, the GCM spheres are topological spheres $\S$ embedded in $\RR$ endowed with a null frame $(e_3^\S, e_4^\S, e_1^\S, e_2^\S)$ adapted to $\S$ (i.e. $e_1^\S,e_2^\S$ tangent to $\S$), relative to which the null expansions $\ka^\S=\trch^\S$, $\kab^\S=\trchb^\S$ and mass aspect function $\mu^\S$ satisfy:
\bea\label{Introd:GCMspheres2}
\ka^\S -\frac{2}{r^\S} =0, \qquad \left(\kab^\S+\frac{2\Up^\S}{r^\S}\right)_{\ell\ge 2}=0, \qquad \left(\mu^\S-\frac{2m^\S}{(r^\S)^3} \right)_{\ell\ge 2}=0,
\eea
where $\rS$ and $\mS$ denote the area radius and Hawking mass of $\S$.
\subsubsection{Deformations of spheres and frame transformations}
The construction of GCM spheres in \cite{KS:Kerr1} was obtained by deforming a given sphere $\ovS=S(\ovu, \ovs)$ of the background foliation of $\RR$. An   $O(\dg)$  deformation of $\ovS$ is defined by a map $\Psi:\ovS\to \S \subset \RR$ of the form
\bea\label{eq:definitionofthedeformationmapPsiinintroduction}
 \Psi(\ovu,\ovs,y^1,y^2)=\left(\ovu+U(y^1, y^2),\ovs+S(y^1, y^2),y^1,y^2\right)
\eea
with $(U, S)$ smooth functions on $\ovS$, vanishing at a fixed point of $\ovS$,  of size  proportional to the small constant  $\dg$ and $(y^1,y^2)$ are spherical coordinates on $\ovS$.
Given such a deformation we identify, at any point on $\S$, two important null frames.
\begin{enumerate}
\item The null frame $(e_3, e_4, e_1, e_2)$ of the background foliation of $\RR$.
\item A null frame  $( e^\S_3, e^\S_4, e^\S_1, e^\S_2)$ obtained from \eqref{eq:Generalframetransf-intro} adapted to $\S$, (i.e. $e_1^\S$, $e_2^\S$ tangent to $\S$).
\end{enumerate}
 \begin{remark}
     Throughout this paper, we denote $(f,\fb,\la)$ the transition coefficients from the background frame $(e_3,e_4,e_1,e_2)$ of $\RR$ to the null frame $(e_3^\S,e_4^\S,e_1^\S,e_2^\S)$ adapted to $\S$.
 \end{remark}
\subsubsection{GCM spheres with \texorpdfstring{$\ell=1$}{} modes in \texorpdfstring{\cite{KS:Kerr1}}{}}
Here is a short version of the main result in \cite{KS:Kerr1}.
\begin{theorem}[Existence of GCM spheres in \cite{KS:Kerr1}]
\label{Theorem:ExistenceGCMS1-intro}
Let $\RR$ be fixed spacetime region, endowed with an outgoing geodesic foliation $S(u, s)$, verifying specific asymptotic assumptions\footnote{Compatible with small perturbations of Kerr.} expressed  in terms  of two parameters $0<\dg\leq \epg$.  In particular we assume that the  GCM quantities  of the background spheres in $\RR$, i.e.
\bea\label{Introd:GCMspheres3}
 \ka-\frac 2 r , \qquad \left( \kab+\frac{2\Up}{r}\right)_{\ell\ge 2 }, \qquad \left( \mu- \frac{2m}{r^3} \right)_{\ell\ge 2},
\eea
are small with respect to the parameter $\dg$. Let  $\ovS=S(\ovu, \ovs)$ be  a fixed sphere of the foliation with  $\rg$ and $\mg$ denoting respectively its area radius and  Hawking mass, with $\rg$ sufficiently large.
 Then, for any fixed triplets   $\La, \Lab \in \RRR^3$  verifying
\bea
|\La|,\,  |\Lab|  &\les & \dg,
\eea
 there exists a unique sphere $\S=\S(\La, \Lab)$, together with a null frame $(e_3^\S,e_4^\S,e_1^\S,e_2^\S)$, which is GCM, i.e. $\S$ is a deformation of $\ovS$, such that\footnote{Note that the GCM conditions \eqref{Introd:GCM spheres2-again} require a choice of $\ell=1$ modes on $\S$, see Remark  \ref{remark:noncan-modes-Intro}.}
 \bea
  \label{Introd:GCM spheres2-again}
 \ka^\S -\frac{2}{r^\S} =0, \qquad \left( \kab^\S+\frac{2\Up^\S}{r^\S}\right)_{\ell\ge 2 }=0, \qquad \left( \mu^\S - \frac{2m^\S}{(r^\S)^3} \right)_{\ell\ge 2}=0,
  \eea
  and
  \bea
  \label{Introd:GCM spheres-LaLab}
  (\div^\S f)_{\ell=1}=\La, \qquad (\div^\S\fb)_{\ell=1}=\Lab,
  \eea
  where $(f, \fb, \la)$ denote  the transition coefficients of the transformation \eqref{eq:Generalframetransf-intro}  from the background frame of $\RR$ to the frame adapted to $\S$.
    \end{theorem}
\begin{remark}
\lab{remark:noncan-modes-Intro}
 The conditions  \eqref{Introd:GCMspheres3},  \eqref{Introd:GCM spheres2-again} and \eqref{Introd:GCM spheres-LaLab} depend  on the  definition of $\ell=1$ modes  respectively on $\ovS$ and $\S$.  In \cite{KS:Kerr1}, once a choice   of $\ell=1$ modes on $\ovS$ is made, it is then extended to $\S$ using the background foliation.  As a consequence, the  GCM spheres of Theorem \ref{Theorem:ExistenceGCMS1-intro}  depend on the particular choice  of $\ell=1$ modes on $\ovS$.
\end{remark}
\subsection{First version of the main theorem}
The goal of this paper is to construct hypersurfaces which are suitable concatenations of the spheres of Theorem \ref{Theorem:ExistenceGCMS1-intro}. We give below a simplified version of our main theorem, see Theorem \ref{mthm} for the precise version.
\begin{theorem}[Existence of GCM hypersurfaces, first version]
\label{intromthm}
Let  $\RR$ be  fixed spacetime region, endowed with an outgoing geodesic foliation  $S(u, s)$, verifying same assumptions as Theorem \ref{Theorem:ExistenceGCMS1-intro}. Assume in addition that $e_3(\Jp)$, $(\div\eta)_{\ell=1}$, $(\div\xib)_{\ell=1}$, $r-s$ and $e_3(r)-e_3(s)$ are small with respect to the parameter $\dg$.\\\\
Let $\S_0$ be a fixed sphere included in the region $\RR$, let a pair of triplets $\La_0,\Lab_0\in\mathbb{R}^3$ such that
\beaa 
|\La_0|,\, |\Lab_0|\les \dg,
\eeaa 
and let $J^{(\S_0,p)}$ a basis of $\ell=1$ modes on $\S_0$, such that we have on $\S_0$
\beaa
\ka^{\S_0}-\frac{2}{r^{\S_0}} =0, \qquad \left( \kab^{\S_0}+\frac{2\Up^{\S_0}}{r^{\S_0}}\right)_{\ell\ge 2 }=0,\qquad \left( \mu^{\S_0} - \frac{2m^{\S_0}}{(r^{\S_0})^3}\right)_{\ell\ge 2}=0,
\eeaa
and
\beaa
(\div^{\S_0} f)_{\ell=1}=\La_0, \qquad (\div^{\S_0}\fb)_{\ell=1}=\Lab_0,
\eeaa
where $(f, \fb)$ denote the transition coefficients of the transformation \eqref{eq:Generalframetransf-intro}  from the background frame of $\RR$ to the frame adapted to $\S_0$.\\ \\
Then, there exists a unique, local, smooth, spacelike hypersurface $\Sigma_0$ passing through $\S_0$, a scalar function $u^\S$ defined on $\Sigma_0$, whose level surfaces are topological spheres denoted by $\S$, a smooth collection of triplets of constants $\La^\S,\Lab^\S$ and a triplet of functions $\JpS$ defined on $\Si_0$ verifying,
\beaa
\La^{\S_0}=\La_0,\qquad \Lab^{\S_0}=\Lab_0,\qquad \JpS\big|_{\S_0}=J^{(\S_0,p)},
\eeaa
such that the following conditions are verified:
\begin{enumerate}
\item The surfaces $\S$ of constant $u^\S$, together with a null frame $(e_3^\S,e_4^\S,e_1^\S,e_2^\S)$, verify 
 \bea
  \label{Introd:GCM spheres2-againagain}
 \ka^\S -\frac{2}{r^\S} =0, \qquad \left( \kab^\S+\frac{2\Up^\S}{r^\S}\right)_{\ell\ge 2 }=0, \qquad \left( \mu^\S - \frac{2m^\S}{(r^\S)^3} \right)_{\ell\ge 2}=0,
  \eea
  and
  \bea
  \label{Introd:GCM spheres-LaLabagain}
  (\div^\S f)_{\ell=1}=\La^\S, \qquad (\div^\S\fb)_{\ell=1}=\Lab^\S,
  \eea
for the triplets of constants $\La^\S,\Lab^\S$ and $\ell=1$ modes $\JpS$.
\item The following transversality conditions
\bea\label{transnew}
    \xi^\S=0,\qquad \omS=0,\qquad\etabS=-\zeS,
\eea
and
\bea \label{e4ue4rintro}
e_4^\S(\uS)=0,\qquad e_4^\S(\rS)=1
\eea
are assumed on $\Si_0$.
\item We have, for some constant $c_0$,
\bea\label{uSrSc}
u^\S+r^\S=c_0,\quad \mbox{along}\quad \Sigma_0.
\eea
\item Let $\nu^\S$ be the unique vectorfield tangent to the hypersurface $\Sigma_0$, normal to $\S$, and normalized by $\g(\nu^\S,e_4^\S)=-2$. Let $\bS$ be the unique scalar function on $\Si_0$ such that $\nu^\S$ is given by
\bea\label{defbS}
    \nu^\S=e_3^\S+\bS e_4^\S.
\eea
The following normalization condition holds true
\bea\label{norma}
    \ov{\bS}=-1-\frac{2m_{(0)}}{r^\S},
\eea
where $\ov{\bS}$ is the average value of $\bS$ over $\S$ and $m_{(0)}$ is a constant.
\item We have the following identities on $\Si_0$:
\bea\label{l=1etaSxibS}
(\div^\S \etaS)_{\ell=1}=0,\qquad (\div^\S \xib^\S)_{\ell=1}=0.
\eea
\item The transition functions $(f,\fb,\la)$, area radius $\rS$ and Hawking mass $\mS$ verify appropriate estimates.
\end{enumerate}
\end{theorem}
\begin{remark}
Theorem \ref{intromthm} is the generalization of Theorem 9.52 in \cite{KS} in the absence of symmetry. It plays a central role in the proof of Theorem M6 and M7 in \cite{KS:main}, see sections 8.4 and 8.5 in \cite{KS:main}.
\end{remark}
\begin{remark}
We provide below more explanations for the statements 1-5 in Theorem \ref{intromthm}:
\begin{enumerate}
    \item Since we concatenate a family of GCM spheres $\S(\La^\S,\Lab^\S)$ emanating from $\S_0$ to construct the GCM hypersurfaces $\Si_0$, by Theorem \ref{Theorem:ExistenceGCMS1-intro}, we have automatically \eqref{Introd:GCM spheres2-againagain} and \eqref{Introd:GCM spheres-LaLabagain} on every $\S$.
    \item The transversality conditions \eqref{transnew} and \eqref{e4ue4rintro} are consistent with a local extension by an outgoing geodesic foliation initialized on $\Si_0$, see item 1 in Remark \ref{rem4.5}. The role of these transversality conditions is to make sense of $\etaS$ and $\xibS$ on $\Si_0$, see item 5 below.
    \item $\uS$ should be chosen to be constant on the GCM spheres foliating $\Si_0$. The choice \eqref{uSrSc} is simple and fulfills this condition but other choices making $\Si_0$ spacelike are possible.
    \item The value $\ov{\bS}$ is free and should be prescribed. Note that the choice \eqref{norma} coincides with the value for the hypersurface $\{u+r=c_0\}$ in Schwarzschild spacetime.
    \item In \eqref{l=1etaSxibS}, $\etaS$ and $\xibS$ are defined intrinsically on $\Si_0$ by
    \begin{equation*}
    \etaS_a=\f12\g\left(\D_{\nu^\S} e_4^\S, e_a^\S\right),\qquad \xibS_a=\f12\g\left(\D_{\nu^\S}e_3^\S,e_a^\S\right)+\bS\zeS_a.
    \end{equation*}
    These definitions are consistent with the standard ones provided $\Si_0$ satisfies \eqref{transnew} which is equivalent to extending $\Si_0$ locally by an outgoing geodesic foliation, see item 2 in Remark \ref{rem4.5}. In the sequel, \eqref{l=1etaSxibS} will be enforced thanks to a special choice of $\La^\S$ and $\Lab^\S$, see Section \ref{sketchofproof} for more explanations.
\end{enumerate}
\end{remark}
\begin{remark}
    As in Section 9.8 of \cite{KS}, $\Si_0$ is chosen to be spacelike. One may wonder whether $\Si_0$ could be chosen to be null\footnote{In the context of the stability of Minkowski, the last slice in the original proof by Christodoulou and Klainerman in \cite{Ch-Kl} is spacelike, while it is null in the proof by Klainerman and Nicol\`o in \cite{KN} in the case of the exterior of an outgoing null cone.}. The reason for choosing it to be spacelike is that it allows more flexibility: all spheres foliating $\Si_0$ in Theorem \ref{intromthm} are GCM spheres, while only one could be a GCM sphere on a null hypersurface.
\end{remark}
\subsection{Sketch of the proof of the main theorem}\label{sketchofproof}
The idea of the proof is to construct $\Si_0$ as a $1$--parameter union of GCM spheres.\\ \\
\noindent{\bf Step 1.} For every background sphere $S(u,s)$ in $\RR$, every pair of triplets $(\La,\Lab)$ and every triplet of functions $\widetilde{J}^{(p)}$ satisfying
\beaa 
\sum_{p=0,+,-}\|\Jp-\widetilde{J}^{(p)}\|_{\hk_{s_{max}}(S(u,s))}\les r\dg,
\eeaa
where $\Jp$ is the $\ell=1$ modes on $S(u,s)$ and $\hk_{s_{max}}(S(u,s))$ denotes the Sobolev space on $S(u,s)$ of order $s_{max}$, we construct by Theorem \ref{Theorem:ExistenceGCMS1-intro} a unique GCM sphere $\S[u,s,\La,\Lab,\widetilde{J}^{(p)}]$, as a deformation of $S(u,s)$ with $\ell=1$ modes in the definition of \eqref{Introd:GCM spheres2-again} and \eqref{Introd:GCM spheres-LaLab} computed w.r.t. $\Jt^{(p)}$. In particular, \eqref{Introd:GCM spheres2-again} and \eqref{Introd:GCM spheres-LaLab} are verified and we have $\S_0=\S[\ovu,\ovs,\La_0,\Lab_0,\Jt^{(p)}]$, provided we choose $\widetilde{J}^{(p)}\big|_{\S_0}=J^{(\S_0,p)}$. \\ \\
\noindent{\bf Step 2.} Given $(\Psi(s),\La(s),\Lab(s))$ such that
\beaa
\Psi(\ovs)=\ovu,\qquad \La(\ovs)=\La_0,\qquad \Lab(\ovs)=\Lab_0,
\eeaa 
We construct, relying on Step 1 and a Banach fixed-point argument, see Theorem \ref{generalcoro}, a family of basis of $\ell=1$ modes $\Jt(s)$ and of GCM spheres $\S[\Psi(s),s,\La(s),\Lab(s),\Jt(s)]$ verifying
\beaa
\nu^\S (\Jt(s))=0 \quad \mbox{ on }\Si,\qquad \Jt^{(p)}(\ovs)=J^{(\S_0,p)},
\eeaa
where the hypersurface $\Si$ is given by
\beaa 
\Si=\bigcup_{s}\S(s)=\bigcup_{s}\S[\Psi(s),s,\La(s),\Lab(s),\Jt(s)],
\eeaa 
and where $\nu^\S$ is the unique vectorfield tangent to $\Si$ with $\g(\nu^\S,e_4^\S)=-2$ and normal to $\S(s)$, see Figure \ref{Sigmaconstruction} for a geometric description.
\begin{figure}[h!]
\centering
\includegraphics[scale=0.5]{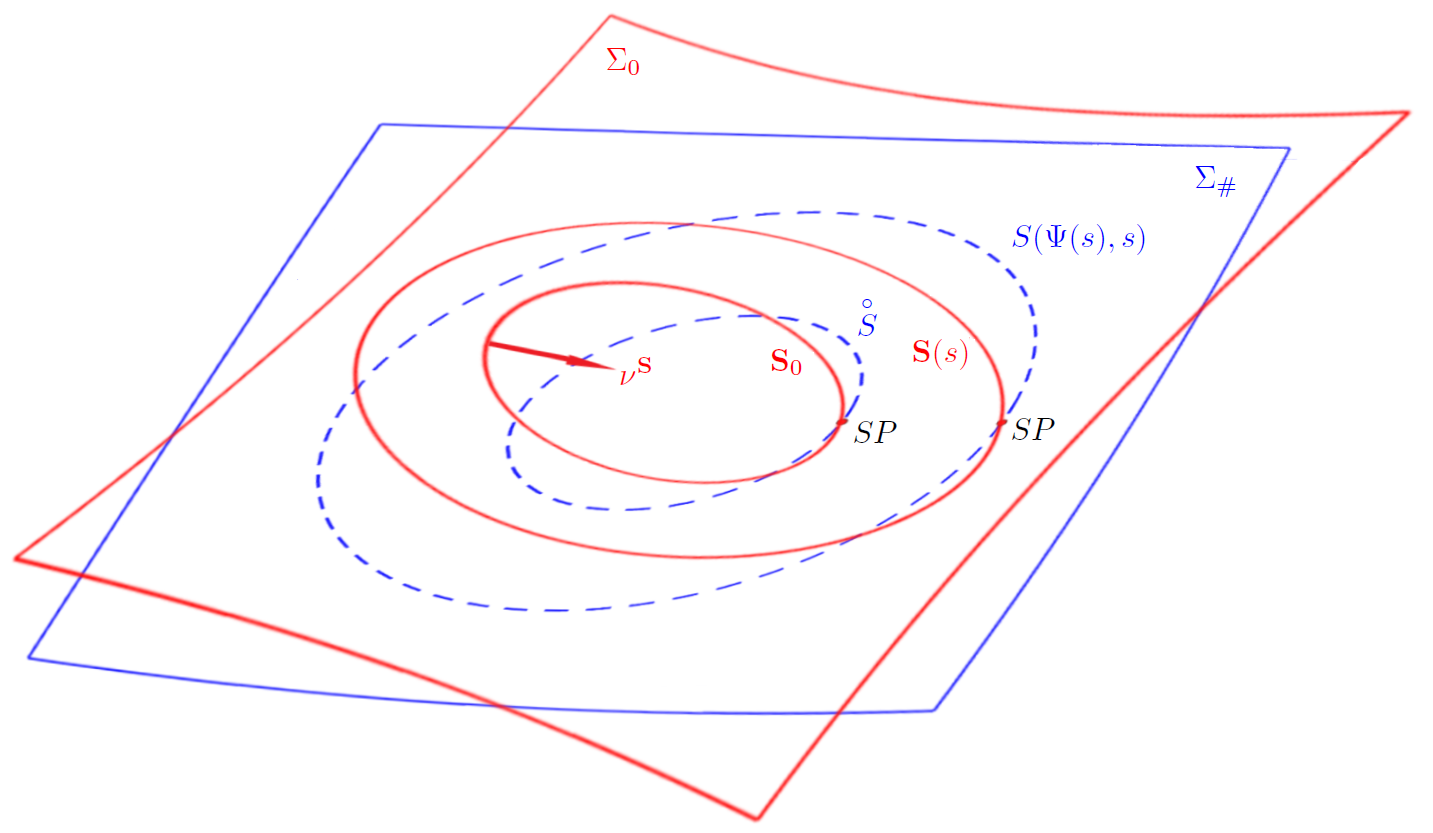}
\caption{The GCM hypersurface $\Si_0$ as a deformation of $\Si_\#=\{u=\Psi(s)\}$}
\label{Sigmaconstruction}
\end{figure}
\\ \\
\noindent{\bf Step 3.} We then derive for $(\Psi,\La,\Lab)$ an ODE system of the following type:
\begin{align}
\begin{split}\label{firstsystem}
\frac{1}{\Psi'(s)}\La'(s)=&(\div^\S\eta^\S)_{\ell=1}-\f12 r^{-1} \La(s) -\f12 r^{-1} \Lab(s)+\lot,\\
\frac{1}{\Psi'(s)}\Lab'(s)=&(\div^\S\xib^\S)_{\ell=1}+\lot,\\
\Psi'(s)=&-1-\f12\left(\ov{\bS}+1+\frac{2\mS}{\rS}\right)+\lot,
\end{split}
\end{align}
where $\lot$ denote lower order terms, see Section \ref{sec4.3} for the precise statement. \\ \\
\noindent{\bf Step 4.} We look for a special choice $(\Psibr(s),\Labr(s),\Labbr(s))$ such that the additional GCM conditions \eqref{norma} and \eqref{l=1etaSxibS} are verified. These conditions lead, in view of Step 3, to an ODE system for $(\Psibr(s),\Labr(s),\Labbr(s))$, with prescribed initial conditions at $\ovs$ which allows us to uniquely determine the desired hypersurface $\Si_0$.
\begin{remark}
The proof of Theorem \ref{intromthm} is largely analogous to that of Theorem 9.52 in \cite{KS}. Below, we compare the proof in this paper and that in Section 9.8 of \cite{KS}.
\begin{itemize}
    \item In {\bf Step 1} and {\bf Step 2}, we show that, in general, one can choose the approximate basis of $\ell=1$ modes so that they are transported along the normal direction to the GCM spheres $\S(s)$ on $\Si$. This contrasts with \cite{KS} where the basis of $\ell=1$ modes is fixed by the polarized symmetry.
    \item Once the choice of $\ell=1$ modes is made, in {\bf Step 3}, we derive the system of ODEs \eqref{firstsystem}. Note that the coefficients of linear terms of $\La(s)$ and $\Lab(s)$ on the R.H.S. of \eqref{firstsystem} are different from that of (9.8.74) in \cite{KS}, which is due to the different choice of $\ell=1$ modes.\footnote{More precisely, the basis of $\ell=1$ modes in \cite{KS}, fixed by polarized symmetry, is not transported along the vectorfield $\nu^\S$.}
    \item {\bf Steps 1-3} are significantly more involved than the corresponding in \cite{KS} due the absence of symmetry, while {\bf Step 4} is similar to that in \cite{KS}.
\end{itemize}
\end{remark}
\subsection{Structure of the paper}
The structure of the paper is as follows:
\begin{itemize}
\item In Section \ref{geometricsetup}, we review the geometric set-up of \cite{KS:Kerr1}.
\item In Section \ref{GCMz}, we review the main result of \cite{KS:Kerr1} on the construction of GCM spheres.
\item In Section \ref{mainsection}, we prove our main theorem concerning the construction of GCM hypersurfaces.
\end{itemize}
\section{Geometric set-up}\label{geometricsetup}
\subsection{Ricci and curvature coefficients}\label{gardef}
As in \cite{KS:Kerr1}, we consider given a vacuum spacetime $\RR$ with metric $\g$
endowed with an outgoing geodesic foliation  by  spheres $S(u, s)$   of fixed $(u, s) $, where $u$  is an outgoing optical function\footnote{That is  $\g^{\a\b}\pr_\a u\pr_\b u  =0$.}  with $L=-\g^{\a\b} \pr_\b u \pr_\a  $ its  null geodesic generator and $s$ chosen such that
\beaa
L(s) =\frac{1}{\vsi}, \qquad L(\vsi)=0.
\eeaa
Let $e_4=\vsi L $ and $e_3$  the unique null vectorfield orthogonal to  $S(u,s)$   and such that $\g(e_3, e_4)=-2$. We then let $(e_1, e_2)$ be an orthogonal basis of the tangent space of $S(u, s)$.
The  corresponding  connection coefficients relative to the null frame $(e_3, e_4, e_1, e_2)$ are denoted by $\chi, \chib, \xi, \xib, \om, \omb, \eta, \etab, \ze$ and the null components of the curvature tensor by $\a, \aa, \b, \bb, \rho,\rhod$. For the convenience of the reader  we recall their definition below:
 \begin{align}
 \begin{split}\label{defga}
\chib_{ab}&=\g(\D_ae_3, e_b),\qquad \,\,\chi_{ab}=\g(\D_ae_4, e_b),\\
\xib_a&=\frac 1 2 \g(\D_3 e_3 , e_a),\qquad \xi_a=\frac 1 2 \g(\D_4 e_4, e_a),\\
\omb&=\frac 1 4 \g(\D_3e_3 , e_4),\qquad\, \om=\frac 1 4 \g(\D_4 e_4, e_3), \\
\etab_a&=\frac 1 2 \g(\D_4 e_3, e_a),\qquad \eta_a=\frac 1 2 \g(\D_3 e_4, e_a),\\
 \ze_a&=\frac 1 2 \g(\D_{e_a}e_4,  e_3),
 \end{split}
\end{align}
and
\begin{align}
\begin{split}\label{defr}
\a_{ab} &=\R(e_a, e_4, e_b, e_4) , \qquad \,\,\,\aa_{ab} =\R(e_a, e_3, e_b, e_3), \\
\b_{a} &=\frac 1 2 \R(e_a, e_4, e_3, e_4), \qquad \bb_{a}=\frac 1 2 \R(e_a, e_3, e_3, e_4),\\
\rho&= \frac 1 4  \R(e_3, e_4, e_3, e_4), \qquad\, \rhod =\frac{1}{4}\dual\R( e_3, e_4, e_3, e_4),
\end{split}
\end{align}
where $\dual\R$ denotes the Hodge dual of $\R$. The null second fundamental forms $\chi, \chib$ are further  decomposed in their traces $\ka=\trch$ and $\kab=\trchb$, and traceless parts $\chih$ and $\chibh$:
\begin{align}
\begin{split}\label{expansionetc}
\ka&:=\trch=\de^{ab}\chi_{ab},\qquad \,\hch_{ab}:=\chi_{ab}-\f12\de_{ab}\ka,\\
\kab&:=\trchb=\de^{ab}\chib_{ab},\qquad \hchb_{ab}:=\chib_{ab}-\f12\de_{ab}\kab.
\end{split}
\end{align}
We define the horizontal covariant operator $\nab$ as follows:
\beaa
\nab_X Y&:=&\D_X Y-\f12\chib(X,Y)e_4-\f12\chi(X,Y)e_3.
\eeaa
We also define $\nab_4 X$ and $\nab_3 X$ to be the horizontal projections:
\beaa
\nab_4 X&:=&\D_4 X-\f12\g(X,\D_4e_3)e_4-\f12\g(X,\D_4e_4)e_3,\\
\nab_3 X&:=&\D_3 X-\f12\g(X,\D_3e_3)e_3-\f12\g(X,\D_3e_4)e_4.
\eeaa
As a direct consequence of \eqref{defga}, we have the Ricci formulas:
\begin{align}
\begin{split}\label{ricciformulas}
    \D_a e_b&=\nab_a e_b+\f12\chi_{ab} e_3+\f12\chib_{ab}e_4,\\
    \D_a e_3&=\chib_{ab}e_b+\ze_a e_3,\\
    \D_a e_4&=\chi_{ab}e_b-\ze_a e_4,\\
    \D_3 e_a&=\nab_3 e_a+\eta_a e_3+\xib_a e_4,\\
    \D_4 e_a&=\nab_4 e_a+\etab_a e_4+\xi_a e_4,\\
    \D_3 e_3&=-2\omb e_3+2\xib_b e_b,\\ 
    \D_3 e_4&=2\omb e_4+2\eta_b e_b,\\
    \D_4 e_4&=-2\om e_4+2\xi_b e_b,\\
    \D_4 e_3&=2\om e_3+2\etab_b e_b.
\end{split}
\end{align}
We recall, see Lemma 2.4 in \cite{KS:Kerr1}, that the geodesic nature of the foliation implies
\beaa
\om=\xi=0,  \qquad  \etab = -\ze, \qquad  \vsi=\frac{2}{e_3(u)}.
\eeaa
We denote
\bea\label{defz}
\Omb:=e_3(r),\qquad z:=e_3(u)=\frac{2}{\vsi}.
\eea
The area radius $ r=r(u, s)$ is defined   such  that the volume of $S$ is given by $4\pi r^2$. The Hawking mass  $m=m(u, s)$ of $S=S(u, s) $ is given by the formula
 \bea\label{eq:definitionoftheHawkingmass}
\frac{2m}{r}=1+\frac{1}{16\pi}\int_{S_{}}\ka\kab.
\eea
The Gauss curvature of $S$ is denoted by $K$. It verifies the Gauss equation
\bea\label{eq:Gaussequation}
K=-\rho -\frac{1}{4} \ka\kab +\frac{1}{2}\chih\c\chibh.
\eea
The mass aspect function $\mu$  is defined by
\bea
\label{def:massaspectfunctions.general}
\mu &:=& - \div \ze -\rho+\frac 1 2  \chih\c \chibh.
\eea
As in \cite{KS:Kerr1}, we define the renormalized quantities\footnote{Renormalized quantities are obtained by subtracting their Schwarzschild values.}
\begin{align}
\begin{split}\label{renor}
\widecheck{\trch} &:= \trch -\frac{2}{r}, \qquad
\widecheck{\trchb}:= \trchb +\frac{2\Up}{r},\qquad
\widecheck{\omb} := \omb -\frac{m}{r^2},\\
\widecheck{K} &:= K -\frac{1}{r^2},\qquad\,\,\,\,\,\,\widecheck{\rho} := \rho +\frac{2m}{r^3},\qquad\,\,\,\,\,\widecheck{\mu} := \mu -\frac{2m}{r^3}, \\
\widecheck{\Omb}& :=\Omb+\Up, \qquad\quad\,\,\,\,
\widecheck{\varsigma} := \varsigma-1, \qquad\qquad\zc:=z-2,
\end{split}
\end{align}
where
\beaa
\Up :=1-\frac{2m}{r},
\eeaa
and the sets
\begin{align}
\label{definition:Ga_gGa_b}
\begin{split}
\Ga_g &:= \Bigg\{\kac,\,\,\chih,\,\,\ze,\,\, \kabc,\,\, r\widecheck{\mu},\,\,  r\widecheck{\rho}, \,\, r\dual\rho, \,\, r\b, \,\, r\a, \,\, r\widecheck{K}\Bigg\},\\
\Ga_b &:= \Bigg\{\eta, \,\,\chibh, \,\, \ombc, \,\, \xib,\,\,  r\bb,\,\,\aa,\,\, r^{-1}\widecheck{\Omb}, \,\,r^{-1}\widecheck{\varsigma},\,\,r^{-1}\zc,\,\, r^{-1}(e_3(r)+\Up\big)\Bigg\}.
\end{split}
\end{align}
\begin{remark}
    The renormalized Ricci coefficients are divided into two groups $\Ga_g$ and $\Ga_b$ according to their $r$--weights. See Remark \ref{Gammagb} for more details.
\end{remark}
\subsection{Basic equations for the renormalized quantities}
We recall some of the null structure equations and Bianchi identities for the renormalized quantities, see Proposition 5.1.17 in \cite{KS:main}.
\begin{proposition}\label{nullstructure}
In an outgoing geodesic foliation, the following null structure equations hold true 
\begin{align*}
    \nab_4\kac+\frac{2}{r}\kac&=\Ga_g\c\Ga_g,\\
    \nab_4\hch+\frac{2}{r}\hch&=-\a+\Ga_g\c\Ga_g,\\
    \nab_4\ze+\frac{2}{r}\ze&=-\b+\Ga_g\c\Ga_g,\\
    \nab_4\kabc+\frac{1}{r}\kabc&=-2\div\ze+2\rhoc+\frac{\Up}{r}\kac+\Ga_b\c\Ga_g,\\
    \nab_4\hchb+\frac{1}{r}\hchb&=\frac{\Up}{r}\hch-\nab\hot\ze+\Ga_b\c\Ga_g,
\end{align*}
and
\begin{align*}
    \nab_3\kac-\frac{\Up}{r}\ka&=2\div\eta+2\rhoc-\frac{1}{r}\kabc+\frac{4}{r}\ombc+\frac{2m}{r^2}\kac+\frac{2}{r^2}\yc+\Ga_b\c\Ga_b,\\
    \nab_3\kabc-\frac{2\Up}{r}\kabc&=2\div\xib+\frac{4\Up}{r}\ombc-\frac{2m}{r^2}\kabc-\left(\frac{2}{r^2}-\frac{8m}{r^3}\right)\yc+\Ga_b\c\Ga_b,\\
    \nab_3\hchb-\frac{2\Up}{r}\hchb&=-\aa-\frac{2m}{r^2}\hchb+\nab\hot\xib+\Ga_b\c\Ga_b,\\
    \nab_3\ze-\frac{\Up}{r}\ze&=-\bb-2\nab\ombc+\frac{\Up}{r}(\eta+\ze)+\frac{1}{r}\xib+\frac{2m}{r}(\ze-\eta)+\Ga_b\c\Ga_b,\\
    \nab_3\hch-\frac{\Up}{r}\hch&=\nab\hot\eta-\frac{1}{r}\hchb+\frac{2m}{r^2}\hch+\Ga_b\c\Ga_b.
\end{align*}
Also,
\begin{align*}
    \curl\eta&={^*\rho}+\Ga_b\c\Ga_g,\\
    \curl\xib&=\Ga_b\c\Ga_b,\\
    \muc&=-\div\ze-\rhoc+\Ga_b\c\Ga_g.
\end{align*}
The Bianchi identities are given by
\begin{align*}
    \nab_3\a-\frac{\Up}{r}\a&=\nab\hot\b+\frac{4m}{r^2}\a+\frac{6m}{r^3}\hch+\Ga_b\c(\a,\b)+r^{-1}\Ga_g\c\Ga_g,\\
    \nab_4\b+\frac{4}{r}\b&=-\div\a+r^{-1}\Ga_g\c\Ga_g,\\
    \nab_3\b-\frac{2\Up}{r}\b&=(\nab\rho+{^*\nab}{^*\rho})+\frac{2m}{r^2}\b-\frac{6m}{r^3}\eta+r^{-1}\Ga_b\c\Ga_g,\\
    \nab_4\rhoc+\frac{3}{r}\rhoc&=\div\b+\frac{3m}{r^3}\kac+r^{-1}\Ga_b\c\Ga_g,\\
    \nab_3\rhoc-\frac{3\Up}{r}&=-\div\bb+\frac{3m}{r^3}\kabc-\frac{6m}{r^4}\Ombc-\f12\hch\c\aa+r^{-1}\Ga_b\c\Ga_b,\\
    \nab_4{^*\rho}+\frac{3}{r}{^*\rho}&=-\curl\b+r^{-1}\Ga_b\c\Ga_g,\\
    \nab_3{^*\rho}-\frac{3\Up}{r}{^*\rho}&=-\curl\bb-\f12\hch\c{^*\aa}+r^{-1}\Ga_b\c\Ga_b.
\end{align*}
\end{proposition}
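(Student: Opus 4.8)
\emph{Proof proposal.} The plan is to obtain the proposition by direct substitution, re-expressing the classical null structure equations and Bianchi identities of the outgoing geodesic foliation in terms of the renormalized unknowns \eqref{renor}. First I would write down the standard equations in the adapted frame $(e_3,e_4,e_1,e_2)$: the Raychaudhuri equation $\nab_4\trch+\frac12\trch^2=-|\hch|^2$ (using $\om=0$), the transport equations for $\hch$ and $\ze$, the $\nab_4$ equations for $\trchb$ and $\hchb$, their $\nab_3$ counterparts for $\trch,\trchb,\hchb,\ze,\hch$, the Hodge relations for $\curl\eta$, $\curl\xib$ and $\mu=-\div\ze-\rho+\frac12\hch\c\hchb$, and the full Bianchi system for $\a,\b,\rho,\rhod,\bb,\aa$. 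All of these are recalled in \cite{KS:Kerr1}, and they simplify under $\om=\xi=0$, $\etab=-\ze$, $\vsi=2/e_3(u)$ as noted in the excerpt.

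Next I need the evolution of the scalars $r$, $m$, $\Up=1-2m/r$ entering \eqref{renor}: one has $e_4(r)=\frac r2\,\ov\ka+\lot$, $e_3(r)=\Omb$, and, from \eqref{eq:definitionoftheHawkingmass}, companion equations for $e_4(m)$ and $e_3(m)$ whose leading parts are governed by the averaged structure equations and whose remainders are quadratic. Then, writing $\ka=\kac+\frac2r$, $\kab=\kabc-\frac{2\Up}{r}$, $\omb=\ombc+\frac m{r^2}$, $\rho=\rhoc-\frac{2m}{r^3}$, $K=\widecheck K+\frac1{r^2}$, $\mu=\muc+\frac{2m}{r^3}$, $\Omb=-\Up+\Ombc$, I would substitute everything into the raw equations. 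The purely $r$-- and $m$--dependent terms generated are exactly the Schwarzschildian reference values and cancel identically; what survives is the stated transport operator ($\nab_4(\cdot)+\frac kr(\cdot)$ or $\nab_3(\cdot)-\frac{k\Up}{r}(\cdot)$ for the relevant integer $k$) on the left, the displayed linear terms in the remaining checked quantities on the right --- for example $2\div\ze+2\rhoc+\frac{\Up}{r}\kac$ in the $\nab_4\kabc$ equation, $\frac{3m}{r^3}\kac$ in the $\nab_4\rhoc$ equation, $\frac{6m}{r^3}\eta$ in the $\nab_3\b$ equation --- together with a quadratic remainder.

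It then remains to classify each quadratic remainder. Most terms fall into $\Ga_g\c\Ga_g$, $\Ga_b\c\Ga_g$ or $\Ga_b\c\Ga_b$ after a factor-by-factor count of $r$--weights: the extra power of $r$ carried by $\a,\b,\rho,\rhod,\mu,K$ in the definition \eqref{definition:Ga_gGa_b} of $\Ga_g$ is what supplies the $r^{-1}$ in terms like $r^{-1}\Ga_g\c\Ga_g$ and $r^{-1}\Ga_b\c\Ga_g$, while $\bb,\aa,\Ombc,\vsic,\zc$ carry the compensating $r^{\pm1}$ weights in $\Ga_b$, and mixed products land in $\Ga_b\c\Ga_g$. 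A few borderline quadratic terms that do not fit the $r^{-1}\Ga_b\c\Ga_b$ remainder of the $\nab_3\rhoc$ and $\nab_3\rhod$ equations --- chiefly $\hch\c\aa$ and $\hch\c\dual\aa$ --- are kept explicit, exactly as in the statement; and the nonlinear $r^{-1}$ corrections produced by inserting $\ka=\kac+\frac2r$ and the like inside quadratic terms are again of the advertised type and absorbed.

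The step I expect to be the main obstacle is the exact cancellation of the non-quadratic (constant and linear in $r^{-1}$) terms at the substitution step: this is sensitive to the $\ell=0$/average parts, since the evolution of $r$ and $m$ is driven by $\ov\ka$ and $\ov\kab$, and a careless treatment leaves spurious surviving $\frac1r$--terms. Pinning down the precise coefficients $\frac m{r^2}$, $\frac{2m}{r^3}$, $\frac{3m}{r^3}$, $\frac{6m}{r^3}$, $\frac{6m}{r^4}$ on the right-hand sides --- rather than merely up to $\Ga$ --- forces one to carry the $m$--evolution equations, together with their own quadratic corrections, through the whole computation. Once the mechanism is verified on one representative equation of each type (one $\nab_4$ transport, one $\nab_3$ transport, one Bianchi pair), the remaining identities follow by the same routine manipulation; as the paper does, one may then simply invoke Proposition 5.1.17 of \cite{KS:main}.
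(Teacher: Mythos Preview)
Your proposal is correct and follows exactly the approach the paper takes: the paper's proof consists of a single sentence pointing to the standard null structure equations and Bianchi identities (Propositions 7.3.2 and 7.4.1 in \cite{Ch-Kl}), the definition \eqref{renor} of the renormalized quantities, and Proposition 5.1.17 in \cite{KS:main} for the detailed computation. Your write-up simply unpacks what that one-line proof actually entails --- the substitution, the cancellation of the Schwarzschild reference terms, and the bookkeeping of the quadratic remainders into $\Ga_g\c\Ga_g$, $\Ga_b\c\Ga_g$, $\Ga_b\c\Ga_b$ --- and you even close by invoking the same reference.
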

\begin{proof}
The proof follows immediately from the standard null structure equations and Bianchi identities (see Propositions 7.3.2 and 7.4.1 in \cite{Ch-Kl}) and the definition \eqref{renor} of the renormalized quantities, see Proposition 5.1.17 in \cite{KS:main} for more details.
\end{proof}
\subsection{Commutation lemmas}
We recall the following commutation lemmas.
\begin{lemma}\label{comm}
For any tensor on a sphere $S$, the following commutation formulas hold true
\begin{align*}
[\nab_3,\nab_a]f&=-\f12\trchb\nab_a f+(\eta_a-\ze_a)\nab_3 f-\hchb_{ab}\nab_b f+\xib_a\nab_4 f+(\Fb[f])_a,\\
[\nab_4,\nab_a]f&=-\f12\trch\nab_a f+(\etab_a+\ze_a)\nab_4 f-\hch_{ab}\nab_b f+\xi_a\nab_3f+(F[f])_a,
\end{align*}
where the tensors $F[f]$ and $\Fb[f]$ have the following schematic form
\beaa 
F[f]=(\b,\chi\c\etab,\chib\c\xi)\c f,\qquad \Fb[f]=(\bb,\chib\c\eta,\chi\c\xib)\c f.
\eeaa 
\end{lemma}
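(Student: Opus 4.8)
The plan is to derive both identities as a direct consequence of the Ricci formulas \eqref{ricciformulas}, the torsion-freeness of $\D$, and --- for the curvature term --- the vacuum condition on $\RR$. By multilinearity it suffices to treat scalars and horizontal $1$-forms, the general tensorial case then following by the Leibniz rule, which only alters $F$ and $\Fb$ by combinatorial constants and hence preserves their schematic form.

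I would begin with a scalar $f$, where $\nab_a f = e_a(f)$ and $\nab_3 f = e_3(f)$, so that $[\nab_3,\nab_a]f = [e_3,e_a](f) - (\nab_3 e_a)(f)$, with $\nab_3 e_a$ the horizontal part of $\D_3 e_a$. Substituting $\D_3 e_a = \nab_3 e_a + \eta_a e_3 + \xib_a e_4$ together with the torsion identity $[e_3,e_a] = \D_3 e_a - \D_a e_3$ makes the $\nab_3 e_a$ terms cancel, leaving $-(\D_a e_3)(f) + \eta_a\nab_3 f + \xib_a\nab_4 f$; inserting $\D_a e_3 = \chib_{ab}e_b + \ze_a e_3$ and splitting $\chib_{ab} = \f12\trchb\,\de_{ab} + \hchb_{ab}$ reproduces the stated formula with $\Fb[f] = 0$. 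The second identity comes out the same way from $\D_4 e_a = \nab_4 e_a + \etab_a e_4 + \xi_a e_3$ and $\D_a e_4 = \chi_{ab}e_b - \ze_a e_4$.

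For a horizontal $1$-form $\omega$ two additional effects enter. First, $\D_a\omega$ fails to be horizontal: one computes $(\D_a\omega)(e_3) = -\chib_{ab}\omega_b$ and $(\D_a\omega)(e_4) = -\chi_{ab}\omega_b$ (while $(\D_a\omega)(e_b) = (\nab_a\omega)_b$), so applying $\D_3$ and using the transversal parts of $\D_3 e_b = \nab_3 e_b + \eta_b e_3 + \xib_b e_4$ generates the quadratic terms $\chib_{ac}\omega_c\,\eta_b$ and $\chi_{ac}\omega_c\,\xib_b$, i.e. $(\chib\c\eta)\c f$ and $(\chi\c\xib)\c f$ (and $(\chi\c\etab)\c f$, $(\chib\c\xi)\c f$ when $\D_4$ is applied instead). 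Second, the genuine curvature appears through $\D_3\D_a\omega - \D_a\D_3\omega = \D_{[e_3,e_a]}\omega + \R(e_3,e_a)\omega$, whose horizontal part is controlled by the components $\R(e_3,e_a,e_b,e_c)$ with $a,b,c$ horizontal; antisymmetry in $(b,c)$ forces $\R(e_3,e_a,e_b,e_c) = \psi_a\,\ep_{bc}$ for a horizontal $1$-form $\psi$, and contracting the vacuum identity $\textrm{Ric}(e_3,\c) = 0$ against the frame identifies $\psi$ with a multiple of $\dual\bb$ --- so no $\a$, $\aa$, $\rho$ or intrinsic ($K$) contribution survives. This is the $\bb\c f$ term (and $\b\c f$ for $[\nab_4,\nab_a]$). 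The remaining ``scalar-type'' pieces of the computation reproduce the $-\f12\trchb\nab_a f$, $-\hchb_{ab}\nab_b f$, $(\eta_a-\ze_a)\nab_3 f$, $\xib_a\nab_4 f$ structure exactly as before; assembling everything gives the two formulas, and the passage to arbitrary horizontal tensors is then routine via Leibniz.

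The step I expect to be the main obstacle is the curvature contribution: one must check carefully that, in vacuum, $\R(e_3,e_a,e_b,e_c)$ with a single transversal index collapses to a multiple of $\bb$, and $\R(e_4,e_a,e_b,e_c)$ to a multiple of $\b$, so that $F$ and $\Fb$ contain no $\a$, $\aa$, $\rho$ or Gauss-curvature terms. Everything else --- keeping track of the horizontal connection terms and the transversal corrections --- is bookkeeping, made lighter by the fact that only a schematic form with the correct factors, not precise coefficients, is being claimed.
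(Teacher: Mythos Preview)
Your proposal is correct and reproduces the standard derivation; the paper itself does not give a proof but simply cites Lemma~7.3.3 of \cite{Ch-Kl}, where exactly this computation is carried out. Your treatment of the scalar case via $[e_3,e_a]=\D_3 e_a-\D_a e_3$ and the Ricci formulas is clean, and your identification of the curvature contribution for $1$-forms --- that $\R(e_3,e_a,e_b,e_c)$ collapses to $\dual\bb$ in vacuum (and $\R(e_4,e_a,e_b,e_c)$ to $\dual\b$) --- is the right mechanism and matches what one finds in \cite{Ch-Kl}.
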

\begin{proof}
See Lemma 7.3.3 in \cite{Ch-Kl}.
\end{proof}
\begin{lemma}\label{commfor}
The following commutation formulas hold true for any tensor $f$ on a sphere $S$:
\begin{align*}
    [\nab_3,\nab]f&=\frac{\Up}{r}\nab f+\Ga_b\cdot\nab_3 f+r^{-1}\Ga_b\cdot\dk^{\leq 1}f,\\
    [\nab_4,\nab]f&=-\frac{1}{r}\nab f+r^{-1}\Ga_g\cdot\dk^{\leq 1}f,\\
    [\nab_\nu,\nab]f&=\frac{2}{r}\nab f+\Ga_b\cdot\nab_\nu f+r^{-1}\Ga_b\cdot\dk^{\leq 1}f,\\
    [\nab_\nu,\De]f&=\frac{4}{r}\nab f+r^{-1}\dkb^{\leq 1}\left(\Ga_b\c\nab_\nu f+r^{-1}\Ga_b\c\dk f\right),
\end{align*}
where
\begin{equation}\label{dkdkbdf}
    \dk\in \{\nab_3,\, r\nab_4,\,\dkb\},\qquad \dkb:=r\nab,\qquad \dk^{\leq 1}f:=(f,\dk f).
\end{equation}
\end{lemma}
\begin{proof}
See Lemma 5.1.19 in \cite{KS:main}.
\end{proof}
\subsection{Hodge operators}
We recall the following Hodge operators acting on $2$--surface $S$ (see Chapter 2 in \cite{Ch-Kl}):
\begin{definition}
We define the following Hodge operators:
\begin{enumerate}
    \item The operator $\ddd_1$ takes any $1$--form $f$ into the pair of functions $(\div f,\curl f)$.
    \item The operator $\ddd_2$ takes any symmetric traceless $2$--tensor $f$ into the $1$--form $\div f$.
    \item The operator $\ddd_1^*$ takes any pair of scalars $(h,{^*h})$ into the $1$--form $-\nab h+{^*\nab}{^*h}$.
    \item The operator $\ddd_2^*$ takes any $1$--form $f$ into the symmetric traceless $2$--tensor $-\f12\nab\hot f$.
\end{enumerate}
\end{definition}
On can easily check that $\ddd_k^*$ is the formal adjoint on $L^2(S)$ of $\ddd_k$ for $k=1,2$. Moreover,
\bea
\begin{split}
\ddd_1^*\ddd_1&=-\De_1+K,\qquad\;\,\,\ddd_1\ddd_1^*=-\De_0,\\
\ddd_2^*\ddd_2&=-\f12\De_2+K,\qquad \ddd_2\ddd_2^*=-\f12(\De_1+K),
\end{split}
\eea
where $\Delta_k$, $k=0,1,2$ is the Laplace operator on scalars, $1$--forms and symmetric traceless $2$--tensors.
\subsection{Definition of \texorpdfstring{$\ell=1$}{} modes}
\begin{definition}\label{jpdef}
On a sphere $S$, a (suitable $\epg$--approximation of) basis of $\ell=1$ modes is a triplet of functions $\Jp$ on $S$ satisfying\footnote{The properties  \eqref{def:Jpsphericalharmonics} of the scalar functions $\Jp$ are motivated by the fact that the $\ell=1$ spherical harmonics  on the standard sphere  $\SSS^2$,  given by  $J^{(0, \SSS^2)}=\cos\th,\, J^{(+, \SSS^2)}=\sin\th\cos\vphi, \,  J^{(-, \SSS^2)}=\sin\th\sin\vphi$,
satisfy  \eqref{def:Jpsphericalharmonics}  with $\epg=0$. Note also  that on $\SSS^2$, there holds
\beaa
\int_{\mathbb{S}^2}(\cos\th)^2=\int_{\mathbb{S}^2}(\sin\th\cos\vphi)^2=\int_{\mathbb{S}^2}(\sin\th\sin\vphi)^2=\frac{4\pi}{3}, \qquad |\SSS^2|=4\pi.
\eeaa} 
 \begin{align}
\begin{split} \label{def:Jpsphericalharmonics}
  (r^2\De+2) \Jp  &= O(\epg),\qquad p=0,+,-,\\
\frac{1}{|S|} \int_{S}\Jp J^{(q)} &=\frac{1}{3}\de_{pq} +O(\epg),\qquad p,q=0,+,-,\\
\frac{1}{|S|}\int_{S}\Jp&=O(\epg),\qquad p=0,+,-,
\end{split}
\end{align}
where $\epg>0$ is a sufficiently small constant.
\end{definition}
\begin{remark}
    For simplicity, throughout this paper, $J^{(p)}$ is called a basis of $\ell=1$ modes.
\end{remark}
\begin{proposition}\label{jpprop}
For a basis of $\ell=1$ modes $\Jp$ defined in Definition \ref{jpdef}, we have
\bea\label{estjpepg}
\ddd_2^*\ddd_1^*\Jp&=&r^{-2}O(\epg).
\eea    
\end{proposition}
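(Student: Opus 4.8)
The plan is to exploit the first relation in the definition \eqref{def:Jpsphericalharmonics}, namely $(r^2\De+2)\Jp = O(\ep)$, together with the commutation identities for the Hodge operators recorded just above. First I would use the fact that on a $2$--surface the operator $\ddd_2^*\ddd_1^*$ annihilates the $\ell=0$ and $\ell=1$ modes on the standard sphere, and more precisely that $\ddd_1^*\ddd_1$ and $\ddd_2^*\ddd_2$ differ from $-\De$ (up to the factor $\f12$) only by curvature terms. Concretely, I would compute $\ddsStwo\ddsSone\Jp$ by first applying $\ddsSone$ to the scalar $\Jp$, obtaining the $1$--form $-\nab\Jp$ (since $\Jp$ is a genuine function, its dual part vanishes), and then applying $\ddsStwo=-\f12\nab\hot(\cdot)$.

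The key computation is to relate $\ddsStwo(-\nab\Jp)$ to $\De\Jp$. I would invoke the operator identity $\ddd_1^*\ddd_1 = -\De_1 + K$ applied to $-\nab\Jp$, or more directly the identity $\ddd_2^*\ddd_1^* = \f12(\ddd_1^*\ddd_1^*)$-type manipulation: the cleanest route is to note that for any scalar $h$ one has $2\,\ddsStwo\ddsSone h = \nab\hot\nab h$, and that the symmetric traceless Hessian $\nab\hot\nab h$ can be rewritten using the Bochner-type identity and the Gauss curvature so that its trace is $(\De - \text{something})h$ while the traceless part is controlled. Since $\De\Jp = -2r^{-2}\Jp + r^{-2}O(\ep)$ by \eqref{def:Jpsphericalharmonics} and $\Jp = O(1)$, all the terms proportional to $\Jp$ must cancel by the structure of the $\ell=1$ eigenspace, leaving only the $r^{-2}O(\ep)$ error. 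I would make this cancellation precise by observing that on the model sphere $\SSS^2$ with $\ep = 0$ one has exactly $\ddd_2^*\ddd_1^*J^{(p,\SSS^2)} = 0$ (the $\ell=1$ spherical harmonics are killed by this second-order operator), and then argue that the $O(\ep)$ perturbations in \eqref{def:Jpsphericalharmonics}, combined with $\widecheck{K} = K - r^{-2}$ being small, propagate through the second-order operator to give an error of size $r^{-2}O(\ep)$ after accounting for the two factors of $r^{-1}$ carried by each Hodge operator.

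The main obstacle I anticipate is keeping careful track of the powers of $r$ and of the Gauss curvature corrections: $\ddsSone$ and $\ddsStwo$ each scale like $r^{-1}$, so the claimed bound $r^{-2}O(\ep)$ is exactly what two derivatives acting on an $O(1)$ quantity should give, but one must verify that no term of size $r^{-2}O(1)$ survives — this requires the precise algebraic cancellation coming from the fact that $\Jp$ is (approximately) an eigenfunction of $\De$ with eigenvalue $-2r^{-2}$, together with the commutator between $\nab$ and $\De$ being lower order. A secondary technical point is that $r$ is not constant on $S$, so when pulling $r^{-2}$ through the Hodge operators one generates extra terms involving $\nab r$; however, since $\nab r$ is itself controlled (it is of size $r\,\Ga$ in the notation of \eqref{definition:Ga_gGa_b}, hence small relative to the leading behaviour), these contributions are absorbed into the $O(\ep)$ error. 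Once these bookkeeping issues are handled, the conclusion \eqref{estjpepg} follows directly.
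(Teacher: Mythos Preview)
The paper itself gives no proof, deferring to Lemma 5.2.8 in \cite{KS:main}; your outline has the right ingredients but two points need correction.

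First, your ``secondary technical point'' about $r$ not being constant on $S$ is a misconception: $r$ is the \emph{area radius}, defined by $|S|=4\pi r^2$, hence a single number attached to each sphere. Thus $\nab r=0$ identically on $S$, and no error terms arise from commuting $r^{-2}$ past the Hodge operators.

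Second, and more substantively, your argument does not explain how the eigenvalue condition $(r^2\De+2)\Jp=O(\ep)$ controls the \emph{traceless} Hessian $\ddd_2^*\ddd_1^*\Jp=\tfrac12\nab\hot\nab\Jp$. The eigenvalue hypothesis only fixes the trace of the Hessian (that is, $\De\Jp$); a pointwise algebraic cancellation at the level of $\ddd_2^*\ddd_1^*$ alone does not exist. The clean route is the operator identity on $1$--forms
\[
\ddd_1^*\ddd_1 \;=\; 2\,\ddd_2\ddd_2^* \;+\; 2K,
\]
which follows from $\ddd_1^*\ddd_1=-\De_1+K$ and $\ddd_2\ddd_2^*=-\tfrac12(\De_1+K)$. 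Applying this to $\ddd_1^*\Jp$ and using $\ddd_1\ddd_1^*=-\De_0$ yields
\[
2\,\ddd_2\big(\ddd_2^*\ddd_1^*\Jp\big)
\;=\;-\ddd_1^*\De\Jp \;-\; 2K\,\ddd_1^*\Jp
\;=\;\frac{2}{r^2}\ddd_1^*\Jp \;-\; \frac{2}{r^2}\ddd_1^*\Jp \;+\; r^{-3}O(\ep),
\]
where the cancellation uses both $\De\Jp=-\tfrac{2}{r^2}\Jp+r^{-2}O(\ep)$ and $K=r^{-2}+\widecheck K$ with $\widecheck K\in r^{-1}\Ga_g$. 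Only then does the coercivity of $\ddd_2$ (Lemma \ref{elliptic1}) give $\|\ddd_2^*\ddd_1^*\Jp\|\les r\cdot r^{-3}O(\ep)=r^{-2}O(\ep)$; iterating with higher derivatives and Sobolev embedding gives the pointwise statement. Your Bochner heuristic is an integrated version of this same identity, but you should make explicit that the passage from $\De\Jp$ to $\nab\hot\nab\Jp$ requires this elliptic step rather than a direct algebraic substitution.
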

\begin{proof}
See Lemma 5.2.8 in \cite{KS:main}.
\end{proof}
\begin{definition}\label{defl=1}
Given a scalar function $f$   defined on a sphere $S$, we define the $\ell=1$ modes of $f$ by   the triplet
\beaa
(f)_{\ell=1} :=\left\{ \int_S f \Jp,\qquad p=0,+,-\right\}.
\eeaa
\end{definition}
\subsection{Elliptic estimates}
For a tensor $f$ on a sphere $S$, we define the following norms for any integer $k\geq 0$
\bea
\begin{split}
\Vert f \Vert_{\hk_k(S)}&:=&\sum_{j=0}^k\Vert\dkb^j f \Vert_{L^2(S)},\\
\Vert f \Vert_{\infty,k}&:=&\sum_{j=0}^k\Vert\dkb^j f \Vert_{L^\infty(S)},
\end{split}
\eea
Under the assumptions
\bea\label{gaggabepr}
\Vert \Ga_g \Vert_{\infty,k}\leq \ep r^{-2},\qquad \Vert \Ga_b \Vert_{\infty,k}\leq \ep r^{-1},
\eea
where $\ep>0$ is sufficiently small, we have the following coercive properties of the operators $\ddd_1$, $\ddd_2$, $\ddd^*_1$.
\begin{lemma}\label{elliptic1}
Consider a sphere $S$ such that its Gauss curvature $K$ satisfies
\begin{equation}\label{Gaussepsilon}
    \left\|K-\frac{1}{r^2}\right\|_{\hk_{s_{max}-1}(S)}\les\epg,
\end{equation}
Then, the following estimates hold for all integer $k\leq s_{max}$:
\begin{enumerate}
    \item If $f$ is a $1$--form
    \bea
    \Vert f\Vert_{\hk_{k+1}(S)}\les r\Vert\ddd_1 f\Vert_{\hk_k(S)}.
    \eea 
    \item If $v$ is a symmetric traceless $2$-tensor
    \bea 
    \Vert v\Vert_{\hk_{k+1}(S)}\les r\Vert \ddd_2 v\Vert_{\hk_k(S)}.
    \eea 
    \item If $(h,{^*h})$ is a pair of scalars
    \bea
    \Vert (h-\ov{h},{^*h}-\ov{^*h})\Vert_{\hk_{k+1}(S)}\les r\Vert\ddd_1^*(h,{^*h})\Vert_{\hk_k(S)}.
    \eea
\end{enumerate}
\end{lemma}
\begin{proof}
See Lemma 2.20 in \cite{KS:Kerr1}.
\end{proof}
\begin{lemma}\label{elliptic2}
On a fixed sphere $S$ for which \eqref{Gaussepsilon} holds, we have for any $1$--form $f$ and integer $0\leq k\leq s_{max}$:
\bea\label{fcontrol2.10}
\Vert f\Vert_{\hk_{k+1}(S)}&\les& r\Vert \ddd_2^* f\Vert_{\hk_k(S)}+|(f)_{\ell=1}|,
\eea
where the $\ell=1$ modes are given by Definition \ref{defl=1}.
\end{lemma}
\begin{proof}
See Lemma 2.19 in \cite{KS:Kerr1} for the case $k=0$. Assume that \eqref{fcontrol2.10} holds for $1\leq k\leq n-1$, we have from standard elliptic regularity that
\beaa
\Vert f\Vert_{\hk_{n+1}(S)} &\les & r^2\Vert \Delta f\Vert_{\hk_{n-1}(S)}\\
&\les & r^3\|\ddd_2^*\Delta f\|_{\hk_{n-2}(S)}+r^2|(\Delta f)_{\ell=1}|\\
&\les & r^3\|\Delta\ddd_2^*f\|_{\hk_{n-2}(S)}+r^3\|[\Delta,\ddd_2^*]f\|_{\hk_{n-2}(S)}+r^2\left|\int_S (\Delta f) J^{(p)}\right|\\
&\les & r\|\ddd_2^*f\|_{\hk_n(S)}+\|f\|_{\hk_n(S)}+r^2\left|\int_S f (\Delta J^{(p)})\right|\\
&\les & r\|\ddd_2^*f\|_{\hk_n(S)}+|(f)_{\ell=1}|,
\eeaa
where we used \eqref{fcontrol2.10} in the case of $k=n-1$ and \eqref{def:Jpsphericalharmonics} at the last step. By induction, we deduce \eqref{fcontrol2.10} for $k=n$. This concludes the proof of Lemma \ref{elliptic2}.
\end{proof}
The following corollary will be useful in Section \ref{sec4.2}.
\begin{corollary}\label{d2d2}
On a fixed sphere $S$ for which \eqref{Gaussepsilon} holds for $\epg$ small enough, we have for any symmetric traceless $2$--tensor $v$ and integer $0\leq k\leq s_{max}-1$:
\begin{align}
\Vert v\Vert_{\hk_{k+2}(S)} &\les r^2\Vert \ddd_2^*\ddd_2 v\Vert_{\hk_{k}(S)},\label{d2d2eq}\\
\Vert v\Vert_{\hk_{k+2}(S)}&\les r^2\left\Vert\left(\ddd_2^*\ddd_2+\frac{1}{r^2}\right) v\right\Vert_{\hk_{k}(S)}.\label{d2d2eqr2}
\end{align}
\end{corollary}
\begin{proof}
By Lemma \ref{elliptic2}, we have
\beaa 
\Vert \ddd_2 v \Vert_{\hk_{k+1}(S)} &\les& r\Vert \ddd_2^* \ddd_2 v\Vert_{\hk_k(S)}+|(\ddd_1\ddd_2 v)_{\ell=1}|\\
&\les&r\Vert \ddd_2^* \ddd_2 v\Vert_{\hk_k(S)}+\left|\int_S(\ddd_1\ddd_2 v)\Jp\right|\\
&\les&r\Vert \ddd_2^* \ddd_2 v\Vert_{\hk_k(S)}+\left|\int_S v(\ddd_2^*\ddd_1^*\Jp)\right|\\
&\les&r\Vert \ddd_2^* \ddd_2 v\Vert_{\hk_k(S)}+\epg\Vert v\Vert_{\infty,k},
\eeaa 
where we have used integration by parts and \eqref{estjpepg}.
Then, we apply Lemma \ref{elliptic1} to conclude
\beaa 
\Vert v\Vert_{\hk_{k+2}(S)} \les r \Vert \ddd_2v\Vert_{\hk_{k+1}(S)}\les r^2\Vert \ddd_2^*\ddd_2 v\Vert_{\hk_k(S)}+\epg r\Vert v\Vert_{\infty,k},
\eeaa
which implies, together with Sobolev and $\epg>0$ small enough,
\beaa 
\Vert v\Vert_{\hk_{k+2}(S)} &\les& r^2\Vert \ddd_2^*\ddd_2 v\Vert_{\hk_{k}(S)}.
\eeaa 
This concludes \eqref{d2d2eq}.

Since $\ddd_2^*\ddd_2$ is a positive operator on $L^2(S)$, the case $k=0$ of \eqref{d2d2eqr2} follows immediately from \eqref{d2d2eq}. Assuming that \eqref{d2d2eqr2} holds true for $0\leq k\leq n-1$, we have
\beaa
\|v\|_{\hk_{n+2}(S)}&\les& r^2\|\ddd_2^* \ddd_2 v\|_{\hk_{n}(S)}\\
&\les& r^2\left\|\left(\ddd_2^* \ddd_2+\frac{1}{r^2}\right) v\right\|_{\hk_n(S)}+ \|v\|_{\hk_n(S)}\\
&\les& r^2\left\|\left(\ddd_2^* \ddd_2+\frac{1}{r^2}\right) v\right\|_{\hk_n(S)},
\eeaa
where we used \eqref{d2d2eqr2} for $k=n-2$. By induction, we obtain \eqref{d2d2eqr2} for all $0\leq k\leq s_{max}-1$. This concludes the proof of Corollary \ref{d2d2}.
\end{proof}
\subsection{General frame transformations}
We recall below Lemma 3.1 in \cite{KS:Kerr1}.
\begin{lemma}
\label{Lemma:Generalframetransf}
Given a null frame $(e_3, e_4, e_1, e_2)$, a general null transformation from the null frame  $(e_3, e_4, e_1, e_2)$  to another null frame $(e_3', e_4', e_1', e_2')$ can be written in the form,
\bea
 \label{eq:Generalframetransf}
 \begin{split}
  e_4'&=\la\left(e_4 + f^b  e_b +\frac 1 4 |f|^2  e_3\right),\\
  e_a'&= \left(\de_{ab} +\frac{1}{2}\fb_af_b\right) e_b +\frac 1 2  \fb_a  e_4 +\left(\frac 1 2 f_a +\frac{1}{8}|f|^2\fb_a\right)   e_3,\qquad a=1,2,\\
 e_3'&=\la^{-1}\left( \left(1+\frac{1}{2}f\c\fb  +\frac{1}{16} |f|^2  |\fb|^2\right) e_3 + \left(\fb^b+\frac 1 4 |\fb|^2f^b\right) e_b  + \frac 1 4 |\fb|^2 e_4 \right),
 \end{split}
\eea
  where $\la$ is a scalar, $f$ and $\fb$ are horizontal 1-forms on the sphere generated by $(e_1',e_2')$. The dot product and magnitude  $|\c |$ are taken with respect to the standard Euclidean norm of $\RRR^2$.\footnote{Here $f$ and $\fb$ are $1$--forms defined on a sphere, we use an orthonomal basis of the sphere to define the dot product and magnitude w.r.t. the standard Euclidean norm of $\RRR^2$.} We call $(f, \fb, \la)$ the transition coefficients of the change of frame. We denote $$F:=(f, \fb, \ovla),\qquad \ovla:=\la-1.$$
  \end{lemma}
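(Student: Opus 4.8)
The plan is to treat this as what it is: a pointwise linear-algebra identity. At a point $p$, both $(e_3,e_4,e_1,e_2)$ and $(e_3',e_4',e_1',e_2')$ are bases of $T_p\RR$ obeying the null-frame relations $\g(e_3,e_4)=-2$, $\g(e_a,e_b)=\de_{ab}$, $\g(e_3,e_3)=\g(e_4,e_4)=\g(e_3,e_a)=\g(e_4,e_a)=0$ (and the same for the primed frame), and I must produce transition coefficients $(\la,f,\fb)$ realising \eqref{eq:Generalframetransf}. I would prove two things: \emph{(a)} for every scalar $\la\neq0$ and horizontal $1$--forms $f,\fb$ the right-hand sides of \eqref{eq:Generalframetransf} define a null frame; and \emph{(b)} conversely, every null frame produced from $(e_3,e_4,e_1,e_2)$ by a frame change is of this form, with $(\la,f,\fb)$ determined uniquely up to a rotation of the transverse pair $(e_1',e_2')$ --- this rotation being the one degree of freedom of the Lorentz group not seen by $(\la,f,\fb)$.

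For \emph{(a)} I would just compute, using bilinearity of $\g$ and the reference relations, the ten defining inner products of the vectors on the right-hand side of \eqref{eq:Generalframetransf}. The identities $\g(e_4',e_4')=\la^2(-|f|^2+|f|^2)=0$, $\g(e_a',e_4')=0$, $\g(e_3',e_4')=-2$, $\g(e_3',e_3')=0$ and $\g(e_3',e_a')=0$ all reduce to elementary polynomial cancellations in $(f,\fb)$ of degree at most $4$. The one step carrying real content is $\g(e_a',e_b')=\de_{ab}$: expanding $\sum_c(\de_{ac}+\f12\fb_af_c)(\de_{bc}+\f12\fb_bf_c)$ produces cross-terms $\f12(\fb_af_b+\fb_bf_a)+\frac14|f|^2\fb_a\fb_b$, and these are exactly cancelled by the $e_3$--$e_4$ cross-contributions $-\fb_aq_b-\fb_bq_a$ coming from the last two terms of $e_a'$ once one inserts $q_a=\f12 f_a+\frac18|f|^2\fb_a$. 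This is precisely the reason the transverse block is the \emph{non-symmetric} matrix $M_{ab}=\de_{ab}+\f12\fb_af_b$.

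For \emph{(b)} I would assume, as holds throughout the paper since all frame changes are $O(\dg)$ perturbations of the identity, that $e_4'$ is transversal to $e_3$, i.e.\ $\g(e_4',e_3)\neq0$. Expanding $e_4'=\la e_4+b\,e_3+c^ae_a$ in the reference basis, transversality gives $\la=-\f12\g(e_4',e_3)\neq0$; with $f^a:=c^a/\la$, the null condition $0=\g(e_4',e_4')=-4\la b+|c|^2$ forces $b=\frac14\la|f|^2$, which is the first line of \eqref{eq:Generalframetransf} and shows $(\la,f)$ are fixed by $e_4'$. I would then set $\fb_a:=-\g(e_a',e_3)$ and write $e_a'=M_{ab}e_b+p_ae_4+q_ae_3$; pairing with $e_3$ gives $p_a=\f12\fb_a$, and the orthogonality $\g(e_a',e_4')=0$ gives $q_a=\f12 f^bM_{ab}-\frac18|f|^2\fb_a$, after which $\g(e_a',e_b')=\de_{ab}$ pins down $\sum_c M_{ac}M_{bc}$, and fixing the residual transverse rotation by the ansatz $M_{ab}=\de_{ab}+\f12\fb_af_b$ yields the second line of \eqref{eq:Generalframetransf}. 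Finally $e_3'$ is the unique null vector orthogonal to $e_1',e_2'$ with $\g(e_3',e_4')=-2$, and since --- by part \emph{(a)} --- the right-hand side of the third line of \eqref{eq:Generalframetransf} is such a vector, the two coincide. Setting $\ovla:=\la-1$ and $F:=(f,\fb,\ovla)$ then completes the proof.

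The proof has no genuine obstacle; it is bookkeeping with the null-frame relations. The one computation deserving care is the cancellation establishing $\g(e_a',e_b')=\de_{ab}$ in part \emph{(a)}, which is what forces the precise (non-symmetric) form of the transverse coefficients in \eqref{eq:Generalframetransf}; and one must keep in mind that the rotation of the transverse pair $(e_1',e_2')$ is the single piece of data not encoded by $(\la,f,\fb)$, so part \emph{(b)} pins down the transition coefficients only modulo such a rotation.
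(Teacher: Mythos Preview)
Your proposal is correct. The paper does not prove this lemma at all: it is stated as a recollection of Lemma~3.1 in \cite{KS:Kerr1} with no argument given, so there is no in-paper proof to compare against. Your direct pointwise verification --- checking in part~(a) that the ten inner products of the right-hand sides of \eqref{eq:Generalframetransf} satisfy the null-frame relations, and in part~(b) reading off $(\la,f,\fb)$ from the expansion of $e_4'$ and $e_a'$ in the reference basis --- is exactly the standard way this identity is established (and is how it is handled in the cited reference). The one caveat you correctly flag, that the transverse rotation of $(e_1',e_2')$ is not captured by $(\la,f,\fb)$, is implicit in the statement and harmless for the paper's purposes.
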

We recall some of the identities of Proposition 3.3 in \cite{KS:Kerr1}.
\begin{proposition}
\label{Prop:transformation-formulas-generalcasewithoutassumptions}
Under a general transformation of type \eqref{eq:Generalframetransf}, the Ricci coefficients  transform as follows:
\begin{itemize}
\item The transformation formula for $\xi$ is given by
\bea
\begin{split}
\la^{-2}\xi' &= \xi +\frac{1}{2}\nab_{\la^{-1}e_4'}f+\frac{1}{4}\trch f+\om f +\err(\xi,\xi'),\\
\err(\xi,\xi') &= \frac{1}{2}f\c\chih+\frac{1}{4}|f|^2\eta+\frac{1}{2}(f\c \ze)\,f -\frac{1}{4}|f|^2\etab \\
&+ \la^{-2}\left( \frac{1}{2}(f\c\xi')\,\fb+ \frac{1}{2}(f\c\fb)\,\xi'   \right)  +\lot
       \end{split}
\eea

\item The transformation formula for $\xib$ is given by
\bea
\begin{split}
\la^2\xib' &= \xib + \frac{1}{2}\la\nab_3'\fb +    \omb\,\fb + \frac{1}{4}\trchb\,\fb +\err(\xib, \xib'),\\
\err(\xib, \xib') &=   \frac{1}{2}\fb\c\chibh - \frac{1}{2}(\fb\c\ze)\fb +  \frac 1 4 |\fb|^2\etab  -\frac 1 4 |\fb|^2\eta'+\lot
\end{split}
\eea
\item The transformation formula for $\trch$ is given by
\bea 
\begin{split}\label{transtrch}
    \la^{-1}\trch'&=\trch+\div'f+f\c\eta+f\c\ze+\err(\trch',\trch),\\
    \err(\trch',\trch)&=\fb\c\xi+\frac{1}{4}\fb\c f\trch+\om(f\c\fb)-\omb|f|^2\\
    &-\frac{1}{4}|f|^2\trchb-\frac{1}{4}(f\c\fb)\la^{-1}\trch'+\lot
\end{split}
\eea 
\item   The transformation formula for $\ze$ is given by
\bea
\begin{split}
\ze' &=\ze-\nab'(\log\la)-\frac{1}{4}\trchb f+\om\fb-\omb f+\frac{1}{4}\fb\trch+\err(\ze,\ze'),\\
\err(\ze, \ze') &=-\f12\hchb\c f+\f12(f\c\ze)\fb-\f12(f\c\etab)\fb+\frac{1}{4}\fb(f\c\eta)+\frac{1}{4}\fb(f\c\ze)\\
&+\frac{1}{4}{^*\fb}(f\wedge\eta)+\frac{1}{4}{^*\fb}(f\wedge\ze)+\frac{1}{4}\fb\div'f+\frac{1}{4}{^*\fb}\curl'f+\f12\la^{-1}\fb\c\hch'\\
&-\frac{1}{16}(f\c\fb)\fb\la^{-1}\trch'+\frac{1}{16}{^*\fb}\la^{-1}(f\wedge\fb)\trch'+\lot
\end{split}
\eea
\item The transformation formula for $\eta$ is given by
\bea
\begin{split}
\eta' &= \eta +\frac{1}{2}\la \nab_3'f+\frac{1}{4}\fb\trch-\omb\, f +\err(\eta, \eta'),\\
\err(\eta, \eta') &= \frac{1}{2}(f\c\fb)\eta +\frac{1}{2}\fb\c\chih
+\frac{1}{2}f(\fb\c\ze)  -  (\fb\c f)\eta'+ \frac{1}{2}\fb (f\c\eta') +\lot
\end{split}
\eea
\item The transformation formula for $\etab$ is given by
\bea
\begin{split}
\etab' &= \etab +\frac{1}{2}\nab_{\la^{-1}e_4'}\fb +\frac{1}{4}\trchb f -\om\fb +\err(\etab, \etab'),\\
\err(\etab, \etab') &=  \frac{1}{2}f\c\chibh + \frac{1}{2}(f\c\eta)\fb-\frac 1 4  (f\c\ze)\fb  -\frac 1 4 |\fb|^2\la^{-2}\xi'+\lot
\end{split}
\eea

\item   The transformation formula for $\om$ is given by
\bea
\begin{split}
\la^{-1}\om' &=  \om -\frac{1}{2}\la^{-1}e_4'(\log\la)+\frac{1}{2}f\c(\ze-\etab) +\err(\om, \om'),\\
\err(\om, \om') &=   -\frac{1}{4}|f|^2\omb - \frac{1}{8}\trchb |f|^2+\frac{1}{2}\la^{-2}\fb\c\xi' +\lot
\end{split}
\eea
\item  The transformation formula for $\omb$ is given by
\bea
\begin{split}
\la\omb' &= \omb+\frac{1}{2}\la e_3'(\log\la)  -\frac{1}{2}\fb\c\ze -\frac{1}{2}\fb\c\eta +\err(\omb,\omb'),\\
\err(\omb,\omb') &= f\c\fb\,\omb-\frac{1}{4} |\fb|^2\om  +\frac{1}{2}f\c\xib + \frac{1}{8}(f\c\fb)\trchb-\frac{1}{8}|\fb|^2\trch \\
&-\frac{1}{4}\la\fb\c\nab_3'f+\frac{1}{2}(\fb\c f)(\fb\c\eta')-\frac{1}{4}|\fb|^2 (f\c\eta')+\lot
\end{split}
\eea
\end{itemize}
where, for the transformation formulas of the Ricci coefficients above, $\lot$ denote expressions of the type
\beaa
         \lot&=O((f,\fb)^3)\Ga +O((f,\fb)^2) \Gac
\eeaa
containing no derivatives of $f$, $\fb$, $\Ga$ and $\Gac$.
\end{proposition}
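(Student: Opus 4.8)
\emph{Proof proposal.} The plan is to establish each of these identities by direct computation from the definitions \eqref{defga} of the Ricci coefficients, the explicit frame transformation \eqref{eq:Generalframetransf}, and the properties of the Levi--Civita connection $\D$ of $\g$. Concretely, for each primed Ricci coefficient I would write it as $\g(\D_{X'}Y',Z')$ for the appropriate triple of primed frame vectors, substitute $X',Y',Z'$ from \eqref{eq:Generalframetransf}, and expand. Two tools make the expansion manageable: the Leibniz rule, which separates off the terms in which $\D$ hits the function coefficients of \eqref{eq:Generalframetransf}; and metric compatibility $X\g(Y,Z)=\g(\D_XY,Z)+\g(Y,\D_XZ)$ together with the orthonormality relations $\g(e_3',e_4')=-2$, $\g(e_a',e_b')=\de_{ab}$, $\g(e_a',e_3')=\g(e_a',e_4')=0$, which kill a large number of terms (for instance $\g(\D_3'e_3',e_3')=0$ automatically).

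First I would isolate the ``differentiating the coefficients'' contributions: these produce exactly the derivative terms appearing on the right-hand sides --- $\nab_{\la^{-1}e_4'}f$ and $\f14\trch f$ for $\xi'$, $\f12\la\nab_3'\fb$ for $\xib'$, $\div'f$ for $\trch'$, $\nab'(\log\la)$ for $\ze'$, $\f12\la\nab_3'f$ for $\eta'$, and the $\log\la$ derivative terms $-\f12\la^{-1}e_4'(\log\la)$ and $\f12\la e_3'(\log\la)$ for $\om'$ and $\omb'$. Then, for the ``differentiating the frame'' contributions, I would insert the Ricci formulas \eqref{ricciformulas} to rewrite every $\D_{e_i}e_j$ in terms of the unprimed frame and the unprimed Ricci coefficients, and project onto the relevant primed vector; this leaves a polynomial in $(f,\fb)$ with coefficients built from the unprimed Ricci (and, at quadratic order, curvature) coefficients. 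Finally I would organize the result by homogeneity degree in $(f,\fb)$: degree $0$ reproduces the unprimed coefficient, degree $1$ gives the displayed principal terms, degree $2$ is collected into $\err(\,\cdot\,,\,\cdot\,)$, and the degree $\ge 3$ part --- which carries no derivative of $f$, $\fb$ or of any Ricci/curvature coefficient --- is absorbed into $\lot$.

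Several points will need attention. The $\la$--weights must be tracked carefully, since each $e_4'$ carries a factor $\la$ and each $e_3'$ a factor $\la^{-1}$; this is why the statements are normalized as $\la^{-2}\xi'$, $\la^2\xib'$, $\la^{-1}\trch'$, $\la^{-1}\om'$, $\la\omb'$ rather than as the bare primed quantities. The derivative terms naturally emerge with respect to the horizontal connection $\nab'$ of the new frame and the primed null directions $\nab_3'$, $\la^{-1}e_4'$, and I would keep them in that form rather than converting to $\nab$, which would only spawn further lower-order terms. And the assertion that $\lot=O((f,\fb)^3)\Ga+O((f,\fb)^2)\Gac$ involves no derivatives has to be checked term by term, but this is essentially automatic: any derivative arising in the computation either comes from differentiating a coefficient of \eqref{eq:Generalframetransf} exactly once --- hence lives in a degree $\le 2$ term already displayed --- or is produced by \eqref{ricciformulas} and is therefore a Ricci coefficient, not a derivative of a transition coefficient. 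The hard part is purely the bookkeeping: the expansion of $e_3'$ alone runs to fourth order in $(f,\fb)$, so each coefficient generates many terms, and the real work is to cancel the spurious ones via the metric relations and sort the survivors correctly into principal, $\err$, and $\lot$; there is no conceptual obstacle, and this is precisely the computation carried out in Proposition 3.3 of \cite{KS:Kerr1}.
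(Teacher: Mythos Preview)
Your proposal is correct and matches the paper's approach: the paper does not give an independent proof but simply refers to Appendix A of \cite{KS:Kerr1}, and you both identify that reference and outline the direct computation (expand $\g(\D_{X'}Y',Z')$ via \eqref{eq:Generalframetransf} and \eqref{ricciformulas}, then sort by degree in $(f,\fb)$) that is carried out there.
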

\begin{proof}
See Appendix A of \cite{KS:Kerr1}.
\end{proof}
\subsection{Background spacetime}
\label{sec:backgroundspacetime}
\subsubsection{Adapted coordinates}\label{adaptedcoordinates}
Recall that we consider given a vacuum spacetime $\RR$ and endowed with of $(u,s)$ foliation, see Section \ref{gardef}. A coordinate system $(u, s, y^1, y^2)$ on $\RR$ is said to be adapted to an outgoing geodesic foliation as above if $e_4(y^1)= e_4(y^2)=0$.  In that case the spacetime metric can be written in the form, see Lemma 2.6 in \cite{KS:Kerr1},
\bea
\lab{spacetimemetric-y-coordinates}
\g &=& -2\vsi duds+\vsi^2\Omb du^2+g_{ab}\big(dy^a-\vsi\undB^a du\big)\big( dy^b-\vsi\undB^b du\big),
\eea
where
\bea
\Omb=e_3(s), \qquad \undB^a =\frac{1}{2} e_3(y^a), \qquad  g_{ab}=\g(\pr_{y^a}, \pr_{y^b}).
\eea
Relative to these coordinates
\bea\label{eq:decompositionofnullframeoncoordinatesframeforbackgroundfoliation}
\pr_s=e_4, \qquad \pr_u = \vsi\left(\frac{1}{2}e_3-\frac{1}{2}\Omb e_4-\undB^a\pr_{y^a}\right), \qquad \pr_{y^a}= \sum_{c=1,2} Y_{(a)}^c e_c, \quad   a=1,2,
\eea
with coefficients  $ Y_{(a)}^b $ verifying
\beaa
g_{ab}=\sum_{c=1, 2} Y_{(a)}^c Y_{(b)}^c.
\eeaa
As a direct consequence of \eqref{eq:decompositionofnullframeoncoordinatesframeforbackgroundfoliation}, we have\footnote{Recall that $z:=\frac{2}{\vsi}$ is defined in \eqref{defz}.}
\bea\label{suc}
e_4=\pr_s,\qquad e_3=z\pr_u+\Omb\pr_s+2\Bb^a\pr_{y^a},\qquad e_c=X_{(c)}^a\pr_{y^a},
\eea 
where $X_{(a)}^c$ is defined by
\beaa 
\sum_{c=1,2} X_{(c)}^aY_{(a)}^c=1.
\eeaa 
As in \cite{KS:Kerr1},  we assume  that $\RR $ is covered by two coordinate systems, i.e. $\RR=\RR_N\cup \RR_S$,
such that:
\begin{enumerate}
\item The North coordinate chart $\RR_N$ is given by the coordinates
$(u, s, y_{N}^1, y_{N}^2)$ with $(y^1_{N})^2+(y^2_{N})^2<2$ while the South coordinate chart $\RR_S$ is given by the coordinates
$(u, s, y_{S}^1, y_{S}^2)$ with $(y^1_{S})^2+(y^2_{S})^2<2$.
\item The two coordinate charts intersect in the open equatorial region
$\RR_{Eq}:=\RR_N\cap \RR_S$ in which both coordinate systems are defined.
 \item  In $\RR_{Eq} $   the transition functions  between the two coordinate  systems are given by  the smooth  functions $ \varphi_{SN}$ and $\varphi_{NS}= \varphi_{SN}^{-1} $.
 \end{enumerate}
The metric coefficients for the two coordinate systems are given by
 \beaa
\g &=& - 2\vsi du ds + \vsi^2\Omb  du^2 +g^{N}_{ab}\big( dy_N^a- \vsi \undB_{N}^a du\big) \big( dy_N^b-\vsi \undB_N^b du\big),\\
\g &=& - 2\vsi du ds + \vsi^2\Omb  du^2 +g^{S}_{ab}\big( dy_S^a- \vsi \undB_{S}^a du\big) \big( dy_S^b-\vsi \undB_S^b du\big),
\eeaa
where
\beaa
\Omb=e_3(s), \qquad \undB_N^a =\frac{1}{2} e_3(y_N^a), \qquad \undB_S^a =\frac{1}{2} e_3(y_S^a).
\eeaa
For a $S(u,s)$--tangent tensor $f$, we consider the following norms
  \bea
  \label{Norms-spacetimefoliation-GSMS}
  \begin{split}
  \| f\|_{\infty} :&=\| f\|_{L^\infty(S)}, \qquad  \| f\|_{2} :=\| f\|_{L^2(S)}, \\
  \|f\|_{\infty,k} &= \sum_{i=0}^k \|\dk^i f\|_{\infty },  \qquad
\|f\|_{2,k}=\sum_{i=0}^k \|\dk^i f\|_{2},
\end{split}
  \eea
  where $\dk^i$ stands for any   combination  of length $i$ of operators  of the form
   $e_3, r e_4, r\nab $.\\ \\
Finally, we recall the following lemma for geodesic foliations.
\begin{lemma}\label{nonpo}
For the background geodesic foliation of $\RR$ and any scalar function $h$ defined on $\RR$, we have
\bea
\begin{split}
e_4\left(\int_S h \right)&=\int_S \left( e_4 (h)+ \ka h\right),\\
e_3\left(\int_S h \right)&=z\int_S\left( z^{-1}e_3(h)-z^{-1}\Omb e_4(h)+ z^{-1}\kab h-z^{-1}\Omb\ka h\right)+\Omb\int_S \left(e_4(h)+\ka h\right).
\end{split}
\eea
In particular, we have for the area radius $r$:
\bea\label{nonpoeq}
e_4(r)=\frac{r}{2}\ov{\ka},\qquad e_3(r)=\frac{r}{2}\left(z\ov{z^{-1}\kab}-z\ov{z^{-1}\Omb\ka}+\Omb\ov{\ka}\right).
\eea
\end{lemma}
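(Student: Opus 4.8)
The plan is to derive both identities in \eqref{nonpoeq} as special cases of the two integration formulas, and to derive those formulas from the variation-of-area computation for the $(u,s)$ foliation. First I would recall that $e_4$ is proportional to a null geodesic generator and that, as noted just before the statement, $\om=\xi=0$ and $\vsi=\tfrac{2}{e_3(u)}$, so that $e_4=\pr_s$ in adapted coordinates \eqref{suc}. The first variation of the area element $\sqrt{\det g}$ under the flow of $e_4$ is governed by $\div e_4 = \trch = \ka$ (restricted to $S$), since $e_4$ is tangent-normal to the spheres with vanishing torsion component along itself. Concretely, $\Lie_{e_4}(d\mathrm{vol}_S) = \ka\, d\mathrm{vol}_S$, because $\chi_{ab}=\g(\D_a e_4,e_b)$ is exactly the second fundamental form of $S$ with respect to $e_4$ and its trace is $\ka$. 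Hence for any scalar $h$,
\[
e_4\left(\int_S h\right) = \int_S e_4(h)\, d\mathrm{vol}_S + \int_S h\, \Lie_{e_4}(d\mathrm{vol}_S) = \int_S \big(e_4(h)+\ka h\big),
\]
which is the first formula. For the $e_3$ formula, the idea is that $e_3$ is \emph{not} normal to the foliation in the naive coordinate sense: from \eqref{suc}, $e_3 = z\pr_u + \Omb\pr_s + 2\Bb^a\pr_{y^a}$, so the $e_3$-derivative of $\int_S h$ picks up the $\pr_u$-flow (weighted by $z$), the $\pr_s=e_4$-flow (weighted by $\Omb$), and a tangential piece $2\Bb^a\pr_{y^a}$ that integrates to zero on the closed sphere (it is a divergence on $S$). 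So I would write $e_3\big(\int_S h\big) = z\,\pr_u\big(\int_S h\big) + \Omb\, e_4\big(\int_S h\big)$, then express $\pr_u\big(\int_S h\big)$ in terms of $e_3, e_4$ using $\pr_u = z^{-1}(e_3 - \Omb e_4 - 2\Bb^a\pr_{y^a})$ together with the variation of area along $\pr_u$, whose normal component contributes $z^{-1}\kab - z^{-1}\Omb\ka$ (the $\kab$ from the $e_3$-part of $\pr_u$, minus the $\Omb\ka$ correction from the $e_4$-part), again discarding the tangential divergence. Assembling the pieces gives precisely the stated second identity.

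Then the particular case \eqref{nonpoeq} follows by taking $h\equiv 1$ and using the definition of the area radius $|S|=4\pi r^2$, so $\int_S 1 = 4\pi r^2$ and $e_4\big(4\pi r^2\big) = 8\pi r\, e_4(r)$, whence $8\pi r\, e_4(r) = \int_S \ka = 4\pi r^2\, \ov{\ka}$ and $e_4(r) = \tfrac{r}{2}\ov{\ka}$; the $e_3(r)$ formula comes out the same way after substituting $h=1$ into the second formula and dividing by $8\pi r$, recognizing that the three averaged terms on the right are exactly $z\,\ov{z^{-1}\kab} - z\,\ov{z^{-1}\Omb\ka} + \Omb\,\ov{\ka}$.

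The main obstacle I anticipate is bookkeeping the tangential pieces and the non-orthogonality of $e_3$ to the coordinate spheres: one must be careful that $2\Bb^a\pr_{y^a}$ applied to $\int_S h$ genuinely integrates to zero (it does, because on a fixed sphere this is the integral of a total divergence of a tangent vectorfield, by Stokes), and that the second fundamental form contributions are attributed to the correct null direction with the correct $\vsi$/$z$/$\Omb$ weights dictated by \eqref{suc} and $\vsi = 2/z$. A secondary point is that one should verify that differentiating the area form commutes correctly with the coordinate description in \eqref{spacetimemetric-y-coordinates}, i.e.\ that $e_4(y^a)=0$ really does make $\Lie_{e_4}$ act on $d\mathrm{vol}_S$ through $\ka$ alone with no extra $\vsi$ factor once one writes things in terms of the frame rather than $\pr_s$. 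Apart from these care-requiring points, the computation is a direct application of the first variation of area together with the frame decompositions already recorded in \eqref{suc} and \eqref{eq:decompositionofnullframeoncoordinatesframeforbackgroundfoliation}.
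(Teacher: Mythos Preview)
Your proposal is correct and follows essentially the same route as the paper: use $e_4=\pr_s$ and the first variation of area for the $e_4$ formula, then write $e_3=z\pr_u+\Omb\pr_s+2\Bb^a\pr_{y^a}$ from \eqref{suc}, compute $\pr_u\big(\int_S h\big)$ via the decomposition of $\pr_u$, discard the tangential $\Bb^a\pr_{y^a}$ piece as a divergence on the closed sphere, and finally specialize $h\equiv 1$. One minor clarification: when you worry whether ``$2\Bb^a\pr_{y^a}$ applied to $\int_S h$ integrates to zero'', note that $\int_S h$ depends only on $(u,s)$, so $\pr_{y^a}\big(\int_S h\big)=0$ trivially; the divergence/Stokes argument is needed \emph{inside} the $\pr_u$ computation, where the tangential part of $\pr_u$ acts on $h$ and on the volume element and those two contributions combine into $\nab_c\big(\tfrac{2}{z}\Bb^c h\big)$, which integrates to zero --- this is exactly what the paper makes explicit.
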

\begin{proof}
Firstly, recall from \eqref{eq:decompositionofnullframeoncoordinatesframeforbackgroundfoliation} that we have
\bea\label{e4|S|}
\begin{split}
e _4\left(\int_S h\right)=\pr_s\left(\int_S h\right)=\int_S \pr_s h+\g^{ab}\g(\D_a\pr_s,\pr_b)h=\int_S \left(e_4(h)+\ka h\right)
\end{split}
\eea
as stated. Next, recall from \eqref{eq:decompositionofnullframeoncoordinatesframeforbackgroundfoliation} that
\beaa 
\pr_u\left(\int_S h\right)&=&\int_S \pr_u h+ \g^{ab}\g(\D_a \pr_u ,\pr_b)h\\
&=&\int_S z^{-1}e_3(h)-z^{-1}\Omb e_4(h)-\frac{2}{z}\Bb^c\pr_{c}(h)+\g^{ab}\g(\D_a (z^{-1}e_3),\pr_b)h\\
&-&\int_S \g^{ab}\g(\D_a (z^{-1}\Omb e_4),\pr_b)h+\g^{ab}\g\left(\D_a\left(\frac{2}{z}\Bb^c\pr_c\right),\pr_b\right)h\\
&=&\int_S z^{-1}e_3(h)-z^{-1}\Omb e_4(h)+ z^{-1}\kab h-z^{-1}\Omb\ka h \\
&-&\int_S \frac{2}{z}\Bb^c\g^{ab}\g(\D_a\pr_c,\pr_b)h+\nab_c\left(\frac{2}{z}\Bb^c h\right).
\eeaa
Recall that the divergence theorem implies that
\beaa 
\int_S \nab_c\left(\frac{2}{z}\Bb^c h\right)=0.
\eeaa
On the other hand
\beaa
\g^{ab}\g(\D_a\pr_c,\pr_b)&=&\g^{ab}\g(\D_c\pr_a,\pr_b)=\f12\g^{ab}\g(\D_c\pr_a,\pr_b)+\f12\g^{ab}\g(\D_c\pr_b,\pr_a)\\
&=&\f12\g^{ab}\D_c(\g_{ab})=\f12\D_c(\g^{ab}\g_{ab})=0.
\eeaa 
Thus, we obtain
\beaa 
\pr_u\left(\int_S h\right)&=&\int_S z^{-1}e_3(h)-z^{-1}\Omb e_4(h)+ z^{-1}\kab h-z^{-1}\Omb\ka h.
\eeaa 
Together with \eqref{eq:decompositionofnullframeoncoordinatesframeforbackgroundfoliation} and \eqref{e4|S|}, we infer
\beaa
e_3\left(\int_S h\right)&=&z\pr_u\left(\int_S h\right)+\Omb e_4\left(\int_S h\right)\\
&=&z\int_S\left( z^{-1}e_3(h)-z^{-1}\Omb e_4(h)+ z^{-1}\kab h-z^{-1}\Omb\ka h\right)+\Omb\int_S \left(e_4(h)+\ka h\right)
\eeaa 
as stated. Taking $h=1$, we obtain \eqref{nonpoeq}. This concludes the proof of Lemma \ref{nonpo}.
\end{proof}
\subsubsection{Background spacetime region \texorpdfstring{$\RR$}{}}\label{subsubsect:regionRR1}
In the following definition, we specify the background spacetime region $\RR$.
 \begin{definition}
\label{defintion:regionRRovr}
Let $m_0>0$ a constant. Let $\epg >0$ a sufficiently small constant, and let  $(\ug, \sg, \rg)$ three real numbers with $\rg$ sufficiently large so that
\bea\lab{eq:rangeofrgandepsilon}
\epg\ll m_0, \qquad\qquad\rg\gg m_0.
\eea
We define $\RR$ to be the region
\bea
\lab{definition:RR(dg,epg)}
\RR:=\left\{|u-\ug|\leq\epg,\quad |s-\sg|\leq  \epg \right\},
\eea
such that  assumptions {\bf A1-A4} below  with constant $\epg$  on  the background foliation of $\RR$,   are verified.
\end{definition}
\subsubsection{Main assumptions for \texorpdfstring{$\RR$}{}}
\label{subsubsect:regionRR2}
Given an integer $s_{max}\geq 5$, we assume\footnote{In  view of \eqref{eq:assumtioninRRforGagandGabofbackgroundfoliation}, we will often replace $\Ga_g$ by $r^{-1} \Ga_b$.} the following:
\begin{enumerate}
\item[\bf A1.]
For  $k\le s_{max}$
\bea\lab{eq:assumtioninRRforGagandGabofbackgroundfoliation}
\begin{split}
\| \Ga_g\|_{k, \infty}&\leq  \epg  r^{-2},\\
\| \Ga_b\|_{k, \infty}&\leq  \epg  r^{-1}.
\end{split}
\eea
\item[\bf A2.]  The Hawking mass $m=m(u,s)$ of  $S(u, s)$ verifies
\bea\lab{eq:assumtionsonthegivenusfoliationforGCMprocedure:Hawkingmass}
\sup_{\RR}\left|\frac{m}{m_0}-1\right| &\leq& \epg.
\eea
\item[\bf A3.]
In the  region of their respective validity\footnote{That is  the quantities on the left verify the  same estimates as those for $\Ga_b$, respectively $\Ga_g$.}   we have
\bea
 \undB_N^a,\,\, \undB_S^a \in r^{-1}\Ga_b, \qquad  Z_N^a,\,\, Z_S^a \in \Ga_b,\qquad r^{-2} \widecheck{g}^{N}_{ab},  \,\, r^{-2} \widecheck{g}^{S}_{ab} \in r\Ga_g,
 \eea
 where
 \beaa
 \widecheck{g} ^{N}\!_{ab} &=& g^N_{ab}-\frac{4r^2}{(1+(y^1_{N})^2+(y^2_{N})^2) }\de_{ab},\\
 \widecheck{g}^{S}\!_{ab} &=& g^S_{ab}-\frac{4r^2}{(1+(y^1_{S})^2+(y^2_{S})^2) } \de_{ab},\\
  Z^c &=& \Bb^aY^c_{(a)}.
 \eeaa
\item[\bf  A4.] We assume  the existence of a smooth family of  scalar functions $\Jp:\RR\to\RRR$, for $p=0,+,-$,   verifying the following properties
\begin{enumerate}
\item On the sphere $\ovS$ of the background foliation, there holds \eqref{def:Jpsphericalharmonics} with $\ep=\epg$, i.e.
 \begin{align}\label{eq:Jpsphericalharmonics}
\begin{split}
  \Big((\rg)^2\lapzero+2\Big) \Jp  &= O(\epg),\qquad p=0,+,-,\\
\frac{1}{|\ovS|} \int_{\ovS}  \Jp J^{(q)} &=  \frac{1}{3}\de_{pq} +O(\epg),\qquad p,q=0,+,-,\\
\frac{1}{|\ovS|}  \int_{\ovS}\Jp &=O(\epg),\qquad p=0,+,-.
\end{split}
\end{align}
\item We extend $\Jp$ from $\ovS$ to $\RR$ by $\pr_s\Jp=\pr_u\Jp=0$, i.e.
\bea\label{eq:extensionofJpfromovStoRR}
\Jp(u,s,y^1,y^2)=\Jp(\ug, \sg, y^1, y^2).
\eea
\end{enumerate}
\end{enumerate}
\begin{remark}
We note that the assumptions {\bf A1}, {\bf A2}, {\bf A3}, {\bf A4}, are expected to be valid in the far regions, i.e. $r$ large, of a perturbed Kerr. In particular, they hold in far regions of Kerr, see Lemma 2.10 in \cite{KS:Kerr1}.
\end{remark}
\subsection{Deformation of surfaces in \texorpdfstring{$\RR$}{}}
\label{sec:defofmationofsurfacesinRR}
 \begin{definition}
 \label{definition:Deformations}
 We say that    $\S$ is a deformation of $ \ovS$ if there exist  smooth  scalar functions $U, S$ defined on $\ovS$ and a map $\Psi:\ovS\to\S$ verifying, on any coordinate chart $(y^1, y^2) $ of $\ovS$,
   \bea
 \Psi(\ovu, \ovs,  y^1, y^2)=\left( \ovu+ U(y^1, y^2 ), \, \ovs+S(y^1, y^2 ), y^1, y^2  \right).
 \eea
 \end{definition}

\begin{definition}
Given a deformation $\Psi:\ovS\to \S$ we say that a new frame $(e_3^\S, e_4^\S, e_1^\S, e_2^\S)$ on $\S$, obtained from the standard frame $(e_3, e_4, e_1, e_2)$  via the transformation \eqref{eq:Generalframetransf}, is $\S$-adapted if the vectorfields $e_1^\S$, $e_2^\S$ are tangent to $\S$ and the vectorfields $e_3^\S$, $e_4^\S$ are orthogonal to $\S$.
\end{definition}

\begin{definition}
Let $\S\subset\RR$ be a compact $2$-sphere, which is a deformation of a leaf $S(u,s)$ of the background geodesic foliation of $\RR$ and let  $(e_3^\S,e_4^\S,e_1^\S,e_2^\S)$ the null frame adapted to $\S$. Then, we denote
\begin{itemize}
\item by $\chi^\S$, $\chib^\S$, $\ze^\S$,..., the corresponding Ricci coefficients,
\item by $\a^\S$, $\b^\S$, $\rho^\S$, ..., the corresponding curvature coefficients,
\item by $r^\S$, $m^\S$, $K^\S$ and $\mu^\S$ respectively the corresponding area radius, Hawking mass, Gauss curvature  and mass aspect function,
\item by $\nab^\S$ the corresponding covariant derivative.
\end{itemize}
\end{definition}
\begin{definition}
We will work with the following weighted Sobolev norms on $\S$
\bea
   \label{definition:spaceH^k(boldS)}
\| f\|_{\hk_s(\S)} &:=& \sum_{i=0}^s \|( \dkb^\S )^i f\|_{L^2(\S)}, \qquad   \dkb^\S =r^\S \nab^\S.
\eea
\end{definition}
We will need the following lemmas.
\begin{lemma}\label{lemma:comparison-gaS-ga}
 Let $\ovS \subset \RR$.    Let  $\Psi:\ovS\to \S $  be   a  deformation generated by the  functions $(U, S)$ as in Definition \ref{definition:Deformations} and denote by $g^{\S,\#}$  the  pull back of the metric $g^\S$ to $\ovS$. Assume the bound
 \bea
 \label{assumption-UV-dg}
   \| (U, S)\|_{L^\infty(\ovS)} +r ^{-1} \big\|(U, S)\big\|_{\hk_{s_{max}+1}(\ovS)}   &\les&  \dg.
 \eea
  Then
  \begin{enumerate}
 \item We have
  \bea\label{eq:compairisionofpulledbackmetricsfordeformations}
  \big\|  g^{\S, \#} -\ovg\big\|_{L^\infty} +r^{-1} \big\|  g^{\S, \#} -\ovg\big\|_{\hk_{s_{max}}(\ovS)}\les \dg  r.
  \eea

\item  For any  tensor $h$  on $\RR$
 \bea
 \label{eq:Prop:comparison2}
 \|h\|_{\hk_s(\S)} \les  r \sup_{\RR}\Big(|\dkb^{\leq s}h|+\dg|\dk^{\leq s}h|\Big), \qquad 0\leq s \leq s_{max}.
 \eea
\item If $V\in \hk_s(\S)$ and $V^\#$ is its pull-back by $\Psi$, we have for all $0\leq s\leq s_{max}$,
 \bea
 \label{eq:Prop:comparison1}
 \|V\|_{\hk_s(\S)}= \|V^\#\|_{\hk_s(\ovS,\, g^{\S,\#})} = \| V^\#\|_{\hk_s(\ovS, \ovg)}\big(1+O(r^{-1} \dg)\big).
 \eea
 \item As a corollary of \eqref{eq:Prop:comparison1} (choosing $V=1$ and $s=0$), we deduce
\bea
 \frac{r^\S}{\ovr}= 1 + O(r ^{-1}  \dg )
 \eea
 where $r^\S$ is the area radius of $\S$ and $\ovr$ that of $\ovS$.
 \item We also have
 \bea\label{2.50}
 |m-\mS|=O(\dg).
 \eea
 \end{enumerate}
 \end{lemma}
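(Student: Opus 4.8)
\emph{Strategy and item (1).} I would establish the five items in the order (1), (3), (4), (2), (5): item (1) is the only one requiring real work, and the others follow from it by soft arguments, so (1) is the main obstacle. In the adapted coordinates $(u,s,y^1,y^2)$ of section \ref{adaptedcoordinates} the deformation map reads $\Psi(\ovu,\ovs,y)=(\ovu+U(y),\ovs+S(y),y)$, so $d\Psi(\pr_{y^a})=\pr_{y^a}+(\pr_{y^a}U)\,\pr_u+(\pr_{y^a}S)\,\pr_s$ and, pairing with the metric \eqref{spacetimemetric-y-coordinates},
\[
g^{\S,\#}_{ab}=\g\big(d\Psi\,\pr_{y^a},\,d\Psi\,\pr_{y^b}\big)\big|_{(\ovu+U,\ovs+S,y)}.
\]
Expanding, $g^{\S,\#}_{ab}$ equals $g_{ab}(\ovu+U,\ovs+S,y)$ plus terms involving the first derivatives of $(U,S)$ paired against $\vsi,\Omb,\undB^a,g_{ab}$; by the $L^\infty$ and $\hk_{s_{max}+1}(\ovS)$ bounds on $(U,S)$ in \eqref{assumption-UV-dg} (with Sobolev on $\ovS$) and {\bf A1}--{\bf A3}, these extra terms are $O(\dg)$, hence $O(\dg r)$. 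For the main term one writes $g_{ab}(\ovu+U,\ovs+S,y)-\ovg_{ab}=\int_0^1(U\,\pr_u+S\,\pr_s)g_{ab}\,dt$ along the coordinate segment and computes $\pr_s g_{ab}=\g(\D_4\pr_{y^a},\pr_{y^b})+\g(\pr_{y^a},\D_4\pr_{y^b})$, and $\pr_u g_{ab}$ similarly, by the Ricci formulas \eqref{ricciformulas}; the borderline contribution is $\ka\,g_{ab}\sim r$, which times $|S|\les\dg$ gives the asserted $O(\dg r)$. Differentiating this identity up to $s_{max}$ times brings down at most $s_{max}+1$ derivatives of $(U,S)$ --- precisely the regularity assumed in \eqref{assumption-UV-dg} --- while the $\dk$--derivatives of the metric coefficients are controlled by {\bf A1}--{\bf A3}; this yields the $\hk_{s_{max}}$ part of \eqref{eq:compairisionofpulledbackmetricsfordeformations}, the $L^\infty$ part being the same computation without derivatives.

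\emph{Items (3) and (4).} Since $\Psi$ pulls $g^\S$ back to $g^{\S,\#}$ it is an isometry of $(\ovS,g^{\S,\#})$ onto $(\S,g^\S)$, so $\|V\|_{\hk_s(\S)}=\|V^\#\|_{\hk_s(\ovS,g^{\S,\#})}$ identically. By item (1) the metrics $g^{\S,\#}$ and $\ovg$ are $\hk_{s_{max}}$--close with scale-invariant relative error $O(r^{-1}\dg)$, hence their volume forms, inverse metrics, area radii and Christoffel symbols agree to relative order $O(r^{-1}\dg)$, giving $\|V^\#\|_{\hk_s(\ovS,g^{\S,\#})}=\|V^\#\|_{\hk_s(\ovS,\ovg)}(1+O(r^{-1}\dg))$ for $0\le s\le s_{max}$, which is \eqref{eq:Prop:comparison1}. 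Taking $V=1$, $s=0$ and using $\|1\|_{\hk_0(\S)}=\sqrt{|\S|}=\sqrt{4\pi}\,r^\S$ and $\|1\|_{\hk_0(\ovS,\ovg)}=\sqrt{4\pi}\,\ovr$ gives $r^\S=\ovr(1+O(r^{-1}\dg))$.

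\emph{Item (2).} Pull $h$ back by $\Psi$; by \eqref{eq:Prop:comparison1} it suffices to bound $\|h^\#\|_{\hk_s(\ovS,\ovg)}$. By the chain rule $\pr_{y^a}h^\#=(\pr_{y^a}U)\,\pr_u h+(\pr_{y^a}S)\,\pr_s h+(\pr_{y^a}h)|_\Psi$, and by \eqref{suc} one has $\pr_s=e_4$ and $\pr_u$ a bounded combination of $e_3,e_4$ and the $\pr_{y^a}$, so $\pr_u h,\pr_s h$ are of the form $\dk^{\le1}h$ up to $r^{-1}$ weights while $\pr_{y^a}h$ is of the form $\dkb^{\le1}h$; moreover $|\pr_{y^a}U|,|\pr_{y^a}S|\les\dg$ by \eqref{assumption-UV-dg} and Sobolev. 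Since $\dkb^\S=r^\S\nab^\S$ and $r^\S\sim r$, each application of $\dkb^\S$ is, after pull-back, of the form $\dkb^{\le1}+\dg\,\dk^{\le1}$ up to zeroth order terms; iterating up to $s_{max}$ times (using again $(U,S)\in\hk_{s_{max}+1}(\ovS)$ to differentiate the coefficients $\pr_yU,\pr_yS$) gives $|(\dkb^\S)^jh|\les|\dkb^{\le j}h|+\dg\,|\dk^{\le j}h|$ pointwise on $\S$ for $j\le s_{max}$. Integrating over $\S$ and using $|\S|\les r^2$ yields \eqref{eq:Prop:comparison2}.

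\emph{Item (5).} From \eqref{eq:definitionoftheHawkingmass} applied on $\S$ and on $\ovS$,
\[
16\pi\Big(\tfrac{2\mS}{r^\S}-\tfrac{2m}{\ovr}\Big)=\int_\S\ka^\S\kab^\S-\int_{\ovS}\ka\kab,
\]
and I would split the right-hand side into the change of frame $\int_\S(\ka^\S\kab^\S-(\ka\kab)^\#)$ and the change of base surface $\int_\S(\ka\kab)^\#-\int_{\ovS}\ka\kab$. For the first, insert the transformation formula \eqref{transtrch} for $\trch$ (from Proposition \ref{Prop:transformation-formulas-generalcasewithoutassumptions}) and the analogous one for $\trchb$: the linear terms $\div^\S f,\div^\S\fb$ integrate to zero by the divergence theorem, and the remaining terms --- quadratic in the transition coefficients, or linear in them times $\Ga$ --- are $O(\dg)$ after integration over area $\sim r^2$. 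For the second, use item (1) and the fact, from Proposition \ref{nullstructure} and Lemma \ref{nonpo}, that the $\pr_s$-- and $\pr_u$--derivatives of $\ka\kab$ are lower order, so this term is $O(\dg)$ as well. Together with $r^\S=\ovr(1+O(r^{-1}\dg))$ and $m=O(m_0)$ from {\bf A2}, this yields $|m-\mS|\les\dg$.
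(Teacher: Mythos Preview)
The paper does not actually prove this lemma: its ``proof'' is the single sentence ``See Lemma 5.8, Proposition 5.10 and Corollary 5.17 in \cite{KS:Kerr1}.'' So there is nothing to compare against, and your sketch is doing strictly more than the paper. Items (1)--(4) of your outline are correct and follow the natural computation.

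There is, however, a genuine gap in your argument for item (5). You invoke the transformation formulas of Proposition~\ref{Prop:transformation-formulas-generalcasewithoutassumptions}, which are expressed in terms of the transition coefficients $(f,\fb,\ovla)$, and then assert that ``the remaining terms --- quadratic in the transition coefficients, or linear in them times $\Ga$ --- are $O(\dg)$.'' But the hypothesis \eqref{assumption-UV-dg} says nothing about $(f,\fb,\ovla)$; it only bounds $(U,S)$. The Hawking mass $m^\S$ is frame-independent among adapted null frames (since $\trch\,\trchb=-\g(H,H)$ with $H$ the mean curvature vector), so you are free to pick any adapted frame, but you must then \emph{produce} one with small transition coefficients. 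The missing step is: from the adapted condition $e_a^\S\in\mathrm{span}\{\Psi_*\pr_{y^1},\Psi_*\pr_{y^2}\}$ and the identity $\Psi_*\pr_{y^a}=\pr_{y^a}+(\pr_{y^a}U)\pr_u+(\pr_{y^a}S)\pr_s$, one solves for $(f,\fb)$ in terms of $e_a(U),e_a(S)$ and finds $|(f,\fb)|\les|\nab U|+|\nab S|\les r^{-1}\dg$ (this is essentially Proposition~5.14 of \cite{KS:Kerr1}, quoted in the present paper at \eqref{UUSS}). Without this bound the quadratic and $F\cdot\Ga$ contributions are not controlled by $\dg$.

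A smaller imprecision: the linear contribution to $\ka^\S\kab^\S-\ka\kab$ is $\ka\,\div^\S\fb+\kab\,\div^\S f$ (the $\la$--dependence cancels at linear order), not bare $\div^\S f,\div^\S\fb$. Only the constant-on-$\S$ parts $2/r^\S$ and $-2\Up^\S/r^\S$ give zero after integrating against a divergence; the remainder $(\ka-2/r^\S)\div^\S\fb$ is of size $O(r^{-2}\epg)\cdot\div^\S\fb$, and to bound its integral by $O(\dg/r)$ you again need the $(f,\fb)$ estimate above.
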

\begin{proof}
See Lemma 5.8, Proposition 5.10 and Corollary 5.17 in \cite{KS:Kerr1}.
\end{proof}
We also have the following lemma.
\begin{lemma}
\label{Lemma:coparison-forintegrals}
Under the same assumptions as in Lemma \ref{lemma:comparison-gaS-ga},  the following estimate holds  for   a scalar function $F$ defined on $\RR$,
\beaa
\left|\int_\S F -\int_{\ovS} F\right| &\les& \dg r \,\left(    \sup_{\RR}|F|+r\sup_{\RR}\big(|\pr_uF|+|\pr_sF|\big)\right).
\eeaa
\end{lemma}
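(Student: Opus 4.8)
The plan is to pull back the integral over $\S$ to $\ovS$ along the deformation map $\Psi$, and then to compare separately the resulting integrand and the resulting volume element to those of $\ovS$. Writing $dv_g$ for the volume element of a metric $g$ and $g^{\S,\#}$ for the pull-back of $g^\S$ by $\Psi$, and recalling that $\Psi$ fixes the coordinates $(y^1,y^2)$ while shifting $(u,s)$ by $(U,S)$, the change of variables formula gives $\int_\S F=\int_{\ovS}(F\circ\Psi)\,dv_{g^{\S,\#}}$, hence
\beaa
\int_\S F-\int_{\ovS}F=\int_{\ovS}\big(F\circ\Psi-F\big)\,dv_{g^{\S,\#}}+\int_{\ovS}F\,\big(dv_{g^{\S,\#}}-dv_{\ovg}\big).
\eeaa
It then suffices to bound the two terms on the right separately, working in coordinates after introducing a partition of unity subordinate to the two charts $\RR_N,\RR_S$ covering $\ovS$.

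For the first term, note $(F\circ\Psi)(y)-F(y)=F(\ovu+U(y),\ovs+S(y),y)-F(\ovu,\ovs,y)$. The bound \eqref{assumption-UV-dg} together with $\dg\le\epg$ and the definition \eqref{definition:RR(dg,epg)} of $\RR$ ensures that the segment joining $(\ovu,\ovs)$ to $(\ovu+U,\ovs+S)$ stays in $\RR$, so the fundamental theorem of calculus yields the pointwise bound
\beaa
\big|F\circ\Psi-F\big|\les\big(|U|+|S|\big)\,\sup_\RR\big(|\pr_uF|+|\pr_sF|\big)\les\dg\,\sup_\RR\big(|\pr_uF|+|\pr_sF|\big).
\eeaa
Integrating this against $dv_{g^{\S,\#}}$ over $\ovS$ and using that its total mass is the area $|\S|\les r^2$ of $\S$ produces a contribution $\les\dg\, r^2\,\sup_\RR(|\pr_uF|+|\pr_sF|)$, which is the second term of the claimed estimate.

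For the second term, I would invoke \eqref{eq:compairisionofpulledbackmetricsfordeformations} of Lemma \ref{lemma:comparison-gaS-ga}, which gives $\|g^{\S,\#}-\ovg\|_{L^\infty(\ovS)}\les\dg\, r$. Since the background metric on $\ovS$ is non-degenerate of size comparable to $r^2$ (by assumption {\bf A3} on the coordinate components $g_{ab}$ and the form of $\ovg$), the elementary expansion $\sqrt{\det g^{\S,\#}}=\sqrt{\det\ovg}\,\big(1+O\big(|\ovg^{-1}(g^{\S,\#}-\ovg)|\big)\big)$ gives $\big|dv_{g^{\S,\#}}-dv_{\ovg}\big|\les r^{-1}\dg\,dv_{\ovg}$ pointwise, valid because $r^{-1}\dg\ll1$. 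Integrating against $|F|$ over $\ovS$ and again using $|\ovS|\les r^2$ gives a contribution $\les r^{-1}\dg\cdot r^2\,\sup_\RR|F|=\dg\, r\,\sup_\RR|F|$, the first term of the claimed estimate. Adding the two bounds completes the proof.

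The only genuinely delicate point is the volume-element comparison in the last paragraph: one must convert the $L^\infty$ bound $\|g^{\S,\#}-\ovg\|_{L^\infty}\les\dg\, r$ of \eqref{eq:compairisionofpulledbackmetricsfordeformations} into the \emph{relative} bound $\les r^{-1}\dg$ on the densities, which is exactly where the non-degeneracy of the metric (of size $\sim r^2$) and the smallness $r^{-1}\dg\ll1$ enter. Everything else is the change-of-variables formula, the mean value theorem, and the crude area bound $|\S|\sim|\ovS|\sim r^2$ (which itself follows from part (4) of Lemma \ref{lemma:comparison-gaS-ga}).
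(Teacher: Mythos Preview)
Your proof is correct and follows the natural approach: pull back to $\ovS$ via $\Psi$, split into the integrand difference and the volume-element difference, and control each by the mean value theorem and the metric comparison \eqref{eq:compairisionofpulledbackmetricsfordeformations} respectively. The paper does not give its own argument here but defers to Corollary~5.9 of \cite{KS:Kerr1}, whose proof is essentially the computation you wrote out.
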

\begin{proof}
See Corollary 5.9 in \cite{KS:Kerr1}.
\end{proof}
We introduce the following schematic presentation of the error terms which appear in various calculations below.
\begin{definition}
\label{Definition:errorterms-prime}
We denote by $\err_k$, $k=1,2$, error terms\footnote{Note  however that  the precise error terms differ in each particular case and that we only emphasize here their general structure.} which can be written schematically in the form,
\bea
\begin{split}
r\err_1&= F\c  (r\Ga_b)+   F^2   + F\c  (r \nabS) F= F\c  (r\Ga_b)+ F\c  (r \nabS)^{\le 1} F,\\
r^2 \err_2 &= ( r\nabS)^{\le 1}( r\err_1)+F\c  r \dk \Ga_b,
\end{split}
\eea
where
\beaa
F&:=& (f,\fb,\ovla),\qquad \ovla=\la-1.
\eeaa
\end{definition}
We recall the following identities, see Corollary 4.6 in \cite{KS:Kerr1}.
\begin{proposition}\label{GCMsystemequation}
The following relations hold true for any adapted frame $(e_3^\S,e_4^\S,e_1^\S,e_2^\S)$ to a given sphere $\S$ connected to the reference frame $(e_3,e_4,e_1,e_2)$ by the transition coefficients $(f,\fb,\la)$,
\bea
\begin{split}\label{Generalizedsystem}
\curl ^\S f  &= -\err_1[\curl^\S  f ],\\
\curl^\S \fb  &= -\err_1[\curl^\S  \fb ],\\
\div^\S f + \ka \ovla -\frac{2}{(r^\S)^2}\ovb &= \ka^\S-\frac{2}{r^\S} -\left(\ka-\frac{2}{r}\right) -\err_1[\div^\S f ] -\frac{2(r-r^\S)^2}{r(r^\S)^2},\\
\div^\S\fb - \kab \ovla +\frac{2}{(r^\S)^2}\ovb &= \kab^\S+\frac{2}{r^\S} -\left(\kab+\frac{2}{r}\right) -\err_1[\div^\S \fb ] +\frac{2(r-r^\S)^2}{r(r^\S)^2},\\
\Delta^\S\ovla + V\ovla &=\mu^\S-\mu -\left(\omb +\frac 1 4 \kab \right) \big(\ka^\S-\ka \big)+\left(\om +\frac 1 4 \ka \right) \big(\kab^\S-\kab \big)+\err_2[ \lap^\S\ovla],\\
\Delta^\S\ovb &= \frac{1}{2}\div^\S\left(\fb - \Up f +\err_1[\Delta^\S \ovb ] \right),
\end{split}
\eea
where
\beaa
\ovb&:=& r-\rS,\\
V&:=&-\left(\f12\ka\kab+\ka\omb+\kab\om\right),
\eeaa
and the error terms $\err_1[\curl^\S f]$, $\err_1[\curl^\S \fb]$, $\err_1[\sdiv^\S f]$, $\err_1[\sdiv^\S \fb]$ and $\err_1[\Delta^\S \ovb]$ are consistent with $\err_1$ in Definition \ref{Definition:errorterms-prime} whereas the error term $\err_2[\Delta^\S\ovla]$ is consistent with $\err_2$ in Definition \ref{Definition:errorterms-prime}.
\end{proposition}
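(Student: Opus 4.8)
The plan is to read off all six identities directly from the general frame--transformation formulas of Proposition~\ref{Prop:transformation-formulas-generalcasewithoutassumptions} --- applied with the primed frame taken to be the $\S$--adapted one --- together with elementary algebra relating $\ka$, $\kab$, $\mu$ to their renormalized counterparts; the real content is only the verification that everything set aside along the way fits the rigid schematic forms of $\err_1$, $\err_2$ in Definition~\ref{Definition:errorterms-prime}. The two $\curl$ identities stand slightly apart: they follow not from a transformation formula but from the construction of the $\S$--adapted frame in \cite{KS:Kerr1}. Solving the conditions ``$e_4^\S$ null and orthogonal to $T\S$'' (resp. the conjugate conditions together with $\g(e_3^\S,e_4^\S)=-2$) expresses $f$ (resp. $\fb$) as a horizontal gradient built from the deformation potentials $(U,S)$ of Definition~\ref{definition:Deformations}, plus terms quadratic in $F$; applying $\curl^\S$ annihilates the gradient and leaves precisely an $\err_1$--type expression, which is the content of the first two lines.

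For the two $\div^\S$ identities I would start from \eqref{transtrch}, i.e. $\la^{-1}\trch^\S=\trch+\div^\S f+f\c\eta+f\c\ze+\err(\trch^\S,\trch)$, write $\la^{-1}=1-\ovla+O(\ovla^2)$, and use that $\trch^\S-\trch$ is of $\Ga$-- and $r^{-1}$--size so that $\ovla\,\trch^\S=\ovla\,\ka+\err_1$, while $f\c\eta$, $f\c\ze$ and $\err(\trch^\S,\trch)$ are themselves $\err_1$. This yields $\trch^\S-\trch+\ka\,\ovla=\div^\S f+\err_1[\div^\S f]$. Inserting $\trch^\S-\frac{2}{r^\S}=\ka^\S-\frac{2}{r^\S}$ and $\trch-\frac{2}{r}=\ka-\frac{2}{r}$, together with the elementary identity
\[
\frac{2}{r^\S}-\frac{2}{r}=\frac{2}{(r^\S)^2}\,\ovb-\frac{2(r-r^\S)^2}{r(r^\S)^2},\qquad \ovb=r-r^\S,
\]
rearranges this into the third displayed relation. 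The fourth follows in the same way from the transformation formula for $\trchb$ (the analogue of \eqref{transtrch}), the opposite sign of the $\frac{2(r-r^\S)^2}{r(r^\S)^2}$--term reflecting the opposite sign with which $\frac{2}{r^\S}$ enters the renormalization used there for $\trchb$.

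For the $\lap^\S\ovla$ identity I would combine $\mu^\S=-\div^\S\ze^\S-\rho^\S+\frac12\hch^\S\c\hchb^\S$ with the transformation formulas for $\ze$, $\rho$ and $\omb$: the term $-\nab^\S(\log\la)$ inside $\ze^\S$ contributes, after $-\div^\S$, the Laplacian $\lap^\S(\log\la)=\lap^\S\ovla+\err_2$ (expand $\log\la=\ovla+O(\ovla^2)$ and commute with $\div^\S$), while the remaining linear--in--$F$ pieces --- $-\frac14\trchb f+\frac14\trch\fb-\omb f+\om\fb$ from $\ze^\S$ and the analogous ones coming from the transformations of $\rho$ and $\omb$ --- reorganize, once $\div^\S f$ and $\div^\S\fb$ are re-expressed by means of the previous step, into the potential term $V\,\ovla$ with $V=-(\frac12\ka\kab+\ka\omb+\kab\om)$ and into the two sources $-(\omb+\frac14\kab)(\ka^\S-\ka)+(\om+\frac14\ka)(\kab^\S-\kab)$; everything else --- products of $F$ with $\Ga$, $r\nab^\S$ of $F$, $r\dk\Ga_b$, and the $(r\nab^\S)^{\le 1}(r\err_1)$ inherited from the $\div^\S f$ identity --- is of $\err_2$--type. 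For the $\lap^\S\ovb$ identity, observe that $\ovb=r-r^\S$ is a genuine function on $\S$, since the background radius $r=r(u,s)$ varies along the deformed sphere while $r^\S$ is constant; because $r$ is constant on the background leaves $S(u,s)$, only the $e_3,e_4$--components of $e_a^\S$ contribute to $e_a^\S(r)$, so the transformation formula for $e_a^\S$ gives $\nab^\S_a\ovb=e_a^\S(r)=\frac12\fb_a\,e_4(r)+\frac12 f_a\,e_3(r)+O(F^2)$; inserting the leading values $e_4(r)=\frac{r}{2}\ov{\ka}=1+\lot$ and $e_3(r)=-\Up+\lot$, absorbing the corrections into the $\err_1[\lap^\S\ovb]$--term, and applying $\frac12\div^\S$ produces the last relation.

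The main difficulty is not conceptual --- each step is either a substitution into Proposition~\ref{Prop:transformation-formulas-generalcasewithoutassumptions} or the algebra above --- but purely a matter of bookkeeping: one has to check that every term discarded along the way genuinely matches the rigid schematic structure of $\err_1$ or $\err_2$ (in particular that no bare second derivative of $F$ survives inside $\err_2$, and that all $r$--weights come out right), using the size assumptions $\|\Ga_g\|\les\epg r^{-2}$, $\|\Ga_b\|\les\epg r^{-1}$ together with the comparison estimates of Lemma~\ref{lemma:comparison-gaS-ga}. This is precisely the content packaged as Corollary~4.6 of \cite{KS:Kerr1}.
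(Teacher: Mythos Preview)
Your proposal is correct and is in fact more than the paper provides: the paper does not prove this proposition at all but simply recalls it as Corollary~4.6 of \cite{KS:Kerr1}, and your sketch is a faithful outline of how that corollary is derived there from the transformation formulas of Proposition~\ref{Prop:transformation-formulas-generalcasewithoutassumptions}. One small remark on the $\curl$ identities: in \cite{KS:Kerr1} these are obtained not from the gradient structure of $(f,\fb)$ that you describe, but from the transformation formula for the antisymmetric traces $\atrch$, $\atrchb$, using that both the background frame and the $\S$--adapted frame are integrable (so $\atrch=\atrch^\S=0$ and likewise for $\atrchb$); the two routes are equivalent, but the integrability argument is more direct since it avoids passing through the deformation potentials and the pull-back.
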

\begin{proof}
    The first two identities in \eqref{Generalizedsystem} follow from the fact that ${^{(a)}\ka} :=\in^{ab} \chi_{ab}=0$ and ${^{(a)}\kab}:=\in^{ab}\chib_{ab}=0$. The third, fourth and fifth identities in \eqref{Generalizedsystem} follow from the transformation formulae for $\ka,\kab$ and $\ze$. See Lemma 4.3 in \cite{KS:Kerr1} for a detailed proof. The last identity in \eqref{Generalizedsystem} is proven in Corollary 4.6 in \cite{KS:Kerr1}.
\end{proof}
\section{GCM spheres}\label{GCMz}
\subsection{GCM spheres with \texorpdfstring{$\ell=1$}{} modes in \texorpdfstring{\cite{KS:Kerr1}}{}}
We review below Theorem 6.1 of \cite{KS:Kerr1} on existence  and uniqueness of GCM spheres in the context of  an arbitrary choice of a $\ell=1$ modes on $\ovS$, denoted $\Jp$,  which verify the assumptions {\bf A4} given in Section \ref{subsubsect:regionRR2}.
\begin{theorem}[GCM spheres with $\ell=1$ modes in \cite{KS:Kerr1}]
\label{Theorem:ExistenceGCMS1}
Let $m_0>0$ a constant.   Let $0<\dg\leq \epg $   two sufficiently   small   constants, and let  $(\ug, \sg, \rg)$ three real numbers with $\rg$ sufficiently large so that
\beaa
\epg\ll m_0, \qquad\qquad  \rg\gg m_0.
\eeaa
Let a fixed  spacetime region $\RR$, as in Definition \ref{defintion:regionRRovr}, together with a $(u, s)$ outgoing geodesic   foliation verifying the  assumptions {\bf A1-A4}.
 Let  $\ovS=S(\ovu, \ovs)$   be  a fixed    sphere  from this foliation  with  $\rg$ and $\mg$ denoting  its area radius and  Hawking mass.
 Assume that  the GCM quantities   $\ka, \kab, \mu$  of the background foliation verify the  following:
\begin{align}\label{eq:thedecompositionofkakabandmuintermsofkadotkabdotandmudotplusmodes}
\begin{split}
\ka&=\frac{2}{r}+\dot{\ka},\\
\kab&=-\frac{2\Up}{r} +  \Cb_0+\sum_p \Cbp \Jp+\dot{\kab},\\
\mu&= \frac{2m}{r^3} + M_0+\sum _p\Mp \Jp+\dot{\mu},
\end{split}
\end{align}
where the scalar functions $\Cb_0=\Cb_0(u,s)$, $\Cbp=\Cbp(u,s)$, $M_0=M_0(u,s)$ and $\Mp=\Mp(u,s)$, defined on the spacetime region $\RR$, depend only on the coordinates $(u,s)$, and where $\dot{\ka}$, $\dot{\kab}$ and $\dot{\mu}$ satisfy the following estimates
\bea\label{oldeq3.2}
\sup_{\RR}\big|\dkb^{\leq s_{max}}(\kadot, \kabdot)|\les r^{-2}\dg,\qquad\qquad
\sup_{\RR}\big|\dkb^{\leq s_{max}}\mudot| \les r^{-3}\dg,
\eea
where $\dkb=r\nab$. Then for any fixed triplets   $\La, \Lab \in \RRR^3$  verifying
\bea\label{eq:assumptionsonLambdaabdLambdabforGCMexistence}
|\La|,\,  |\Lab|  &\les & \dg,
\eea
there exists a unique  GCM sphere $\S=\S(\La, \Lab)$, which is a deformation of $\ovS$,
such that  there exist constants $\Cb^\S_0$, $\CbpS$, $ M^\S_0$, $\MpS$, $p\in\{-,0, +\}$  for which the following GCM conditions are verified
 \begin{align}
\label{def:GCMC}
\begin{split}
\ka^\S&=\frac{2}{r^\S},\\
\kab^\S &=-\frac{2}{r^\S}\Up^\S+  \Cb^\S_0+\sum_p \CbpS \Jp,\\
\mu^\S&= \frac{2m^\S}{(r^\S)^3} +   M^\S_0+\sum _p\MpS \Jp.
\end{split}
\end{align}
Relative to these modes we also have
\bea
\label{GCMS:l=1modesforffb}
(\div^\S f)_{\ell=1}=\La, \qquad   (\div^\S\fb)_{\ell=1}=\Lab.
\eea
The resulting  deformation has the following additional properties:
\begin{enumerate}
\item The triplet $(f,\fb,\ovla)$ verifies
\bea
\label{eq:ThmGCMS1}
\|(f,\fb, \ovla)\|_{\hk_{s_{max}+1(\S)}} &\les &\dg.
\eea
\item The GCM constants  $\Cb^\S_0$, $\CbpS$, $M^\S_0$, $\MpS$, $p\in\{-,0, +\}$ verify
\begin{align}
\label{eq:ThmGCMS2}
\begin{split}
\big| \Cb^\S_0-\ov{\Cb_0}^\S\big|+\big| \CbpS-\ov{\Cbp}^\S\big|&\les r^{-2}\dg,\\
\big| M^\S_0-\ov{M_0}^\S\big|+\big| \MpS-\ov{\Mp}^\S\big|&\les r^{-3}\dg.
\end{split}
\end{align}
\item The volume radius $r^\S$  verifies
\bea
\lab{eq:ThmGCMS3}
\left|\frac{r^\S}{\rg}-1\right|\les  r^{-1} \dg.
\eea
\item  The parameter  functions $U, S$  of the deformation verify
\bea
\label{eq:ThmGCMS4}
 \|( U, S)\|_{\hk_{s_{max}+1}(\ovS)}  &\les& r\dg.
 \eea
\item The Hawking mass  $m^\S$  of $\S$ verifies the estimate
\bea
\label{eq:ThmGCMS5}
 \big|m^\S-\ovm\big|&\les &\dg.
 \eea
 \item The well defined\footnote{Note  that  while  the Ricci coefficients $\ka^\S, \kab^\S,  \chih^\S, \chibh^\S, \ze^\S$ as well as all curvature  components  and   mass aspect function $\mu^\S$     are well defined on $\S$, this in not the case  of $\eta^\S, \etab^\S, \xi^\S, \xib^\S, \om^\S, \omb^\S$ which require  the  derivatives of the frame in the $e_3^\S$ and $e_4^\S$ directions.}
  Ricci and curvature coefficients of $\S$  verify,
\begin{align}\label{eq:ThmGCMS6}
\begin{split}
\| \Ga^\S_g\|_{\hk_{s_{max} }(\S) }&\les  \epg  r^{-1},\\
\| \Ga^\S_b\|_{\hk_{s_{max} }(\S) }&\les  \epg.
\end{split}
\end{align}
\item  The transition parameters $f, \fb, \ovla$ are continuously differentiable  with respect to $\La, \Lab $ and
\begin{align}\label{eq:property7oftheoldGCMtheoremnoncanonicalell=1modes}
\begin{split}
\frac{\pr f }{\pr \La}&=O\big( r^{-1}\big), \quad\;\,  \frac{ \pr f }{\pr \Lab}=O\big(\dg r^{-1} \big), \quad\;\,
\frac{\pr\fb }{\pr \La}=O\big(\dg  r^{-1}\big), \quad\;\,  \frac{\pr\fb}{\pr \Lab}=O\big( r^{-1} \big),\\
\frac{\pr\ovla  }{\pr \La}&=O\big(\dg  r^{-1}\big), \quad \frac{\pr\ovla  }{\pr \Lab}=O\big(\dg  r^{-1}\big).
\end{split}
\end{align}
\item The parameter functions $(U, S)$ of the deformation are continuously differentiable  with respect to $\La, \Lab$
 and
 \bea\label{USdepend}
 \frac{\pr U}{\pr \La}=O(1), \quad \frac{\pr U}{\pr \Lab}=O(1),\quad  \frac{\pr S}{\pr \La}=O(1) , \quad  \frac{\pr S}{\pr \Lab}=O(1).
 \eea
\item Relative to the  coordinate system induced by $\Psi$ the metric $g^{\La, \Lab}$  of $\S=\S(\La, \Lab)$  is continuous with respect to the parameters $\La, \Lab$ and verifies
\bea\label{eq:property9oftheoldGCMtheoremnoncanonicalell=1modes}
\big\| \pr_\La g^\S, \,\pr_{\Lab} g^\S\|_{L^\infty(\S)} &\les O( r^2).
\eea
\end{enumerate}
\end{theorem}
\begin{remark}
The conclusions of Theorem \ref{Theorem:ExistenceGCMS1} still hold if we replace \eqref{oldeq3.2} with the weaker condition\footnote{Note that \eqref{oldeq3.2} implies \eqref{eq:GCM-improved estimate2-again:butwithhksnorminstead} in view of \eqref{eq:Prop:comparison2}.}
\bea
\lab{eq:GCM-improved estimate2-again:butwithhksnorminstead}
\big\|(\kadot, \kabdot)\|_{\hk_{s_{max}}(\S)}\les r^{-1}\dg,\qquad\qquad
\big\|\mudot\|_{\hk_{s_{max}}(\S)} \les r^{-2}\dg,
\eea
for any deformed sphere $\S$ with $(U,S)$ satisfying \eqref{eq:ThmGCMS4}, where \eqref{eq:GCM-improved estimate2-again:butwithhksnorminstead} is uniform w.r.t such spheres. See Remark 6.2 in \cite{KS:Kerr1}.
\end{remark}
\subsection{Linearized GCM equations}
\begin{definition}
\label{definition:GCMSgen-equations}
Let   $\S\subset\RR$  a  smooth  $O(\epg)$-sphere. We say that $F=(f, \fb, \ovla)$  verifies the inhomogeneous linearized GCM system on $\S$ if the following holds true:
 \bea
 \label{GeneralizedGCMsystem}
\begin{split}
\curl ^\S f &= h_1 -\ov{h_1}^\S,\\
\curl^\S \fb&= \underline{h}_1 - \ov{\underline{h}_1}^\S,\\
\div^\S f + \frac{2}{r^\S} \ovla  -\frac{2}{(r^\S)^2}\ovb &=  h_2,
\\
\div^\S\fb + \frac{2}{r^\S} \ovla +\frac{2}{(r^\S)^2}\ovb
&=   \Cbdot_0+\sum_p \Cbpdot \JpS +\underline{h}_2,\\
\left(\Delta^\S+\frac{2}{(r^\S)^2}\right)\ovla  &=  \Mdot_0+\sum _p\Mpdot \JpS+\frac{1}{2r^\S}\left(\Cbdot_0+\sum_p \Cbpdot \JpS\right) +h_3,\\
\Delta^\S\ovb-\frac{1}{2}\div^\S\Big(\fb - f\Big) &= h_4 -\ov{h_4}^\S , \qquad \ov{\ovb}^\S=b_0,
\end{split}
\eea
 for  some choice of  constants $\Cbdot_0, \Mdot_0, \Cbpdot, \Mpdot$, $b_0$, and scalar functions $h_1$, $h_2$, $h_3$, $h_4$, $\underline{h}_1$, $\underline{h}_2$.
 \end{definition}
The following proposition provides a priori estimates for the linearized GCM system \eqref{GeneralizedGCMsystem}, which play a crucial role in the proof of Theorem \ref{Theorem:ExistenceGCMS1} (see the proof of Theorem 6.1 in \cite{KS:Kerr1}).
\begin{proposition}
\label{Thm.GCMSequations-fixedS:contraction}
  Assume  $\S$ is a given  $O(\epg)$-sphere  in $\RR$.  Assume given a solution     $(f, \fb, \ovla, \Cbdot_0, \Mdot_0, \Cbpdot, \Mpdot, \ovb)$  of the system     \eqref{GeneralizedGCMsystem}, and verifying
  \beaa 
  (\divS f)_{\ell=1}=\La,\qquad (\divS f)_{\ell=1}=\Lab.
  \eeaa 
 Then, the following a priori estimates are verified, for $3\leq s\leq s_{max}+1$,
\bea
\begin{split}
&\Big\|\Big(f,\fb, \ovla-\ov{\ovla}^\S\Big)\Big\|_{\hk_{s}(\S)} +\sum_p\Big(r^2|\Cbpdot|+r^3|\Mpdot|\Big)\\  \les &\; r\|(h_1-\ov{h_1}^\S,\, \hb_1-\ov{\hb_1}^\S, \,h_2-\ov{h_2}^\S,\,h_2-\ov{\hb_2}^\S)\|_{\hk_{s-1}(\S)} \\&+r^2\|h_3-\ov{h_3}^\S\|_{\hk_{s-2}(\S)}+r\|h_4-\ov{h_4}^\S\|_{\hk_{s-3}(\S)} +|\La|+|\Lab|,
\end{split}
\eea
and
\bea
\begin{split}
r^2|\Cbdot_0|+r^3|\Mdot_0|+r\Big|\ov{\ovla}^\S\Big|&\les r\|(h_1-\ov{h_1}^\S,\, \hb_1-\ov{\hb_1}^\S, \,h_2-\ov{h_2}^\S,\,h_2-\ov{\hb_2}^\S)\|_{\hk_{s-1}(\S)} \\&+r^2\|h_3-\ov{h_3}^\S\|_{\hk_{s-2}(\S)}+r\|h_4-\ov{h_4}^\S\|_{\hk_{s-3}(\S)} +|\La|+|\Lab|+|b_0|.
\end{split}
\eea
\end{proposition}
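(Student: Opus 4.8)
The plan is to treat the linearized GCM system \eqref{GeneralizedGCMsystem} as a decoupled collection of Hodge-type equations on the fixed $O(\epg)$-sphere $\S$ and to invert them one at a time, using the elliptic estimates of Section 2.7 (Lemma \ref{elliptic1}, Lemma \ref{elliptic2}, Corollary \ref{d2d2}) together with the properties of the $\ell=1$ modes $\JpS$. First I would extract the constants $\Cbpdot,\Mpdot$ by projecting the relevant equations onto the $\ell=1$ modes: pairing the $\div^\S\fb$ and $\Delta^\S\ovla$ equations with $\JpS$ and integrating by parts (moving derivatives onto $\JpS$, whose image under $\ddd_2^*\ddd_1^*$ is $r^{-2}O(\epg)$ by Proposition \ref{jpprop}) produces a nearly diagonal linear system for $(\Cbpdot,\Mpdot)$ whose inverse is controlled by $r^{-2},r^{-3}$ times the $\ell=1$ norms of the right-hand sides, plus the contribution $|\La|+|\Lab|$ coming from the constraint $(\div^\S f)_{\ell=1}=\La$, $(\div^\S\fb)_{\ell=1}=\Lab$.

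Next I would recover the one-forms $f$ and $\fb$. The pair $(\curl^\S f,\div^\S f) = (h_1-\ov{h_1}^\S, h_2 - \frac{2}{r^\S}\ovla + \frac{2}{(r^\S)^2}\ovb)$ determines $f$ via $\ddd_1$, so Lemma \ref{elliptic1}(1) gives $\|f\|_{\hk_s(\S)}\les r\|\ddd_1 f\|_{\hk_{s-1}(\S)}$; but the right-hand side still contains the unknowns $\ovla$ and $\ovb$, so I would organize the argument so that $\ovla$ and $\ovb$ are estimated first. For $\ovla$: apply the scalar Poisson estimate Lemma \ref{elliptic1}(4) to the fifth equation $(\Delta^\S+\frac{2}{(r^\S)^2})\ovla = \ldots$, absorbing the zeroth-order term $\frac{2}{(r^\S)^2}\ovla$ on the left into the Laplacian via the positivity of $-\Delta^\S$ (and treating $\frac{2}{(r^\S)^2}$ perturbatively, using $r^\S$ large), to get $\|\ovla-\ov{\ovla}^\S\|_{\hk_s(\S)}\les r^2\|h_3\text{-terms}\|_{\hk_{s-2}(\S)} + \sum_p(r^2|\Cbpdot|+r^3|\Mpdot|)$; the average $\ov{\ovla}^\S$ is then pinned down separately by averaging the fourth equation (or equivalently the sum/difference of the $\div^\S f$ and $\div^\S\fb$ equations), which yields the $r|\ov{\ovla}^\S|$ bound in the second displayed estimate. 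For $\ovb$: the last equation $\Delta^\S\ovb = \frac12\div^\S(\fb-f) + h_4-\ov{h_4}^\S$ with $\ov{\ovb}^\S=b_0$ is again a scalar Poisson equation, so Lemma \ref{elliptic1}(4) controls $\ovb-b_0$ in terms of $\|f\|,\|\fb\|$ and $h_4$.

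The main obstacle is the circularity: $f$ and $\fb$ appear on the right-hand side of the $\ovb$ equation while $\ovb$ (and $\ovla$) appear on the right-hand side of the $\div^\S f$, $\div^\S\fb$ equations. I would break this loop by combining the estimates into a single inequality for the aggregate quantity $X:=\|(f,\fb,\ovla-\ov{\ovla}^\S)\|_{\hk_s(\S)}$: substituting the $\ovb$-bound into the $f,\fb$ bounds produces $X\les (\text{data}) + C r^{-1} X$ (each reinsertion costing a factor $r^{-1}$ because $\ddd_1^{-1}$ gains one power of $r$ while $\Delta^{-1}\div$ is order $r$, and the coupling coefficients $\Up, r^{-1}$ etc. are $O(1)$ or smaller), so for $\rg$ sufficiently large the $Cr^{-1}X$ term is absorbed on the left. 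One has to be careful about the loss of derivatives encoded in the different Sobolev indices $s, s-1, s-2, s-3$ on the right — this is why the statement is phrased for the range $3\le s\le s_{max}+1$ and why the $h_4$-term carries $\hk_{s-3}$ — so I would track indices carefully through each inversion, but no new idea beyond bookkeeping is needed there. Finally, for the $\ell=1$ modes of $f$ I would use Lemma \ref{elliptic2} (rather than Lemma \ref{elliptic1}(1)), which is exactly the estimate that produces the extra $|\La|$ term; the analogous step for $\fb$ produces $|\Lab|$. Assembling all pieces gives the two displayed inequalities.
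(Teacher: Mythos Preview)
The paper does not give its own proof of this proposition; it simply refers to Proposition~4.13 of the companion paper \cite{KS:Kerr1}. Your overall architecture---extract the constants by projecting onto $\ell=1$, invert the Hodge and Poisson operators, and close the $f\leftrightarrow\fb\leftrightarrow\ovb$ loop by absorbing a factor of $r^{-1}$---is the right one and is essentially what is carried out in \cite{KS:Kerr1}.

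There is, however, a genuine gap in your treatment of the $\ovla$ equation. You propose to handle
\[
\Big(\Delta^\S+\frac{2}{(r^\S)^2}\Big)\ovla \;=\; \Mdot_0+\sum_p\Mpdot\JpS+\frac{1}{2r^\S}\Big(\Cbdot_0+\sum_p\Cbpdot\JpS\Big)+h_3
\]
by ``absorbing the zeroth-order term $\frac{2}{(r^\S)^2}\ovla$ into the Laplacian via the positivity of $-\Delta^\S$'' and ``treating $\frac{2}{(r^\S)^2}$ perturbatively, using $r^\S$ large''. Neither works: $\Delta^\S$ itself scales like $(r^\S)^{-2}$, so the zeroth-order term is \emph{not} lower order, and the relevant operator $-\Delta^\S-\frac{2}{(r^\S)^2}$ has eigenvalues $(\ell(\ell+1)-2)/(r^\S)^2$ on the round sphere, hence an approximate kernel spanned by the $\ell=1$ modes $\JpS$ on an $O(\epg)$-sphere. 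Lemma~\ref{elliptic1}(4) alone therefore gives no control of $(\ovla)_{\ell=1}$, and your claimed bound $\|\ovla-\ov{\ovla}^\S\|_{\hk_s}\les r^2\|h_3\text{-terms}\|_{\hk_{s-2}}+\ldots$ cannot be obtained this way.

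The correct fix is to treat the $\ell=0$, $\ell=1$, and $\ell\ge 2$ parts of $\ovla$ separately. On $\ell\ge 2$ the operator is coercive with inverse of size $O(r^2)$ and the Poisson estimate applies. The $\ell=0$ part is $\ov{\ovla}^\S$, recovered (as you say) by averaging the $\div^\S f$ and $\div^\S\fb$ equations. The constants $\Mpdot$ are not just ``extracted'': they are precisely the Lagrange multipliers that make the right-hand side (approximately) orthogonal to the kernel---pairing the equation with $\JpS$ and using $(\Delta^\S+2/(r^\S)^2)\JpS=O(\epg r^{-2})$ from \eqref{def:Jpsphericalharmonics} determines $\Mpdot$ up to an $O(\epg)\|\ovla\|$ error that is absorbed later. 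The $\ell=1$ part of $\ovla$ itself is then recovered not from the Poisson equation but from projecting the \emph{sum} of the $\div^\S f$ and $\div^\S\fb$ equations onto $\JpS$, which gives $\frac{4}{r^\S}(\ovla)_{\ell=1}$ in terms of $\La+\Lab$, $\Cbpdot$, and the data. With this modification the rest of your outline (including the absorption of the $f,\fb,\ovb$ coupling) goes through.
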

\begin{proof}
See Proposition 4.13 in \cite{KS:Kerr1}.
\end{proof}
Proposition \ref{Thm.GCMSequations-fixedS:contraction} will be used in Sections \ref{sec4.4} and \ref{sec4.5}.
\subsection{GCM spheres with more general \texorpdfstring{$\ell=1$}{} modes}
\label{generalGCMsection}
We have the following corollary of Theorem \ref{Theorem:ExistenceGCMS1}.
\begin{corollary}\label{generalGCMspheres}
Under the same assumptions as in Theorem \ref{Theorem:ExistenceGCMS1}, assume moreover that there exists a triplet of scalar functions $\widetilde{J}^{(p)}$ verifying
\bea\label{widetildeJ}
\sum_{p}\left\|\Jp-\widetilde{J}^{(p)}\right\|_{\hk_{s_{max}}(\ovS)}\les r\dg,
\eea
and
\begin{equation}\label{extendJtilde}
    \pr_u \widetilde{J}^{(p)}(u,s,y^1,y^2)=\pr_s \widetilde{J}^{(p)}(u,s,y^1,y^2)=0.
\end{equation}
Then, for any fixed triplets $\La,\Lab\in\mathbb{R}^3$ verifying
\bea\label{lalabdg}
|\La|,\,|\Lab|&\les& \dg,
\eea
there exists a unique GCM sphere $\S=\S(\La,\Lab,\widetilde{J}^{(p)})$, which is a deformation of $\ovS$, such that there exists constants $\Cb_0^\S$, $M_0^\S$, $\CbpS,\MpS$, $p\in\{-,0,+\}$ for which the following GCM conditions are verified
\begin{align}
\begin{split}\label{3.20}
\kaS&= \frac{2}{\rS},\\
\kabS&= -\frac{2}{\rS}\UpS+\Cb_0^\S+\sum_p \CbpS\widetilde{J}^{(p)},\\
\muS&= \frac{2\mS}{(\rS)^3}+M_0^\S+\sum_p \MpS\widetilde{J}^{(p)}.
\end{split}
\end{align}
Moreover,
\bea
(\divS f)_{\ell=1}=\La,\qquad (\divS \fb)_{\ell=1}=\Lab,\label{3.23}
\eea
where $(\divS f)_{\ell=1},\,(\divS \fb)_{\ell=1}$ are defined with respect to $\widetilde{J}^{(p)}$. Finally, the resulting deformation verifies all the properties 1-9 of Theorem \ref{Theorem:ExistenceGCMS1}.
\end{corollary}
\begin{proof}
Notice that \eqref{widetildeJ}, \eqref{extendJtilde} and \eqref{eq:Jpsphericalharmonics} implies that $\widetilde{J}^{(p)}$ satisfies Assumption {\bf A4}. Then, it suffices to apply Theorem \ref{Theorem:ExistenceGCMS1} with $\widetilde{J}^{(p)}$ replacing $\Jp$.
\end{proof}
\section{Construction of GCM hypersurfaces}
\label{mainsection}
We are ready to state the precise version of the main result of this paper.
\begin{theorem}[Existence of GCM hypersurfaces, version 2]\label{mthm}
Let $m_0>0$ a constant. Let $0<\dg\leq\epg$ two sufficiently small constants, and let $(\ug,\sg,\rg)$ three real numbers with $\rg$ sufficiently large so that
\beaa
\epg\ll m_0, \qquad\qquad  \rg\gg m_0.
\eeaa
Let a fixed space-time region $\RR$ as in Definition \ref{defintion:regionRRovr}, together with a $(u,s)$ outgoing geodesic foliation and a basis of $\ell=1$ modes $\Jp$ verifying assumptions {\bf{A1-A4}} in Section \ref{subsubsect:regionRR2}. We further assume that
\bea\label{assjp}
\sup_{\RR} r\left|{\dkt}^{\leq s_{max}+1} e_3(\Jp)\right| &\les& \dg,\qquad s_{max}\geq 5,
\eea
where
\beaa 
\dkt&:=&(e_3-(z+\Omb)e_4,\dkb),\qquad \dkb=r\nab,
\eeaa
denotes the weighted derivatives tangent to the level hypersurfaces of $u+s$. In addition, we assume on $\RR$, the estimates
\bea
\label{eq:GCM-improved estimate2-again}
\sup_{\RR}\big|\dkt^{\leq s_{max}+1}(\kadot, \kabdot)|\les r^{-2}\dg,\qquad\qquad
\sup_{\RR}\big|\dkt^{\leq s_{max}+1}\mudot| \les r^{-3}\dg,
\eea
and
\bea\label{assuml1}
|(\div\eta)_{\ell=1}|\les\dg,\qquad |(\div \xib)_{\ell=1}|\les \dg.
\eea
We also assume
\bea\label{e3re3s}
|r-s|+|e_3(r)-e_3(s)|&\les&\dg,
\eea
as well as the existence of a constant $m_{(0)}$ such that we have on $\RR$,
\bea\label{vsiOmbass}
\left|\ov{z+\Omb}-1-\frac{2m_{(0)}}{r}\right|\les \dg,
\eea
where $\ov{z+\Omb}$ denotes the average of $z+\Omb$ on the sphere $S(u,s)$.\\ \\
Let $\S_0$ be a fixed sphere included in the region $\RR$, let a pair of triplets $\La_0,\Lab_0\in \mathbb{R}^3$ such that
\bea \label{4.6}
|\La_0|, |\Lab_0|\les\dg,
\eea 
and let $J^{(\S_0,p)}$ be a basis of $\ell=1$ modes on $\S_0$ satisfying
\bea\label{csjs}
\Vert J^{(\S_0,p)}-\Jp\Vert_{\hk_{s_{max}+1}(\S_0)}\les r\dg.
\eea
Assume that we have on $\S_0$
\begin{align}
    \begin{split}
        \ka^{\S_0}&=\frac{2}{r^{\S_0}},\\
        \kab^{\S_0}&=-\frac{2\Up^{\S_0}}{r^{\S_0}}+\Cb_0^{\S_0}+\sum_p\Cb^{(\S_0,p)}J^{(\S_0,p)},\\
        \mu^{\S_0}&=\frac{2m^{\S_0}}{r^{\S_0}}+M_0^{\S_0}+\sum_p M^{(\S_0,p)}J^{(\S_0,p)},
    \end{split}
\end{align}
as well as
\bea
(\div f_0)_{\ell=1}=\La_0,\qquad (\div \fb_0)_{\ell=1}=\Lab_0,
\eea 
with $(f_0,\fb_0)$ corresponding to the coefficients from the background frame to the frame adapted to $\S_0$, and the $\ell=1$ modes being taken w.r.t. the basis $J^{(\S_0,p)}$.

Then, there exists a unique, local, smooth, spacelike hypersurface $\Sigma_0$ passing through $\S_0$, a scalar function $u^\S$ defined on $\Sigma_0$, whose level surfaces are topological spheres denoted by $\S$ with adapted frame $(e_3^\S,e_4^\S,e_1^\S,e_2^\S)$, a smooth collection of constants $\La^\S,\Lab^\S$ and a triplet of functions $\JpS$ defined on $\Si_0$ verifying
\beaa
\La^{\S_0}=\La_0,\qquad \Lab^{\S_0}=\Lab_0,\qquad \JpS|_{\S_0}=J^{(\S_0,p)},
\eeaa
such that the following conditions are verified:
\begin{enumerate}
\item The following GCM conditions hold on $\Si_0$
\begin{align}
\begin{split}\label{GCMcondi}
\kaS =& \frac{2}{\rS}, \\
\kabS =& -\frac{2}{\rS}\UpS+\Cb_0^\S +\sum_p \Cb^{(\S,p)}\JpS ,\\
\muS =& \frac{2\mS}{(\rS)^3}+M_0^\S +\sum_p M^{(\S,p)}\JpS.
\end{split}
\end{align}
\item Denoting $\rS$ to be the area radius of the spheres $\S$ we have, for some constant $c_0$,
\bea
u^\S+r^\S=c_0,\quad \mbox{along}\quad \Sigma_0.
\eea
\item Let $\nu^\S$ be the unique vectorfield tangent to the hypersurface $\Sigma_0$, normal to $\S$, and normalized by $\g(\nu^\S,e_4^\S)=-2$. There exists a unique scalar function $b^\S$ on $\Sigma_0$ such that $\nu^\S$ is given by
    \beaa
    \nu^\S=e_3^\S+b^\S e_4^\S.
    \eeaa
Then, the following normalization condition holds true on every sphere $\S$
    \bea\label{4.7}
    \ov{b^\S}=-1-\frac{2m_{(0)}}{r^\S},
    \eea
    where $\ov{\bS}$ denotes the average of $\bS$ on the sphere $\S$.
\item The triplet of functions $\JpS$ verifies on $\Si_0$\footnote{A basis of $\ell=1$ modes verifying \eqref{nuSJpS=0} can simplify the transport equations along $\nu^\S$ for the $\ell=1$ modes of the transition functions $(f,\fb)$, see Lemma \ref{ODElalab} for more details.}
\bea\label{nuSJpS=0}
\nu^\S (\JpS)=0,\quad p=0,+,-.
\eea
\item The following transversality conditions are assumed\footnote{A priori, the quantities $\xi^\S$, $\om^\S$, $\etab^\S$, $e_4^\S(\rS)$ and $e_4^\S(\uS)$ do not make sense on $\Si_0$. The conditions \eqref{transversalityru} and \eqref{transversalitye4re3u} are in fact consistent with a local extension of $\Si_0$ by an outgoing geodesic foliation, see Remark \ref{rem4.5}.}
\bea\label{transversalityru}
\xi^\S=0,\qquad \omS=0,\qquad \etab^\S=-\zeS,
\eea
and
\bea\label{transversalitye4re3u}
e_4^\S(\rS)=1,\qquad e_4^\S(u^\S)=0.
\eea 
\item In view of \eqref{transversalityru}, the Ricci coefficients $\etaS,\xibS$ are well defined for every $\S\subset\Si_0$. They verify
\bea\label{4.10}
(\div^\S \etaS)_{\ell=1}=0,\qquad (\div^\S \xib^\S)_{\ell=1}=0.
\eea
\item The transition coefficients from the background foliation to that of $\Si_0$ verify
\bea
\Vert (f,\fb,\ovla)\Vert_{\hk_{s_{max}+1}(\S)}+\Vert\dk(f,\fb,\ovla)\Vert_{\hk_{s_{max}}(\S)}\les \dg,
\eea
where 
\beaa 
\dk\in\{\nabS_3,r\nabS,r\nabS_4\}.
\eeaa 
\end{enumerate}
\end{theorem}
We state the following corollary.
\begin{corollary}\lab{coro25}
Let $\RR$ be a fixed spacetime region with a background foliation verifying the assumption of Theorem \ref{mthm}, including \eqref{assjp}-\eqref{vsiOmbass}. Also, assume given a GCM hypersurface $\Si_0\subset \RR$ foliated by surfaces $\S$ such that
\beaa
\kaS &=& \frac{2}{\rS},\\
\kabS &=& -\frac{2\UpS}{\rS}+\Cb_0^\S+\sum_p \CbpS \JpS,\\
\muS&=& \frac{2\mS}{(\rS)^3}+M_0^\S +\sum_p \MpS\JpS,\\
(\divS \etaS)_{\ell=1}&=& (\divS \xibS)_{\ell=1} =0 ,\qquad \nu^\S (\JpS)=0,\quad p=0,+,-,
\eeaa
where the $\ell=1$ modes are defined w.r.t. $\JpS$. Assume moreover
\bea\label{divetadivxibpr}
|\dkt^{\leq s_{max}}(\div\eta)_{\ell=1}|\les\dg,\qquad |\dkt^{\leq s_{max}}(\div\xib)_{\ell=1}|\les\dg,
\eea
and
\bea\label{e3re3saddition}
|\dkt^{\leq s_{max}}(e_3(r)-e_3(s))|&\les&\dg,
\eea
where $\dkt\in\{e_3-(z+\Omb)e_4,\dkb\}.$ Then:
\begin{enumerate}
\item If we assume in addition that for a given sphere $\S_0$ on $\Si_0$, the transition coefficients $(f,\fb,\la)$ from the background foliation to $\S_0$ verify
\bea\label{zctj}
\|(f,\fb,\ovla)\|_{\hk_{s_{max}+1}(\S_0)}&\les& \dg,
\eea
then
\beaa
\|\dk^{\leq s_{max}+1}(f,\fb,\ovla)\|_{L^2(\S_0)}&\les&\dg.
\eeaa
\item If we assume in addition that for a given sphere $\S_0$ on $\Si_0$, the transition coefficients $(f,\fb,\la)$ from the background foliation to $\S_0$ verify
\bea\label{atj}
\| f\|_{\hk_{s_{max}+1}(\S_0)}+(r^{\S_0})^{-1}\|(\fb,\ovla)\|_{\hk_{s_{max}+1}(\S_0)}&\les& \dg,
\eea
then
\beaa
\|\dk^{\leq s_{max}+1}f\|_{L^2(\S_0)}+(r^{\S_0})^{-1}\|\dk^{\leq s_{max}+1}(\fb,\ovla)\|_{L^2(\S_0)}+\|\dk^{\leq s_{max}}\nabS_{\nu^\S}(\fb,\ovla)\|_{L^2(\S_0)} &\les& \dg.
\eeaa
\end{enumerate}
\end{corollary}
\begin{remark}
Theorem \ref{mthm} is the generalization of Theorem 9.52 in \cite{KS} and Corollary \ref{coro25} is the generalization of Corollary 9.53 in \cite{KS} in the absence of symmetry. Theorem \ref{mthm} plays a central role in the proof of Theorems M6 and M7 in \cite{KS:main}, see Section 8.4 and 8.5 in \cite{KS:main}. Note that Theorem \ref{mthm} and Corollary \ref{coro25} are restated as Theorem 8.1.10 and Corollary 8.1.11 in \cite{KS:main}.
\end{remark}
\begin{remark}
The proof of Theorem \ref{mthm} and Corollary \ref{coro25} is similar to the one of Theorem 9.52 in \cite{KS} and Corollary 9.53 in \cite{KS}. The main differences are:
\begin{itemize}
    \item the absence of symmetry in Theorem \ref{mthm} and Corollary \ref{coro25} while \cite{KS} relies on axial polarization,
    \item the dependance of the construction on the choice of the basis of $\ell=1$ modes, while the basis of $\ell=1$ modes in \cite{KS} is fixed by the polarized symmetry.
\end{itemize}
\end{remark}
The proof of Theorem \ref{mthm} is given in Sections \ref{sec4.1}-\ref{sec4.4} and the proof of Corollary \ref{coro25} is given in Section \ref{sec4.5}.
\subsection{Definition of a family of hypersurfaces $\Sigma$}\label{sec4.1}
As stated in Theorem \ref{mthm}, we assume given a spacetime region 
$$\RR=\{|u-\ovu|\leq \epg ,\, |s-\ovs|\leq\epg\}$$ 
endowed with a background foliation such that the conditions {\bf A1-A4} and \eqref{assjp}-\eqref{vsiOmbass} hold true. We also assume given a sphere $\S_0$, triplets $(\La_0, \Lab_0)$ and a basis of $\ell=1$ modes $J^{(p,\S_0)}$ satisfying \eqref{4.6}-\eqref{csjs}. In view of the uniqueness in Corollary \ref{generalGCMspheres}, note that $\S_0$ is in fact the deformation sphere
\beaa
\S_0=\S[\ovu,\ovs,\La_0,\Lab_0,J^{(p)}[\S_0]]
\eeaa
of the given sphere $\ovS = S(\ovu,\ovs)$ of the background foliation provided by Corollary \ref{generalGCMspheres}. Our goal is to construct, in a small neighborhood of $\S_0$, a spacelike hypersurface $\Si_0$ passing through $\S_0$ verifying all the desired properties of Theorem \ref{mthm}. To this end, we first define more general families of hypersurfaces within which we will make a suitable choice of $\Si_0$ in Section \ref{sec4.4}. We start with the following definition.
\begin{definition}\label{SigmaJ}
Under the same assumptions as in Theorem \ref{mthm}, let $\Psi(s)$ be a real valued function, $\La(s)$, $\Lab(s)$ triplet of functions satisfying
\bea
\begin{split}\label{curvecon}
|\La(s)|,\;|\La'(s)|,\;|\Lab(s)|,\;|\Lab'(s)|,\;|\Psi(s)+s-c_0|,\;|\Psi'(s)+1|\les \dg,\qquad s\in \ovI,\\
\La(\ovs)=\La_0,\qquad \Lab(\ovs)=\Lab_0,\qquad \Psi(\ovs)=\ovu,
\end{split}
\eea
where
$$c_0:=\ovu+\ovs,\qquad \ovI:=[\ovs,s_1],$$
with $s_1$ verifying $|s_1-\ovs|\les\epg$. Then, we denote
\beaa 
\Si_\#:=\bigcup_{s\in\ovI}S(\Psi(s),s),
\eeaa 
and $\nu_\#$ is the unique tangent vector to $\Si_\#$ normal to $S(\Psi(s),s)$ with $\g(\nu_\#,e_4)=-2$. By definition, $\Si_\#$ is the hypersurface $\{u=\Psi(s),s\in\ovI\}$.\\ \\
Let a triplet of functions $\Jt$ defined on $\Si_\#$ such that 
\bea \label{JJpdiff}
\sup_{s\in\ovI}\sum_p\|\Jt^{(p)}(s)-\Jp \|_{\hk_{s_{max}}(S(\Psi(s),s))}\les r\dg,\qquad \Jt(s):=\Jt\big|_{S(\Psi(s),s)}.
\eea 
For every $s\in\ovI$, we apply Corollary \ref{generalGCMspheres} to the background sphere $S(\Psi(s),s)$ and the triplet $\Jt(s)$\footnote{For every $s$, we extend $\Jt(s)$ to $\RR$ by \eqref{extendJtilde} in order to apply Corollary \ref{generalGCMspheres}.} to obtain a GCM sphere $\S[\Psi(s),s,\La(s),\Lab(s),\Jt(s)]$. The deformation map is given by:
\begin{align*}
\Phi^{\Jt}(s):S(\Psi(s),s)&\longrightarrow \S[\Psi(s),s,\La(s),\Lab(s),\Jt(s)]\subset \RR, \\
(\Psi(s),s,y^1,y^2)&\longrightarrow (\Psi(s)+U^{\Jt}(\Psi(s),s,y^1,y^2),s+S^{\Jt}(\Psi(s),s,y^1,y^2),y^1,y^2).
\end{align*}
The adapted null frame of $\S[\Psi(s),s,\La(s),\Lab(s),\Jt(s)]$ is denoted by $(e_3^{\Jt},e_4^{\Jt},e_1^{\Jt},e_2^{\Jt})$ and the transition functions are denoted by 
\beaa 
F^{\Jt}(s):=\left(f^{\Jt}(s),\fb^{\Jt}(s),\ovla^{\Jt}(s)\right).
\eeaa
Moreover, we denote
\beaa 
F^{\Jt,\#}(s):=\left(f^{\Jt,\#}(s),\fb^{\Jt,\#}(s),\ovla^{\Jt,\#}(s)\right):=(\Phi^{\Jt}(s))^\#\left(f^{\Jt}(s),\fb^{\Jt}(s),\ovla^{\Jt}(s)\right).
\eeaa 
Then, we define
\bea\label{hyperSiJt}
\Si_{\Jt}&:=&\bigcup_{s\in\ovI}\S[\Psi(s),s,\La(s),\Lab(s),\Jt(s)].
\eea
Let $\nu^{\Jt}$ the unique vectorfield tangent to the hypersurface $\Si_{\Jt}$ such that $\g(\nu^{\Jt},e_4^{\Jt})=-2$ and normal to $\S[\Psi(s),s,\La(s),\Lab(s),\Jt(s)]$. We define 
\beaa
\nu^{\Jt}_{\#}:=\left((\Phi^{\Jt})^{-1}\right)_\#(\nu^{\Jt}),
\eeaa
which is a tangent vectorfield on $\Si_\#$, where $\Phi^{\Jt}: \Si_\# \to\Si_{\Jt}$ is defined by
\beaa
\Phi^{\Jt}\big|_{S(\Psi(s),s)}=\Phi^{\Jt}(s).
\eeaa
\end{definition}
In the sequel, we denote
\bea
S:=S(\Psi(s),s),\qquad \ovS:=S(\ovu,\ovs).
\eea
We will need the following four lemmas.
\begin{lemma}\label{lemmaapp}
For a triplet of functions $\Jt$ on $\Si_\#$ satisfying \eqref{JJpdiff}, we define $(U^\Jt,S^\Jt,F^\Jt,\nu^\Jt_\#)$ as in Definition \ref{SigmaJ}. Then, we have
\bea \label{nujnbsl}
\nu^{\Jt}_\#=\left(1+A(U^\Jt,S^\Jt,F^{\Jt,\#})\right)\nu_\#+B^a(U^\Jt,S^\Jt,F^{\Jt,\#})\pr_{y^a},
\eea
where
\beaa 
A(U,S,F)=O(\dk_\#^{\leq 1}(U,S),F),\qquad B^a(U,S,F)=\epg r^{-2}O(\dk_\#^{\leq 1}(U,S),F),
\eeaa 
with
\beaa 
\dk_\#&:=&\{\dkb,\,\nu_\#\}.
\eeaa 
\end{lemma}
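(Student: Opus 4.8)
The plan is to compute $\nu^{\Jt}_\#$ by explicitly pushing forward $\nu^{\Jt}$ through the inverse deformation map $(\Phi^{\Jt}(s))^{-1}$ and comparing with $\nu_\#$. First I would recall that $\nu^{\Jt}$ is characterized, on each GCM sphere $\S(s)=\S[\Psi(s),s,\La(s),\Lab(s),\Jt(s)]$, as the unique vectorfield tangent to $\Si_{\Jt}$, normal to $\S(s)$, and normalized by $\g(\nu^{\Jt},e_4^{\Jt})=-2$; pulling back by $\Phi^{\Jt}$ turns each of these three conditions into an analogous condition for $\nu^{\Jt}_\#$ on $\Si_\#$ relative to the pulled-back metric $g^{\Jt,\#}$, the pulled-back null frame, and tangency to the level spheres $S(\Psi(s),s)$ inside $\Si_\#$. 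Likewise $\nu_\#$ is the unique vectorfield tangent to $\Si_\#$, normal to $S(\Psi(s),s)$ in the background metric, with $\g(\nu_\#,e_4)=2$ — note the sign, which is why the leading coefficient in \eqref{nujnbsl} comes out $+1$ rather than $-1$ once one accounts for the relation between $e_4^{\Jt}$ and $e_4$ via the transition coefficients $f^{\Jt}$.

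Next I would write the most general tangent vectorfield to $\Si_\#$ normal (in the background metric) to $S(\Psi(s),s)$ as a multiple of $\nu_\#$ plus a combination of the $\pr_{y^a}$, i.e. $\nu^{\Jt}_\# = \lambda_\# \nu_\# + B^a \pr_{y^a}$ for scalars $\lambda_\#, B^a$ on $\Si_\#$; the $\pr_{y^a}$ piece is forced to appear because the GCM spheres $\S(s)$ differ from $S(\Psi(s),s)$ by the $O(\dg)$ deformation generated by $(U^{\Jt},S^{\Jt})$, so the $\S(s)$-tangency condition is not the same as the $S(\Psi(s),s)$-tangency condition, only $\dg$-close to it. To extract $\lambda_\#$ and $B^a$ I would impose the two remaining constraints: (i) that $g^{\Jt,\#}(\nu^{\Jt}_\#, e_a^{\Jt,\#})=0$ for $a=1,2$, which, after expanding $g^{\Jt,\#}=\ovg+O(\dg r)$ (Lemma \ref{lemma:comparison-gaS-ga}, estimate \eqref{eq:compairisionofpulledbackmetricsfordeformations}) and $e_a^{\Jt,\#}=e_a+O(F^{\Jt,\#})$ (the frame transformation \eqref{eq:Generalframetransf} together with the fact that the $\#$-pullback records the deformation), yields $B^a$ as an expression in $\dk_\#^{\le 1}(U^{\Jt},S^{\Jt})$ and $F^{\Jt,\#}$; and (ii) the normalization $g^{\Jt,\#}(\nu^{\Jt}_\#, e_4^{\Jt,\#})=-2$, which fixes $\lambda_\#$. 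The key structural point to track is the size: the $\pr_{y^a}$ coefficient $B^a$ carries an extra $\epg r^{-2}$ relative to a generic $O(\dg)$ term because the mismatch between the $S(\Psi(s),s)$-normal direction and the $\S(s)$-normal direction is driven by $\ze,\eta$-type Ricci coefficients of the background foliation — which are $\Ga_g\in \epg r^{-2}$ — contracted with the deformation data; I would keep this bookkeeping explicit so that the stated form $B^a=\epg r^{-2}O(\dk_\#^{\le 1}(U,S),F)$ emerges, while $A=\lambda_\#-1=O(\dk_\#^{\le 1}(U,S),F)$ has no such gain.

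The main obstacle I expect is the careful tracking of how derivatives of the deformation functions enter. The vectorfield $\nu^{\Jt}_\#$ is tangent to $\Si_\#$ but transversal to the spheres, so evaluating the tangency/normalization conditions requires differentiating the deformation map $\Phi^{\Jt}(s)$ in the $s$-direction along $\Si_\#$ — this is exactly where the $\nu_\#$-derivatives of $(U^{\Jt},S^{\Jt})$ appear, hence the use of $\dk_\#=\{\dkb,\nu_\#\}$ rather than just $\dkb$. One must check that no higher derivatives or derivatives of $F^{\Jt,\#}$ in the $\nu_\#$ direction are generated at leading order (they would be, but they are absorbed into lower-order structure by the schematic $O(\cdot)$ notation, or controlled since $F^{\Jt}$ solves the GCM system of Proposition \ref{GCMsystemequation}). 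After that, \eqref{nujnbsl} follows by collecting terms; the quantitative smallness of $A$ and $B^a$ is then immediate from the bounds $\|(U^{\Jt},S^{\Jt})\|\les r\dg$, $\|F^{\Jt}\|\les\dg$ furnished by Corollary \ref{generalGCMspheres} (properties 1 and 4 of Theorem \ref{Theorem:ExistenceGCMS1}) together with \eqref{eq:assumtioninRRforGagandGabofbackgroundfoliation}.
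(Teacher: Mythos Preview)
Your overall strategy --- write $\nu^{\Jt}_\# = \lambda_\#\,\nu_\# + B^a\pr_{y^a}$ and determine the coefficients from the defining properties of $\nu^{\Jt}$ --- is sound, and would eventually lead to the result. However, your explanation of where the $\epg r^{-2}$ gain in $B^a$ comes from is incorrect, and following that heuristic would lead you astray. You attribute the smallness to ``$\ze,\eta$-type Ricci coefficients of the background foliation --- which are $\Ga_g\in\epg r^{-2}$''; but $\eta\in\Ga_b$, not $\Ga_g$ (see \eqref{definition:Ga_gGa_b}), and more to the point the gain does not come from any Ricci coefficient at all. It comes from the background metric coefficient $\Bb^a=\tfrac12 e_3(y^a)$, which by Assumption {\bf A3} lies in $r^{-1}\Ga_b$ and hence is $O(\epg r^{-2})$. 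This quantity governs the failure of $\nu_\#$ (equivalently $e_3$) to be a pure $\pr_u,\pr_s$-combination in adapted coordinates, and is therefore what produces the residual $\pr_{y^a}$-component after comparing $\nu_\#^{\Jt}$ with $\nu_\#$.

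The paper's proof differs from your plan in that it works entirely in the coordinate basis $(\pr_u,\pr_s,\pr_{y^a})$, avoiding any need to pull back metric conditions or frame vectors transverse to $\Si_{\Jt}$. Concretely: it introduces $X_0:=\Psi'(s)\pr_u+\pr_s$, shows $X_0=\tfrac{\Psi'(s)}{z}\nu_\#-\tfrac{2\Psi'(s)}{z}\Bb^a\pr_{y^a}$ (this is exactly where $\Bb^a$ enters), computes $\Phi^{\Jt}_\#(X_0)$ and $\Phi^{\Jt}_\#(\pr_{y^a})$ by differentiating the deformation map, writes $\nu^{\Jt}=e_3^{\Jt}+b^{\Jt}e_4^{\Jt}$ in the $(\pr_u,\pr_s,\pr_{y^a})$-basis via \eqref{eq:Generalframetransf} and \eqref{suc}, and then matches $\pr_u$- and $\pr_{y^a}$-components with those of $A_{\Jt}\Phi^{\Jt}_\#(X_0)+B^a_{\Jt}\Phi^{\Jt}_\#(\pr_{y^a})$ to solve for $A_{\Jt},B^a_{\Jt}$. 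Your metric-condition route would also require making sense of ``$e_4^{\Jt,\#}$'' and ``$g^{\Jt,\#}$'' in a way that captures the full 4-dimensional inner product (since $e_4^{\Jt}$ is not tangent to $\Si_{\Jt}$ and so cannot be pulled back by $\Phi^{\Jt}:\Si_\#\to\Si_{\Jt}$); this is fixable, but comparing coordinate components directly as the paper does is cleaner.
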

\begin{proof}
See Appendix \ref{nunu}.
\end{proof}
\begin{lemma}\label{lemmaappa}
For a triplet of functions $\Jt$ on $\Si_\#$ satisfying \eqref{JJpdiff}, we define $(U^\Jt,S^\Jt,F^\Jt,\nu^\Jt_\#)$ as in Definition \ref{SigmaJ}. In addition, we assume that
\bea\label{nuJO1}
\left\|\nu_\#^\Jt(\Jt)\right\|_{L^\infty(\Si_\#)}\leq 1.
\eea
Then, we have
\bea
\|\nu_\#(U^{\Jt},S^{\Jt})\|_{\hk_{s}(S)}\les r\dg,\qquad 0\leq s\leq s_{max}.
\eea
\end{lemma}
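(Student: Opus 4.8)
The plan is to differentiate, along the hypersurface, the whole GCM system that determines the spheres $\S[\Psi(s),s,\La(s),\Lab(s),\Jt(s)]$, to read off from it a \emph{linearized} GCM system (in the sense of Definition \ref{definition:GCMSgen-equations}) for the $\nu_\#$--derivatives of the transition coefficients $(f^\Jt,\fb^\Jt,\ovla^\Jt)$, to estimate that system by Proposition \ref{Thm.GCMSequations-fixedS:contraction}, and finally to recover $\nu_\#(U^\Jt,S^\Jt)$ from the relations linking the deformation functions to $(f^\Jt,\fb^\Jt)$. The key point is that the already-established bounds $\|(U^\Jt,S^\Jt)\|_{\hk_{s_{max}+1}(S)}\les r\dg$ and $\|F^\Jt\|_{\hk_{s_{max}+1}(\S)}\les\dg$ of Corollary \ref{generalGCMspheres} control all the quadratic/error pieces, while the structural assumptions of Theorem \ref{mthm} make the genuinely new source terms small.

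First I record the structural role of $\nu_\#$. Since $\Si_\#=\{u=\Psi(s)\}$ with $|\Psi'+1|\les\dg$ and $\nu_\#$ is normal to the background spheres inside $\Si_\#$, one has schematically $\nu_\#=-\big(e_3-(z+\Omb)e_4\big)+O(\dg)\,e_4+\big(O(\epg r^{-1})\ \mbox{tangential}\big)$; that is, $\nu_\#$ differs from the non-tangential generator $e_3-(z+\Omb)e_4$ of $\dkt$ only by an $O(\dg)$ multiple of $e_4$ and an $O(\epg)$ tangential derivative. Consequently $\dkb^{\leq s_{max}}$ applied to $\nu_\#(e_3(\Jp))$, $\nu_\#\kadot$, $\nu_\#\kabdot$, $\nu_\#\mudot$ is bounded by the respective $r^{-1}\dg$, $r^{-2}\dg$, $r^{-3}\dg$--type quantities of \eqref{assjp} and \eqref{eq:GCM-improved estimate2-again} (using $\nu_\#(\Jp)=-e_3(\Jp)$ since $\pr_s\Jp=0$), and $|\nu_\#\La|+|\nu_\#\Lab|\les\dg$ along $\Si_\#$ by \eqref{curvecon}. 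The other background quantities ($r,m,z,\Omb,\dots$) have only $O(1)$ derivatives along $\nu_\#$, but in the GCM system they always occur multiplied by objects of size $\les\dg$ (transition coefficients, renormalized Ricci and curvature coefficients, or the small GCM constants $\CbpS,\MpS$, which are $\les r^{-2}\dg,\ r^{-3}\dg$), so their $\nu_\#$--derivatives still contribute at order $\dg$.

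Now I differentiate the coupled system of Proposition \ref{GCMsystemequation} along $\nu_\#$: the relations expressing $\nabS(U^\Jt,S^\Jt)$ schematically as $(f^\Jt,\fb^\Jt)+\Ga_b\cdot(U^\Jt,S^\Jt)$ (coming from tangency of $(e_1^\Jt,e_2^\Jt)$ to $\S(s)$); the three GCM identities; the two $\ell=1$ conditions $(\divS f^\Jt)_{\ell=1}=\La(s)$ and $(\divS\fb^\Jt)_{\ell=1}=\Lab(s)$; and the equation for the radius difference $r-\rS$. Pushing $\nu_\#$ past $\nabS,\divS,\lap^\S$ with the commutation Lemmas \ref{comm}--\ref{commfor}, using that the exact GCM values are $s$--independent modulo the motion of the basis $\Jt$ (so, e.g., $\nu_\#(\kaS-2/\rS)=0$, while the $\ell\geq2$ parts of the other two GCM quantities acquire only the small terms below), and estimating $\nu_\#\err_1,\nu_\#\err_2$ by the Corollary \ref{generalGCMspheres} bounds, I obtain a linearized GCM system for $(\nu_\#f^\Jt,\nu_\#\fb^\Jt,\nu_\#\ovla^\Jt)$ and $\nu_\#(r-\rS)$ with $\ell=1$ data $(\nu_\#\La,\nu_\#\Lab)$. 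Its source functions split into: (a) the small background pieces bounded above; (b) terms where $\nu_\#$ lands on a GCM constant times $\Jt^{(p)}$, which are $\les r^{-2}\dg$ once $\nu_\#(\Jt)$ is known to be $O(1)$ via \eqref{nuJO1} and \eqref{nujnbsl}, the smallness being supplied by the constants; (c) terms proportional to $\epg\|\nu_\#(U^\Jt,S^\Jt)\|$ and to $\dg\|(\nu_\#f^\Jt,\nu_\#\fb^\Jt,\nu_\#\ovla^\Jt)\|$, to be absorbed on the left. Proposition \ref{Thm.GCMSequations-fixedS:contraction} (for $3\leq s\leq s_{max}$, smaller $s$ following by inclusion of norms) together with this absorption yields $\|(\nu_\#f^\Jt,\nu_\#\fb^\Jt,\nu_\#\ovla^\Jt)\|_{\hk_{s}(S)}\les\dg$. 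Finally, differentiating the tangency relations along $\nu_\#$ expresses $\nabS\nu_\#(U^\Jt,S^\Jt)$ in terms of $\nu_\#(f^\Jt,\fb^\Jt)$, of $\Ga_b\cdot\nu_\#(U^\Jt,S^\Jt)$, of $\nu_\#(\Ga_b)\cdot(U^\Jt,S^\Jt)$ and of commutators $[\nu_\#,\nabS](U^\Jt,S^\Jt)$; since $\nu_\#(U^\Jt,S^\Jt)$ vanishes at the fixed pole, a Poincaré-/Sobolev-type estimate on $S$ controls $\|\nu_\#(U^\Jt,S^\Jt)\|_{\hk_{s}(S)}$ by $r\,\|\nabS\nu_\#(U^\Jt,S^\Jt)\|_{\hk_{s-1}(S)}$, and inserting the previous bounds and absorbing the $\epg$--contribution gives $\|\nu_\#(U^\Jt,S^\Jt)\|_{\hk_s(S)}\les r\dg$ for $0\leq s\leq s_{max}$, as claimed.

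The main obstacle is the circular structure: $\nu_\#(U^\Jt,S^\Jt)$ reappears on the right-hand side both of the differentiated GCM system and of the elliptic estimate for itself — indeed it is precisely the quantity hidden inside $A$ in \eqref{nujnbsl}, through which quantities anchored on the moving spheres $\S(s)$ get differentiated — so the scheme closes only after one checks that every such occurrence carries a coefficient of size $\les\epg$ (or $\les\dg$) and can therefore be absorbed for $\dg,\epg$ small. A secondary point, which is exactly why the hypothesis \eqref{nuJO1} is needed rather than smallness of $\nu_\#^\Jt(\Jt)$, is that one must verify that $\nu_\#(\Jt)$ enters the differentiated system only against the small GCM constants $\CbpS,\MpS$ or against $(f^\Jt,\fb^\Jt)$, never against an $O(1)$ coefficient.
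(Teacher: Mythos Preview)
Your overall strategy coincides with the paper's: differentiate the GCM system of Proposition \ref{GCMsystemequation} along the hypersurface, apply Proposition \ref{Thm.GCMSequations-fixedS:contraction} to the resulting linearized system to bound the $\nu$--derivatives of $(f^\Jt,\fb^\Jt)$, then recover $\nu_\#(U^\Jt,S^\Jt)$ from the tangency relation \eqref{UUSS} together with the (approximate) vanishing at the south pole. The treatment of the $\ell=1$ modes via $|\La'(s)|+|\Lab'(s)|\les\dg$ and the role of \eqref{nuJO1} are also identified correctly.

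There is, however, one genuine gap in your absorption claim. The paper does \emph{not} differentiate along $\nu_\#$ directly; it first differentiates along $\nu^\Jt_\#=\big((\Phi^\Jt)^{-1}\big)_*\nu^\Jt$, which is the natural derivative for pulled--back quantities since $\nu^\Jt_\#(h\circ\Phi^\Jt)=(\nu^\Jt h)\circ\Phi^\Jt$. This yields $\|\nu^\Jt_\#(U^\Jt,S^\Jt)\|_{\hk_s(S)}\les r\dg$ cleanly. The passage from $\nu^\Jt_\#$ to $\nu_\#$ is then handled by a separate \emph{continuity argument} on expanding geodesic balls $\varpi_\la\subset S$ centered at the south pole. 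The reason is precisely the coefficient $A$ in \eqref{nujnbsl}: from its explicit form (Appendix \ref{nunu}) one has $A=\tfrac{\Psi'(s)}{z}A_0\circ\Phi^\Jt+\ldots$ with $A_0\ni X_0(U^\Jt)=\tfrac{\Psi'(s)}{z}\nu_\#(U^\Jt)+\ldots$, so $A$ contains $\nu_\#(U^\Jt,S^\Jt)$ with an $O(1)$ coefficient, not an $O(\epg)$ one. In particular, your statement that ``every such occurrence carries a coefficient of size $\les\epg$ (or $\les\dg$)'' is not correct for this term, and a direct absorption of $(1+A)^{-1}$ cannot be justified without already knowing $|\nu_\#(U^\Jt,S^\Jt)|$ is small. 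The paper circumvents this by bootstrapping $r^{-1}\|\nu_\#(U^\Jt,S^\Jt)\|_{\hk_s(\varpi_\la)}\le\dg^{1/2}$ from the pole outward, using the $\nu^\Jt_\#$--estimate as input.

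In short: your scheme is right up to and including the estimate for $\nu^\Jt_\#(U^\Jt,S^\Jt)$, but the final conversion to $\nu_\#$ requires either the paper's continuity argument or an independent justification that $(1+A)$ is invertible a priori --- your proposal does not supply this.
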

\begin{proof}
See Appendix \ref{AAAA}.
\end{proof}
\begin{lemma}\label{lemmaappb}
For two triplets of functions $\Jt$ and $\Jh$ on $\Si_\#$ satisfying \eqref{JJpdiff}, we define $(U^\Jt,S^\Jt,F^\Jt,\nu^\Jt_\#)$ and respectively $(U^\Jh,S^\Jh,F^\Jh,\nu^\Jh_\#)$ as in Definition \ref{SigmaJ}. In addition, we assume that
\bea\label{nuJtJhO1}
\left\|\nu_\#^\Jt(\Jt,\Jh)\right\|_{L^\infty(\Si_\#)}\leq 1.
\eea
Then, we have
\beq\label{cor6.11}
r^{-1}\|(U^{\Jt}-U^{\Jh},S^{\Jt}-S^{\Jh})\|_{\hk_{s+1}(S)}+ \|F^{\Jt,\#}-F^{\Jh,\#}\|_{\hk_{s+1}(S)}\les \epg r^{-1}\|\Jt-\Jh\|_{\hk_{s}(S)},
\eeq
for all $0\leq s \leq s_{max}$ and
\beq\label{nucor6.11}
\|\nu_\#(U^{\Jt}-U^{\Jh},S^{\Jt}-S^{\Jh})\|_{\hk_{s+1}(S)}\les\epg \|\Jt-\Jh\|_{\hk_{s}(S)}+\epg \|\nu_\#(\Jt-\Jh)\|_{\hk_{s}(S)},
\eeq
for all $0\leq s\leq s_{max}-1$.
\end{lemma}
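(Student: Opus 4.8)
The strategy is to view the two GCM constructions side by side and to estimate their difference as a solution of the \emph{linearized} GCM system of Definition~\ref{definition:GCMSgen-equations}, for which Proposition~\ref{Thm.GCMSequations-fixedS:contraction} supplies a priori estimates. First I would fix $s\in\ovI$ and work entirely on the single background sphere $S=S(\Psi(s),s)$, using the pull-backs by the deformation maps. By Corollary~\ref{generalGCMspheres}, the data $(U^{\Jt},S^{\Jt},F^{\Jt,\#})$ and $(U^{\Jh},S^{\Jh},F^{\Jh,\#})$ solve, on $S$, the GCM system~\eqref{Generalizedsystem} together with the GCM conditions~\eqref{def:GCMC} and the normalizations $(\div^{\S}f)_{\ell=1}=\La(s)$, $(\div^{\S}\fb)_{\ell=1}=\Lab(s)$, with the \emph{same} parameters $(\Psi(s),s,\La(s),\Lab(s))$ but with the two different $\ell=1$ bases $\Jt(s)$ and $\Jh(s)$; the only place where the basis enters is through the $\ell=1$-projections on the right-hand sides of~\eqref{def:GCMC} and in these two normalizations. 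Rewriting the $\Jh$-system in terms of the metric $g^{\S^{\Jt},\#}$ and subtracting, the differences
$$(\de U,\de S,\de F):=\big(U^{\Jt}-U^{\Jh},\,S^{\Jt}-S^{\Jh},\,F^{\Jt,\#}-F^{\Jh,\#}\big)$$
satisfy a linearized GCM system of the form~\eqref{GeneralizedGCMsystem} whose right-hand sides split into (i) terms linear in $\Jt-\Jh$, with coefficients that are $\Ga_b$-type quantities or the constants $\La(s),\Lab(s)$ (coming from the $\ell=1$-mode decompositions computed in the two bases), and (ii) quadratic remainders, in particular the contribution of the difference $g^{\S^{\Jt},\#}-g^{\S^{\Jh},\#}$ of the pulled-back metrics, which by Lemma~\ref{lemma:comparison-gaS-ga} and~\eqref{eq:compairisionofpulledbackmetricsfordeformations} is controlled by $(\de U,\de S)$ up to size $\dg r$.

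For the first estimate~\eqref{cor6.11}, I would note that the coefficients in (i) are of size $\epg r^{-1}$ or $\dg$, so the type-(i) source is bounded by $\epg r^{-1}\|\Jt-\Jh\|_{\hk_{s}(S)}$ once the $r$-weights of~\eqref{GeneralizedGCMsystem} are accounted for, while the type-(ii) remainders are bounded by $\epg$ times $r^{-1}\|(\de U,\de S)\|_{\hk_{s+1}(S)}+\|\de F\|_{\hk_{s+1}(S)}$ and can therefore be absorbed on the left. Applying Proposition~\ref{Thm.GCMSequations-fixedS:contraction} to $(\de U,\de S,\de F)$ --- the $|\La|+|\Lab|$ contributions there now being the \emph{differences} of the $\ell=1$ projections of $\La(s),\Lab(s)$ in the two bases, again of order $\dg r^{-1}\|\Jt-\Jh\|$ --- and using that $\epg$ is sufficiently small to absorb the remainders, yields~\eqref{cor6.11}.

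For~\eqref{nucor6.11} I would differentiate this linearized system along $\nu_\#$. Since $\nu_\#$ is a background vectorfield tangent to $\Si_\#$ and transversal to the leaves $S$, the decomposition~\eqref{suc} of $e_3,e_4,e_c$ together with the normalization $\g(\nu_\#,e_4)=2$ shows that $\nu_\#$ lies in the span of the weighted derivatives $\dkt$ modulo controlled coefficients; hence by the hypotheses~\eqref{assjp} and~\eqref{eq:GCM-improved estimate2-again} all the coefficients of the system have $\nu_\#$-derivatives of the expected size, and one may commute $\nu_\#$ through the Hodge operators on $S$ using Lemma~\ref{commfor}. One thus obtains a linearized GCM system for $\nu_\#(\de U,\de S,\de F)$ whose source consists of: $\nu_\#(\Jt-\Jh)$ times $\epg r^{-1}$-coefficients; $(\Jt-\Jh)$ times $\nu_\#$-derivatives of coefficients, again of size $\epg r^{-1}$; and terms involving the already-estimated $(\de U,\de S,\de F)$ and the $\nu_\#$-derivatives of the single-$\Jh$ GCM data $(U^{\Jh},S^{\Jh},F^{\Jh,\#})$, the latter being $O(r\dg)$ by the a priori bounds underlying Lemma~\ref{lemmaappa} (here the hypothesis~\eqref{nuJtJhO1}, combined with Lemma~\ref{lemmaapp} relating $\nu^{\Jt}_\#$ and $\nu_\#$, provides the needed control of the transversal derivatives of $\Jt$ and $\Jh$). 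A final application of Proposition~\ref{Thm.GCMSequations-fixedS:contraction}, combined with~\eqref{cor6.11} to dispose of the $(\de U,\de S,\de F)$-terms, gives~\eqref{nucor6.11}.

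The main obstacle is the $\nu_\#$-differentiated estimate. One must carefully track how $\nu_\#$ interacts with: the deformation map $\Phi^{\Jt}$ and its pull-back, so that a $\nu_\#$-derivative of $F^{\Jt,\#}$ on $S$ genuinely corresponds to a controlled derivative of $F^{\Jt}$ on $\S^{\Jt}$; the nonlocal $\ell=1$-projection operators, which are defined relative to the varying basis $\Jt(s)$ and whose differentiation produces further $\Jt-\Jh$ and $\nu_\#(\Jt-\Jh)$ contributions; and the scalar coefficients $r^{\S}$ and $m^{\S}$, whose $\nu_\#$-derivatives have to be expressed through integration formulas of the type of Lemma~\ref{nonpo}. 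A secondary technical point is that the two GCM systems are genuinely posed on different metrics on $S$, so the step ``difference $=$ linearized system with absorbable remainder'' requires isolating the metric-difference contributions and checking they are higher order --- which is where the smallness of $\epg$ and $\dg$ and the comparison estimates of Lemma~\ref{lemma:comparison-gaS-ga} are used repeatedly.
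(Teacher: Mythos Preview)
Your approach is essentially the same as the paper's: for \eqref{cor6.11} the paper simply cites Corollary~6.11 of \cite{KS:Kerr1}, which is precisely the ``subtract the two GCM systems and apply Proposition~\ref{Thm.GCMSequations-fixedS:contraction} to the difference'' argument you describe; for \eqref{nucor6.11} the paper also commutes the difference system with a transversal derivative and re-applies the a priori estimate.

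Two small technical points where your outline diverges from the paper and which you should make explicit. First, Proposition~\ref{Thm.GCMSequations-fixedS:contraction} controls $(\de f,\de\fb,\de\ovla,\de\ovb)$ but not $(\de U,\de S)$ directly; the paper recovers $\dkb(\de U,\de S)$ from $(\de f,\de\fb)$ via the relation \eqref{UUSS}, and then uses the vanishing of $U,S$ (hence of $\de U,\de S$) at the south pole $\gamma(s)$ to fix the remaining mode --- this south-pole step is what pins down the full $(\de U,\de S)$ and also their $\nu_\#$-derivatives. Second, the paper commutes the difference system with $\nu^{\Jt}_\#$ rather than with $\nu_\#$; this is more natural because the difference GCM system \eqref{GCMde} lives in the metric $g^{\S^{\Jt},\#}$, and the $\ell=1$ modes one must track after differentiation are $\nu^{\Jt}(\de\La,\de\Lab)$, handled via the structure of $(\de\La,\de\Lab)$ from Appendix~B of \cite{KS:Kerr1}. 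The conversion back to $\nu_\#$ at the end is then immediate from Lemma~\ref{lemmaapp}, exactly as you anticipate.
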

\begin{proof}
See Appendix \ref{BBBB}.
\end{proof}
\begin{lemma}\label{transportlemma}
For a triplet of functions $\Jt$ satisfying \eqref{JJpdiff}, we define $\nu^\Jt_\#$ as in Definition \ref{SigmaJ}. Then, for any scalar function $h$ and $0\leq k\leq s_{max}$, we have
\bea
\sup_{S\subset\Si_\#}\|h\|_{\hk_k(S)} \leq (1+O(\epg))\|h\|_{\hk_k(\ovS)}+\sup_{S\subset\Si_\#}\left\|\nu_\#^\Jt(h)\right\|_{\hk_k(S)}.
\eea
\end{lemma}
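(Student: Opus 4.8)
The plan is to establish the bound via a transport argument along $\Si_\#$, carried out intrinsically, and closed by Gronwall. Recall from Definition \ref{SigmaJ} that $\Si_\#=\{u=\Psi(s)\}$ is foliated by the background spheres $S_s:=S(\Psi(s),s)$, $s\in\ovI=[\ovs,s_1]$, that $\ovS=S_{\ovs}$ (since $\Psi(\ovs)=\ovu$), and that $|s_1-\ovs|\les\epg$. In the adapted coordinates of section \ref{adaptedcoordinates} the vectorfield $V:=\Psi'(s)\pr_u+\pr_s$ --- the tangent to the curves $s\mapsto(\Psi(s),s,y^1,y^2)$ at fixed $y$ --- is tangent to $\Si_\#$ and, by \eqref{eq:decompositionofnullframeoncoordinatesframeforbackgroundfoliation}, equals $\f12\Psi'\vsi\,e_3+\big(1-\f12\Psi'\vsi\,\Omb\big)e_4-\Psi'\vsi\,\undB^a\pr_{y^a}$, hence is transversal to the leaves $S_s$. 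Since $\nu^\Jt_\#$ is likewise tangent to $\Si_\#$ and normal to the leaves, we may write $V=c\,\nu^\Jt_\#+X$ with $X$ horizontal and tangent to $S_s$; by Lemma \ref{lemmaapp} and the deformation bounds of Theorem \ref{Theorem:ExistenceGCMS1} the relative lapse $c$ satisfies $c,c^{-1}=O(1)$ and $\div X=O(r^{-1})$ (with $|X|\les\epg r^{-1}$, coming from $\undB^a\in r^{-1}\Ga_b$, assumption {\bf A3}). It therefore suffices to control $\frac{d}{ds}\|h\|_{\hk_k(S_s)}$ and integrate from $\ovs$.

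Next I would compute the first variation of the norms. For $0\le j\le k$ set $T:=\dkb^j h$, a horizontal $j$-tensor on $S_s$; since $r$ is constant on each leaf, $\dkb^j=r^j\nab^j$. A first variation computation yields
\[
\frac{d}{ds}\|T\|_{L^2(S_s)}^2=\int_{S_s}\Big(2\langle\nab_V T,T\rangle+\QQ(T)\Big)\,d\mu_{S_s},
\]
where $\QQ(T)$ collects the $s$-variation of the induced metric and of the volume form; this variation is the second fundamental form of the leaves in $\Si_\#$, which by the Ricci formulas \eqref{ricciformulas}, \eqref{suc} and the background asymptotics ($\ka\sim 2/r$, $\kab\sim -2\Up/r$, assumptions {\bf A1}, {\bf A3}) is of size $O(r^{-1})$, so $|\QQ(T)|\les r^{-1}|T|^2$. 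Inserting $V=c\,\nu^\Jt_\#+X$, the $X$-part is handled by the divergence theorem, $\int_{S_s}\langle\nab_X T,T\rangle=-\f12\int_{S_s}|T|^2\,\div X$, which eliminates the would-be top-order tangential derivative of $T$ and leaves $O(r^{-1})\|T\|_{L^2(S_s)}^2$. For the $\nu^\Jt_\#$-part, the crucial point is that $\nu^\Jt_\#$ and the $e_a$'s are all tangent to $\Si_\#$, so their brackets $[\nu^\Jt_\#,e_a]$ stay tangent to $\Si_\#$ and split into a horizontal piece (controlled by the second fundamental form, $O(r^{-1})$) and a multiple of $\nu^\Jt_\#$ (the acceleration, $O(r^{-1})$); consequently no derivative of $h$ transverse to $\Si_\#$ is produced, and iterating this commutation (in the spirit of Lemma \ref{commfor}, together with {\bf A1}) gives $\nab_{\nu^\Jt_\#}\dkb^j h=\dkb^j\big(\nu^\Jt_\#(h)\big)+E_j$ with $\|E_j\|_{L^2(S_s)}\les r^{-1}\big(\|h\|_{\hk_k(S_s)}+\|\nu^\Jt_\#(h)\|_{\hk_k(S_s)}\big)$. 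Summing over $j\le k$ and applying Cauchy--Schwarz,
\[
\frac{d}{ds}\|h\|_{\hk_k(S_s)}\le C\|\nu^\Jt_\#(h)\|_{\hk_k(S_s)}+C r^{-1}\|h\|_{\hk_k(S_s)},\qquad C=C(s_{max}).
\]

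Finally, integrating this differential inequality by Gronwall on $[\ovs,s]\subset\ovI$, whose length is $\les\epg$, yields
\[
\|h\|_{\hk_k(S_s)}\le e^{C(s-\ovs)/r}\Big(\|h\|_{\hk_k(\ovS)}+C\epg\sup_{S'\subset\Si_\#}\|\nu^\Jt_\#(h)\|_{\hk_k(S')}\Big).
\]
Since $r\gg m_0\gg\epg$ we have $e^{C\epg/r}=1+O(\epg)$, and since $\epg$ is sufficiently small (depending on $s_{max}$) we have $C\epg\le 1$; taking the supremum over $s\in\ovI$ of the left-hand side gives the claim.

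The main obstacle is not a single hard estimate but the accounting in the middle step: one must verify that every term produced by the first variation of the sphere metrics and by the commutations with $\dkb^j$ is either $O(r^{-1})$ times $\|h\|_{\hk_k(S_s)}$ --- so that, integrated over an $s$-interval of length $\les\epg$, it contributes only the harmless factor $e^{O(\epg/r)}=1+O(\epg)$ --- or $O(1)$ times $\|\nu^\Jt_\#(h)\|_{\hk_k(S_s)}$, which after the same integration is absorbed, with room to spare, into the coefficient $1$ in front of $\sup_{S'}\|\nu^\Jt_\#(h)\|_{\hk_k(S')}$. This rests on the intrinsic nature of the commutations (so that no transverse derivative of $h$ leaks in), on the comparability of $\nu^\Jt_\#$ to the explicit transport field $V$ supplied by Lemma \ref{lemmaapp}, and on the background estimates {\bf A1}--{\bf A4}.
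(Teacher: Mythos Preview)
Your approach is correct and essentially the same as the paper's: both differentiate $\|h\|_{\hk_k(S_s)}$ along $\Si_\#$, discard the tangential part of the transport vector via the divergence theorem, commute $\dkb^k$ past the transverse derivative using Lemma~\ref{commfor}, and close by integrating over the $O(\epg)$-length interval $\ovI$. The only differences are cosmetic: the paper uses $\nu_\#$ (which is exactly normal to the leaves $S_s$, unlike $\nu^\Jt_\#$, which by Lemma~\ref{lemmaapp} carries a small tangential piece $B^a\pr_{y^a}$---so your claim that $\nu^\Jt_\#$ is ``normal to the leaves'' is slightly off, though harmlessly so since $X$ absorbs the defect in your decomposition $V=c\,\nu^\Jt_\#+X$) and converts to $\nu^\Jt_\#$ only at the very end via Lemma~\ref{lemmaapp}, whereas you make the decomposition from the start.
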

\begin{proof}
See Appendix \ref{lemmaappd}.
\end{proof}
The following theorem will allow us to introduce a suitable family of hypersurfaces in Definition \ref{hypersurfaceSigma}.
\begin{theorem}\label{generalcoro}
There exists a unique $\Jt$ satisfying \eqref{JJpdiff} and verifying
\bea 
\begin{split}\label{desiredJt}
\nu^{\Jt}\left(\left((\Phi^{\Jt})^{-1}\right)^\#\Jt\right)&=0,\qquad \quad \mbox{ on }\Si_\Jt,\\
\left((\Phi^{\Jt})^{-1}\right)^\#\Jt^{(p)}&=J^{(\S_0,p)},\quad \mbox{ on }\S_0. 
\end{split}
\eea 
\end{theorem}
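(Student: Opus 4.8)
The plan is to produce $\Jt$ as the unique fixed point of a contraction. Let $\BB$ be the set of triplets $\Jt=(\Jt^{(0)},\Jt^{(+)},\Jt^{(-)})$ of scalar functions on $\Si_\#$ for which \eqref{JJpdiff} holds together with the additional bound $\sup_{s\in\ovI}\|\nu_\#(\Jt(s)-\Jp)\|_{\hk_{s_{max}-1}(S)}\les r\dg$, metrized on the closed ball $\BB$ by the weaker norm $\|\Jt\|_\YY:=\sup_{s\in\ovI}\sum_p\|\Jt^{(p)}(s)\|_{\hk_{s_{max}-1}(S)}$; working with two norms is the usual device that absorbs the loss of one derivative in the dependence of the GCM construction on the prescribed $\ell=1$ basis. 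For $\Jt\in\BB$, since $\Jt(s)$ is $O(r\dg)$--close to $\Jp$ on $S$ it still verifies Assumptions {\bf A4} (with $\ep$ a fixed multiple of $\epg$), so Corollary \ref{generalGCMspheres} applies on each $S=S(\Psi(s),s)$ and, exactly as in Definition \ref{SigmaJ}, yields the GCM spheres $\S[\Psi(s),s,\La(s),\Lab(s),\Jt(s)]$, the hypersurface $\Si_\Jt$, the deformation $\Phi^\Jt:\Si_\#\to\Si_\Jt$, the transition functions $F^\Jt(s)$, the normal $\nu^\Jt$ and its push-forward $\nu^\Jt_\#$ to $\Si_\#$, whose structure is given by Lemma \ref{lemmaapp}. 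I then define $\mathcal{T}\Jt$ as the unique solution on $\Si_\#$ of the linear transport system
\[
\nu^\Jt_\#\big((\mathcal{T}\Jt)^{(p)}\big)=0\quad\text{on }\Si_\#,\qquad (\mathcal{T}\Jt)^{(p)}\big|_{\ovS}=\Jp[\S_0]\big|_{\ovS},\qquad p=0,+,-,
\]
with $\Jp[\S_0]$ as in Definition \ref{definitionofJpbracketslashS}; this is well posed since $\nu^\Jt_\#$ is transverse to the level spheres and $\Si_\#$ is foliated by them with parameter $s\in\ovI$. A fixed point $\Jt=\mathcal{T}\Jt$ is precisely a solution of \eqref{desiredJt}: pushing forward by $\Phi^\Jt$ and using the fundamental property \eqref{eq:thefondamentalpropertyofJbracketbfS:itagreeswithcanonicalmode} turns $\nu^\Jt_\#(\Jt)=0$ into $\nu^\Jt\big(((\Phi^\Jt)^{-1})^\#\Jt\big)=0$ on $\Si_\Jt$; moreover $\Jt(\ovs)=\Jp[\S_0]\big|_{\ovS}$ forces, by the uniqueness in Corollary \ref{generalGCMspheres}, the $s=\ovs$ sphere to be $\S_0$, and then again by \eqref{eq:thefondamentalpropertyofJbracketbfS:itagreeswithcanonicalmode}, $((\Phi^\Jt)^{-1})^\#\Jt^{(p)}=J^{(\S_0,p)}$ on $\S_0$.

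\textbf{$\mathcal{T}$ maps $\BB$ into itself.} Applying Lemma \ref{transportlemma} to $(\mathcal{T}\Jt)^{(p)}-\Jp$ and using $\nu^\Jt_\#((\mathcal{T}\Jt)^{(p)})=0$,
\[
\sup_{s}\big\|(\mathcal{T}\Jt)^{(p)}(s)-\Jp\big\|_{\hk_{s_{max}}(S)}\les\big\|\Jp[\S_0]\big|_{\ovS}-\Jp\big\|_{\hk_{s_{max}}(\ovS)}+\sup_s\big\|\nu_\#(\Jp)+(\nu^\Jt_\#-\nu_\#)(\Jp)\big\|_{\hk_{s_{max}}(S)}.
\]
The first term is $\les r\dg$ by \eqref{csjs} and Lemma \ref{lemma:comparison-gaS-ga} (noting that $\Jp$, extended by $\pr_s\Jp=\pr_u\Jp=0$, is unchanged under the deformation $\ovS\to\S_0$). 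For the second, $\Si_\#$ is an $O(\epg\dg)$--graph over a level set of $u+s$, so $\nu_\#$ is, modulo $S$--tangential and $O(\dg)$ corrections, proportional to $e_3-(z+\Omb)e_4$; hence $\nu_\#(\Jp)$ is bounded in terms of $e_3(\Jp)$ (as $e_4(\Jp)=\pr_s\Jp=0$) and is $\les\dg$ by \eqref{assjp}, while $(\nu^\Jt_\#-\nu_\#)(\Jp)$ is controlled by Lemma \ref{lemmaapp} together with \eqref{eq:ThmGCMS4} and \eqref{eq:ThmGCMS1}. This gives $\sup_s\|(\mathcal{T}\Jt)^{(p)}(s)-\Jp\|_{\hk_{s_{max}}(S)}\les r\dg$; the bound on $\nu_\#((\mathcal{T}\Jt)^{(p)}-\Jp)$ follows similarly, writing $\nu_\#((\mathcal{T}\Jt)^{(p)})=-(\nu^\Jt_\#-\nu_\#)((\mathcal{T}\Jt)^{(p)})$, estimating $\nu^\Jt_\#-\nu_\#$ by Lemmas \ref{lemmaapp}--\ref{lemmaappa}, and absorbing the $O(\dg)$ coefficient of $\nu_\#((\mathcal{T}\Jt)^{(p)})$ on the left. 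Hence $\mathcal{T}\Jt\in\BB$; this also yields the a priori bound $\|\nu^\Jt_\#(\Jt)\|_{L^\infty(\Si_\#)}\le1$ on $\BB$, which legitimizes the use of Lemmas \ref{lemmaappa}--\ref{lemmaappb}.

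\textbf{$\mathcal{T}$ is a contraction in $\YY$.} For $\Jt,\Jh\in\BB$ the initial data in the two transport systems agree, and subtracting gives $\nu^\Jt_\#\big((\mathcal{T}\Jt)^{(p)}-(\mathcal{T}\Jh)^{(p)}\big)=-(\nu^\Jt_\#-\nu^\Jh_\#)\big((\mathcal{T}\Jh)^{(p)}\big)$. By Lemma \ref{lemmaappb} the parameters $(U,S)$ and transition functions attached to $\Jt$ and $\Jh$ differ by $\les\epg r^{-1}\|\Jt-\Jh\|_\YY$, with the analogous $\nu_\#$--derivative estimate \eqref{nucor6.11}; hence, by Lemma \ref{lemmaapp}, so do $\nu^\Jt_\#$ and $\nu^\Jh_\#$. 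Since $(\mathcal{T}\Jh)^{(p)}=\Jp+O(r\dg)$ has $\pr_y$--derivatives of size $O(1)$, the source term above is $\les\epg\|\Jt-\Jh\|_\YY$. Feeding this into Lemma \ref{transportlemma} applied to $(\mathcal{T}\Jt)^{(p)}-(\mathcal{T}\Jh)^{(p)}$ in the $\hk_{s_{max}-1}$ norm yields $\|\mathcal{T}\Jt-\mathcal{T}\Jh\|_\YY\les\epg\|\Jt-\Jh\|_\YY$, a strict contraction for $\epg$ small. The contraction mapping principle, applied on the closed ball $\BB$ of the stronger space metrized by $\YY$, then produces a unique fixed point $\Jt\in\BB$, which satisfies \eqref{JJpdiff} and \eqref{desiredJt}.

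\textbf{Main obstacle.} The crux is not the transport ODE itself but the quantitative, derivative-tame control of how the GCM construction — the deformation $\Phi^\Jt$ and the normal $\nu^\Jt_\#$ — reacts to a change of the prescribed $\ell=1$ basis $\Jt$. This is exactly the content of Lemmas \ref{lemmaapp}--\ref{lemmaappb}; the $\epg$--gain in \eqref{cor6.11}--\eqref{nucor6.11} is what makes the scheme contract, and the two-norm set-up is what accommodates the loss of one derivative. Everything else reduces to bookkeeping with Lemma \ref{transportlemma} and the a priori hypotheses \eqref{assjp}, \eqref{csjs}, \eqref{eq:ThmGCMS1}, \eqref{eq:ThmGCMS4}.
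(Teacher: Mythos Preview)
Your approach is essentially the paper's: build a map $\mathcal T$ by transporting the initial basis $\Jp[\S_0]$ along $\nu^\Jt_\#$, show it is a self-map of a ball defined by a strong norm, and a contraction in a weaker norm. The ingredients you invoke (Lemmas \ref{lemmaapp}--\ref{lemmaappb}, \ref{transportlemma}, the assumptions \eqref{assjp}, \eqref{csjs}, \eqref{eq:ThmGCMS1}, \eqref{eq:ThmGCMS4}) are exactly those the paper uses.

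There is, however, a genuine gap in your contraction step. Your weak norm $\|\cdot\|_\YY=\sup_s\|\cdot\|_{\hk_{s_{max}-1}}$ carries no $\nu_\#$--derivative, and the contraction does not close in $\YY$ alone. When you estimate $(\nu^\Jt_\#-\nu^\Jh_\#)(\mathcal T\Jh)$ via Lemma \ref{lemmaapp}, the differences $A^\Jt-A^\Jh$, $B^\Jt-B^\Jh$ involve $\nu_\#(U^\Jt-U^\Jh,S^\Jt-S^\Jh)$; by \eqref{nucor6.11} this is bounded by $\epg\|\Jt-\Jh\|_{\hk_s}+\epg\|\nu_\#(\Jt-\Jh)\|_{\hk_s}$, and the second summand is \emph{not} controlled by $\|\Jt-\Jh\|_\YY$. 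Replacing it by the crude $\BB$--bound $\|\nu_\#(\Jt-\Jh)\|\les r\dg$ yields an additive error $O(\epg r\dg^2)$, not a Lipschitz factor, so the map is not a contraction in $\YY$. The paper fixes this by taking the contraction norm to be
\[
\|H\|_{\XX_{s_{max}-1}}:=\sup_S\Big(\|H\|_{\hk_{s_{max}-1}(S)}+\|\nu_\#(H)\|_{\hk_{s_{max}-2}(S)}\Big),
\]
so that the $\nu_\#$--term coming out of \eqref{nucor6.11} is part of the metric and one closes $\|\mathcal T\Jt-\mathcal T\Jh\|_{\XX_{s_{max}-1}}\le\tfrac12\|\Jt-\Jh\|_{\XX_{s_{max}-1}}$.

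A second, smaller point: your claim that $\|\nu^\Jt_\#(\Jt)\|_{L^\infty}\le 1$ follows ``a priori'' from membership in $\BB$ is circular as written. To pass from $\nu_\#(\Jt)$ to $\nu^\Jt_\#(\Jt)$ via Lemma \ref{lemmaapp} you must bound $A,B$, hence $\nu_\#(U^\Jt,S^\Jt)$; but this is exactly what Lemma \ref{lemmaappa} provides, and that lemma already assumes $\|\nu^\Jt_\#(\Jt)\|_{L^\infty}\le 1$. The paper avoids this by writing the condition $\|\nu^\Jt_\#(\Jt)\|_{L^\infty}\le 1$ directly into the definition of the ball $\XX$ (see \eqref{Banachspace}) and then checking, as part of the self-mapping step, that $\|\nu^{T(\Jt)}_\#(T(\Jt))\|_{L^\infty}\le 1$; this is easy since $\nu^\Jt_\#(T(\Jt))=0$ allows one to write $\nu^{T(\Jt)}_\#(T(\Jt))=(\nu^{T(\Jt)}_\#-\nu^\Jt_\#)(T(\Jt))$ and invoke Lemma \ref{lemmaappb} with the pair $(\Jt,T(\Jt))$, whose hypothesis \eqref{nuJtJhO1} is then legitimately available.
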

\begin{proof} The proof follows from a standard Banach fixed-point argument. More precisely, we construct a map $T$ which sends a triplet of functions $\Jt$ on $\Si_\#$ to another triplet of functions $T(\Jt)$ on $\Si_\#$. By taking a suitable choice of the domain of $T$, we prove that $T$ is a contraction map and hence admits a unique fixed point. Finally, we conclude that the fixed point of $T$ satisfies \eqref{desiredJt}.\\ \\
\noindent{\bf Step 1. }Construction of the map $T$.\\ \\
For a triplet of functions $\Jt$ defined on $\Si_\#$ satisfying \eqref{JJpdiff}, we define
the triplet of functions $T(\Jt)$ on $\Si_\#$ by the following transport equation:
\bea
\begin{split}\label{defTJ}
\nu^\Jt_\# ( T(\Jt))&= 0,\qquad\qquad \mbox{ on }\Si_\#,\\
T(\Jt)^{(p)}&=\Jp[\S_0] ,\quad\; \mbox{ on }\ovS,
\end{split}
\eea
where $\Jp[\S_0]$ is the extension of $J^{(\S_0,p)}$ to $\RR$ by $\pr_u(\Jp[\S_0])=\pr_s(\Jp[\S_0])=0$.\\ \\
We define the following norms for triplets of functions $H^{(p)}$ on $\Si_\#$ and $1\leq s\leq s_{max}$:
\bea \label{defXX}
\|H\|_{\XX_s}&:=&\sup_{S\subset\Si_\#}\sum_p\left(\|H^{(p)}\|_{\hk_{s}(S)}+\|\nu_\#(H^{(p)})\|_{\hk_{s-1}(S)}\right).
\eea 
We denote $H\in\XX_s$ if $\|H\|_{\XX_s}<+\infty$. Then $\XX_s$ is a Banach space for $1\leq s\leq s_{max}$. \\\\
Recalling $\pr_u(\Jp)=\pr_s(\Jp)=0$, we can rewrite \eqref{csjs} as follows:
\bea \label{rewrite}
\sum_p\|\Jp[\S_0]-\Jp\|_{\hk_{s_{max}+1}(\ovS)}\leq C_0\ovr\dg,
\eea 
where $C_0$ is a given constant. We define
\beq\label{Banachspace}
\XX:= \left\{\Jt\in\XX_{s_{max}}\Big/\|\Jt-J\|_{\XX_{s_{max}}}\leq 2C \ovr \dg,\quad \left\|\nu_\#^\Jt(\Jt)\right\|_{L^\infty(\Si_\#)}\leq 1\right\}.
\eeq
By definition, $\XX$ is a closed subset of $\XX_{s_{max}-1}$.\\ \\
\noindent{\bf Step 2.} Boundedness of $T$.\\ \\
Notice that for $\Jt\in\XX$, \eqref{JJpdiff} holds true. Thus, $T$ is well-defined on $\XX$. Moreover \eqref{nuJO1} holds, and we can thus apply Lemma \ref{lemmaappa} to obtain
\bea\label{nuUSyoujie}
\|\nu_\#(U^\Jt,S^\Jt)\|_{\hk_{s_{max}}(S)}\les r\dg.
\eea
Moreover, \eqref{eq:ThmGCMS1} and \eqref{eq:ThmGCMS4}, together with Lemma \ref{lemma:comparison-gaS-ga}\footnote{Lemma \ref{lemma:comparison-gaS-ga} allows to compare the norms $\|F^\Jt\|_{\hk_{s_{max}+1}(\S)}$ and $\|F^{\Jt,\#}\|_{\hk_{s_{max}+1}(S)}$.}, imply
\bea\label{USFyoujie}
r^{-1}\|(U^\Jt,S^\Jt)\|_{\hk_{s_{max}+1}(S)}+\|(F^{\Jt,\#})\|_{\hk_{s_{max}+1}(S)}\les\dg.
\eea
Recalling \eqref{assjp}, \eqref{nujnbsl}, \eqref{defTJ}, \eqref{nuUSyoujie} and $\|J\|_{\hk_{s_{max}+1}(S)}\les r$, we infer
\bea 
\begin{split}\label{nuJtTJtJ}
\|\nu^\Jt_\#(T(\Jt)-J)\|_{\hk_{s_{max}}(S)}&=\|\nu^\Jt_\#(J)\|_{\hk_{s_{max}}(S)}\\
&\les\|\nu_\#(J)\|_{\hk_{s_{max}}(S)}+\epg\dg r^{-2}\|\pr_{y^a}(J)\|_{\hk_{s_{max}}(S)}\\
&\les \dg+\epg\dg\les\dg.    
\end{split}
\eea 
Applying Lemma \ref{transportlemma} with $h=T(\Jt)-J$ and recalling \eqref{rewrite}, for $\epg$ small enough and $\ovr$ large enough, we obtain
\bea 
\begin{split}\label{Boundednessstep1}
\|T(\Jt)-J\|_{\hk_{s_{max}}(S)}&\leq(1+O(\epg))\|T(\Jt)-J\|_{\hk_{s_{max}}(\ovS)}+\sup_S\|\nu^\Jt_\#(T(\Jt)-J)\|_{\hk_{s_{max}}(S)} \\
&\leq \frac{3}{2}C_0\dg\ovr.
\end{split}
\eea 
Moreover, recalling \eqref{nujnbsl}, \eqref{nuUSyoujie}, \eqref{USFyoujie}\footnote{\eqref{nuUSyoujie} and \eqref{USFyoujie} imply the boundedness of $A$ and $B$ in \eqref{nujnbsl}.}, \eqref{nuJtTJtJ} and \eqref{Boundednessstep1}, we have
\beaa 
\|\nu_\#(T(\Jt)-J)\|_{\hk_{s_{max}-1}(S)}&\les&\|\nu^\Jt_\#(T(\Jt)-J)\|_{\hk_{s_{max}-1}(S)}+\epg r^{-2}\|\pr_{y^a}(T(\Jt)-J)\|_{\hk_{s_{max}-1}(S)}\\
&\les&\dg+\epg r^{-2}\dg\|T(\Jt)-J\|_{\hk_{s_{max}}(S)}\les \dg.
\eeaa 
Thus, for $\ovr$ large enough, we obtain
\bea\label{Boundednessstep2}
\|\nu_\#(T(\Jt)-J)\|_{\hk_{s_{max}-1}(S)}&\leq& \frac{1}{2}C_0\dg\ovr.
\eea 
From \eqref{Boundednessstep1} and \eqref{Boundednessstep2} we obtain
\bea\label{Boundednessstep3}
\|T(\Jt)-J\|_{\hk_{s_{max}}(S)}+\|\nu_\#(T(\Jt)-J)\|_{\hk_{s_{max}-1}(S)}\leq 2C_0\dg\ovr,
\eea
which implies 
\bea \label{tj1}
\|T(\Jt)-J\|_{\XX_{s_{max}}}\leq 2C_0\dg\ovr.
\eea 
Moreover, \eqref{Boundednessstep3} implies that
\bea \label{Boundednessstep4}
\|\dk_\#(T(\Jt))\|_{\hk_{s_{max}-1}(S)}\les r.
\eea 
Next, recalling \eqref{nujnbsl}, \eqref{defTJ}, \eqref{defXX}, \eqref{Boundednessstep3}, \eqref{Boundednessstep4} and applying Lemma \ref{lemmaappb}, we infer
\beaa 
\left|\nu_\#^{T(\Jt)}(T(\Jt))\right|&=&\left|\nu_\#^{T(\Jt)}(T(\Jt))-\nu_\#^\Jt(T(\Jt))\right|\\
&\les &\left|A(U^{T(\Jt)},S^{T(\Jt)},F^{T(\Jt),\#})-A(U^{\Jt},S^{\Jt},F^{\Jt,\#})\right||\nu_\#(T(\Jt))|\\
&+&\left|B(U^{T(\Jt)},S^{T(\Jt)},F^{T(\Jt),\#})-B(U^{\Jt},S^{\Jt},F^{\Jt,\#})\right||\pr_{y^a}(T(\Jt))|\\
&\les&\left|\dk_\#^{\leq 1}(U^{T(\Jt)}-U^{\Jt},S^{T(\Jt)}-S^{\Jt})\right|+\left|F^{T(\Jt),\#}-F^{\Jt,\#}\right|\\
&\les&\epg r^{-1}\|T(\Jt)-\Jt \|_{\hk_2(S)}+\epg r^{-1}\|\nu_\#(T(\Jt)-\Jt)\|_{\hk_2(S)}\\
&\les&\epg r^{-1}\|T(\Jt)-J\|_{\XX_{s_{max}}}\les \dg,
\eeaa 
which implies for $\dg$ small enough
\bea\label{tj2}
\left\|\nu_\#^{T(\Jt)}(T(\Jt))\right\|_{L^\infty(\Si_\#)}\leq 1.
\eea
Notice that \eqref{tj1} and \eqref{tj2} imply $T(\Jt)\in\XX$. Hence, we have $T: \XX\rightarrow \XX$. \\ \\
\noindent{\bf Step 3.} $T$ is a contraction map.\\ \\
For two triplets of functions $\Jt$ and $\Jh$ in $\XX$, we have
\beaa 
\|\nu^\Jt_\#(T(\Jt)-T(\Jh))\|_{\hk_{s_{max}-1}(S)}=\|\nu^\Jt_\#(T(\Jh))\|_{\hk_{s_{max}-1}(S)}=\|(\nu^\Jt_\#-\nu^\Jh_\#)(T(\Jh))\|_{\hk_{s_{max}-1}(S)}.
\eeaa 
Recall from \eqref{nujnbsl} that 
\beaa 
\nu_\#^\Jt-\nu_\#^\Jh&=&O\left(\dk_\#^{\leq 1}(U^\Jt-U^\Jh,S^\Jt-S^\Jh), F^{\Jt,\#}-F^{\Jh,\#}\right)\nu_\#\\
&+&\frac{\epg }{r^2}O\left(\dk_\#^{\leq 1}(U^\Jt-U^\Jh,S^\Jt-S^\Jh), F^{\Jt,\#}-F^{\Jh,\#}\right)\pr_{y^a}.
\eeaa 
Applying Lemma \ref{lemmaappb}, \eqref{defXX} and \eqref{Boundednessstep4}, we obtain
\bea 
\begin{split}\label{nuTJtTJh}
\|\nu^\Jt_\#(T(\Jt)-T(\Jh))\|_{\hk_{s_{max}-1}(S)}&\les\|\dk_\#^{\leq 1}(U^\Jt-U^\Jh,S^\Jt-S^\Jh), F^{\Jt,\#}-F^{\Jh,\#}\|_{\hk_{s_{max}-1}(S)}\\
&\les\epg \left(\|\Jt-\Jh\|_{\hk_{s_{max}-1}(S)}+\|\nu_\#(\Jt-\Jh)\|_{\hk_{s_{max}-2}(S)} \right)\\
&\les \epg \|\Jt-\Jh\|_{\XX_{s_{max}-1}}.    
\end{split}
\eea 
Applying Lemma \ref{transportlemma} with $h=T(\Jt)-T(\Jh)$ and Recalling that $T(\Jt)=T(\Jh)$ on $\ovS$, we obtain
\bea\label{TJtTJh}
\|T(\Jt)-T(\Jh)\|_{\hk_{s_{max}-1}(S)}&\les&\epg \|\Jt-\Jh\|_{\XX_{s_{max}-1}}.
\eea 
Moreover, we have from \eqref{nujnbsl}, \eqref{defXX} and \eqref{nuTJtTJh}
\beaa 
\|\nu_\#(T(\Jt)-T(\Jh))\|_{\hk_{s_{max}-2}(S)}&\les& \|\nu^\Jt_\#(T(\Jt)-T(\Jh))\|_{\hk_{s_{max}-2}(S)}+\epg r^{-2} \|T(\Jt)-T(\Jh)\|_{\hk_{s_{max}-1}(S)}\\
&\les&\epg \|\Jt-\Jh\|_{\XX_{s_{max}-1}}+\epg \|T(\Jt)-T(\Jh)\|_{\hk_{s_{max}-1}(S)}.
\eeaa 
Combining with \eqref{TJtTJh}, we obtain 
\beaa 
\|T(\Jt)-T(\Jh)\|_{\hk_{s_{max}-1}(S)}+\|\nu_\#(T(\Jt)-T(\Jh))\|_{\hk_{s_{max}-2}(S)}\les \epg \|\Jt-\Jh\|_{\XX_{s_{max}-1}}.
\eeaa 
Hence, by \eqref{defXX} and for $\epg$ small enough, we have
\bea\label{contractionT}
\|T(\Jt)-T(\Jh)\|_{\XX_{s_{max}-1}}\leq \f12 \|\Jt-\Jh\|_{\XX_{s_{max}-1}},
\eea
which implies that $T$ is a contraction map on $(\XX,\|\cdot\|_{\XX_{s_{max}-1}})$.\\ \\
\noindent{\bf Step 4.} Fixed point of $T$.\\ \\
Since $\XX$ is a closed subset of $\XX_{s_{max}-1}$, we deduce that $(\XX,\|\cdot\|_{\XX_{s_{max}-1}})$ is a complete metric space. Then, applying Banach fixed-point theorem to $T:\XX\rightarrow\XX$, we obtain a unique fixed point $\Jt\in\XX$ of $T$, i.e.
\bea \label{fixedpointT}
T(\Jt)=\Jt.
\eea
Injecting \eqref{fixedpointT} into \eqref{defTJ}, we obtain
\beaa 
\nu^\Jt_\#(\Jt)&=&0,\qquad\qquad \mbox{ on }\Si_\#,\\
\Jt^{(p)}&=&\Jp[\S_0],\quad \;\mbox{ on }\ovS.
\eeaa 
Finally, we have
\beaa 
\nu^\Jt \left( (\Phi^\Jt)^{-1})^\# (\Jt)\right)=(\Phi^\Jt)_\#(\nu^\Jt_\#)\left( (\Phi^\Jt)^{-1})^\# (\Jt)\right) =\nu^\Jt_\#(\Jt)=0,
\eeaa 
which implies \eqref{desiredJt}. This concludes the proof of Theorem \ref{generalcoro}.
\end{proof}
\begin{definition}\label{hypersurfaceSigma}
Let $P(s):=(\Psi(s),s,\La(s),\Lab(s))$ be a curve satisfying \eqref{curvecon}. By Theorem \ref{generalcoro}, we define a triplet of functions $\Jt^{(p)}$ and a hypersurface of the form
\bea\label{hyper}
\Si &=& \bigcup_{s\geq \ovs} \S[P(s)]=\bigcup_{s\geq\ovs}\S[\Psi(s),s,\La(s),\Lab(s),\Jt^{(p)}].
\eea
We define the following family of triplets on $\Si$:
\bea\label{jpsjpsjps}
\JpS:=(\Phi^{-1})^\#(\Jt^{(p)}).
\eea 
According to the construction in \eqref{hyper}, there is an $\S$--adapted null frame $(e_3^\S,e_4^\S,e_1^\S,e_2^\S)$ on every GCM sphere $\S[P(s)]$.

We also assume the transversality conditions:
\bea\label{transver} 
\xi^\S=0,\qquad \om^\S=0,\qquad \etab^\S=-\ze^\S,\quad \mbox{ on }\Si,
\eea
and
\bea\label{e4ue4r}
e_4^\S(\rS)=1,\qquad e_4^\S(\uS)=0, \quad \mbox{ on }\Si,
\eea 
where $\uS$ is defined as
\bea\label{defu}
\uS&:=&c_0-\rS\quad \mbox{on }\Si,\qquad \uS\big|_{\S_0}=\ovu.
\eea
\end{definition}
\begin{remark}\label{rem4.5}
Let us provide below justifications for introducing the transversality conditions \eqref{transver} and \eqref{e4ue4r}:
\begin{enumerate}
\item \eqref{transver} and \eqref{e4ue4r} are consistent with a local extension by an outgoing geodesic foliation initialized on $\Si$. The use of transversality conditions instead of a local extension is chosen here to have intrinsic definitions on $\Si$.
\item The role of the transversality conditions \eqref{transver} is to make sense of the Ricci coefficients $\etaS$, $\xibS$ and $\ombS$ on $\Si$ through the formulae:\footnote{Note that the L.H.S. of \eqref{trans1} are well defined on $\Si$ since $\nu^\S$ is tangent to $\Si$.}
\begin{align}
\begin{split}\label{trans1}
\g(\D_{\nu^\S}e_4^\S,e_a^\S)&=2\etaS_a+2\bS\xi_a^\S=2\etaS_a,\\
\g(\D_{\nu^\S}e_3^\S,e_a^\S)&=2\xib^\S_a+2\bS\etab^\S_a=2\xibS_a-2\bS\zeS_a,\\
\g(\D_{\nu^\S}e_3^\S,e_4^\S)&=4\ombS-4\bS\omS=4\ombS.
\end{split}
\end{align}
\item The role of the transversality conditions \eqref{e4ue4r} is to make sense of $e_3^\S(\rS)$ and $e_3^\S(\uS)$ on $\Si$ through the formulae:
\begin{align}
\begin{split}\label{trans2}
    e_3^\S(\rS)&=\nu^\S(\rS)-\bS e_4^\S(\rS)=\nu^\S(\rS)-\bS,\\
    e_3^\S(\uS)&=\nu^\S(\uS)-\bS e_4^\S(\uS)=\nu^\S(\uS).
\end{split}
\end{align}
\end{enumerate}
\end{remark}
We have the following proposition.
\begin{proposition}\label{propositionSigma}
Let $\Si$ a hypersurface given by Definition \ref{hypersurfaceSigma}. Then, the following properties hold:
\begin{enumerate}
\item The curve $\ga(s)=(\Psi(s),s,0,0)$ of South Poles of the background spheres $S(\Psi(s),s)$ verifies $\ga(s)\subset\Si$, $s\in\ovI$. 
\item On $\S$, the following GCM conditions hold
\bea
\begin{split}
\label{GCMtj}
\ka^\S&=\frac{2}{r^\S},\\
\kab^\S&=-\frac{2}{r^\S}\Up^\S+\Cb^\S_0+\sum_p \CbpS\JpS,\\
\mu^\S&=\frac{2m^\S}{(r^\S)^3}+M^\S_0+\sum _p\MpS \JpS,
\end{split}
\eea
where $\JpS$ is defined by \eqref{jpsjpsjps}. Moreover,
\bea\label{GCMlalab}
(\div^\S f)_{\ell=1}=\La,\qquad (\div^\S \fb)_{\ell=1}=\Lab,
\eea
where $(f,\fb,\la)$ are the transition parameters of the frame transformation from the background frame $(e_3,e_4,e_1,e_2)$ to the adapted frame $(e_3^\S,e_4^\S,e_1^\S,e_2^\S)$ and the $\ell=1$ modes are defined w.r.t. $\JpS$. The triplets of constants $\La^\S,\Lab^\S$ depend smoothly on the surfaces $\S$ and
\beaa
\La^{\S_0}=\La_0,\qquad \Lab^{\S_0}=\Lab_0.
\eeaa
\item There are two maps $\Xi_{S}:\RR_{S} \to \S$ and $\Xi_{N}:\RR_{N} \to \S$ given by
\bea
\begin{split}\lab{XiSN}
&\Xi_{S,N}:(u,s,y_{S,N}^1,y_{S,N}^2)\mapsto \\ &\left(u+U_{S,N}(y_{S,N}^1,y_{S,N}^2,u,s,\La,\Lab),s+S_{S,N}(y_{S,N}^1,y_{S,N}^2,u,s,\La,\Lab),y_{S,N}^1,y_{S,N}^2\right)
\end{split}
\eea
with $U_S,S_S$ vanishing at South Pole and verify the following transition condition
\beaa
\Xi_N=\Xi_S \circ \varphi_{NS},
\eeaa
where $\varphi_{NS}$ is the transition map between the 2 coordinates charts. Using these two maps, we can define a map $\Xi: S(u,s) \rightarrow \S$ by
\bea
\Xi=\Xi_{S} \;\mbox{ on }\RR_S,\qquad \Xi=\Xi_N \;\mbox{ on } \RR_{N}.
\eea
\item Let $\nu^\S$ be the unique vectorfield tangent to the hypersurface $\Sigma$, normal to $\S$, and normalized by $\g(\nu^\S,e_4^\S)=-2$. There exists a unique scalar function $b^\S$ on $\Sigma$ such that $\nu^\S$ is given by
\beaa
\nu^\S=e_3^\S+b^\S e_4^\S.
\eeaa
\item The transversality conditions hold:
\beaa
\xi^\S=0,\qquad \om^\S=0,\qquad \etab^\S=-\ze^\S,\quad \mbox{ on }\Si,
\eeaa
and 
\beaa
e_4^\S(\rS)=1,\qquad e_4^\S(\uS)=0,\quad \mbox{ on }\Si.
\eeaa
\item The Ricci coefficients $\ks^\S,\ksb^\S,\chih^\S,\chibh^\S,\zeS$ are well defined on each sphere $\S$ of $\Sigma$, and hence on $\Sigma$. The same holds true for all curvature coefficients $\a^\S,\b^\S,\rho^\S,\rhod^\S,\bb^\S,\aa^\S$. Taking into account our transversality conditions \eqref{transver} on $\Si$, we remark that all the Ricci coefficients are well defined on $\Si$ including $\etaS$, $\xibS$ and $\ombS$, see item 2 in Remark \ref{rem4.5}.
\item The $\ell=1$ modes $\JpS$ verify:
\beaa 
\nu^\S(\JpS)=0,\qquad\JpS\big|_{\S_0}=J^{(\S_0,p)},\qquad p=0,+,-.
\eeaa 
\item The hypersurface $\Si$ constructed above is a smooth hypersurface. 
\end{enumerate}
\end{proposition}
\begin{remark}
Since we always work on the hypersurface $\Si$, with no risk of confusion, we denote
\bea\label{defXi} 
\Xi_{S,N}(s,y_{S,N}^1,y_{S,N}^2)&:=&\Xi_{S,N}(\Psi(s),s,y_{S,N}^1,y_{S,N}^2)
\eea
in the following context.
\end{remark}
\begin{proof}[Proof of Proposition \ref{propositionSigma}]
Properties 1-7 are immediate consequences of Definition \ref{hypersurfaceSigma}. So we only prove Property 8. For this purpose, we first prove the smoothness of the functions $\Xi_{p}$ for $p=S,N$. Notice that
\begin{align}
\begin{split}\label{smooth}
\pr_s\Xi_p(s,y^1_p,y^2_p)&=\left(\Psi'(s)+\pr_P U_p(\c)P'(s),1+\pr_P S_p(\c)P'(s),0,0\right),\\
\pr_{y^a_p} \Xi_p(s,y^1_p,y^2_p)&=\left(\pr_{y^a_p}U_p(\c),\pr_{y^a_p}S_p(\c),\de_{a1},\de_{a2}\right),
\end{split}
\end{align}
where
\beaa
\pr_P U_p(\c)P'(s)&=&\Psi'(s)\pr_u U_p(\c)+\pr_s U_p(\c)+\La'(s)\pr_\La U_p(\c)+\Lab'(s)\pr_\Lab U_p(\c),\\
\pr_P S_p(\c)P'(s)&=&\Psi'(s)\pr_u S_p(\c)+\pr_s S_p(\c)+\La'(s)\pr_\La S_p(\c)+\Lab'(s)\pr_\Lab S_p(\c).
\eeaa
Thus, in view of the smoothness of $U_p,S_p$ w.r.t. the parameters $\La,\Lab$ and $u,s$, which are statements 7-9 in Theorem \ref{Theorem:ExistenceGCMS1} and Corollary 6.11 in \cite{KS:Kerr1}, we deduce that $\Xi_p$ for $p=S,N$ are smooth 
and they are immersions at every point. Recall that
\beaa
\Xi_p(s,y_p^1,y_p^2)&=&\left(\Psi(s)+U(y_p^1,y_p^2,P(s)),s+S(y_p^1,y_p^2,P(s)),y_p^1,y_p^2\right).
\eeaa
According to \eqref{smooth} and the implicit function theorem, at every point $(s,y_p^1,y_p^2)$, its inverse map is locally given by
\bea\label{inverseXiS}
\Xi_p^{-1}(h(z_p,y_p^1,y_p^2),z_p,y_p^1,y_p^2)=(s(z_p,y_p^1,y_p^2),y_p^1,y_p^2),
\eea
where $h(z_p,y_p^1,y_p^2),s(z_p,y_p^1,y_p^2)$ are smooth functions.

Now we recall the following result, see Theorem 2.1.2 in \cite{berger} for a proof.
\begin{theorem}\label{submanifold}
$V$ is a $d$-dimensional smooth submanifold of $\mathbb{R}^n$ if for every point $x\in V$, there exists an open neighborhood $U\subset\mathbb{R}^n$ of $x$, an open domain $\Omega\subset \mathbb{R}^d$ and a map $g:\Omega\to \mathbb{R}^n$ such that
\begin{enumerate}
\item $g$ is a smooth map.
\item $g$ is homeomorphism from $\Om$ to $U\cap V$.
\item $g$ is an immersion at every point of $\Om$.
\end{enumerate}
\end{theorem}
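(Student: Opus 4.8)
The plan is to exhibit, near an arbitrary point of $V$, a \emph{submanifold chart}: a diffeomorphism of an open subset of $\mathbb{R}^n$ onto an open subset of $\mathbb{R}^n$ that carries the corresponding piece of $V$ onto an open piece of the coordinate slice $\mathbb{R}^d\times\{0\}$. Once this is achieved at every point, $V$ is by definition a $d$-dimensional embedded submanifold of $\mathbb{R}^n$, and the induced charts on $V$ are smoothly compatible, which provides the smooth structure. So I would fix $x\in V$ together with the data $(\Omega,g)$ from the statement, choosing $t_0\in\Omega$ with $g(t_0)=x$. Since $g$ is an immersion at $t_0$, the Jacobian $dg_{t_0}$ has rank $d$, so after permuting the coordinates of $\mathbb{R}^n$ we may assume its top $d\times d$ block is invertible. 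Writing $\pi\colon\mathbb{R}^n\to\mathbb{R}^d$ for the projection onto the first $d$ coordinates, the map $\pi\circ g$ has invertible differential at $t_0$, so the inverse function theorem produces an open neighbourhood $\Omega_0\subset\Omega$ of $t_0$ on which $\psi:=\pi\circ g$ is a diffeomorphism onto an open set $W\subset\mathbb{R}^d$. Setting $h:=g\circ\psi^{-1}\colon W\to\mathbb{R}^n$, one has $\pi\circ h=\mathrm{id}_W$, hence $h(y)=(y,\phi(y))$ for a smooth $\phi\colon W\to\mathbb{R}^{n-d}$, and therefore $g(\Omega_0)$ equals the graph $\Gamma_\phi=\{(y,\phi(y)):y\in W\}$.

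The next step is where hypothesis $(2)$ enters, and the argument really hinges on it. Because $\psi$ is a diffeomorphism and $g$ is a homeomorphism of $\Omega$ onto $U\cap V$ for the subspace topology, $g(\Omega_0)$ is relatively open in $V$, so $g(\Omega_0)=V\cap U_0$ for some open $U_0\subset\mathbb{R}^n$. Replacing $U_0$ by $U_0\cap\pi^{-1}(W)$ --- still an open neighbourhood of $x$, since $x\in\Gamma_\phi\subset\pi^{-1}(W)$ --- we may assume $U_0\subset W\times\mathbb{R}^{n-d}$ and $V\cap U_0=\Gamma_\phi$. I would then define $\Theta\colon U_0\to\mathbb{R}^n$ by $\Theta(y,z)=(y,\,z-\phi(y))$ for $y\in W$ and $z\in\mathbb{R}^{n-d}$; this is a diffeomorphism of $U_0$ onto an open subset of $\mathbb{R}^n$ with inverse $(y,w)\mapsto(y,w+\phi(y))$, and it sends $V\cap U_0=\Gamma_\phi$ onto $\Theta(U_0)\cap(\mathbb{R}^d\times\{0\})$. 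Hence $(U_0,\Theta)$ is a submanifold chart at $x$.

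As $x$ ranges over $V$ these charts cover $V$, so $V$ is a $d$-dimensional embedded submanifold of $\mathbb{R}^n$; the associated chart on $V$ near $x$ is $\psi=\pi|_{V\cap U_0}$, with smooth inverse $h$, and for two overlapping such charts the transition map is a restriction of a composition $\pi\circ h$, hence smooth, so the smooth structure on $V$ is well defined. In the present application one applies the criterion to $g=\Xi_{S}$ and to $g=\Xi_{N}$ (equivalently their restrictions to the parameter $s$ via \eqref{defXi}), whose smoothness in all arguments has just been established from the parameter-regularity statements of Theorem~\ref{Theorem:ExistenceGCMS1}, and checks --- using $\Psi'\approx -1$ and the smallness of $(U,S)$ and their derivatives --- that they are injective immersions and homeomorphisms onto their images; this then yields Property~8.

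The only genuinely delicate step is the use of $(2)$: for a merely injective immersion the set $g(\Omega_0)$ need not be relatively open in $V$ (its image can accumulate on itself, as in a figure-eight curve), and then no $U_0$ with $V\cap U_0=\Gamma_\phi$ exists, so $\Theta$ fails to straighten $V$. Making precise that the homeomorphism property forces relative openness --- and that this survives the shrinkage to $U_0\cap\pi^{-1}(W)$ --- is the heart of the proof; the remainder is a routine application of the inverse function theorem together with a linear change of coordinates.
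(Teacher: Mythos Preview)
Your proof is correct and is the standard graph-straightening argument via the inverse function theorem. The paper does not prove Theorem~\ref{submanifold} at all --- it simply records the statement and cites Theorem~2.1.2 in \cite{berger} for the proof --- so there is no in-paper argument to compare with; your write-up would serve as a self-contained substitute for that citation.
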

For every $x\in \Si$, without loss of generality, we assume that $x$ can be written as
\begin{equation*}
x=\Xi_S(s_0,y^1_0,y^2_0)=\left(\Psi(s_0)+U(y_0^1,y_0^2,P(s_0)),s_0+S(y_0^1,y_0^2,P(s_0)),y_0^1,y_0^2\right)
\end{equation*}
in the South coordinate chart. We define
\begin{equation*}
    U_x:=B(x,\varepsilon):=\left\{q\in\RRR^4\Big/\, \|q-x\|_{\RRR^4}\leq \varepsilon\right\}.
\end{equation*}
Then, for every\footnote{By taking $\vep$ small enough, $q$ can be written in the South coordinate chart.}
\begin{equation*}
q=(\Psi(s)+U(y^1,y^2,P(s)),s+S(y^1,y^2,P(s)),y^1,y^2)\in U_x\cap \Si,
\end{equation*}
we have
\beaa
\left|s+S(y^1,y^2,P(s))- s_0-S(y_0^1,y_0^2,P(s_0))\right|,\; |y^1-y^1_0|,\; |y^2-y^2_0|&\leq& \varepsilon.
\eeaa
Thus, in view of the smallness dependence of $S$ w.r.t. the parameters $(s,y^1,y^2)$, we deduce that $q\in\NN$ where
\beaa
\NN:= \left\{(s,y^1,y^2)\in\RRR^3/\, |s-s_0|, \, |y^1-y^1_0|,\, |y^2-y^2_0|\leq  2\vep\right\}.
\eeaa 
For $\vep$ small enough, $\Xi_S$ is a homeomorphism from $\NN$ to its image. We then denote
\beaa
\Om := \Xi_S^{-1} (U_x\cap \Si)\subset \NN,
\eeaa
where $\Xi_S^{-1}$ is defined in \eqref{inverseXiS}. Hence, the map
\beaa
\Xi_S \big|_{\Om} : \Om &\mapsto& U_x\cap \Si,
\eeaa 
is smooth, homeomorphism and is an immersion. The similar conclusion holds for $\Xi_N$. Applying Theorem \ref{submanifold}, we deduce that $\Si$ is a smooth hypersurface as stated. This concludes the proof of Proposition \ref{propositionSigma}.
\end{proof}
\subsection{Estimates for the transition functions \texorpdfstring{$(f,\fb,\la)$}{}}
Let $\Si$ be a smooth spacelike hypersurface defined as in Definition \ref{hypersurfaceSigma}. We define the scalars\footnote{Recall that \eqref{e4ue4r} allows to make sense of $e_3^\S(\uS)$ and $e_3^\S(\rS)$, see item 3 in Remark \ref{rem4.5}.}
\bea\label{zSOmbSfirst}
\zS:=e_3^\S(u^\S),\qquad \Omb^\S:=e_3^\S(\rS),
\eea
and the renormalized quantities
\bea
\zcS:=\zS-2,\qquad \OmbcS:=\OmbS+\UpS.
\eea
We define the following quantities along $\Si$
\bea\label{BBbDfirst}
\BS:= (\divS \etaS)_{\ell=1}, \qquad \BbS:=(\divS \xibS)_{\ell=1}, \qquad \DS:=\ov{\bcS}=\ov{\bS}+1+\frac{2\mS}{\rS},
\eea
where the $\ell=1$ modes are defined w.r.t. $\JpS$.

The goal of this section is to prove Proposition \ref{th24}. To this end, we first prove a series of lemmas which will be used throughout this paper.
\begin{lemma}\label{resume}
We have the following identities:
\begin{align}
\begin{split}
e_a^\S(\zS )&=(\zeS_a-\etaS_a)\zS,\\
e^\S_a(\OmbS) &= (\zeS_a-\etaS_a)\OmbS-\xib_a^\S,\\
\bS &=-\zS-\OmbS. \label{eqresume}
\end{split}
\end{align}
\end{lemma}
\begin{proof}
By Lemma \ref{comm}, we have
\beaa
[e_a,e_3]=\frac{1}{2} \ksb e_a+\chibh_{ab} e_b-\nab_{3}e_a +(\ze_a-\eta_a)e_3 -\xib_a e_4.
\eeaa
Since $e_a^\S(u^\S)=e_a^\S(r^\S)=0$, $e_4^\S(\rS)=1$ and $e_4^\S(\uS)=0$, we deduce that
\beaa
e_a^\S (e_3^\S (u^\S) )&=&[e_a^\S,e_3^\S]u^\S=\left[\frac{1}{2}\ksb^\S e^\S_a+\chibh_{ab}^\S e^\S_b-\nab_{e_3^\S}e_a^\S+(\ze^\S_a-\eta^\S_a)e^\S_3-\xib^\S_a e^\S_4\right]u^\S \\
&=&(\ze_a^\S-\eta_a^\S) e_3^\S (u^\S), \\
e_a^\S (e_3^\S (r^\S) )&=&[e_a^\S,e_3^\S]r^\S=\left[\frac{1}{2}\ksb^\S e^\S_a+\chibh_{ab}^\S e^\S_b-\nab_{e_3^\S}e_a^\S+(\ze^\S_a-\eta^\S_a)e^\S_3-\xib^\S_a e^\S_4\right]r^\S \\
&=&(\ze_a^\S-\eta_a^\S) e_3^\S (r^\S)-\xib_a^\S.
\eeaa
We deduce, in view of the definition of $\zS$ and $\OmbS$,
\begin{align}
\begin{split}\label{zombsea}
e^\S_a(z^\S) &= e^\S_a(e^\S_3(\uS))=(\zeS_a-\etaS_a)\zS,\\
e^\S_a(\OmbS) &= e^\S_a (e^\S_3 (\rS))=(\zeS_a-\etaS_a)\OmbS-\xib_a^\S,
\end{split}
\end{align}
which implies the first two identities in \eqref{eqresume}.

Next, we prove the last identity in \eqref{eqresume}. In view of the definition of $\nu^\S$ and $\zS$ we make use of \eqref{e4ue4r} to deduce
\beaa
\nu^\S (\uS) =e_3^\S(\uS)+b^\S e_4^\S(\uS)=z^\S.
\eeaa
On the other hand, since $u^\S=c_0-r^\S$ along $\Si$, we have
\bea\label{zbO}
z^\S=\nu^\S (\uS)=-\nu^\S (\rS)=-e_3^\S(r^\S)-\bS e_4^\S (r^\S) =-\OmbS -\bS,
\eea
where we used \eqref{e4ue4r} and \eqref{zSOmbSfirst}. This concludes the proof of Lemma \ref{resume}.
\end{proof}
\begin{lemma}\label{dint}
For every scalar function $h$, we have the formula
\bea
\nu^\S \left(\int_\S h \right)&=& z^\S \int_\S (z^\S)^{-1} \left(\nu^\S(h) + (\ksb^\S +b^\S \ks^\S) h \right).\lab{dinth}
\eea
In particular
\beaa
\nu^\S (\rS) &=& \frac{\rS}{2}z^\S\ov{(z^\S)^{-1}(\ksb^\S +b^\S \ks^\S)},
\eeaa
where the average is with respect to $\S$.
\end{lemma}
\begin{proof}
    See Lemma 5.2.2 in \cite{KS:main}.
\end{proof}
\begin{lemma}\label{nunununu}
We have the following properties for the transition coefficients $(f,\fb,\la)$:
\begin{align}
\begin{split}\label{nuffb}
\nab^\S_{\nu^\S}(f)=&2(\etaS-\eta)-\f12\ka(\fb+\bS f)+2f\omb+F\cdot \Ga_b+\lot ,\\
\nab^\S_{\nu^\S}(\fb)=&2(\xibS-\xib)-\f12\fb\kab-2\omb(\fb-\bS f)+\bS\left(2\nab^\S (\la)-\f12 \fb\ka\right)\\
&+F\cdot\Ga_b+\lot,\\
\nabS_{\nu^\S}(\la)=&2(\ombS-\omb)-2\omb\ovla+F\c\Ga_b+\lot,
\end{split}
\end{align}
where $\lot$ denotes terms which are linear in $\Ga_g,\Ga_b$ and linear and higher order in $F$.\footnote{Here $\lot$ is different from that of Proposition \ref{Prop:transformation-formulas-generalcasewithoutassumptions}.}
\end{lemma}
\begin{proof}
Schematically, from the transformation formulas for $\eta$, $\xib$ and $\omb$ in Proposition \ref{Prop:transformation-formulas-generalcasewithoutassumptions}, we have
\beaa
\nab_3^\S (f)&=&2(\etaS-\eta)-\f12\fb\ka+2\omb f+F\cdot \Ga_b +\lot,\\
\nab_3^\S (\fb)&=&2(\xibS-\xib)-\f12\fb\kab-2\omb\fb+F\cdot \Ga_b +\lot,\\
\nab_3^\S (\la) &=&2(\ombS-\omb)-2\omb\ovla+F\c\Ga_b+\lot,
\eeaa
where $F=(f,\fb,\ovla)$ and $\lot$ denotes terms which are linear in $\Ga_g,\Ga_b$ and linear and higher order in $F$. Recall also that the $e_4^\S$ derivatives of $F$ are fixed by our transversality condition \eqref{transversalityru}. More precisely we have from \eqref{transversalityru} and the transformation formulas of $\xi$, $\om$, $\etab$ and $\ze$ of Proposition \ref{Prop:transformation-formulas-generalcasewithoutassumptions}
\begin{align}
\begin{split}\label{e4ffbla}
\nab_4^\S(f)&= -\f12\ka f+F\cdot \Ga_b+\lot ,\\
\nab_4^\S(\fb)&= 2\nab^\S(\la)+2\omb f -\f12 \fb\ka+F\cdot \Ga_b+\lot,\\
\nab_4^\S(\la)&=F\c\Ga_b +\lot
\end{split}
\end{align}
Using $\nu^\S=e_3^\S +\bS e_4^\S$, we obtain \eqref{nuffb}. This concludes the proof of Lemma \ref{nunununu}.
\end{proof}
\begin{lemma}\label{tildelemma}
We have, for any scalar function $h$ on $\RR$, and any $1\leq l\leq s$,
\beaa
\|({\nu^\S})^l h\|_{\hk_{s-l}(\S)} \les r\sup_{\RR}\Big(|\widetilde{\dk}^{\leq s}h|+\dg|\dk^{\leq s}h|\Big)+\|(\nabS_{\nu^\S})^{\leq l-1}(F,\bS-b)\|_{\hk_{s-l}(\S)}\sup_{\RR}|\dk^{\leq s}h|,
\eeaa
where 
\beaa 
\dkt:=(e_3-(z+\Omb)e_4,\dkb),\qquad b:=-z-\Omb.
\eeaa
\end{lemma}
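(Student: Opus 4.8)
The plan is to reduce $\nu^\S$ to its principal part, which is exactly the first entry $T:=e_3-(z+\Omb)e_4$ of $\dkt$, and to absorb the rest into error terms whose coefficients are built from $F=(f,\fb,\ovla)$ and from $\bS-b$. First I would use Lemma \ref{Lemma:Generalframetransf} together with $\nu^\S=e_3^\S+\bS e_4^\S$ to write, on each leaf $\S$,
\[
\nu^\S=T+\mathcal{E},\qquad \mathcal{E}=(\bS-b)\,e_4+\sum_{X\in\{e_3,e_4,e_1,e_2\}}c_X\,X ,
\]
where $b=-z-\Omb$ and the $c_X$ are scalar functions on $\S$ of size $O(|F|+|F|^2)$; here one uses $|\bS|\les1$, which follows from Lemma \ref{step22} and Proposition \ref{bS}. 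Note that $T$ is, by definition, a single $\dkt$--derivative, and that $|Xh|\les|\dk^{\leq1}h|$ pointwise on $\S$ for every $X$ above.

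Next I would prove, by induction on $l$, that $(\nu^\S)^l h=T^l h+\sum_i C_i D_i$ (a finite sum), where each $D_i$ is a $\dk^{\leq l}$--derivative of $h$ restricted to $\S$, each $C_i$ is a product of factors of the form $(\nabS_{\nu^\S})^{\leq l-1}$ applied to a component of $F$, to $\bS-b$, or to a bounded background scalar ($z+\Omb$, $m$, $\Up$, a connection coefficient), and every $C_i$ carries at least one factor $(\nabS_{\nu^\S})^{\leq l-1}(F)$ or $(\nabS_{\nu^\S})^{\leq l-1}(\bS-b)$; moreover, whenever $D_i$ is not a $\dkt^{\leq l}$--derivative of $h$ (i.e.\ contains a bare $e_4$), the factor $C_i$ additionally contains a factor that is $F$, $\bS-b$, or a connection coefficient, hence $O(\epg r^{-1})+O(\dg)$ in sup norm. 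The base case $l=1$ is the display above; the inductive step uses $\nu^\S=T+\mathcal{E}$, Leibniz, and the observations that prepending the atomic operator $T$ to a $\dkt$--word yields a $\dkt$--word, and that every application of $\mathcal{E}$, or of $\nu^\S$ to a coefficient of the form $c_X(z+\Omb)$ or $\bS-b$, creates a fresh $F$-- or $(\bS-b)$--factor (so no standalone derivative of a background scalar is ever produced outside of $(\nabS_{\nu^\S})^{\leq l-1}(\bS-b)$).

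Finally I would estimate $\|(\nu^\S)^l h\|_{\hk_{s-l}(\S)}$ term by term. For the principal term $T^l h$ I would apply \eqref{eq:Prop:comparison2}, reducing to $r\sup_\RR(|\dkb^{\leq s-l}T^l h|+\dg|\dk^{\leq s-l}T^l h|)$; commuting the $\dkb$'s past the atomic operators $T$ via Lemmas \ref{comm} and \ref{commfor} gives $\dkt^{\leq s}h$ plus commutator terms, each of which is a connection coefficient or a $\dk$--derivative of $z+\Omb$ (so $O(\epg r^{-1})+O(1/r)$) times a $\dk^{\leq s}$--derivative of $h$; after the prefactor $r$ these yield $r\sup_\RR|\dkt^{\leq s}h|+\dg r\sup_\RR|\dk^{\leq s}h|+\epg\sup_\RR|\dk^{\leq s}h|$, the last term being absorbed into the final term of the assertion since $\|(\nabS_{\nu^\S})^{\leq l-1}(F,\bS-b)\|_{\hk_{s-l}(\S)}\gtrsim\|\bS-b\|_{L^2(\S)}\sim\epg r$ (and, in any case, $\rg$ is taken large enough that $\epg\les\dg\rg$). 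For each error term $C_iD_i$ I would use the product estimate $\|C_iD_i\|_{\hk_{s-l}(\S)}\les\|C_i\|_{\hk_{s-l}(\S)}\sup_\RR|\dk^{\leq s}h|$, legitimate because $s-l\leq s_{max}$ and all factors lie, with room, in the high Sobolev spaces $\hk_{s_{max}+1}(\S)$ by \eqref{eq:ThmGCMS1}, Proposition \ref{bS} and assumptions {\bf A1--A4}; since each $C_i$ contains a factor $(\nabS_{\nu^\S})^{\leq l-1}(F,\bS-b)$ with the remaining factors bounded, $\|C_i\|_{\hk_{s-l}(\S)}\les\|(\nabS_{\nu^\S})^{\leq l-1}(F,\bS-b)\|_{\hk_{s-l}(\S)}$, and summing over $i$ gives the claimed bound.

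The main obstacle is the bookkeeping: ensuring in the inductive step that every bare $e_4$--derivative of $h$ — controlled only by $\dk$, not by $\dkt$ — is always accompanied by a small ($F$, $\bS-b$, or connection-coefficient) coefficient, and, in the norm estimate, that the commutator terms arising in passing between $\dkb$-- and $\dkt$--words of $h$ are genuinely absorbable into the two terms on the right; the latter is the point at which the largeness of $\rg$ enters. The remaining manipulations (Leibniz, product estimates, the comparison estimate \eqref{eq:Prop:comparison2}) are routine once this schematic structure has been pinned down.
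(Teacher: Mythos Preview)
Your approach is essentially the paper's: write $\nu^\S=\nu_\RR+\mathcal{E}$ with $\nu_\RR:=e_3+be_4=e_3-(z+\Omb)e_4$, expand $(\nu^\S)^l h$ as $(\nu_\RR)^l h$ plus correction terms carrying factors of $(\nabS_{\nu^\S})^{\leq l-1}(F,\bS-b)$, and then apply \eqref{eq:Prop:comparison2} to the principal part.

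One simplification you are missing: no commutation is needed for the principal term. By definition $\dkt=(\nu_\RR,\dkb)$, so $\dkb^{\leq s-l}(\nu_\RR)^l h$ is \emph{literally} a $\dkt^{\leq s}$--word, and $|\dkb^{\leq s-l}(\nu_\RR)^l h|\leq|\dkt^{\leq s}h|$ pointwise without any commutator analysis. Likewise $|\dk^{\leq s-l}(\nu_\RR)^l h|\les|\dk^{\leq s}h|$ since $\nu_\RR=e_3+be_4$ with $|b|\les 1$ and $e_3,\,re_4\in\dk$. This removes the ``$\epg\sup_\RR|\dk^{\leq s}h|$'' residual term you worry about, and with it the need for the absorption argument $\|\bS-b\|_{L^2(\S)}\sim\epg r$ or $\epg\les\dg\rg$ --- neither of which is actually available in general (indeed $\bS-b=O(\dg)$ on the final hypersurface, and $\dg\rg$ is not assumed to dominate $\epg$).
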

\begin{proof}
We denote
\beaa 
\nu_\RR:=e_3+b e_4.
\eeaa 
By definition, we have
\beaa
\nu^\S(h)-\nu_\RR(h)&=&({e_3^\S}-{e_3})h+\bS({e_4^\S}-{e_4})h+(\bS-b){e_4}(h).
\eeaa
Applying Lemma \ref{Lemma:Generalframetransf}, we have
\beaa
{e_4^\S}h-{e_4}h=O(F)(\dk^{\leq 1}h),\qquad e_3^\S h-e_3 h=O(F)(\dk^{\leq 1}h).
\eeaa
Thus, we obtain
\beaa
\nu^\S(h)-\nu_\RR(h)&=&O(F)\dk^{\leq 1}(h)+O(\bS-b)\dk^{\leq 1}(h).
\eeaa
More generally, for $1\leq l\leq s$ we have
\beaa
({\nu^\S})^l(h)-({\nu_\RR})^l(h)&=& O\left((\nabS_{\nu^\S})^{\leq l-1}(F,\bS-b)\right)\dk^{\leq l}(h),
\eeaa
which implies
\beq\label{cha1}
\|({\nu^\S})^l(h)-({\nu_\RR})^l(h)\|_{\hk_{s-l}(\S)}\les \|(\nabS_{\nu^\S})^{\leq l-1}(F,\bS-b)\|_{\hk_{s-l}(\S)}\sup_{\RR}|\dk^{\leq s}h|.
\eeq
Next, applying \eqref{eq:Prop:comparison2}, we have
\begin{align}
\begin{split}\label{cha3}
\|({\nu_\RR})^l(h)\|_{\hk_{s-l}(\S)}&\les r\sup_{\RR}\left(|\dkb^{\leq s-l}({\nu_\RR})^l(h)|+\dg |\dk^{\leq s-l}({\nu_\RR})^l(h)|\right)\\
&\les r\sup_{\RR}\left(|\dkt^{\leq s}h|+\dg |\dk^{\leq s}h|\right).
\end{split}
\end{align}
Combining \eqref{cha1} and \eqref{cha3}, we deduce
\beaa
\|({\nu^\S})^l h\|_{\hk_{s-l}(\S)} \les r\sup_{\RR}\Big(|\widetilde{\dk}^{\leq s}h|+\dg|\dk^{\leq s}h|\Big)+\|(\nabS_{\nu^\S})^{\leq l-1}(F,\bS-b)\|_{\hk_{s-l}(\S)}\sup_{\RR}|\dk^{\leq s}h|
\eeaa
as stated. This concludes the proof of Lemma \ref{tildelemma}.
\end{proof}
In the remainder of this section, we denote $K:=s_{max}+1$ and 
\begin{equation*}
    \nu:=\nu^\S=e_3^\S+\bS e_4^\S.
\end{equation*}
\begin{lemma}\label{lemmacorollary}
Under the assumption \eqref{eq:GCM-improved estimate2-again}, we have for $0\leq l\leq K$
\begin{align}
\begin{split}\label{resultkakabmu}
\left\|\nu^l(\kadot,\kabdot)\right\|_{\hk_{K-l}(\S)}&\les r^{-1}\dg+ r^{-1}\epg|\DS|+r^{-1}\epg\sum_{j=1}^{l}\|(\nabS_\nu)^j (F)\|_{\hk_{K-j}(\S)},\\ \left\|\nu^l(\mudot)\right\|_{\hk_{K-l}(\S)}&\les r^{-2}\dg+ r^{-2}\epg|\DS|+r^{-2}\epg\sum_{j=1}^{l}\|(\nabS_\nu)^j (F)\|_{\hk_{K-j}(\S)}.
\end{split}
\end{align}
\end{lemma}
\begin{proof}
Notice that we have from \eqref{USdepend} and \eqref{curvecon}
\begin{equation*}
    \zS-z ,\; \bS-b =O(1),
\end{equation*}
which implies from Lemma \ref{resume} that
\begin{equation}\label{bzobounded}
    |\bS|,\, |\zS|,\, |\OmbS|\les 1.
\end{equation}
Applying Lemma \ref{resume} and \eqref{bzobounded}, we infer that $(\zS-\ov{\zS})-(z-\ov{z})$ and $(\OmbS-\ov{\OmbS})-(\Omb-\ov{\Omb})$ can be estimated by $\etaS-\eta$ and $\xibS-\xib$. Thus, these quantities can be controlled by $\nabS_\nu (f,\fb)$ thanks to Lemma \ref{nunununu} and hence 
\bea\label{zwidecheckEE}
\left\|(\zS-\ov{\zS})-(z-\ov{z}),(\OmbS-\ov{\OmbS})-(\Omb-\ov{\Omb})\right\|_{\hk_{K}(\S)}\les \dg+r\|\nabS_\nu (f,\fb)\|_{\hk_{K-1}(\S)}.
\eea
Combining \eqref{zwidecheckEE} and Lemma \ref{resume}, we obtain
\bea\label{bSbdifference}
\left\|(\bS-\ov{\bS})-(b-\ov{b})\right\|_{\hk_{K}(\S)}\les r\dg+r\|\nabS_\nu(f,\fb)\|_{\hk_{K-1}(\S)}.
\eea 
Recalling \eqref{vsiOmbass} and Lemma \ref{lemma:comparison-gaS-ga}, we deduce
\bea\label{bSbov}
\ov{\bS}-\ov{b}=\DS-1-\frac{2\mS}{\rS}+1+\frac{2m_{(0)}}{r}=\DS+r^{-1}O(\dg).
\eea
Thus, we have
\bea\label{bSb}
\|\bS-b\|_{\hk_{K}(\S)}\les r\dg+r|\DS|+r\|\nabS_\nu(f,\fb)\|_{\hk_{K-1}(\S)}.
\eea
Similarly, we have
\begin{equation}\label{bSbl}
    \|(\nabS_\nu)^{\leq l-1}(\bS-b)\|_{\hk_{K-l-1}(\S)}\les r\dg+r|\DS|+r\sum_{j=1}^l\|(\nabS_\nu)^j(f,\fb)\|_{\hk_{K-j}(\S)}.
\end{equation}
Applying Proposition \ref{nullstructure}, we deduce
\beaa
e_4\kadot&=&-\frac{2}{r}\kadot+\Ga_g\c\Ga_g=r^{-3}O(\epg),\\
e_4\kabdot&=&-\frac{1}{r}\kabdot-2\div\ze+2\rhoc+\Ga_b\c\Ga_g=r^{-3}O(\epg),\\
e_4\mudot&=&e_4\left(-\div\ze-\rhoc+\Ga_b\c\Ga_g\right)=r^{-4}O(\epg).
\eeaa
Recalling \eqref{eq:GCM-improved estimate2-again}, we obtain
\bea\label{lastestimateinmy}
\dk^{\leq s}(\kadot,\kabdot)=r^{-2}O(\epg),\qquad \dk^{\leq s}\mudot=r^{-3}O(\epg).
\eea 
Applying Lemma \ref{tildelemma} with $h=(\kadot,\kabdot)$ and $s=K$, combining with \eqref{eq:GCM-improved estimate2-again}, \eqref{bSbl} and \eqref{lastestimateinmy}, we infer
\begin{align*}
\|\nu^l(\kadot,\kabdot)\|_{\hk_{K-l}(\S)}&\les r\sup_{\RR}\Big(|\widetilde{\dk}^{\leq s}(\kadot,\kabdot)|+\dg|\dk^{\leq s}(\kadot,\kabdot)|\Big)+\|(\nabS_{\nu^\S})^{\leq l-1}(F,\bS-b)\|_{\hk_{K-l}(\S)}\sup_{\RR}|\dk^{\leq s}(\kadot,\kabdot)|,\\
&\les r^{-1}\dg+r^{-2}\epg\left(r\dg+ r|\DS|+ r\sum_{j=1}^l\|(\nabS_\nu)^jF\|_{\hk_{K-j}(\S)}\right)\\
&\les r^{-1}\dg+ r^{-1}\epg|\DS|+ r^{-1}\epg \sum_{j=1}^l\|(\nabS_\nu)^jF\|_{\hk_{K-j}(\S)},
\end{align*}
which implies the first estimate in \eqref{resultkakabmu}. The second estimate in \eqref{resultkakabmu} is similar. This concludes the proof of Lemma \ref{lemmacorollary}.
\end{proof}
\begin{lemma}\label{GCMuse}
In view of the GCM conditions \eqref{def:GCMC}, the system \eqref{Generalizedsystem} can be written in the following form:
\bea
\begin{split}\label{eqGCMl}
\curl ^\S f &= h_1 -\ov{h_1}^\S,\\
\curl^\S \fb&= \underline{h}_1 - \ov{\underline{h}_1}^\S,\\
\div^\S f + \frac{2}{r^\S} \ovla -\frac{2}{(r^\S)^2}\ovb &=h_2,\\
\div^\S\fb + \frac{2}{r^\S} \ovla +\frac{2}{(r^\S)^2}\ovb
&=   \Cbdot_0+\sum_p \Cbpdot \JpS +\underline{h}_2,\\
\left(\Delta^\S+\frac{2}{(r^\S)^2}\right)\ovla  &=  \Mdot_0+\sum _p\Mpdot \JpS+\frac{1}{2r^\S}\left(\Cbdot_0+\sum_p \Cbpdot \JpS\right) +h_3,\\
\Delta^\S\ovb-\frac{1}{2}\div^\S\Big(\fb - f\Big) &= h_4 -\ov{h_4}^\S , \qquad \ov{\ovb}^\S=b_0,
\end{split}
\eea
where
\beaa 
\ovb&:=&r-\rS,
\eeaa 
and
\bea
\begin{split}\label{hhhhh}
&\left\|\nu^l(h_1,\hb_1,h_2,\hb_2,h_4)\right\|_{\hk_{s}(\S)}+r\left\|\nu^l(h_3)\right\|_{\hk_{s}(\S)}\\
\les& r^{-1}\dg+(\epg r^{-1}+r^{-2})\left(\left\|(\nabS_{\nu})^{l}(F)\right\|_{\hk_{s}(\S)}+r^{-1}\big\|\nu^{l}(\ovb)\big\|_{\hk_{s}(\S)}\right)\\
+&r^{-1}\left(\left\|(\nabS_{\nu})^{\leq l-1}(F)\right\|_{\hk_{s}(\S)}+r^{-1}\big\|\nu^{\leq l-1}\ovb\big\|_{\hk_{s}(\S)}\right)+r^{-2}\dg\|\nu^{\leq l-1}(\bS)\|_{\hk_{s}(\S)}.
\end{split}
\eea 
\end{lemma}
\begin{proof}
See Remark 4.10 in \cite{KS:Kerr1} for the proof of \eqref{eqGCMl} and the explicit expressions of $(h_1,\hb_1,h_2,\hb_2,h_3,h_4)$. Remark that \eqref{hhhhh} is a direct consequence of GCM conditions \eqref{def:GCMC}, Remark 4.11 in \cite{KS:Kerr1} and Lemma \ref{lemmacorollary}.
\end{proof}
\begin{proposition}\label{th24}
We have the following estimate
\bea\label{nuFdg}
\Vert F\Vert_{\hk_{s_{max}+1}(\S)}+\Vert\nab^\S_{\nu} F\Vert_{\hk_{s_{max}}(\S)} &\les& \dg+|\BS|+|\BbS|+|\DS|.
\eea
\end{proposition}
\begin{proof}
In view of the construction of $\Si$ and \eqref{eq:ThmGCMS1}, for every $\S\subset\Si$, we have
\bea\lab{Fest}
\Vert F\Vert_{\hk_{s_{max}+1}(\S)}\les\dg.
\eea
To derive the remaining tangential derivatives of $F$ along $\Si_0$, we commute the GCM system \eqref{eqGCMl} with respect to $\nabS_\nu$. Applying $\nu(\JpS)=0$ and $|\nu(\rS)|\les 1$\footnote{Notice that we have from \eqref{USdepend} and \eqref{curvecon} that $\nu(\rS)=O(1)$.}, we obtain
\bea
\begin{split}\label{nuGCM}
\curl ^\S \nabS_\nu (f) &=\nabS_\nu( h_1 -\ov{h_1}^\S)+[\curl^\S,\nabS_\nu]f,\\
\curl^\S \nabS_\nu(\fb)&=\nabS_\nu(\hb_1 -\ov{\underline{h}_1}^\S)+[\curl^\S,\nabS_\nu]\fb,\\
\div^\S \nabS_\nu(f)+\frac{2}{r^\S}\nabS_\nu(\ovla)-\frac{2}{(r^\S)^2}\nabS_\nu(\ovb) &=\nabS_\nu(h_2)+[\divS,\nabS_\nu]f+r^{-2}O(F)+r^{-3}O(\ovb),
\\
\div^\S \nabS_\nu(\fb)+\frac{2}{r^\S}\nabS_\nu(\ovla)+\frac{2}{(r^\S)^2}\nabS_\nu(\ovb)
&=\nu(\Cbdot_0)+\sum_p \nu(\Cbpdot) \JpS+\nabS_\nu(\hb_2)\\
&+[\divS,\nabS_\nu]\fb+r^{-2}O(F)+r^{-3}O(\ovb),\\
\left(\Delta^\S+\frac{2}{(r^\S)^2}\right)(\nabS_\nu\ovla) &=\nu(\Mdot_0)+\sum _p\nu(\Mpdot) \JpS\\
&+\frac{1}{2r^\S}\left(\nu(\Cbdot_0)+\sum_p \nu(\Cbpdot) \JpS\right)+\nabS_\nu(h_3)\\
&+[\De^\S,\nabS_\nu]\ovla+r^{-3}O(F)+r^{-2}O(\Cbdot_0,\Cbpdot),\\
\Delta^\S(\nabS_\nu\ovb)-\frac{1}{2}\div^\S\Big(\nabS_\nu\fb-\nabS_\nu f\Big)&=\nabS_\nu(h_4-\ov{h_4}^\S) +[\De^\S,\nabS_\nu]\ovb+[\divS,\nabS_\nu](F),\\
b_0^{(1)}&:=\ov{\nu\ovb}^\S.
\end{split}
\eea
Recalling that
\beaa 
F=r^{-1}O(\dg),\qquad \ovb=O(\dg),
\eeaa 
and using Lemma \ref{commfor}, we have
\beaa 
\left([\divS,\nabS_\nu]F,\,[\curl^\S,\nabS_\nu]F\right)&=&r^{-3}O(\dg)+r^{-1}O(\epg)\nabS_\nu(F),
\eeaa
and
\begin{align*}
 [\De^\S,\nabS_\nu] F&= r^{-4}O(\dg)+r^{-1}\dkb^{\leq 1}(\Ga_b\c\nabS_\nu F)=r^{-4}O(\dg)+r^{-2}O(\epg)\dkb^{\leq 1}(\nabS_\nu(F)),\\
[\De^\S,\nabS_\nu]\ovb &= r^{-3}O(\dg)+r^{-1}\dkb^{\leq 1}(\Ga_b\c\nabS_\nu\ovb)=r^{-3}O(\dg)+r^{-2}O(\epg)\dkb^{\leq 1}(\nabS_\nu(\ovb)).
\end{align*}
Moreover, applying Proposition \ref{Thm.GCMSequations-fixedS:contraction} to \eqref{eqGCMl}, we deduce
\bea\label{coefficientsestimates}
(\Cbdot_0,\Cbpdot)=r^{-2}O(\dg).
\eea
Thus, applying Proposition \ref{Thm.GCMSequations-fixedS:contraction} to \eqref{nuGCM}, in view of \eqref{hhhhh}, we infer
\bea
\begin{split}\label{ffbovlac} 
\left\Vert \nab^\S_{\nu}(f,\fb),\nu(\ovla)-\ov{\nu(\ovla)}^\S\right\Vert_{\hk_{K-1}(\S)}&\les\dg+\left|\left(\divS\nab^\S_{\nu}f\right)_{\ell=1}\right|+\left|\left(\divS\nab^\S_{\nu}\fb\right)_{\ell=1}\right|\\
&+(\epg+r^{-1})\left(\left\Vert\nab^\S_{\nu}(F)\right\Vert_{\hk_{K-1}(\S)}+r^{-1}\big\Vert\nu(\ovb)\big\Vert_{\hk_{K-1}(\S)}\right),
\end{split}
\eea 
and
\bea
\begin{split}\label{ovovla}
r\left|(\ov{\nu\ovla}^\S)\right| &\les\dg+\left|\left(\divS\nab^\S_{\nu}f\right)_{\ell=1} \right|+\left|\left(\divS\nab^\S_{\nu}\fb\right)_{\ell=1}\right|+|b_0^{(1)}|\\
&+(\epg+r^{-1})\left(\left\Vert\nab^\S_{\nu}(F)\right\Vert_{\hk_{K-1}(\S)}+r^{-1}\big\Vert\nu(\ovb)\big\Vert_{\hk_{K-1}(\S)}\right).
\end{split}
\eea
To estimate the $\ell=1$ modes of $(\nab^\S_{\nu})(f,\fb)$, we make use of the equations \eqref{nuffb} to obtain
\beaa 
\int_\S\divS(\nabS_\nu)(f)\JpS&=&2\int_\S(\divS\etaS-\divS\eta)\JpS+\lot\\
&=& 2\BS-\int_\S(\div\eta)\JpS+\int_\S (\sdiv\eta-\sdiv^\S\eta)\JpS+\lot\\
&=&2\BS+O(\dg),
\eeaa
where we used \eqref{assuml1}, \eqref{BBbDfirst}, Lemmas \ref{lemma:comparison-gaS-ga} and \ref{Lemma:coparison-forintegrals} at the last step. Similarly, we have
\beaa 
\int_\S\divS(\nabS_\nu)(\fb)\JpS&=&2\BbS+O(\dg),
\eeaa
Thus, we deduce from \eqref{ffbovlac} that
\begin{align}\label{nu1ffb}
\begin{split}
&\left\Vert\nabS_{\nu}(f,\fb),\nu\ovla-\ov{\nu\ovla}^\S\right\Vert_{\hk_{K-1}(\S)}\\
\les&\;\dg+|\BS|+|\BbS|+(\epg+r^{-1})\left(\left\Vert\nab^\S_{\nu}(F)\right\Vert_{\hk_{K-1}(\S)}+r^{-1}\big\Vert\nu(\ovb)\big\Vert_{\hk_{K-1}(\S)}\right),
\end{split}
\end{align}
and
\begin{align}
\begin{split}\label{nu1ovla}
 r\left|(\ov{\nu\ovla}^\S)\right|&\les \dg+|\BS|+|\BbS|+|b_0^{(1)}| \\
&+(\epg+r^{-1})\left(\left\Vert\nab^\S_{\nu}(F)\right\Vert_{\hk_{K-1}(\S)}+r^{-1}\big\Vert\nu(\ovb)\big\Vert_{\hk_{K-1}(\S)}\right).
\end{split}
\end{align}
In the sequel, we denote for $1\leq l\leq s_{max}+1$.
\beq\label{defEE}
\EE_l:=\dg+|\BS|+|\BbS|+|\DS|+(\epg+r^{-1})\left(\left\Vert(\nab^\S_{\nu})^l(F)\right\Vert_{\hk_{K-l}(\S)}+r^{-1}\big\Vert\nu^l(\ovb)\big\Vert_{\hk_{K-l}(\S)}\right),
\eeq
Next, we consider $b_0^{(1)}=\ov{\nu(r-\rS)}^\S$. Applying Lemmas \ref{nonpo}, \ref{dint}, \eqref{eq:Generalframetransf}, \eqref{eq:GCM-improved estimate2-again}, \eqref{eq:ThmGCMS1} and \eqref{zbO}, we infer
\beaa 
\nu(r-\rS)&=&e_3(r)+\bS e_4(r)-\nu(\rS)+O(\dg)\\
&=&\frac{r}{2}\left(\Omb\ov{\ka}+z\ov{z^{-1}\kab}-z\ov{z^{-1}\Omb\ka}\right)+\bS\frac{r}{2}\ov{\ka}-\frac{\rS}{2}\zS\ov{(\zS)^{-1}(\kabS+\bS\kaS)}^\S+O(\dg)\\
&=&\Omb+\frac{rz}{2}\ov{z^{-1}\kab}-z\ov{z^{-1}\Omb}+\bS-\frac{\rS}{2}\zS\ov{(\zS)^{-1}\kabS}^\S+\zS\ov{(\zS)^{-1}(\Omb^\S+\zS)}^\S+O(\dg)\\
&=&\Omb+\frac{rz}{2}\ov{z^{-1}\kab}-z\ov{z^{-1}\Omb}-\OmbS-\frac{\rS}{2}\zS\ov{(\zS)^{-1}\kabS}^\S+\zS\ov{(\zS)^{-1}\Omb^\S}^\S+O(\dg).
\eeaa
Recall from Lemma \ref{lemma:comparison-gaS-ga} and Proposition \ref{Prop:transformation-formulas-generalcasewithoutassumptions} that 
\bea\label{kabSkabest}
r-\rS=O(\dg),\qquad \kabS-\kab=O(\dg).
\eea
Notice that \eqref{bSb} and \eqref{nu1ffb} imply
\bea\label{bbSEE1}
|b-\bS|\les \dg+|\DS|+\Vert\nabS_\nu(f,\fb)\Vert_{\hk_{K-1}(\S)}=O(\EE_1).
\eea
Recall that Lemma \ref{Lemma:coparison-forintegrals} allows us to compare the average on S and $\S$. Thus, we may assume from now on that all averages are on $\S$. We obtain
\beaa 
\nu(r-\rS)&=&\Omb+\frac{rz}{2}\ov{z^{-1}\kab}-z\ov{z^{-1}\Omb}-\OmbS-\frac{r}{2}\zS\ov{(\zS)^{-1}\kab}+\zS\ov{(\zS)^{-1}\Omb^\S}+O(\dg)\\
&=&-b+\frac{rz}{2}\ov{z^{-1}\kab}+z\ov{z^{-1}b}+\bS-\frac{r\zS}{2}\ov{(\zS)^{-1}\kab}-\zS\ov{(\zS)^{-1}\bS}+O(\EE_1)\\
&=&z\ov{z^{-1}\left(\frac{r}{2}\kab+b\right)}-\zS\ov{(\zS)^{-1}\left(\frac{r}{2}\kab+b\right)}+O(\EE_1).
\eeaa
We denote
\beaa 
\frac{r}{2}\kab+b&:=&-2+h_0,\qquad h_0= r\Ga_b=O(\epg).
\eeaa 
Thus, we deduce
\bea\label{nbsl}
\nu(r-\rS)&=&-2\left(z\ov{z^{-1}}-\zS\ov{(\zS)^{-1}}\right)+O(\epg)\ov{z-\zS}+O(\EE_1).
\eea 
Taking the average of \eqref{nbsl} on $\S$, recalling that $\ov{ab}=\ov{a}\ov{b}+\ov{(a-\ov{a})(b-\ov{b})}$ and applying \eqref{zwidecheckEE}, we obtain
\beaa
\ov{\nu(r-\rS)}&=&-2\left(\ov{z}\ov{z^{-1}}-\ov{\zS}\,\ov{(\zS)^{-1}}\right)+O(\epg)|z-\zS|+O(\EE_1)\\
&=&2\left(\ov{(z-\ov{z})(z^{-1}-\ov{z^{-1}})}-\ov{(\zS-\ov{\zS})\left((\zS)^{-1}-\ov{(\zS^{-1}})\right)}\right)+O(\epg)|z-\zS|+O(\EE_1)\\
&=&O(\epg)\ov{z-\zS}+O(\EE_1).
\eeaa
On the other hand, we apply \eqref{eq:GCM-improved estimate2-again}, \eqref{e3re3s}, \eqref{zbO}, \eqref{bbSEE1} and Lemma \ref{nonpo} to obtain
\beaa 
\nu(r-\rS)&=&e_3(r)+\bS e_4(r)-\bS-\OmbS+O(\dg)=e_3(r)+b\frac{r}{2}\ov{\ka}+\zS+O(\EE_1)\\
&=&e_3(r)-e_3(s)+\Omb+b+\zS+O(\dg)=e_3(r)-e_3(s)+\zS-z+O(\EE_1)\\
&=&\zS-z+O(\EE_1).
\eeaa 
Thus, we deduce
\bea\label{zzSrrS}
\ov{\zS-z}=\ov{\nu(r-\rS)}+O(\EE_1)=O(\epg)(\ov{\zS-z})+O(\EE_1),
\eea
which implies
\beaa 
\ov{z-\zS}=O(\EE_1),\qquad \ov{\nu(r-\rS)}=O(\EE_1),
\eeaa 
and hence, recalling \eqref{defEE} and Lemma \ref{Lemma:coparison-forintegrals}, we obtain
\begin{align}
\begin{split}\label{nu1b0}
    b_0^{(1)}&=\ov{\nu(r-\rS)}+O(\dg)\\
    &\les\dg+|\BS|+|\BbS|+|\DS|\\
    &+(\epg+r^{-1})\left(\left\Vert\nab^\S_{\nu}(F)\right\Vert_{\hk_{K-1}(\S)}+r^{-1}\big\Vert\nu(\ovb)\big\Vert_{\hk_{K-1}(\S)}\right).
\end{split}
\end{align}
Finally, we recall the equation of $\ovb$ in \eqref{nuGCM}. By \eqref{hhhhh} and Lemma \ref{elliptic1}, we have
\beq\label{ovbc}
\left\|\nu(\ovb)-\ov{\nu(\ovb)}^\S\right\|_{\hk_{K-1}(\S)}\les r\dg+r\left\Vert\nab^\S_{\nu}(F)\right\Vert_{\hk_{K-1}(\S)}+(\epg +r^{-1})\big\Vert\nu(\ovb)\big\Vert_{\hk_{K-1}(\S)}.
\eeq
Thus, from \eqref{nu1ffb}, \eqref{nu1ovla} \eqref{nu1b0} and \eqref{ovbc}, we conclude that
\beaa 
&&\left\Vert\nabS_{\nu}(f,\fb,\ovla)\right\Vert_{\hk_{K-1}(\S)}+r^{-1}\left\|\nu(\ovb)\right\|_{\hk_{K-1}(\S)}\\
&\les&\dg+|\BS|+|\BbS|+|\DS|+(\epg+r^{-1})\left(\left\Vert\nab^\S_{\nu}(F)\right\Vert_{\hk_{K-1}(\S)}+r^{-1}\big\Vert\nu(\ovb)\big\Vert_{\hk_{K-1}(\S)}\right).
\eeaa
For $\epg$ and small enough and $\ovr$ large enough, we deduce \eqref{nuFdg}. This concludes the proof of Proposition \ref{th24}.
\end{proof}
\subsection{Extrinsic properties of \texorpdfstring{$\Sigma$}{}}\lab{sec4.2}
The goal of this section is to prove the following proposition.
\begin{proposition}\label{bS}
We assume that
\bea\label{BBbDass}
|\BS|+|\BbS|+|\DS| \leq \epg^{\frac{2}{3}}.
\eea
Then, the following estimates hold true for all $k\leq s_{max}$:
\begin{enumerate}
\item The Ricci coefficients $\etaS,\xibS,\ombS$ verify
\beaa
\Vert\etaS\Vert_{\hk_{k}(\S)} &\les& \epg+|\BS|,\\
\Vert\xibS\Vert_{\hk_{k}(\S)} &\les& \epg+|\BbS|,\\
\Vert\ombcheck^\S\Vert_{\hk_{k}(\S)} &\les& \epg+|\BS|+|\BbS|.
\eeaa
\item The scalar $\bS$ verifies
\beaa
(\rS)^{-1} \Vert \bcS \Vert_{\hk_k(\S)} &\les& \epg+|\BS|+|\DS|.
\eeaa
\item We also have
\beaa
(\rS)^{-1} \Vert \Ombc^\S \Vert_{\hk_{k}(\S)}&\les& \epg +|\BS|+|\BbS|+|\DS|,\\
(\rS)^{-1} \Vert \zcS \Vert_{\hk_{k}(\S)} &\les& \epg +|\BS|+|\BbS|+|\DS|.
\eeaa
\end{enumerate}
\end{proposition}
\begin{remark}\label{Gammagb}
We split the Ricci and curvature coefficients as follows.
\beaa
\Ga_g^\S &=& \left\{\kac^\S,\, \chihS,\, \zeS,\, \kabc^\S,\,r \rhocheck^\S,\,r\rhod^\S,\,r\b^\S,\,r\a^\S,\,r\mucheck^\S,\,r\widecheck{K}^\S \right\},\\
\Ga_{b,w}^\S &=& \left\{\chibhS,\, r\bb^\S,\,\aa^\S\right\},\\
\Ga_{b,i}^\S &=& \left\{\etaS,\,\xibS,\, \ombc^\S\right\}.
\eeaa
Since all the terms in $\GagS\cup \Ga_{b,w}^\S$ are well-defined on a sphere, as a direct consequence of Corollary \ref{generalGCMspheres}, see also \eqref{eq:ThmGCMS6}, we have the following estimates for $k\leq s_{max}$.
\bea
\begin{split}\label{Gaggj}
\Vert \GagS \Vert_{\hk_k(\S)}\les\epg r^{-1},\qquad\qquad\Vert \Ga_{b,w}^\S \Vert_{\hk_k(\S)}\les\epg.
\end{split}
\eea
By the null structure equations and Bianchi equations introduced in Proposition \ref{nullstructure}, we obtain immediately\footnote{As explained in Remark \ref{rem4.5}, the transversality conditions are consistent with extension by local outgoing geodesic foliation initialized on $\Si$. Hence, the equations in Proposition \ref{nullstructure} are valid on $\Si$.}
\bea\label{e4Gaggj}
\Vert\nabS_4\GagS\Vert_{\hk_{k-1}(\S)}\les \epg r^{-2},\qquad \Vert \nabS_4\Ga_{b,w}^\S \Vert_{\hk_{k-1}(\S)}\les \epg r^{-1}.
\eea
\end{remark}
\begin{remark}
In the sequel, unless stated otherwise, we admit the following conventions:
\begin{itemize}
    \item For a scalar quantity $h^\S$ defined on $\S$, we denote $\ov{h^\S}$ for its average over the sphere $\S$;
    \item  For a background scalar quantity $h$, we denote $\ov{h}$ for its average over the background sphere $S(u,s)$;
    \item For any scalar quantity $h$, we denote $\ov{h}^\S$ for its average over the sphere $\S$.
\end{itemize}
\end{remark}
Recalling that $\dg\leq\epg$, applying Proposition \ref{th24}, \eqref{BBbDass} and Lemma \ref{nunununu}, we deduce the following estimates for $k\leq s_{max}$
\bea\label{bootGab}
\Vert \Ga_{b,i}^\S\Vert_{\hk_k(\S)} &\les& \epg^\frac{2}{3}.
\eea
Notice that we have
\bea\label{e3Gab}
\Vert \nab_3^\S\GagS\Vert_{\hk_{k-1}(\S)}\les \epg^\frac{2}{3} r^{-1},\qquad \Vert \nab_3^\S\GabS\Vert_{\hk_{k-1}(\S)}\les \epg^\frac{2}{3},
\eea
as a direct consequence of \eqref{bootGab} and Proposition \ref{nullstructure}.\\ \\
To prove Proposition \ref{bS}, we need the following lemmas.
\begin{lemma}\label{GCMfz}
Under the GCM conditions \eqref{GCMtj} on $\Si$, we have
\beaa
\nab^\S_{\nu^\S} (\ddsStwo\ddsSone \muS)&=&r^{-5}O(\epg),\\
\nab^\S_{\nu^\S}(\nab^\S\kaS)&=&0,\\
\nab^\S_{\nu^\S}\left((\dddS_2\ddsStwo+2K^\S)\ddsStwo\ddsSone\kabS\right)&=&r^{-6}O(\epg).
\eeaa
\end{lemma}
\begin{lemma}\label{etaxibomb}
The following estimates hold true for $k\leq s_{max}-5$
\begin{align}
\begin{split}
\Vert\ddsStwo \etaS\Vert_{\hk_{k+4}(\S)}&\les \epg r^{-1},\\
\Vert\ddsStwo \xibS\Vert_{\hk_{k+4}(\S)}&\les \epg r^{-1},\\
\Vert\ddsSone \ombS\Vert_{\hk_{k+4}(\S)}&\les \epg r^{-1}+ r^{-1}\Vert \etaS \Vert_{\hk_{k+4}(\S)}+r^{-1} \Vert \xibS \Vert_{\hk_{k+4}(\S)}. \lab{etaxibombestimates}
\end{split}
\end{align}
\end{lemma}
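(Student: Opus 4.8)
The plan is to exploit the GCM conditions \eqref{GCMtj} on $\Si$ together with the transport identities of Lemma \ref{GCMfz}, integrating along the vectorfield $\nu^\S$ starting from the sphere $\S_0$, and then inverting the Hodge operators by the elliptic estimates of Corollary \ref{d2d2}. First I would note that $\ddsStwo\etaS$ is a symmetric traceless $2$--tensor, and that $\ddsStwo\ddsSone\etaS$ is related, via the Codazzi-type equation for $\eta^\S$ and the definition of $\mu^\S$, to $\nab^\S\mu^\S$ modulo $\Ga\cdot\Ga$ terms. More precisely, since $\curl^\S\eta^\S = {}^\star\rho^\S + \Ga_b^\S\cdot\Ga_g^\S$ and $\muS = -\div^\S\ze^\S - \rho^\S + \Ga_b^\S\cdot\Ga_g^\S$, one can express the $\ell\ge 2$ part of $\div^\S\eta^\S$ in terms of $\muS$ and controlled quantities; applying $\ddsStwo\ddsSone$ and using $\ddsStwo\ddsSone\ddsStwo\ddsSone$-type identities, the leading term is $\ddsStwo\ddsSone\muS$, which is controlled by Lemma \ref{GCMfz}: $\nab^\S_{\nu^\S}(\ddsStwo\ddsSone\muS) = r^{-5}O(\epg)$.

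The key steps, in order, are: (i) integrate the transport equations of Lemma \ref{GCMfz} along $\nu^\S$ from $\S_0$ to a general sphere $\S\subset\Si$, using the smallness of $\Si$ in $\RR$ (so the $\nu^\S$-length is $O(\epg)$) and the initial bounds on $\S_0$ coming from \eqref{csjs} and Theorem \ref{Theorem:ExistenceGCMS1} property 6, to deduce $\|\ddsStwo\ddsSone\muS\|_{\hk_{k}(\S)}\les\epg r^{-5}$, $\|\nab^\S\kaS\|_{\hk_k(\S)}=0$ (this is immediate from $\kaS = 2/\rS$), and the analogous bound for $(\dddS_2\ddsStwo+2K^\S)\ddsStwo\ddsSone\kabS$; (ii) use Corollary \ref{d2d2} (both estimates, noting $\ddsStwo\ddsSone$ relates to $\ddsStwo\ddsStwo+$ curvature via $\ddd_1^*\ddd_1 = -\De_1+K$ and the Gauss equation) to invert: $\|\ddsStwo\etaS\|_{\hk_{k+4}(\S)}\les r^2\|\ddsStwo\ddsSone\ddsStwo\etaS\|_{\hk_{k+2}(\S)}\les r^2\|\ddsStwo\ddsSone\muS\|_{\hk_k(\S)}+\text{l.o.t.}\les\epg r^{-1}$, after converting the Hodge system for $\eta^\S$ into one for $\ddsStwo\eta^\S$; (iii) repeat for $\xibS$ using $\curl^\S\xib^\S=\Ga_b^\S\cdot\Ga_b^\S$ and the third identity of Lemma \ref{GCMfz} (which after inversion gives $\ddsStwo\xibS$ in terms of $\kabS=-2\UpS/\rS+(\ell\le1)$, whose $\ell\ge2$ part vanishes), together with the transport equation $\nab_3^\S\hchb^\S$ from Proposition \ref{nullstructure} relating $\xib^\S$ to $\aa^\S$; (iv) for $\omb^\S$, use the transformation formula for $\omb$ in Proposition \ref{Prop:transformation-formulas-generalcasewithoutassumptions}, or the null structure equation for $\ze$ from Proposition \ref{nullstructure} (which contains $\nab\ombc$), together with $\ddsSone\omb^\S$ being expressible via $\ddsSone$ applied to the $\ze^\S$-equation, bounding it by $\epg r^{-1}$ plus the $\eta^\S$ and $\xib^\S$ contributions that have already been estimated.

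The main obstacle I expect is step (ii): setting up the right elliptic system for the \emph{composed} quantities $\ddsStwo\etaS$ and $\ddsStwo\xibS$ rather than for $\etaS,\xibS$ themselves — one must commute $\ddsStwo$ past $\ddsSone$ and past $\nabS_{\nu^\S}$ using the commutator lemmas (Lemma \ref{commfor}), and carefully track that all error terms fall into the $\err_1,\err_2$ schematic structure and are absorbed by the bootstrap \eqref{bootGab} with room to spare (hence the $\epg^{1/2}$ versus $\epg$ gain). A secondary subtlety is that the $\ell=1$ modes of $\div^\S\eta^\S$ and $\div^\S\xib^\S$ are \emph{not} yet known to vanish at this stage (that is precisely the content of \eqref{4.10}, proved later), so one must be careful to estimate only $\ddsStwo\eta^\S$, i.e. the part detected by $\ddd_2^\S$, which by Lemma \ref{elliptic2} does not see the $\ell=1$ modes; the commutation $[\nabS_{\nu^\S},\ddsStwo]$ then only produces terms already controlled, and the $\ell=1$ contamination stays harmlessly confined. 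The derivative count — needing $\hk_{k+4}$ control from $\hk_k$ control of a fourth-order Hodge quantity — is exactly why the statement is restricted to $k\le s_{max}-5$, leaving one derivative of slack for the Sobolev step inside Corollary \ref{d2d2}.
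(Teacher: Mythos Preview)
There is a genuine gap. Your step (i) proposes to integrate the identities of Lemma~\ref{GCMfz} along $\nu^\S$ from $\S_0$ to bound $\ddsStwo\ddsSone\muS$ on each $\S$, but this is neither how the lemma is used nor what is needed. The link between $\etaS,\xibS$ and the GCM data goes through the $e_3^\S$--transport null structure equations (e.g.\ $\nabS_3\kaS\sim\divS\etaS+\rhoS$), so the proof must control the \emph{$e_3^\S$--derivatives} $C_1:=\nabS_3(\ddsStwo\ddsSone\muS)$, $C_2:=\nabS_3(\nabS\kaS)$, $C_3:=\nabS_3\big((\dddS_2\ddsStwo+2K^\S)\ddsStwo\ddsSone\kabS\big)$. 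Lemma~\ref{GCMfz} is applied \emph{pointwise}: writing $\nabS_3=\nabS_{\nu^\S}-b^\S\nabS_4$, the $\nu^\S$--part is $O(\epg r^{-5})$ (resp.\ $0$, $O(\epg r^{-6})$) by Lemma~\ref{GCMfz}, while the $\nabS_4$--part is bounded separately by commuting $\nabS_4$ past the Hodge operators via Lemma~\ref{commfor} and using \eqref{e4Gaggj}. No integration along $\Si$ is involved.

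Your proposed relation between $\etaS$ and $\muS$ is also incorrect: $\muS=-\divS\zeS-\rhoS+\Ga\cdot\Ga$ involves $\zeS$, not $\etaS$, and transversality gives $\etabS=-\zeS$, not $\etaS=-\zeS$; so ``expressing the $\ell\ge2$ part of $\divS\etaS$ in terms of $\muS$'' does not go through. The paper instead invokes the fifth--order elliptic identities of \cite{KS:main} (Propositions~5.1.21--5.1.22), for instance
\[
2\,\ddsStwo\ddsSone\dddS_1\dddS_2\ddsStwo\etaS \;=\; -\ddsStwo\ddsSone\dddS_1 C_2 + \tfrac{2}{\rS}C_1 - \tfrac{4}{\rS}\ddsStwo\ddsSone\divS\bb^\S + (\rS)^{-5}(\dkbS)^{\le4}\GagS + (\rS)^{-4}(\dkbS)^{\le4}(\GabS\cdot\GabS),
\]
with the analogue for $\xibS$ built from $C_1,C_3$, and $2\nabS\ombS=(\rS)^{-1}\xibS-\nabS_3\zeS-\bb^\S+(\rS)^{-1}\etaS+\ldots$ for $\ombS$. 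After bounding $C_1,C_2,C_3$ as above, one rewrites $\ddsSone\dddS_1=2\dddS_2\ddsStwo+2K^\S$ and applies Corollary~\ref{d2d2} twice to obtain $\|\ddsStwo\etaS\|_{\hk_{k+4}(\S)}\les r^4\|\ddsStwo\ddsSone\dddS_1\dddS_2\ddsStwo\etaS\|_{\hk_k(\S)}$. Without these identities your inversion step~(ii) has no legitimate source term to invert.
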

\begin{lemma}\label{ombpjz}
We have the following identity for $\ov{\ombcheck^\S}$.
\beaa
\ov{\ombcheck^\S}=\ov{\ombS}-\frac{\mS}{(\rS)^2}&=&-\frac{1}{2\rS}\ov{\Ombc^\S}+\frac{\rS}{2} \left(\ov{\mucheck^\S}-\ov{\etaS\cdot\etaS}+\ov{\frac{\bS}{2}\hch^\S\cdot\hch^\S}\right).
\eeaa
\end{lemma}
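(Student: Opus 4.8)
The identity will be proved by averaging a single null structure equation over the sphere $\S$, using the GCM normalization $\ka^\S\equiv\frac{2}{r^\S}$ from \eqref{GCMtj} and the transversality conditions $\xi^\S=\om^\S=0$, $e_4^\S(r^\S)=1$ from \eqref{transver}--\eqref{e4ue4r}. The inputs are the (unrenormalized) null structure equations for $\ka^\S=\trch^\S$ in the $e_3^\S$ and $e_4^\S$ directions underlying Proposition \ref{nullstructure} (see also \cite{Ch-Kl}):
\begin{align*}
\nab^\S_3\ka^\S+\frac{1}{2}\kab^\S\ka^\S&=2\ombS\ka^\S+2\rho^\S-\chih^\S\cdot\chibh^\S+2\div^\S\etaS+2\etaS\cdot\etaS+2\xi^\S\cdot\xibS,\\
\nab^\S_4\ka^\S+\frac{1}{2}(\ka^\S)^2+\chih^\S\cdot\chih^\S&=-2\om^\S\ka^\S.
\end{align*}

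First I will combine these two equations along $\nu^\S=e_3^\S+b^\S e_4^\S$, which is tangent to $\Si$. Since $\ka^\S$ equals $\frac{2}{r^\S}$ as a function on $\Si$, one has $\nu^\S(\ka^\S)=\nab^\S_3\ka^\S+b^\S\nab^\S_4\ka^\S=-\frac{2}{(r^\S)^2}\nu^\S(r^\S)$. Inserting the second (Raychaudhuri) equation for $\nab^\S_4\ka^\S$ with $\om^\S=0$, and using $e_3^\S(r^\S)=\nu^\S(r^\S)-b^\S=\Omb^\S$ (cf.\ \eqref{zSOmbS}), this yields the pointwise identity on $\Si$
\[
\nab^\S_3\ka^\S=-\frac{2}{(r^\S)^2}\Omb^\S+b^\S\,\chih^\S\cdot\chih^\S.
\]
Substituting into the first structure equation and using $\ka^\S=\frac{2}{r^\S}$ and $\xi^\S=0$, I then solve for $\frac{4}{r^\S}\ombS$, obtaining a pointwise identity on $\Si$ expressing $\frac{4}{r^\S}\ombS$ in terms of $\Omb^\S$, $\kab^\S$, $\rho^\S$, $\div^\S\etaS$, $\etaS\cdot\etaS$, $b^\S\,\chih^\S\cdot\chih^\S$ and $\chih^\S\cdot\chibh^\S$.

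Next I will average this identity over $\S$, using three facts: (i) $\ov{\div^\S\etaS}=0$ by the divergence theorem, since $\etaS$ is a genuine $1$--form on $\S$ (cf.\ Remark \ref{rem4.5}); (ii) $\ov{\kab^\S}=-\frac{2\Up^\S}{r^\S}$, which follows from the Hawking mass formula \eqref{eq:definitionoftheHawkingmass} together with $\ka^\S=\frac{2}{r^\S}$ (so $\frac{2m^\S}{r^\S}=1+\frac{r^\S}{2}\ov{\kab^\S}$); (iii) $\ov{\rho^\S}$ is replaced by $-\ov{\mu^\S}+\frac{1}{2}\ov{\chih^\S\cdot\chibh^\S}$ via the definition \eqref{def:massaspectfunctions.general} of the mass aspect function (which kills another divergence). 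The $\ov{\chih^\S\cdot\chibh^\S}$ contributions then cancel, the linear terms collect into $-\frac{2}{(r^\S)^2}\ov{\Ombc^\S}$ via $\Ombc^\S=\Omb^\S+\Up^\S$, and passing to the renormalized quantities $\ombcheck^\S$, $\mucheck^\S$ (using $\Up^\S=1-\frac{2m^\S}{r^\S}$) makes the remaining $m^\S$--dependent pieces cancel, yielding precisely the claimed identity. I note in passing that $\ov{\mucheck^\S}$ in fact vanishes, by Gauss--Bonnet together with the Gauss equation \eqref{eq:Gaussequation}, but it is convenient to keep it in the statement.

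The only point requiring care is that the $\S$--adapted frame is defined only along $\Si$, so $\nab^\S_3$ and $\nab^\S_4$ applied to scalars are a priori sensitive to the off-$\Si$ extension of the frame; only the combination $\nab^\S_3+b^\S\nab^\S_4=\nu^\S$ is intrinsic to $\Si$. This is not a genuine obstruction: the structure and Raychaudhuri equations force $\nab^\S_3\ka^\S$ and $\nab^\S_4\ka^\S$ to equal expressions in quantities that are well defined along $\Si$ (in particular $\etaS$ and $\ombS$, cf.\ Remark \ref{rem4.5}), so the extension-dependence cancels in the combined identity and the whole computation can be carried out on $\Si$. Keeping track of signs and verifying the $\chih^\S\cdot\chibh^\S$ cancellation is the main, essentially routine, care needed.
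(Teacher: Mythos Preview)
Your proof is correct and follows essentially the same approach as the paper: both use the $\nab^\S_3\ka^\S$ structure equation, compute $\nab^\S_3\ka^\S$ by writing $\nu^\S(\ka^\S)=-\frac{2}{(r^\S)^2}\nu^\S(r^\S)$ and subtracting $b^\S$ times the Raychaudhuri equation (with $\om^\S=\xi^\S=0$), then average over $\S$ using $\ov{\div^\S\etaS}=0$, $\ov{\kab^\S}=-\frac{2\Up^\S}{r^\S}$, and the mass aspect definition to trade $\rho^\S$ for $\mu^\S$. Your additional remarks on why the computation is well defined on $\Si$ and on the vanishing of $\ov{\mucheck^\S}$ via Gauss--Bonnet are correct side observations not present in the paper's proof.
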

\begin{lemma}\label{weakestimate}
    We have the following estimate for $k\leq s_{max}$:
    \begin{equation}
        \left\|(\bcS,\zcS,\OmbcS)\right\|_{\hk_k(\S)}\les r\epg^\frac{2}{3}.
    \end{equation}
\end{lemma}
\begin{proof}[Proof of Lemma \ref{GCMfz}]
Recalling that
\bea\label{numu}
\mucheck^\S&=& M_0^\S +\sum_p \MpS\JpS,
\eea
we infer
\beaa
\ddsStwo\ddsSone \muS &=& \sum_p \MpS (\ddsStwo\ddsSone\JpS).
\eeaa
Therefore,
\bea\label{mubiao}
\nab^\S_{\nu^\S}(\ddsStwo\ddsSone \muS)=\nu^\S(\MpS)\ddsStwo\ddsSone\JpS+\MpS\nab^\S_{\nu^\S}(\ddsStwo\ddsSone\JpS).
\eea
Firstly, we estimate $\nab^\S_{\nu^\S}(\ddsStwo\ddsSone\JpS)$. Recall that $\nu^\S(\JpS)=0$, By Lemma \ref{commfor}, we have
\beaa
\nab^\S_{\nu^\S}(\ddsStwo\ddsSone\JpS)&=&[\nab^\S_{\nu^\S},\ddsStwo\ddsSone]\JpS \\
&=&[\nab^\S_{\nu^\S},\ddsStwo]\ddsSone\JpS+\ddsStwo[\nab^\S_{\nu^\S},\ddsSone]\JpS \\
&=&\frac{2}{\rS}\ddsStwo\ddsSone\JpS + \Ga_b^\S\c\dk \ddsSone\JpS \\
&& + \ddsStwo\left(\frac{2}{\rS}\ddsSone \JpS + \Ga_b^\S \c \dk\JpS\right)\\
&=& \frac{4}{\rS}\ddsStwo\ddsSone\JpS+(\rS)^{-2} O(\epg^\f12).
\eeaa
Recall that (see Lemma \ref{jpprop} and also Corollary 5.2.10 in \cite{KS:main}).
\beaa
|\ddsStwo\ddsSone\JpS| &\les& \frac{\epg}{r^2},\qquad M^{(\S,p)}\in r^{-1}\GagS.
\eeaa
According to \eqref{mubiao}, we deduce
\beaa
\nab^\S_{\nu^\S}(\ddsStwo\ddsSone\muS)=(\rS)^{-5}O(\epg),
\eeaa
which implies the first estimate. The third estimate is similar and left to the reader. Finally, the second estimate follows immediately from the GCM condition $\kaS=\frac{2}{\rS}$. This concludes the proof of Lemma \ref{GCMfz}. 
\end{proof}
\begin{proof}[Proof of Lemma \ref{etaxibomb}]
We denote
\bea
\begin{split}
C_1&=\nab_3^\S(\ddsStwo\ddsSone \muS),\\
C_2&=\nab_3^\S(\nab \kaS),\\
C_3&=\nab^\S_3(\ddsStwo\dddS_2+2K^\S)\ddsStwo\ddsSone\kabS.
\end{split}
\eea
Recalling that $\nu^\S=e_3^\S+\bS e_4^\S$, we obtain in view of Lemma \ref{GCMfz}
\beaa
C_1&=&-\bS \nab^\S_4 (\ddsStwo\ddsSone \muS)+r^{-5}O(\epg) ,\\
C_2&=&-\bS \nab^\S_4 (\nab^\S\kaS) ,\\
C_3&=& -\bS \nab^\S_4 \left( \MpS (\dddS_2\ddsStwo+2K^\S)\ddsStwo\ddsSone\kabc^\S \right)+r^{-6}O(\epg).
\eeaa
We claim that
\bea
\begin{split}\label{nab4}
\Vert\nab^\S_4 (\ddsStwo\ddsSone\muS)\Vert_{\hk_{k-3}(\S)}&\les& \epg r^{-5},\\
\Vert\nab^\S_4 (\nab^\S\kaS)\Vert_{\hk_{k-2}(\S)}&\les& \epg r^{-3},\\
\left\Vert\nab^\S_4 \left((\ddsStwo\dddS_2+2K^\S)\ddsStwo\ddsSone\kabS\right)\right\Vert_{\hk_{k-5}(\S)}&\les& \epg r^{-6}.
\end{split}
\eea
Indeed, making use of Lemma \ref{commfor} and \eqref{e4Gaggj}, we have
\beaa
\nabS_4\nabS\kaS&=&[\nabS_4,\nabS]\kac^\S+\nabS\nabS_4\kac^\S\\
&=&-r^{-1}\nabS\kac^\S+r^{-1}\GagS\cdot\dk^{\leq 1}\kac^\S+r^{-1}\dk \nab_4^\S(\GagS)\\
&=&r^{-4}O(\epg),
\eeaa
which implies the second estimate in \eqref{nab4}. The first and third estimates in \eqref{nab4} are similar and left to the reader.\\ \\
Writing $\bS=\bcS-1-\frac{2\mS}{\rS}$, recall the bootstrap assumptions \eqref{bootGab} and making use of product estimates we deduce
\beaa
\Vert C_1\Vert_{\hk_{k-3}(\S)}&\les& \epg r^{-4},\\
\Vert C_2\Vert_{\hk_{k-2}(\S)}&\les& \epg r^{-3},\\
\Vert C_3\Vert_{\hk_{k-5}(\S)}&\les& \epg r^{-5}.
\eeaa
Next, we estimate the error terms
\beaa
E_k&:=& r^{-1}\Vert \GagS \Vert_{\hk_{k}(\S)}+\Vert \GabS\c\GabS\Vert_{\hk_{k}(\S)} \\
&\les & r^{-2}\epg + r^{-1} \Vert \GabS\Vert_{\hk_{k}(\S)}\Vert \GabS\Vert_{\hk_{k}(\S)} \\
&\les & r^{-1}\epg,\qquad \forall k\leq s_{max}.
\eeaa
We cite the following identities, see Propositions 5.1.21 and 5.1.22 in \cite{KS:main}:
\begin{align*}
 2\ddsStwo\ddsSone \dddS_1\dddS_2\ddsStwo\etaS &= -\ddsStwo\ddsSone \dddS_1\nab^\S_3\nab^\S\kaS+\frac{2}{\rS}\nab^\S_3\ddsStwo\ddsSone\muS -\frac{4}{\rS}\ddsStwo\ddsSone\divS\bb^\S  \\
& + (\rS)^{-5} (\dkb^\S)^{\le 4} \Ga_g^\S+ (\rS)^{-4} (\dkb^\S)^{\le 4} (\Ga_b^\S\c \Ga_b^\S),\\
2\ddsStwo\ddsSone \dddS_1\dddS_2\ddsStwo\xibS  &= \nab^\S_3(\ddsStwo\dddS_2+2K^\S)\ddsStwo\ddsSone\kabS +\frac{2}{\rS}\nab_3\ddsStwo\ddsSone\muS \\
&-\frac{4}{\rS}\ddsStwo\ddsSone \divS\bb^\S + (\rS)^{-5} (\dkb^\S)^{\le 4} \Ga_g^\S+ (\rS)^{-4} (\dkb^\S)^{\le 4} (\Ga_b^\S\c \Ga_b^\S),\\
2\nab^\S \ombS &=\frac{1}{\rS}\xibS - \nab^\S_3 \zeS-\bb^\S +\frac{1}{\rS}\etaS +(\rS)^{-1}\Ga^\S_g+\Ga^\S_b\c \Ga^\S_b.    
\end{align*}
Recalling that 
$$\ddsSone\dddS_1=2\dddS_2\ddsStwo+2K,$$
we deduce
\beaa 
\ddsStwo\ddsSone \dddS_1\dddS_2\ddsStwo\etaS&=&2\ddsStwo(\dddS_2\ddsStwo+K)\dddS_2\ddsStwo\etaS\\
&=&2\ddsStwo\dddS_2\ddsStwo\dddS_2\ddsStwo\etaS+\frac{2}{r^2}\ddsStwo\dddS_2\ddsStwo\etaS\\
&&+2\widecheck{K}\ddsStwo\dddS_2\ddsStwo\etaS+2\ddsStwo(\widecheck{K})\dddS_2\ddsStwo\etaS.
\eeaa 
Notice that
\beaa
2\widecheck{K}\ddsStwo\dddS_2\ddsStwo\etaS&=&O(\epg r^{-3})\ddsStwo\dddS_2\ddsStwo\etaS,\\
2\ddsStwo(\widecheck{K})\dddS_2\ddsStwo\etaS&=&O(\epg r^{-4})\dddS_2\ddsStwo\etaS,
\eeaa 
where we used \eqref{Gaggj}. Thus, by Corollary \ref{d2d2}, we obtain
\beaa 
\Vert\ddsStwo\etaS\Vert_{\hk_{k+4}(\S)}&\les& r^4\Vert\ddsStwo\dddS_2\ddsStwo\dddS_2\ddsStwo\etaS\Vert_{\hk_{k}(S)}+r^2\Vert\ddsStwo\dddS_2\ddsStwo\etaS\Vert_{\hk_k(S)}\\
&\les&r^4\Vert\ddsStwo\ddsSone\dddS_1\dddS_2\ddsStwo\etaS\Vert_{\hk_k(S)}+O(\epg)\Vert\ddsStwo\etaS\Vert_{\hk_{k+2}(S)},
\eeaa 
which implies
\beaa 
\Vert\ddsStwo\etaS\Vert_{\hk_{k+4}(\S)}&\les&r^4\Vert\ddsStwo\ddsSone\dddS_1\dddS_2\ddsStwo\etaS\Vert_{\hk_k(S)}.
\eeaa 
Using \eqref{Gaggj}, \eqref{e4Gaggj} and \eqref{bootGab}, we deduce
\beaa
\Vert\ddsStwo\etaS\Vert_{\hk_{k+4}(\S)}&\les& r\Vert C_2\Vert_{\hk_{k+3}(\S)}+r^3\Vert C_1\Vert_{\hk_{k}(\S)}+\epg r^{-1}+E_{4+k}\les\epg r^{-1}.
\eeaa 
Similarly, we have
\beaa
\Vert\ddsStwo\xibS\Vert_{\hk_{k+4}(\S)}&\les& r^4\Vert C_3\Vert_{\hk_{k}(\S)}+r^3\Vert C_1\Vert_{\hk_{k+3}(\S)}+\epg r^{-1}+E_{4+k}\les\epg r^{-1}.
\eeaa 
Finally, recalling that $r\bb^\S\in\GabS$ is well-defined on a sphere, we obtain
\beaa 
\Vert\ddsSone\ombS\Vert_{\hk_{k+4}(\S)}&\les& r^{-1}\Vert \xibS\Vert_{\hk_{k+4}(\S)}+r^{-1} \Vert \etaS\Vert_{\hk_{k+4}(\S)}+r^{-1} \epg+E_{4+k} \\
&\les& r^{-1}\Vert\xibS\Vert_{\hk_{k+4}(\S)}+r^{-1} \Vert \etaS\Vert_{\hk_{k+4}(\S)}+r^{-1}\epg
\eeaa
as stated. This concludes the proof of Lemma \ref{etaxibomb}.
\end{proof}
\begin{proof}[Proof of Lemma \ref{ombpjz}]
We recall the following null structure equation, see Proposition 7.4.1 in \cite{Ch-Kl},
\beaa
\nab_3^\S \kaS + \f12 \kabS\kaS -2\ombS\kaS&=& 2\divS\etaS -\chibhS\cdot\chihS+2(\etaS)^2 +2\rhoS.
\eeaa
Making use of the GCM condition $\kaS=\frac{2}{\rS}$ we deduce
\beaa
\ombS&=& \frac{1}{2\kaS}\left[\nab_3^\S\kaS+\f12\kabS\kaS-2\divS\etaS+\chibhS\cdot\chihS-2(\etaS)^2-2\rhoS \right]\\
&=&\frac{1}{2\kaS}\left[\nab_{\nu^\S}^\S\left(\frac{2}{\rS}\right)-\bS\nab^\S_{4}\kaS+\f12\kabS\kaS-2\divS\etaS+\chibhS\cdot\chihS-2(\etaS)^2-2\rhoS \right]\\
&=&\frac{1}{2\kaS}\left[-2\frac{e_3^\S(\rS)+\bS}{(\rS)^2}+\bS\left(\f12(\kaS)^2+|\hch^\S|^2\right)+\f12\kabS\kaS-2\divS\etaS+\chibhS\cdot\chihS-2(\etaS)^2-2\rhoS\right]\\
&=&\frac{1}{2\kaS}\left[-\frac{2e_3^\S(\rS)}{(\rS)^2}+\bS|\hch^\S|^2+\f12\kabS\kaS-2\divS\etaS+\chibhS\cdot\chihS-2(\etaS)^2-2\rhoS\right]\\
&=& -\frac{e_3^\S(\rS)}{2\rS} +\frac{1}{4}\bS\rS|\hch^\S|^2+\frac{1}{4}\kabS +\frac{\rS}{4}\left[-2\divS\etaS+\chibhS\cdot\chihS-2(\etaS)^2-2\rhoS \right],
\eeaa
where we used
\beaa 
e_4^\S(\rS)=1,\qquad\OmbS=e_3^\S(\rS),\qquad e_4^\S(\kaS)=-\f12(\kaS)^2-|\hch^\S|^2.
\eeaa
Taking the average over $\S$, and recalling that $\muS=-\divS\zeS-\rhoS+\f12\chibhS\cdot\chihS$ and \eqref{ovksb}, we obtain
\beaa
\ov{\ombS}&=& -\frac{1}{2\rS}\ov{\Ombc^\S} +\frac{\rS}{2}\ov{\muS-(\etaS)^2+\frac{\bS}{2}|\hch^\S|^2},
\eeaa
or
\beaa
\ov{\ombcheck^\S}=\ov{\ombS}-\frac{\mS}{(\rS)^2}&=&-\frac{1}{2\rS}\ov{\Ombc^\S}+\frac{\rS}{2} \left(\ov{\mucheck^\S}-\ov{\etaS\cdot\etaS}+\ov{\frac{\bS}{2}\hch^\S\cdot\hch^\S}\right).
\eeaa
This concludes the proof of Lemma \ref{ombpjz}.
\end{proof}
\begin{proof}[Proof of Lemma \ref{weakestimate}]
    We have from Proposition \ref{th24}, \eqref{bSbdifference} and \eqref{assuml1} that
    \begin{equation*}
        \left\|(\bcS-\ov{\bcS})-(\widecheck{b}-\ov{\widecheck{b}})\right\|_{\hk_k(\S)}\les r\dg+r|\DS|\les r\epg^\frac{2}{3},
    \end{equation*}
which implies
\begin{equation}\label{weakb}
    \|\bcS\|_{\hk_k(\S)}\les r\epg^\frac{2}{3}+r|\DS|+\|\widecheck{b}-\ov{\widecheck{b}}\|_{\hk_k(\S)}\les r\epg^\frac{2}{3}.
\end{equation}
We have from Lemma \ref{ombpjz} and \eqref{bootGab}
\begin{align*}
    \left|\ov{\OmbcS}\right|\les r|\ov{\ombS}|+r^2\left|\ov{\widecheck{\mu}^\S}\right|+r^2|\GabS\c\GabS|\les \epg^\frac{2}{3}.
\end{align*}
Next, we have from Proposition \ref{th24}, \eqref{zwidecheckEE} and \eqref{assuml1} that
        \begin{equation*}
        \left\|\left(\OmbcS-\ov{\OmbcS}\right)-(\Ombc-\ov{\Ombc})\right\|_{\hk_k(\S)}\les r\dg+r|\DS|\les r\epg^\frac{2}{3},
    \end{equation*}
which implies
\begin{equation}\label{weakOmb}
    \|\OmbcS\|_{\hk_k(\S)}\les r\epg^\frac{2}{3}+\left\|\widecheck{\Omb}-\ov{\Ombc}\right\|_{\hk_k(\S)}\les r\epg^\frac{2}{3}.
\end{equation}
Finally, we have from \eqref{eqresume} that
\begin{align}
\begin{split}
    \zcS&=\zS-2=-\bS-\OmbS-2\\
    &=-\bS-1-\frac{2\mS}{\rS}-\OmbS-1+\frac{2\mS}{\rS}\\
    &=-\bcS-\OmbcS,\label{zcbcOc}
\end{split}
\end{align}
which implies from \eqref{weakb} and \eqref{weakOmb}
\begin{equation}\label{weakz}
    \|\zcS\|_{\hk_k(\S)}\les \|\bcS\|_{\hk_k(\S)}+\|\OmbcS\|_{\hk_k(\S)}\les r\epg^\frac{2}{3}.
\end{equation}
Combining \eqref{weakb}, \eqref{weakOmb} and \eqref{weakz}, this concludes the proof of Lemma \ref{weakestimate}.
\end{proof}
We are now ready to prove Proposition \ref{bS}.
\begin{proof}[Proof of Proposition \ref{bS}]
We start with the estimate of $\Ombc^\S$. We first note that the GCM condition $\ks^\S=\frac{2}{\rS}$ together with the definition of the Hawking mass
\beaa
\frac{2m^\S}{\rS}&=& 1+ \frac{1}{16\pi}\int_\S \kaS\kabS
\eeaa
implies that,
\bea
\ov{\ksb^\S}=-\frac{2\UpS}{\rS},\label{ovksb}
\eea
where $\UpS=1-\frac{2m^\S}{\rS}$. Thus, in view of Lemma \ref{dint}, we deduce
\beaa
\OmbcS+\bcS-2 &=&\OmbS+\bS =\nu^\S(\rS) = \frac{\rS}{2}z^\S \ov{(z^\S)^{-1}(\kabS+\bS\kaS)}\\
&=& \frac{\rS}{2}z^\S \ov{(z^\S)^{-1}\left(\kabc^\S-\frac{2\UpS}{\rS}+\left(\bcS-1-\frac{2\mS}{\rS}\right) \frac{2}{\rS}\right)} \\
&=& \frac{\rS}{2}z^\S \ov{(z^\S)^{-1}\left(\kabc^\S-\frac{4}{\rS}+\frac{2\bcS}{\rS}\right)}\\
&=& \frac{\rS}{2}z^\S \ov{(z^\S)^{-1}\kabc^\S}-2\zS \ov{(\zS)^{-1}}+\zS\ov{(\zS)^{-1}\bcS}.
\eeaa
Multiplying by $(\zS)^{-1}$ and taking the average over $\S$, we infer
\beaa
\ov{\OmbcS(\zS)^{-1}} &=& \frac{\rS}{2} \ov{(z^\S)^{-1}\kabc^\S}.
\eeaa
We write
\beaa
\frac{1}{\zS}=\frac{1}{2+\zcS}=\frac{1}{2}+O(\zcS),
\eeaa
and hence, recalling that $\ov{\kabc^\S}=0$, we infer
\bea\label{ovOmbcScompute}
\ov{\OmbcS}&=&\ov{O(\zcS)\OmbcS}+\rS\ov{O(\zcS)\kabc^\S}.
\eea
Combining Lemma \ref{resume} and \eqref{ovOmbcScompute}, we obtain
\beaa
e_a^\S(\Ombc^\S)&=&(\zeS_a-\etaS_a)\Omb^\S-\xib_a^\S,\\
\ov{\Ombc^\S}&=&\ov{O(\zcS)\OmbcS}+r\GagS.
\eeaa
Applying Lemma \ref{weakestimate} and \eqref{Gaggj}, we deduce for $k\leq s_{max}-1$,
\bea
\begin{split}\label{Ombcggj}
r^{-1}\Vert \Ombc^\S \Vert_{\hk_{k+1}(\S)}\les& \Vert \etaS \Vert_{\hk_{k}(\S)}+\Vert \xibS \Vert_{\hk_{k}(\S)}+\Vert \GagS \Vert_{\hk_{k}(\S)}+r^2 \Vert \zcS\c\OmbcS\Vert_{L^\infty(\S)}\\
\les&\Vert \etaS \Vert_{\hk_{k}(\S)}+\Vert \xibS \Vert_{\hk_{k}(\S)}+\epg.
\end{split}
\eea
Applying Lemma \ref{elliptic1} to the equation
\beaa
e_a^\S(\zS)&=& (\zeS-\etaS)\zS,
\eeaa
we derive
\bea
\begin{split}\label{zcSbcggj}
r^{-1}\Vert\zS-\ov{\zS}\Vert_{\hk_{k+1}(\S)}&\les \Vert(\zeS-\etaS)\zS\Vert_{\hk_{k}(\S)} \\
&\les \Vert (\zeS-\etaS)\zcS \Vert_{\hk_k(\S)}+\Vert\etaS\Vert_{\hk_k(\S)}+\epg \\
&\les \epg+\Vert\etaS\Vert_{\hk_k(\S)}.
\end{split}
\eea
Next, we estimate $\ov{\zcS}$. For this, we have from \eqref{zcbcOc}
\beaa
\ov{\zcS}&=&-\ov{\bcS}-\ov{\Ombc^\S}=-\DS-\ov{\Ombc^\S}.
\eeaa
Then, applying \eqref{Ombcggj}, we obtain
\beaa
|\ov{\zcS}|\les|\DS|+\Vert\Ombc^\S\Vert_{L^\infty}\les |\DS|+\Vert \etaS \Vert_{\hk_{k}(\S)}+\Vert \xibS \Vert_{\hk_{k}(\S)}+\epg.
\eeaa
By \eqref{zcSbcggj} and the above estimate, we conclude
\bea\label{zcScggj}
r^{-1} \Vert \zcS \Vert_{\hk_{k+1}(\S)}&\les& |\DS|+\Vert \etaS \Vert_{\hk_{k}(\S)}+\Vert \xibS \Vert_{\hk_{k}(\S)}+\epg.
\eea
Finally, according to \eqref{zcbcOc}, we have
\bea
\begin{split}\label{bcScggj}
r^{-1} \Vert \bcS\Vert_{\hk_{k+1}(\S)}&\les r^{-1}\Vert \zcS\Vert_{\hk_{k+1}(\S)}+r^{-1}\Vert \Ombc^\S\Vert_{\hk_{k+1}(\S)} \\
&\les |\DS|+\Vert \etaS \Vert_{\hk_{k}(\S)}+\Vert \xibS \Vert_{\hk_{k}(\S)}+\epg.
\end{split}
\eea
It remains to estimate $(\etaS,\xibS,\ombc^\S)$. Recall from Lemma \ref{etaxibomb} that, $\forall k \le s_{max}-5$, we have
\begin{align*}
\Vert \ddsStwo \etaS \Vert_{\hk_{k+4}(\S)}&\les \epg r^{-1},\\
\Vert \ddsStwo \xibS \Vert_{\hk_{k+4}(\S)}&\les \epg r^{-1},\\
\Vert \ddsSone \ombS \Vert_{\hk_{k+4}(\S)}&\les \epg r^{-1}+r^{-1}\Vert \etaS \Vert_{\hk_{k+4}(\S)}+r^{-1}\Vert \xibS \Vert_{\hk_{k+4}(\S)}.
\end{align*}
In view of the definition of $\BS$ and $\BbS$ and Lemma \ref{elliptic2}, we deduce
\bea\label{etasxibsest}
\begin{split}
\Vert\etaS \Vert_{\hk_{k+5}(\S)}&\les& \epg+|\BS|+|(\curl^\S\etaS)_{\ell=1}|,\\
\Vert\xibS \Vert_{\hk_{k+5}(\S)}&\les&\epg +|\BbS|+|(\curl^\S\xibS)_{\ell=1}|.
\end{split}
\eea
Recall from Proposition \ref{nullstructure} that
\beaa 
\curl^\S\etaS&=&{^*\rho^\S}+\GabS\cdot\GagS,\\
\curl^\S\xibS&=&\GabS\cdot\GabS.
\eeaa
Applying \eqref{Gaggj} and \eqref{bootGab}, we obtain
\beaa 
\curl^\S\etaS&=&r^{-3}O(\epg),\\
\curl^\S\xibS&=&r^{-2}O(\epg).
\eeaa
Thus, \eqref{etasxibsest} implies
\beaa 
\Vert\etaS \Vert_{\hk_{k+5}(\S)}&\les&\epg+|\BS|,\\
\Vert\xibS \Vert_{\hk_{k+5}(\S)}&\les&\epg+|\BbS|.
\eeaa
Next, we have from Lemma \ref{ombpjz} that
\beaa
\left|\ov{\ombcheck^\S}\right|&\les& (\rS)^{-1} \Vert\ov{\Ombc^\S}\Vert_{L^\infty(\S)}+\GagS+ \rS \Vert \GabS\Vert_{L^\infty(\S)}^2 \\
&\les& \epg r^{-1} + r^{-1} \Vert \etaS\Vert_{\hk_k(\S)}+ r^{-1} \Vert \xibS\Vert_{\hk_k(\S)}.
\eeaa
Together with the above estimate for $\ddsSone \omb^\S$ and Lemma \ref{elliptic1}, we obtain
\beaa
\Vert\ombc^\S \Vert_{\hk_{k+1}(\S)}&\les&\epg +\Vert\etaS\Vert_{\hk_{k}(\S)}+\Vert\xibS\Vert_{\hk_{k}(\S)}+r|\ov{\ombc^\S}| \\
&\les& \epg+|\BS|+|\BbS|,\qquad k\leq s_{max}-1,
\eeaa
which concludes the statement 1 of Proposition \ref{bS}. 

We can then go back to the preliminary estimates \eqref{Ombcggj}, \eqref{zcScggj} and \eqref{bcScggj} obtained above to derive 
\begin{align*}
r^{-1}\Vert \Ombc^\S \Vert_{\hk_{k+1}(\S)}
&\les\epg+\Vert \etaS \Vert_{\hk_{k}(\S)}+\Vert \xibS \Vert_{\hk_{k} (\S)}\les \epg+|\BS|+|\BbS|,\\
r^{-1}\Vert \zcS \Vert_{\hk_{k+1}(\S)}
&\les \epg+|\DS|+\Vert \etaS \Vert_{\hk_{k}(\S)}+\Vert \xibS \Vert_{\hk_{k} (\S)}\les \epg+|\BS|+|\BbS|+|\DS|,\\
r^{-1}\Vert \bcS \Vert_{\hk_{k+1}(\S)}
&\les \epg+|\DS|+\Vert\etaS\Vert_{\hk_{k}(\S)}+\Vert\xibS\Vert_{\hk_{k} (\S)}\les \epg+|\BS|+|\BbS|+|\DS|.
\end{align*}
This concludes the remaining statements 2 and 3 of Proposition \ref{bS}.
\end{proof}
\subsection{ODE system for \texorpdfstring{$(\Psi,\La,\Lab)$}{}}\label{sec4.3}
The goal of this section is to prove the following proposition which provides an ODE system for $(\Psi,\La,\Lab)$.
\begin{proposition}\label{LaLabprop}
Let $\Si$ a smooth spacelike hypersurface as in Definition \ref{hypersurfaceSigma}. We define $r(s),B(s),\Bb(s)$ and $D(s)$ on $\S:=\S(s)=\S[P(s)]$ as
\begin{equation}\label{D(s)}
r(s):=\rS,\qquad B(s):=\BS,\qquad \Bb(s):=\BbS,\qquad D(s):=\ov{\bS}+1+\frac{2m_{(0)}}{\rS},
\end{equation}
where $\BS,\BbS$ are given by \eqref{BBbDfirst}.\\ \\
Then we have the following equations for the functions $D(s),r(s)$ and the triplets $\La(s)$, $\Lab(s)$, $B(s)$, $\Bb(s)$:
\begin{align*}
\frac{1}{-1+\psi'(s)}\La'(s)&=B(s)+G(\La,\Lab,\psi)(s)+N(B,\Bb,D,\La,\Lab,\psi)(s),\\
\frac{1}{-1+\psi'(s)}\Lab'(s)&=\Bb(s)+\Gb(\La,\Lab,\psi)(s)+\Nb(B,\Bb,D,\La,\Lab,\psi)(s),\\
\psi'(s)&=-\f12 D(s)+H(B,\Bb,\La,\Lab,\psi)(s)+M(B,\Bb,D,\La,\Lab,\psi)(s),
\end{align*}
where 
\begin{equation}
    \psi(s):=\Psi(s)+s-c_0,
\end{equation}
and $G,\Gb,H,N,\Nb,M$ satisfy the following properties:
\begin{itemize}
\item $G,\Gb$ are $O(1)$--Lipschitz functions of $(\La,\Lab,\psi)$;
\item $H$ is a $O(1)$--Lipschitz function of $(B,\Bb,\La,\Lab,\psi)$;
\item $M,N,\Nb$ are $O(\epg^\f12)$--Lipschitz functions of $(B,\Bb,D,\La,\Lab,\psi)$.
\end{itemize}
\end{proposition}
Before proving this proposition, we first study some properties of the derivation $\frac{d}{ds}$. Recall that, according to Section \ref{sec4.1}, we have
\bea\label{defSi0}
\Si=\bigcup_{p=\{S,N\}} \left\{ \Xi_p(s,y^1_p,y^2_p) \Big/ \, s\geq \ovs,\, (y^1_p)^2+(y^2_p)^2<2 \right\},
\eea
where the maps $\Xi_p(s,y^1_p,y^2_p)$, $p=S,N$ are defined in \eqref{defXi}.
\begin{definition}
We define the vectorfield $X_p$ along $\Si$ as the push forward of $\frac{d}{ds}$ by $\Xi_p$, i.e, for every scalar function $h$, we define
\bea\lab{defXSP}
X_p (h) &:=& \frac{d}{ds} h\left(\Xi_p(s,y_p^1,y_p^2)\right).
\eea
\end{definition}
\begin{proposition}\label{st14}
We have the following expression for $X_p$
\beaa
X_p|_{(s,y^1_p,y^2_p)}&=&\left(\Psi'(s)+\Abr_p(s,y^1_p,y^2_p)\right)\pr_u+\left(1+\Bbr_p(s,y^1_p,y^2_p)\right)\pr_s,
\eeaa
where
\beaa
\Abr_p(s,y^1_p,y^2_p)&:=& \pr_P U_p(y^1_p,y^2_p,P(s))P'(s), \\
\Bbr_p(s,y^1_p,y^2_p)&:=& \pr_P S_p(y^1_p,y^2_p,P(s))P'(s),
\eeaa
with
\beaa
\pr_P U_p(\cdot)P'(s)&=& \Psi'(s)\pr_u U_p(\cdot)+\pr_s U_p(\cdot)+\La'(s)\cdot\pr_\La U_p(\cdot)+\Lab'(s)\cdot\pr_{\Lab}U_p(\cdot)+\pr_J U_p\c J'(s),\\
\pr_P S_p(\cdot)P'(s)&=& \Psi'(s)\pr_u S_p(\cdot)+\pr_s S_p(\cdot)+\La'(s)\cdot\pr_\La S_p(\cdot)+\Lab'(s)\cdot\pr_{\Lab}S_p(\cdot)+\pr_J S_p\c J'(s).
\eeaa
\end{proposition}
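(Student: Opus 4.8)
The statement is essentially a computation of the pushforward of the coordinate vectorfield $\frac{d}{ds}$ on the parameter space under the map $\Xi_p$, so my plan is to differentiate the explicit formula for $\Xi_p$ directly and identify the resulting tangent vector in the $(u,s,y^1_p,y^2_p)$ coordinate frame. Recall from \eqref{defXi} and \eqref{XiSN} that
\[
\Xi_p(s,y^1_p,y^2_p) = \big(\Psi(s)+U_p(y^1_p,y^2_p,P(s)),\; s+S_p(y^1_p,y^2_p,P(s)),\; y^1_p,\; y^2_p\big),
\]
where $P(s)=(\Psi(s),s,\La(s),\Lab(s))$, and where, per Theorem \ref{generalcoro} and Definition \ref{hypersurfaceSigma}, the deformation functions $U_p,S_p$ also depend on $s$ through the basis of $\ell=1$ modes $\Jt(s)$. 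First I would apply the chain rule to $\frac{d}{ds}$ acting on each component of $\Xi_p$: the first component contributes $\Psi'(s) + \pr_P U_p\cdot P'(s) + \pr_J U_p\cdot J'(s)$, the second contributes $1 + \pr_P S_p\cdot P'(s) + \pr_J S_p\cdot J'(s)$, and the last two contribute $0$ since $y^a_p$ are held fixed along the curve. Setting $\Abr_p := \pr_P U_p(\cdot)P'(s) + \pr_J U_p\cdot J'(s)$ and $\Bbr_p := \pr_P S_p(\cdot)P'(s) + \pr_J S_p\cdot J'(s)$ and expanding $P'(s) = (\Psi'(s),1,\La'(s),\Lab'(s))$ componentwise gives exactly the claimed expressions for $\Abr_p$ and $\Bbr_p$.

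The only point requiring care is the legitimacy of the chain rule here, i.e.\ that $U_p$ and $S_p$ are genuinely differentiable in all the arguments $(u,s,\La,\Lab)$ and in the parameter $J$. This is precisely what is supplied by statements 7--9 of Theorem \ref{Theorem:ExistenceGCMS1} (continuous differentiability of $U,S$ with respect to $\La,\Lab$, with the $O(1)$ bounds on the partials), together with Corollary 6.11 in \cite{KS:Kerr1} for the dependence on $(u,s)$, and by the smoothness of the fixed-point map constructed in Theorem \ref{generalcoro} for the dependence on $\Jt(s)$ and hence on $s$ through that channel. So the computation is valid and the formula follows. I would simply invoke these differentiability statements, perform the chain-rule expansion, and collect terms.

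The main (and essentially only) subtlety is bookkeeping the $s$-dependence that enters \emph{implicitly} through $\Jt(s)$: unlike the situation in the proof of Proposition \ref{propositionSigma} (where the $\ell=1$ modes were not being varied), here $\Xi_p$ depends on $s$ both explicitly and via $P(s)$ and via $\Jt(s)$, so one must make sure the $\pr_J U_p\cdot J'(s)$ and $\pr_J S_p\cdot J'(s)$ terms are correctly accounted for — which is exactly why the displayed formulas for $\pr_P U_p(\cdot)P'(s)$ and $\pr_P S_p(\cdot)P'(s)$ in the statement carry the extra $\pr_J U_p\c J'(s)$ and $\pr_J S_p\c J'(s)$ summands. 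Beyond tracking this term, the proof is a one-line application of the chain rule. I expect no genuine obstacle; the content of the proposition is notational, preparing the derivation $X_p$ for use in deriving the ODE system of Proposition \ref{LaLabprop}.
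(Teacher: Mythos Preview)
Your proposal is correct and takes essentially the same approach as the paper: both simply apply the chain rule to the explicit formula for $\Xi_p$ and read off the coefficients of $\pr_u$ and $\pr_s$. The paper's proof is just the one-line computation you describe, without the additional discussion of differentiability.
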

\begin{proof}
Clearly,
\beaa
\pr_s\Xi_p(s,y^1_p,y^2_p) &=& \left(\Psi'(s)+\pr_P U_p(y^1_p,y^2_p,P(s))P'(s),1+\pr_P S_p(y^1_p,y^2_p,P(s))P'(s),0,0\right).
\eeaa
Given a function on $\Si$, we infer
\beaa
\frac{d}{ds}h\left(\Xi_p(s,y^1_p,y^2_p)\right)&=&\left(\Psi'(s)+\pr_P U_p(y^1_p,y^2_p,P(s))P'(s)\right)\pr_u h\\
&&+\left(1+\pr_P S_p(y^1_p,y^2_p,P(s))P'(s) \right)\pr_s h\\
&=& X_p(h),
\eeaa
as stated.
\end{proof}
\begin{lemma}\label{st15}
Let $\ga(s)$ be the curve of the South Poles of the background\footnote{By the construction of GCM spheres in Theorem \ref{Theorem:ExistenceGCMS1}, the South poles of GCM spheres coincide with that of background spheres. Hence, we have $\ga(s)\subset \Si$.} that intersect the spheres foliating $\Si$, i.e.
\bea\label{definga}
\ga(s)=(\Psi(s),s,0,0)
\eea 
in the south coordinates chart.\\ \\
Then, for a function $h$ defined on $\ga(s)$:
$$
h(s):=h(\Psi(s),s,0,0),
$$
we have (recall $\nu^\S=e_3^\S+\bS e_4^\S$)
\bea \label{Xgas}
X\big|_{\ga(s)} (h)&=&C(s) \nu^\S\big|_{\ga(s)}(h),
\eea 
where
\bea
C(s)&=&\frac{\la}{z}\Psi'(s)\Big|_{\ga(s)}+F\c\Ga_b+O(F^2),\label{C(s)}
\eea
and
\beaa
F&:=&(f,\fb,\ovla),\qquad \ovla=\la-1.
\eeaa
Moreover, we have the following expression for $\bS$:
\bea
\bS\big|_{\ga(s)}&= &\frac{z-\Omb\Psi'(s)}{\la^2\Psi'(s)}\bigg|_{\ga(s)}+F\c\Ga_b+O(F^2).\label{Eq:bSsp}
\eea
Here $f,\fb,\la$ are the transition coefficients and $z,\Omb$ correspond to the background foliation.
\end{lemma}
\begin{proof}
Note that $U(0,0,P(s))=S(0,0,P(s))=0$, so we have
\beaa 
\Abr(s,0,0)=\Bbr(s,0,0)=0
\eeaa 
in Proposition \ref{st14}. Thus, we have
\beaa
X\big|_{\ga(s)}=\Psi'(s) \pr_u + \pr_s.
\eeaa
Note that $X$ is tangent to $\Si$. Thus, we have
\beaa 
\g(X,N^\S)\big|_{\ga(s)}=0,\qquad N^\S=e_3^\S-\bS e_4^\S,
\eeaa 
since the vectorfield $N^\S$ is normal to $\Si$. Now, applying \eqref{eq:decompositionofnullframeoncoordinatesframeforbackgroundfoliation}, \eqref{eq:Generalframetransf} and $Z^c\in\Ga_b$\footnote{See mentioned in Assumption {\bf{A3}}.}, and recalling that $z=\frac{2}{\vsi}$, we have
\beaa 
\g(X,N^\S)&=&\g\left(\Psi'(s)\pr_u+\pr_s,e_3^\S-\bS e_4^\S\right)\\
&=&\g\left(\Psi'(s)\left(\frac{1}{z}e_3-\frac{\Omb}{z}e_4-\frac{2}{z}Z^ce_c\right)+e_4,\la^{-1}e_3+\la^{-1}\fb^c e_c-\bS\left(\la e_4+\la f^c e_c\right)\right)+O(F^2)\\
&=&\g\left(\frac{\Psi'}{z}e_3+\left(1-\frac{\Omb\Psi'}{z}\right)e_4-\frac{2\Psi'Z^c}{z}e_c,\la^{-1}e_3-\la\bS e_4+\left(\la^{-1}\fb^c-\la\bS f^c\right)e_c\right)+O(F^2)\\
&=&-\frac{2}{\la}\left(1-\frac{\Omb\Psi'}{z}\right)+2\la\bS\frac{\Psi'}{z}+F\c \Ga_b+O(F^2).
\eeaa 
Since $\g(X,N^\S)\big|_{\ga(s)}=0 $, we deduce
\beaa 
\left[-\frac{2}{\la}\left(1-\frac{\Omb\Psi'}{z}\right)+2\la\bS\frac{\Psi'}{z}\right]\Bigg|_{\ga(s)}=F\c \Ga_b+O(F^2),
\eeaa 
which implies
\beaa 
\bS\big|_{\ga(s)}=\frac{z-\Omb\Psi'}{\la^2\Psi'}\bigg|_{\ga(s)}+F\c \Ga_b+O(F^2),
\eeaa 
as stated in \eqref{Eq:bSsp}.\\ \\
In view of $\nu^\S=e_3^\S+\bS e_4^\S$, \eqref{eq:Generalframetransf} and \eqref{suc}, we have
\bea
\begin{split}\label{nue4e3}
\nu^\S=&e_3^\S+\bS e_4^\S\\
=&\la^{-1}\left(e_3+\fb^b e_b\right)+\bS\la \left(e_4+f^be_b\right)+O(F^2)\\
=&\la^{-1}e_3+\la\bS e_4+(\la^{-1}\fb^c+\la\bS f^c) e_c+O(F^2)\\
=&\la^{-1}(z\pr_u+\Omb\pr_s+2\Bb^a\pr_{y^a})+\la\bS\pr_s+(\la^{-1}\fb^c+\la\bS f^c)X^a_{(c)}\pr_{y^a}+O(F^2)\\
=&\frac{z}{\la}\pr_u+\left(\frac{\Omb}{\la}+\la\bS\right)\pr_s+\left(2\la^{-1}\Bb^a+(\la^{-1}\fb^c+\la\bS f^c)X^a_{(c)}\right)\pr_{y^a}+O(F^2).
\end{split}
\eea
Applying \eqref{nue4e3} to a function $h(s)=h(\Psi(s),s,0,0)$, we obtain
\bea\label{nuh}
\nu^\S\big|_{\ga(s)}(h)&=&\frac{z}{\la}\pr_u(h)+\left(\frac{\Omb}{\la}+\la\bS\right)\pr_s(h).
\eea
Recalling \eqref{Eq:bSsp} and comparing \eqref{nuh} to $X\big|_{\ga(s)}=\Psi'(s)\pr_u+\pr_s$, we obtain
\beaa 
X\big|_{\ga(s)}(h)=C(s)\nu^\S\big|_{\ga(s)}(h),
\eeaa 
where
\beaa
C(s)&=&\frac{\la}{z}\Psi'(s)\Big|_{\ga(s)}+F\c\Ga_b+O(F^2),
\eeaa
as stated in \eqref{Xgas}. This concludes the proof of Lemma \ref{st15}.
\end{proof}
Now, we derive the following equations for $\La(s)$ and $\Lab(s)$.
\begin{lemma}\label{ODElalab}
We have the following identities
\bea
\begin{split}\lab{Eq:ODElalab}
\nu^\S\big|_{\ga(s)}\La(s)&=&\int_\S\nu^\S(\divS f) J^{(\S,p)}-\frac{4}{\rS}\La(s)+E(s),\\
\nu^\S\big|_{\ga(s)}\Lab(s)&=&\int_\S\nu^\S(\divS \fb) J^{(\S,p)}-\frac{4}{\rS}\Lab(s)+\Eb(s),
\end{split}
\eea
with error terms\footnote{For a scalar function $h$ defined on $\S$, $h-h\big|_{\ga(s)}$ is the scalar function define on $\S$ by subtracting its value at South pole $\ga(s)$.}
\beaa
E(s)&=&\int_\S \left(\frac{2}{\rS}\bcS+\kabc^\S\right)(\divS f) J^{(\S,p)}  \\
&&+ \zS\big|_{\ga(s)}\int_\S \left((\zS)^{-1}-(\zS)^{-1}\big|_{\ga(s)}\right)\left(\nu^\S(\divS f)-\frac{4}{\rS}\divS f +\left(\frac{2}{\rS}\bcS+\kabc^\S\right)\divS f\right)J^{(\S,p)},\\
\Eb(s)&=&\int_\S\left(\frac{2}{\rS}\bcS+\kabc^\S\right)(\divS \fb) J^{(\S,p)}  \\
&&+ \zS\big|_{\ga(s)}\int_\S \left((\zS)^{-1}-(\zS)^{-1}\big|_{\ga(s)}\right)\left(\nu^\S(\divS\fb)-\frac{4}{\rS}\divS \fb+\left(\frac{2}{\rS}\bcS+\kabc^\S\right)\divS\fb\right)J^{(\S,p)}. \\
\eeaa
\end{lemma}
\begin{proof}
According to Lemma \ref{dint} we have
\beaa
\nu^\S\left(\int_\S h \right)&=&\zS \int_\S (\zS)^{-1}\left(\nu^\S(h) + (\kabS +\bS \kaS) h \right).
\eeaa
Thus, choosing $h=(\divS f) J^{(\S,p)}$, we infer
\beaa
\nu^\S\big|_{\ga(s)}(\La)&=&\nu^\S\left(\int_\S (\divS f)\JpS\right)\bigg|_{\ga(s)}\\
&=&\zS\big|_{\ga(s)}\int_\S (\zS)^{-1}\left(\nu^\S((\divS f) J^{(\S,p)})+(\kabS+\bS\kaS)(\divS f) J^{(\S,p)}\right).
\eeaa
Recalling that $\kaS=\frac{2}{\rS}$, we have
\beaa
\kabS+\bS\kaS&=&\frac{2}{\rS}\bS-\frac{2\UpS}{\rS}+\kabc^\S \\
&=& \frac{2}{\rS}\left(\bS+1+\frac{2\mS}{\rS}\right)-\frac{2}{\rS}\left(-\UpS+2\right)-\frac{2\UpS}{\rS}+\kabc^\S\\
&=& \frac{2}{\rS}\bcS-\frac{4}{\rS}+\kabc^\S.
\eeaa
Since $\nu^\S(J^{(\S,p)})=0$, we infer
\begin{align*}
\nu^\S\big|_{\ga(s)}(\La)=&\int_\S \left(\nu^\S(\divS f) J^{(\S,p)}+(\kabS+\bS\kaS)(\divS f) J^{(\S,p)} \right) \\
&+\zS\big|_{\ga(s)} \int_\S \left((\zS)^{-1}-(\zS)^{-1}\big|_{\ga(s)}\right)\left(\nu^\S(\divS f) J^{(\S,p)}+(\kabS+\bS\kaS)(\divS f) J^{(\S,p)} \right)\\
=&\int_\S \left(\nu^\S(\divS f) J^{(\S,p)}+\left(\frac{2}{\rS}\bcS-\frac{4}{\rS}+\kabc^\S\right)(\divS f) J^{(\S,p)} \right) \\
&+\zS\big|_{\ga(s)}\int_\S\left((\zS)^{-1}-(\zS)^{-1}\big|_{\ga(s)}\right)\left(\nu^\S(\divS f)-\frac{4}{\rS}\divS f+\left(\frac{2}{\rS}\bcS+\kabc^\S\right)\divS f\right)J^{(\S,p)} \\
=& \int_\S \nu^\S(\divS f) J^{(\S,p)}-\frac{4}{\rS}\La(s)+E(s),
\end{align*}
as stated. The proof for $\Lab(s)$ is similar and left to the reader. This concludes the proof of Lemma \ref{ODElalab}.
\end{proof}
Recall that in Proposition \ref{bS} we have made the auxiliary assumption \eqref{BBbDass}, i.e.
\beaa
|\BS|+|\BbS|+|\DS| \leq \epg^{\frac{2}{3}}.
\eeaa
Moreover, recalling the definition \eqref{BBbDfirst} of $\DS$ and \eqref{D(s)} of $D(s)$ and applying Assumption {\bf A1} and \eqref{vsiOmbass}, we have
\beaa 
\left|\frac{2m}{r}-\frac{2m_{(0)}}{r}\right|&\les&\left|\ov{\zc+\Ombc}\right|+\dg\les\epg+\dg\les\epg.
\eeaa 
Recall from \eqref{2.50} that 
\beaa 
|m-\mS|&\les&\dg,
\eeaa 
thus, we obtain
\beaa
|\DS-D(s)| = \frac{2}{\rS}|\mS-m_{(0)}|\les\frac{1}{r}|m-m_{(0)}|\les\epg,
\eeaa
so we have the estimate
\beaa
|B(s)|+|\Bb(s)|+|D(s)|\les \epg^{\frac{2}{3}}.
\eeaa
The following notation is very useful in the proof of Proposition \ref{LaLabprop}.
\begin{definition}\label{good}
For a function $h$ of $(B,\Bb,D,\La,\Lab,\psi)$, we denote
\beaa
h=\good,
\eeaa
if
\beaa 
h=O(\epg),\qquad \frac{\pr h}{\pr v}=O(\epgf),\quad\forall v\in \{B,\Bb,D,\La,\Lab,\psi\}.
\eeaa
\end{definition}
\begin{lemma}\label{propgood}
We have
\bea\label{Fgood}
\dk^{\leq 1}(F\c\Ga_b)=r^{-2}\good,\qquad F\c F=r^{-2}\good,\qquad \err_1=r^{-2}\good,
\eea
where $\err_1$ is defined in Definition \ref{Definition:errorterms-prime} and Good is defined in Definition \ref{good}. We also have
\bea\label{nudivfgood}
\nu^\S (\divS f)\c\Ga_b&=r^{-3}\good.
\eea
Finally, for the following background quantities, we have
\bea\label{backgroundgood}
\left(\int_{S(\Psi(s),s)}(\div\eta) J^{(p)},\int_{S(\Psi(s),s)}(\div\xib) J^{(p)},\int_{S(\Psi(s),s)}\De(\kac) J^{(p)}\right)&=\good,
\eea
and
\bea\label{zombgood}
\ov{z+\Omb}-1-\frac{2m_{(0)}}{r}=\good. 
\eea
\end{lemma}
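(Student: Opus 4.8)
The plan is to verify each of the claimed identities by unwinding the relevant definitions and invoking the estimates already established in the excerpt, checking at each stage both the $O(\dg)$ bound and the $O(\epgf)$ bound on the partial derivatives in the parameters $(B,\Bb,D,\La,\Lab,\psi)$ required by Definition \ref{good}. The two bounds are logically separate: the size bound $O(\dg)$ comes from Theorem \ref{Theorem:ExistenceGCMS1}, Proposition \ref{bS}, and assumptions {\bf A1--A4} together with \eqref{assjp}--\eqref{vsiOmbass}; the Lipschitz/derivative bounds come from differentiating the GCM construction with respect to the parameters, for which the crucial inputs are properties 7--10 of Theorem \ref{Theorem:ExistenceGCMS1} (and Corollary 6.11 of \cite{KS:Kerr1}), which give $\pr f/\pr\La = O(r^{-1})$, $\pr\ovla/\pr\La = O(\dg r^{-1})$, etc., and the fact that the dependence of the GCM sphere on $\psi = \Psi(s)$ enters only through the base point $(u,s)=(\Psi(s),s)$ of the background foliation, where the relevant background quantities vary by $O(\epg)$ over $\RR$.

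First I would treat \eqref{Fgood}. Write $F=(f,\fb,\ovla)$. By \eqref{eq:ThmGCMS1} we have $\|F\|_{\hk_{s_{max}+1}(\S)}\les\dg$ and by \eqref{eq:ThmGCMS6}, $\|\Ga_b^\S\|_{\hk_{s_{max}}(\S)}\les\epg$, while the background $\Ga_b\in r^{-1}\Ga_b$-type bounds from {\bf A1} give $\|r\Ga_b\|_{\infty,s_{max}}\les\epg$; combining via the product/Sobolev estimates and Lemma \ref{lemma:comparison-gaS-ga} yields $\dk^{\le1}(F\c\Ga_b)=r^{-2}O(\dg)$ and $F\c F=r^{-2}O(\dg)$, and then $\err_1=r^{-2}O(\dg)$ follows from its schematic form in Definition \ref{Definition:errorterms-prime} ($r\err_1 = F\c(r\Ga_b)+F\c(r\nabS)^{\le1}F$, each term $r^{-1}O(\dg)$). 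For the derivative bounds: differentiating $F\c\Ga_b$ in any parameter $v$, one factor gets differentiated; $\pr_v F = O(r^{-1})$ in the worst case (the $f$ component, $\pr f/\pr\La$), and $\Ga_b^\S=O(\epg)$, giving $\pr_v(F\c\Ga_b)=r^{-2}O(\epg)$, which is $r^{-2}O(\epgf)$; the $\pr_v\Ga_b$ contribution is controlled similarly using that $\Ga_b^\S$ depends on the parameters through the deformation, which by properties 7--9 of Theorem \ref{Theorem:ExistenceGCMS1} is $C^1$ with the stated moduli. The same bookkeeping handles $F\c F$ ($\pr_v(F\c F) = 2F\c\pr_v F = r^{-1}\cdot r^{-1}O(\dg)\cdot\ldots$, actually $O(\dg r^{-2})$, better than needed) and $\err_1$. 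For \eqref{nudivfgood} one additionally uses the claimed bound \eqref{estnuF1}, $\|\nabS_{\nu^\S}F\|_{\hk_{s_{max}}(\S)}\les\dg$ (proved later in Proposition \ref{th24}), together with Lemma \ref{commfor} to commute $\nu^\S$ past $\divS$; then $\nu^\S(\divS f)\c\Ga_b = r^{-1}O(\dg)\cdot r^{-1}O(\epg) = r^{-3}O(\dg)$, with the parameter derivatives handled as before.

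Next I would dispatch \eqref{backgroundgood} and \eqref{zombgood}, which are purely about background quantities evaluated along the curve $s\mapsto(\Psi(s),s)$. The size bounds are immediate: $(\div\eta)_{\ell=1}$, $(\div\xib)_{\ell=1}$ are $O(\dg)$ by \eqref{assuml1}; $\int_S\De(\kac)J^{(p)}$ is $O(\dg)$ after integration by parts using \eqref{eq:GCM-improved estimate2-again} and Proposition \ref{jpprop}; and $\ov{z+\Omb}-1-2m_{(0)}/r = O(\dg)$ is exactly \eqref{vsiOmbass}. For the parameter derivatives: these expressions depend on $(B,\Bb,D,\La,\Lab)$ not at all and on $\psi$ only through the base point $(\Psi(s),s)$, so $\pr/\pr v = 0$ for $v\neq\psi$, and $\pr/\pr\psi$ equals $\pr_u$ applied to the background quantity; since by \eqref{assjp}, \eqref{divetadivxibpr}, \eqref{e3re3saddition} (and {\bf A1}) the $u$-derivative — which is a component of the tangential derivative $\dkt$ up to the $e_4$ direction along which these quantities are likewise controlled — is $O(\dg)$, hence $O(\epgf)$, these are all Good. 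The last identity \eqref{zombgood} requires a short argument that $\pr_u(\ov{z+\Omb})$ is $O(\dg)$: here one uses Lemma \ref{nonpo} to express $e_3$- and $e_4$-derivatives of the average, the structure equations of Proposition \ref{nullstructure}, and assumptions {\bf A1}, \eqref{assjp}, \eqref{e3re3s}.

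The main obstacle I anticipate is not any single estimate but the systematic verification of the $O(\epgf)$-Lipschitz bounds: one must be careful that differentiating the GCM construction in the parameters never produces a term of size larger than $\epgf$, and in particular that the $\psi$-derivative — which acts on background quantities via $\pr_u$ at the base point — stays small, which is precisely why the hypotheses \eqref{assjp}, \eqref{eq:GCM-improved estimate2-again}, \eqref{divetadivxibpr} are stated with the tangential-derivative operator $\dkt$ rather than just with undifferentiated quantities. A secondary subtlety is that \eqref{nudivfgood} relies on \eqref{estnuF1}, which is only proved later (Proposition \ref{th24}); the logical dependency is acyclic but should be flagged. Everything else reduces to the product and commutator estimates (Lemmas \ref{comm}, \ref{commfor}), the elliptic estimates of Lemma \ref{elliptic1} and Corollary \ref{d2d2}, and the comparison Lemma \ref{lemma:comparison-gaS-ga}, all of which are available.
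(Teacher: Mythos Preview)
Your proposal is essentially correct and follows the same approach as the paper: verify the $O(\dg)$ size from Theorem \ref{Theorem:ExistenceGCMS1} and {\bf A1}, check the $O(\epgf)$ parameter-derivatives from properties 7--9 of Theorem \ref{Theorem:ExistenceGCMS1} and from $\pr_\psi=\pr_u$ on background quantities, and handle \eqref{nudivfgood} by commuting with Lemma \ref{commfor} and invoking the forward reference \eqref{estnuF1}. One minor correction: for the $\psi$-derivatives in \eqref{backgroundgood} and \eqref{zombgood} you cite \eqref{divetadivxibpr} and \eqref{e3re3saddition}, but those are hypotheses of Corollary \ref{coro25}, not of Theorem \ref{mthm} in whose proof Lemma \ref{propgood} is used; the paper relies only on Assumption {\bf A1}, which gives $\pr_u(\cdot)=O(\epg)$ for these background integrals, and $O(\epg)$ is already $\leq O(\epgf)$ --- the Good condition only requires $O(\epgf)$ on the derivative, not the $O(\dg)$ you claim.
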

\begin{proof}
Notice that \eqref{Fgood} is a direct consequence of Definitions \ref{Definition:errorterms-prime}, \ref{good}, Assumption {\bf A1} and \eqref{eq:ThmGCMS1}.\\ \\
Since $\div\eta$, $\div\xib$, $\De(\kac)$ and $\Jp$ only depend on the background foliation, the quantities in \eqref{backgroundgood} only depend on $\psi$. Recalling \eqref{bootGab}, we have
\beaa 
\pr_\psi\left(\int_{S(\Psi(s),s)}(\div\eta)\Jp\right)&=&\pr_u\left(\int_{S(u,s)}(\div\eta)\Jp\right)\\
&=&\pr_u\left(\int_{\mathbb{S}}\sqrt{\det \g(s)}(\div\eta)\Jp dy^1dy^2\right)\\
&=&\int_{\mathbb{S}}\pr_u(\sqrt{\det \g(s)}\Jp)(\div\eta) dy^1dy^2+\int_{S(u,s)}\pr_u(\div\eta)\Jp\\
&=&r\dkb(\Ga_b)+r\dk^{\leq 2}(\Ga_b)=O(\epgf).
\eeaa 
Recalling from \eqref{assuml1} that $(\div\eta)_{\ell=1}=O(\dg)$, we conclude
\beaa
\int_{S(u,s)}(\div\eta)\Jp&=&\good.
\eeaa 
Similarly, using \eqref{assuml1} and \eqref{eq:GCM-improved estimate2-again}, we have
\beaa 
\int_{S(u,s)}(\div\xib)\Jp=\good,\qquad \int_{S(u,s)}(\De\kac)\Jp=\good.
\eeaa 
This concludes \eqref{backgroundgood}. \\ \\
Next, Recall that \eqref{vsiOmbass} implies
\beaa 
\ov{z+\Omb}-1-\frac{2m_{(0)}}{r}=O(\dg).
\eeaa 
Since $z,\Omb,r$ only depends on the background foliation, $\ov{z+\Omb}-1-\frac{2m_{(0)}}{r}$ only depends on $\psi$. We have
\beaa 
\pr_u\left(\ov{z+\Omb}-1-\frac{2m_{(0)}}{r}\right)=r\dk^{\leq 1}\Ga_b=O(\epgf),
\eeaa 
which implies \eqref{zombgood}.\\ \\
We have from Proposition \ref{th24} that
\bea\label{estnuF1}
\Vert \nab_{\nu^\S}^\S (F)\Vert_{\hk_{s_{max}}(\S)}&\les&\dg+|\BS|+|\BbS|+|\DS|,
\eea 
Applying Lemma \ref{commfor}, \eqref{Fgood} and \eqref{estnuF1} , we have
\beaa
\nu^\S(\divS f)&=&[\nabS_{\nu^\S},\divS]f+\divS(\nabS_{\nu^\S} f)\\
&=&\frac{2}{\rS}(\divS f)+\GabS\c\nabS_{\nu^\S}f+r^{-1}\GabS\c\dk^{\leq 1}f+r^{-1}\dkb^{\leq 1}(\nabS_{\nu^\S}f)\\
&=&\frac{2}{\rS}(\divS f)+r^{-1}\dkb^{\leq 1}(\nabS_{\nu^\S}f)+r^{-2}\good.
\eeaa
Then, we infer
\beaa 
\nu^\S(\divS f)\c\Ga_b&=&r^{-1}\dkb^{\leq 1}(f)\c\Ga_b+r^{-1}\dkb^{\leq 1}(\nabS_{\nu^\S}f)\c\Ga_b+r^{-3}\good\\
&=&r^{-3}\good,
\eeaa 
where we used \eqref{Fgood} and \eqref{estnuF1}. This concludes the proof of \eqref{nudivfgood}, and hence concludes the proof of Lemma \ref{propgood}.
\end{proof}
We are now ready to prove Proposition \ref{LaLabprop}. 
\begin{proof}[Proof of Proposition \ref{LaLabprop}]
{\bf Step 1} Firstly, we prove the following identities:
\bea
\begin{split}\label{intnuffb}
\int_{\S(s)} \nu^\S(\divS f)J^{(\S,p)}&=2B(s)+3r^{-1}\La(s)-r^{-1}\Lab(s)+\La(s)O(r^{-2})+\good,\\
\int_{\S(s)} \nu^\S(\divS \fb)J^{(\S,p)}&=2\Bb(s)+5r^{-1}\Lab(s)+r^{-1}\La(s)+O(r^{-2})(\La(s)+\Lab(s))+\good.
\end{split}
\eea
In order to prove the first identity in \eqref{intnuffb}, we write
\beaa
\nu^\S(\divS f)&=&[\nab_{\nu^\S}^\S,\divS]f+\divS (\nab_{\nu^\S}f),
\eeaa
According to \eqref{nuffb}, Proposition \ref{bS} and Lemma \ref{propgood}, we have
\beaa
\divS(\nab_{\nu^\S}f)&=&\divS\left( 2(\etaS-\eta)-\f12\ka(\fb+\bS f)+2f\omb\right)+ r^{-1}\dk^{\leq 1}( F\c \Ga_b)+\lot \\
&=& 2\divS\etaS-2\divS\eta - \frac{1}{r} \divS \fb -\frac{\bS}{r} \divS f +\frac{2m}{r^2}\divS f+r^{-1}\dk^{\leq 1}( F\c \Ga_b)+\lot \\
&=& 2\divS\etaS-2\divS\eta-\frac{1}{r}\divS\fb+\frac{1}{r}\divS f +\frac{4m}{r^2}\divS f+ r^{-3}\good.
\eeaa
Also, by Lemma \ref{commfor} and \eqref{estnuF1}, we have
\beaa
[\nab_{\nu^\S}^\S,\divS]f &=& \frac{2}{\rS}\divS f+\Ga_b\c\nabS_{\nu^\S} f+r^{-1}\Ga_b\c\dk^{\leq 1}f+\lot\\
&=&\frac{2}{\rS}\divS f+r^{-3}\good.
\eeaa
Recalling from Lemma \ref{lemma:comparison-gaS-ga} that $|\rS-r|\les \dg$, we obtain
\bea\label{nusdivsf}
\nu^\S(\divS f)=2\divS\etaS-2\divS\eta - \frac{1}{r} \divS \fb+\frac{3}{r}\divS f +\frac{4m}{r^2}\divS f+ r^{-3}\good.
\eea
Thus, recalling from Lemma \ref{lemma:comparison-gaS-ga} that $|\mS-m|\les\dg$, we obtain
\beaa
\int_{\S(s)} \nu^\S (\divS f) J^{(\S,p)}=2B(s)-2\int_{\S(s)}(\divS\eta) J^{(\S,p)}+\left(\frac{1}{3r}+\frac{4m}{r^2}\right)\La(s)-\frac{1}{r}\Lab(s)+ r^{-1}\good.
\eeaa
We claim that
\bea\label{divSetadg}
\left|\int_{\S(s)}(\divS\eta) J^{(\S,p)}-\int_{S(\Psi(s),s)}(\div\eta)\Jp\right|\les\epgf\dg.
\eea
Indeed,
\begin{align*}
& \left|\int_{\S(s)}(\divS\eta) J^{(\S,p)} -\int_{S(\Psi(s),s)}(\div\eta) J^{(p)} \right| \\
\les & \left|\int_{\S(s)}(\divS\eta-\div\eta) J^{(\S,p)}\right|+\left|\int_{\S(s)}\div\eta(\JpS-\Jp)\right|\\
&+\left|\int_{\S(s)}(\div\eta) J^{(p)}-\int_{S(\Psi(s),s)}(\div\eta) J^{(p)} \right|.
\end{align*}
The first term can be estimated by Lemma \ref{lemma:comparison-gaS-ga}. The second term can be estimated by \eqref{JJpdiff}. The last term can be estimated by Lemma \ref{Lemma:coparison-forintegrals}.
Hence,
\beaa
\left|\int_{\S(s)}(\divS\eta) J^{(\S,p)} -\int_{S(\Psi(s),s)}(\div\eta) J^{(p)} \right| \les\epgf\dg.
\eeaa
Recalling its structure and Definition \ref{good}, we obtain
\beaa
\int_{\S(s)}(\divS\eta) J^{(\S,p)} -\int_{S(\Psi(s),s)}(\div\eta) J^{(p)} =\good.
\eeaa
Applying \eqref{backgroundgood}, we deduce
\bea\label{goodeta}
\int_{\S(s)}(\divS\eta) J^{(\S,p)}=\good.
\eea 
Therefore,
\beaa
\int_{\S(s)} \nu^\S (\divS f) J^{(\S,p)}= 2B(s)+3r^{-1}\La(s)-r^{-1}\Lab(s)+\La(s)O(r^{-2})+\good.
\eeaa
Similarly, starting with
\beaa
\nab^\S_{\nu^\S}(\fb)&=&2(\xibS-\xib)-\f12\fb\kab-2\omb(\fb-\bS f)+\bS\left(2\nab^\S(\la)-\f12 \fb\ka\right)+F\cdot\Ga_b+\lot,
\eeaa
we deduce
\begin{align*}
\int_{\S(s)} \nu^\S (\divS \fb) J^{(\S,p)}=&2\Bb(s)-2\int_{\S(s)} (\divS\xib) J^{(\S,p)}+4r^{-1}\Lab(s)+2\int_{\S(s)}(\De^\S\la) J^{(\S,p)}\\
&+O(r^{-2})(\La(s)+\Lab(s))+r^{-1}\good.
\end{align*}
Similarly to \eqref{divSetadg}, we can prove
\bea\lab{divSxibdg}
\left|\int_{\S(s)}(\divS\xib)J^{(\S,p)}-\int_{S(\Psi(s),s)}(\div\xib)\Jp\right|\les\epgf\dg,
\eea
and thus,
\beaa
\int_{\S(s)}(\divS\xib)J^{(\S,p)}-\int_{S(\Psi(s),s)}(\div\xib)\Jp=\good.
\eeaa
Applying \eqref{backgroundgood}, we obtain
\beaa 
\int_{\S(s)}(\divS\xib)J^{(\S,p)}=\good.
\eeaa 
Next, we recall the transformation formula for $\ka$ in Proposition \ref{Prop:transformation-formulas-generalcasewithoutassumptions}:
\beaa 
\la^{-1}\kaS&=&\ka+\divS f+F\c\Ga_b+\lot
\eeaa 
Applying Definition \ref{good} and Lemma \ref{propgood}, recalling $\kaS=\frac{2}{\rS}$ and from Lemma \ref{lemma:comparison-gaS-ga} that $|r-\rS|\les\dg$, we infer
\beaa 
\ovla&=&\la-1=\frac{\kaS}{\ka+\divS f+F\c\Ga_b+\lot}-1\\
&=&\frac{\frac{2}{\rS}}{\frac{2}{r}+\kac+\divS f+\good}-1\\
&=&\frac{r}{\rS}\left(1-\frac{r}{2}\kac-\frac{r}{2}\divS f+\good\right)-1\\
&=&\frac{r-\rS}{\rS}-\frac{r^2}{2\rS}\kac-\frac{r^2}{2\rS}\divS f+\good\\
&=&\frac{r-\rS}{\rS}-\frac{\rS}{2}\kac-\frac{\rS}{2}\divS f+\good.
\eeaa
Thus,
\bea
\begin{split}\label{dela}
\int_{\S}(\De^\S\ovla)\JpS&=\frac{1}{\rS}\int_{\S}\De^\S(r-\rS)\JpS-\frac{\rS}{2}\int_\S\De^\S(\kac)\JpS\\
&-\frac{\rS}{2}\int_\S\De^\S\div^\S(f)\JpS+\good.
\end{split}
\eea
Similary as \eqref{divSetadg} and applying \eqref{backgroundgood}, we obtain 
\beaa
\frac{\rS}{2}\int_\S \De^\S(\kac)\JpS=\good.
\eeaa
Applying Proposition \ref{jpprop} and Lemma \ref{lemma:comparison-gaS-ga}, we infer
\beaa 
\int_\S\De^\S\div^\S(f)\JpS&=&\int_\S\divS (f)\De^\S\JpS\\
&=&-\frac{2}{(\rS)^2}\int_\S\divS(f)\JpS+\good\\
&=&-\frac{2}{r^2}\La(s)+\good.
\eeaa 
Recalling \eqref{Generalizedsystem}, we infer
\beaa 
\int_{\S}\De^\S(r-\rS)\JpS&=&\f12\int_\S\divS\left(\fb-\Up f+\err_1[\De^\S\ovb]\right)\JpS\\
&=&\f12\Lab(s)-\f12\La(s)+O(r^{-2})\La(s)+r\good.
\eeaa 
Now, from \eqref{dela} we deduce
\beaa 
\int_\S(\De^\S\ovla)\JpS=\frac{1}{2r}\Lab(s)+\frac{1}{2r}\La(s)+O(r^{-2})\La(s)+\good.
\eeaa 
Finally, we obtain
\begin{align*}
\int_{\S(s)} \nu^\S (\divS \fb) J^{(\S,p)}=2\Bb(s)+5r^{-1}\Lab(s)+r^{-1}\La(s)+O(r^{-2})(\La(s)+\Lab(s))+\good.
\end{align*}
This concludes the proof of \eqref{intnuffb}.\\ \\
{\bf Step 2} Now, we prove that the error terms $E(s),\Eb(s)$ defined in Lemma \ref{ODElalab} satisfy
\bea\label{EEbest}
E(s)=\good,\qquad \Eb(s)=\good.
\eea
Recall that
\beaa 
E(s)&=&\int_\S \left(\frac{2}{\rS}\bcS+\kabc^\S\right)(\divS f) J^{(\S,p)}  \\
&&+ \zS\big|_{\ga(s)}\int_\S \left((\zS)^{-1}-(\zS)^{-1}\big|_{\ga(s)}\right)\left(\nu^\S(\divS f)-\frac{4}{\rS}\divS f +\left(\frac{2}{\rS}\bcS+\kabc^\S\right)\divS f\right)J^{(\S,p)}.
\eeaa 
Since
\bea\label{use}
\left(\frac{2}{\rS}\bcS+\kabc^\S\right)=r^{-1} \Ga_b \c\dk^{\leq 1}(f),
\eea 
we obtain from Lemma \ref{propgood}
\bea\label{E1good}
\int_\S \left(\frac{2}{\rS}\bcS+\kabc^\S\right)(\divS f) J^{(\S,p)}=\good.
\eea 
Also, since $\zS=2+r\Ga_b$ and
\beaa
\int_\S \left((\zS)^{-1}-(\zS)^{-1}\big|_{\ga(s)}\right)\left(-\frac{4}{\rS}\divS f +\left(\frac{2}{\rS}\bcS+\kabc^\S\right)\divS f\right)J^{(\S,p)}=r^{-1}\Ga_b\c\dk^{\leq 1}(f),
\eeaa
we obtain from Lemma \ref{propgood} that
\bea\label{E2good}
\zS\big|_{\ga(s)}\int_\S \left((\zS)^{-1}-(\zS)^{-1}\big|_{\ga(s)}\right)\left(-\frac{4}{\rS}+ \frac{2}{\rS}\bcS+\kabc^\S\right)(\divS f)J^{(\S,p)}=\good.
\eea
Next, recalling $\zS=2+r\Ga_b$ and applying Lemma \ref{propgood}, we have
\bea\label{Isgood} 
\int_\S \left((\zS)^{-1}-(\zS)^{-1}\big|_{\ga(s)}\right)\nu^\S(\divS f)\JpS=\good.
\eea 
Combining \eqref{E1good}, \eqref{E2good} and \eqref{Isgood}, we deduce
\beaa 
E(s)=\good
\eeaa 
as stated. The proof for $\Eb(s)$ is similar and left to the reader. This concludes the proof of \eqref{EEbest}.\\ \\
{\bf Step 3} Recall from \eqref{Eq:ODElalab} that we have
\beaa
\nu^\S\big|_{\ga(s)}\La(s) &=& \int_\S \nu^\S(\divS f) J^{(\S,p)}-\frac{4}{\rS}\La(s)+E(s),\\
\nu^\S\big|_{\ga(s)}\Lab(s) &=& \int_\S \nu^\S(\divS \fb) J^{(\S,p)}-\frac{4}{\rS}\Lab(s)+\Eb(s).
\eeaa
Applying \eqref{Xgas}, \eqref{intnuffb} and \eqref{EEbest}, we obtain
\beaa
\frac{1}{\Psi'(s)}\La'(s)&=&\frac{C(s)}{\Psi'(s)}\left(2B(s)-r^{-1}\La(s)-r^{-1}\Lab(s)+O(r^{-2})\La(s)+\good\right),\\
\frac{1}{\Psi'(s)}\Lab'(s)&=&\frac{C(s)}{\Psi'(s)}\left(2\Bb(s)+r^{-1}\La(s)+r^{-1}\Lab(s)+O(r^{-2})(\La(s)+\Lab(s))+\good\right).
\eeaa
Recall that $z=2+O(\epg)$ and also $\la=1+O(r^{-1}\dg)$, thus, in view of \eqref{C(s)}
\beaa
\frac{C(s)}{\Psi'(s)}=\left(\frac{\la}{z}\right)\Bigg|_{\ga(s)}+O(\dg)=\frac{1+O(r^{-1}\dg)}{2+O(\epg)}+O(\dg)=\f12+O(\epg).
\eeaa
Hence,
\beaa
\frac{1}{\Psi'(s)}\La'(s)&=&\left(\frac{1}{2}+O(\epg)\right)\left(2B(s)-r^{-1}\La(s)-r^{-1}\Lab(s)+\La(s)O(r^{-2})+\good\right)\\
&=&B(s)-\f12 r^{-1} \La(s) -\f12 r^{-1} \Lab(s)+\La(s)O(r^{-2})+\good,
\eeaa
where we used
\beaa 
O(\epg)B(s)=\good.
\eeaa
Recalling that $\Psi(s)=-s+\psi(s)+c_0$, we obtain
\beaa
\frac{1}{-1+\psi'(s)}\La'(s) &=&B(s)+G(\La,\Lab,\psi)(s)+N(B,\Bb,D,\La,\Lab,\psi)(s)
\eeaa
where 
\beaa 
G(\La,\Lab,\psi)(s)=-\f12 r^{-1}\La(s)-\f12 r^{-1}\Lab(s)+O(r^{-2})\La(s),\qquad N=\good,
\eeaa 
verify the properties mentioned in Proposition \ref{LaLabprop}.\\ \\
In the same manner, we derive
\beaa
\frac{1}{-1+\psi'(s)}\Lab'(s) &=& \Bb(s) +\Gb(\La,\Lab,\psi)(s)+\Nb(B,\Bb,D,\La,\Lab,\psi)(s),
\eeaa
where
\beaa 
\Gb(\La,\Lab,\psi)(s)=\f12{{r^{-1}}}\La(s)+\f12{{r^{-1}}}\Lab(s)+O(r^{-2})(\La(s)+\Lab(s)),\qquad\Nb=\good,
\eeaa
verify the properties mentioned in Proposition \ref{LaLabprop}.\\ \\
{\bf Step 4} It remains to derive the equation of $\psi'(s)$. In view of \eqref{Eq:bSsp} and Lemma \ref{propgood}, we have
\bea\label{Eq;Psider}
\Psi'(s) &=& \left[\frac{z}{\la^2\bS+\Omb}\right]\Bigg|_{\ga(s)}+\good.
\eea
By Definition \ref{good}, $\zS=2+r\Ga_b$, $\bS=-1-\frac{2\mS}{\rS}+r\Ga_b$ and $\Omb=-\Up+r\Ga_b$, we have
\beaa
\frac{z}{\la^2\bS+\Omb} &=& \frac{z}{\bS+(\la^2-1)\bS+\Omb}=\frac{z}{\bS+\Omb}\left(\frac{1}{1+\frac{(\la^2-1)\bS}{\bS+\Omb}}\right)\\
&=&\frac{z}{\bS+\Omb}-\frac{2z\ovla\bS}{(\bS+\Omb)^2}+\good\\
&=&\frac{z}{\bS+\Omb}-\frac{4\ovla\left(-1-\frac{2\mS}{\rS}\right)}{\left(-2-\frac{2\mS}{\rS}+\frac{2m}{r}\right)^2}+\good\\
&=&\frac{z}{\bS+\Omb}+\ovla\left(1+\frac{2m}{r}\right)+\good.
\eeaa
Hence,
\beaa
\psi'(s)&=&\Psi'(s)+1=\left[\frac{\bS+z+\Omb}{\bS+\Omb}\right]\Bigg|_{\ga(s)}+\ovla\left(1+\frac{2m}{r}\right)+\good.
\eeaa
Recall that
\beaa
\ov{\bS} &=& D(s)-1-\frac{2m_{(0)}}{\rS}.
\eeaa 
Applying \eqref{zombgood}, we infer\footnote{Recall that $\ov{\bS}$ is the average of $\bS$ over $\S$ while $\ov{z+\Omb}$ is the average of $z+\Omb$ on $S(\Psi(s),s)$.}
\beaa
(\bS+z+\Omb)\big|_{\ga(s)}&=&(\bS-\ov{\bS})\big|_{\ga(s)}+\ov{\bS}+(z+\Omb-\ov{z+\Omb})\big|_{\ga(s)}+\ov{z+\Omb}\\
&=&D(s)-\left(\frac{2m_{(0)}}{\rS}-\frac{2m_{(0)}}{r}\right)\bigg|_{\ga(s)}+(\bcS-\ov{\bcS})\big|_{\ga(s)}\\
&&+(z+\Omb-\ov{z+\Omb})|_{\ga(s)}+\good\\
&=&D(s)+H_1(s)+H_2(s)+H_3(s)+\good,
\eeaa
where
\beaa 
H_1(\La,\Lab,\psi)(s)&:=&\left(\frac{2m_{(0)}}{r}-\frac{2m_{(0)}}{\rS}\right)\bigg|_{\ga(s)},\\
H_2(B,\Bb,\La,\Lab,\psi)(s)&:=&(\bcS-\ov{\bcS})\big|_{\ga(s)},\\
H_3(s)&:=&(z+\Omb-\ov{z+\Omb})|_{\ga(s)}.
\eeaa
From \eqref{eq:property9oftheoldGCMtheoremnoncanonicalell=1modes} and Assumption {\bf A1}, we deduce that $H_1$ is $O(1)$--Lipschitz on $(\La,\Lab,\psi)$.

We then consider $H_2=(\bS-\ov{\bS})\big|_{\ga(s)}$. Applying Lemma \ref{resume}, we obtain
\beaa
e_a^\S(\bS)&=&(\ze_a^\S-\eta_a^\S)\bS+\xibS_a.
\eeaa 
Taking another choice of $(\widetilde{\La},\widetilde{\Lab},\widetilde{\psi})$, we construct by Definition \ref{hypersurfaceSigma} another hypersurface $\widetilde{\Si}$. Denoting $\widetilde{X}$ the associated quantity of $X$ on $\widetilde{\Si}$, we have for $a=1,2$:\footnote{Recall that we extended $\rS$ and $\uS$ to $\RR$ in Definition \ref{hypersurfaceSigma}. Thus, $\zS$ and $\OmbS$ are well defined in $\RR$ by \eqref{zSOmbSfirst} and hence $\bS$ is also well defined in $\RR$ by $\bS=-\zS-\OmbS$.}
\begin{equation*}
    e_a^\S(\bS)-e_a^{\widetilde{\S}}(\widetilde{\bS})=(\ze_a^\S-\eta_a^\S)(\bS-\widetilde{\bS})+\left((\ze_a^\S-\widetilde{\zeS_a})-(\etaS_a-\widetilde{\etaS_a})\right)\widetilde{\bS}+\xibS_a-\widetilde{\xibS_a}.
\end{equation*}
Applying Lemma \ref{Lemma:Generalframetransf}, \eqref{eq:property7oftheoldGCMtheoremnoncanonicalell=1modes} and elliptic estimate, we deduce
\beaa
(\bS-\ov{\bS})-(\widetilde{\bS}-\ov{\widetilde{\bS}}) &=& O(r)\left(\etaS_a-\widetilde{\etaS_a},\xibS_a-\widetilde{\xibS_a}\right)+O(1)(\La-\widetilde{\La},\Lab-\widetilde{\Lab},\psi-\widetilde{\psi}).
\eeaa
which implies that $\bS-\ov{\bS}$ is $O(r)$--dependent on $\etaS$ and $\xibS$. Next, we proceed as in the proofs of Lemma \ref{etaxibomb} and Proposition \ref{bS} to deduce
\begin{align*}
    \ddsStwo\etaS_{ab}-\widetilde{\ddsStwo}\widetilde{\etaS_{ab}}&=O(r^{-1})(\La-\widetilde{\La},\Lab-\widetilde{\Lab},\psi-\widetilde{\psi}),\\
    \curl^\S\etaS-\widetilde{\curl^\S}\widetilde{\etaS}&=O(r^{-1})(\La-\widetilde{\La},\Lab-\widetilde{\Lab},\psi-\widetilde{\psi}).
\end{align*}
Applying Lemmas \ref{elliptic2}, \ref{Lemma:Generalframetransf} and \ref{lemma:comparison-gaS-ga}, we deduce that 
\beaa 
(\bS-\ov{\bS})-(\widetilde{\bS}-\ov{\widetilde{\bS}})=O(1)(B-\widetilde{B},\Bb-\widetilde{\Bb},\La-\widetilde{\La},\Lab-\widetilde{\Lab},\psi-\widetilde{\psi}).
\eeaa 
Thus, $H_2$ is $O(1)$--Lipschitz on $(B,\Bb,\La,\Lab,\psi)$ but independent on $D$. Moreover, 
\beaa
H_1&=&\frac{2m_{(0)}(\rS-r)}{r\rS}=r^{-2}O(\dg),\\
H_2&=&\bS-\ov{\bS}=\bcS-\ov{\bcS}=O(\epg),\\
H_3&=&r\Ga_b=O(\epg).
\eeaa
On the other hand, we have
\beaa
(\bS+\Omb)\big|_{\ga(s)}&=&D(s)-1-\frac{2m_{(0)}}{\rS}+\left(\bS\big|_{\ga(s)}-\ov{\bS}\right)+\Ombc|_{\ga(s)}-\Up|_{\ga(s)}\\
&=& D(s)-2+H_2(s)+H_4(s),
\eeaa
where
\beaa
H_4(\La,\Lab,\psi)(s):=\left(\frac{2m}{r}-\frac{2m_{(0)}}{\rS}\right)\Big|_{\ga(s)}+\Ombc|_{\ga(s)}.
\eeaa
By Assumption {\bf A2}, \eqref{eq:ThmGCMS3}, \eqref{eq:ThmGCMS5} and \eqref{eq:ThmGCMS6}, we have
\beaa 
H_4(s)=O(\dg)+r\Ga_b=O(\epg).
\eeaa
Remark that $H_4(s)$ is $O(1)$--Lipschitz on $(\La,\Lab,\psi)$.

Then, we infer
\beaa
\psi'(s)&=&\frac{\bS+z+\Omb}{\bS+\Omb}\bigg|_{\ga(s)}+\ovla\left(1+\frac{2m}{r}\right)+\good\\
&=&\frac{D(s)+H_1(s)+H_2(s)+H_3(s)+\good}{D(s)-2+H_2(s)+H_4(s)}+\ovla\left(1+\frac{2m}{r}\right)+\good\\
&=&\left(D(s)+H_1(s)+H_2(s)+H_3(s)+\good\right)\left(-\f12+O(D(s),H_2(s),H_4(s))\right)\\
&&+\ovla\left(1+\frac{2m}{r}\right)+\good\\
&=&-\f12D(s)-\f12\left(H_1(s)+H_2(s)+H_3(s)\right)+O({\bf H}(s)^2) +\ovla\left(1+\frac{2m}{r}\right)\\
&+&O(D(s){\bf H}(s))+O(D(s)^2)+\good,
\eeaa 
where
\beaa 
{\bf H}(s)&:=&(H_1(s),H_2(s),H_3(s),H_4(s))
\eeaa 
is $O(1)$--Lipschitz on $(B,\Bb,\La,\Lab,\psi)$. Thus, we can write
\beaa 
\psi'(s)=-\f12 D(s)+H(B,\Bb,\La,\Lab,\psi)(s)+M(B,\Bb,D,\La,\Lab,\psi)(s),
\eeaa 
where 
\beaa 
H(B,\Bb,\La,\Lab,\psi)(s)&=&-\f12\left(H_1(s)+H_2(s)+H_3(s)\right)+O({\bf H}(s)^2) +\ovla\left(1+\frac{2m}{r}\right),\\
M(B,\Bb,D,\La,\Lab,\psi)(s)&=&O(D(s){\bf H}(s))+O(D(s)^2)+\good,
\eeaa 
verify the properties stated in Proposition \ref{LaLabprop}. This concludes the proof of Proposition \ref{LaLabprop}.
\end{proof}
\subsection{Proof of Theorem \ref{mthm}}\lab{sec4.4}
\noindent{\bf Step 1.} From Proposition \ref{LaLabprop}, we have the following closed system of equations in $(\La,\Lab,\psi)$:
\bea
\begin{split}\label{system}
\frac{1}{-1+\psi'(s)}\La'(s)=&B(s)+G(\La,\Lab,\psi)(s)+N(B,\Bb,D,\La,\Lab,\psi)(s),\\
\frac{1}{-1+\psi'(s)}\Lab'(s)=&\Bb(s) +\Gb(\La,\Lab,\psi)(s)+\Nb(B,\Bb,D,\La,\Lab,\psi)(s),\\
\psi'(s)=& -\f12 D(s)+H(B,\Bb,\La,\Lab,\psi)(s)+M(B,\Bb,D,\La,\Lab,\psi)(s),
\end{split}
\eea
with initial conditions
\bea\lab{initialcon}
\psi(\ovs)=0,\qquad \La(\ovs)=\La_0,\qquad \Lab(\ovs)=\Lab_0,
\eea
see Proposition \ref{LaLabprop} for the properties of $G,\Gb,H,N,\Nb,M$.\\ \\
The system \eqref{system} is verified for all hypersurface $\Si$ as in Definition \ref{hypersurfaceSigma}, and hence for any choice of $(\La,\Lab,\psi)$. We now make a suitable choice to obtain the GCM hypersurface $\Si_0$. \\ \\
Consider in particular the system obtained from \eqref{system} by setting $B,\Bb,D$ to zero, i.e.
\begin{align}
\begin{split}\label{system0}
\frac{1}{-1+\psibr'(s)}\Labr'(s)=&G(\Labr,\Labbr,\psibr)(s)+\Nbr(\Labr,\Labbr,\psibr)(s),\\
\frac{1}{-1+\psibr'(s)}\Labbr'(s)=&\Gb(\Labr,\Labbr,\psibr)(s)+\Nbbr(\Labr,\Labbr,\psibr)(s),\\
\psibr'(s)=&H(0,0,\Labr,\Labbr,\psibr)(s)+\Mbr(\Labr,\Labbr,\psibr)(s),
\end{split}
\end{align}
where
\beaa
\Nbr(\Labr,\Labbr,\psibr)(s)&:=&N(0,0,0,\Labr,\Labbr,\psibr)(s),\\
\Nbbr(\Labr,\Labbr,\psibr)(s)&:=&\Nb(0,0,0,\Labr,\Labbr,\psibr)(s),\\
\Mbr(\Labr,\Labbr,\psibr)(s)&:=&M(0,0,0,\Labr,\Labbr,\psibr)(s).
\eeaa
We initialize the system at $s=\ovs$ as in \eqref{initialcon}, i.e.
\beaa
\psibr(\ovs)=0,\qquad \Labr(\ovs)=\La_0,\qquad \Labbr(\ovs)=\Lab_0.
\eeaa
Proposition \ref{LaLabprop} implies that $G,\Gb,H,{\Nbr},{\Nbbr},{\Mbr}$ are $O(1)$--Lipschitz functions of $(\Labr,\Labbr,\psibr)$. Applying Cauchy-Lipschitz theorem, we deduce that the system admit a unique solution $(\Labr(s),\Labbr(s),\psibr(s))$ defined in a small neighborhood $\overset{\circ}{I}$ of $\ovs$, which satisfies
\beaa
|(\Labr(s),\Labbr(s),\psibr(s))| &\les& \dg.
\eeaa
The desired hypersurface $\Si_0$ is given from this solution $(\Labr(s),\Labbr(s),\psibr(s))$ by:
\beaa 
\Si_0:=\bigcup_{s\in\overset{\circ}{I}}\S[\Psibr(s),s,\Labr(s),\Labbr(s)].
\eeaa 
\noindent{\bf Step 2.} Next, we show that the functions $B,\Bb,D$ vanish on the hypersurface $\Si_0$ defined above. Since the system \eqref{system} is verified for all functions and triplets $(\La,\Lab,\psi)$ and in particular by $(\Labr,\Labbr,\psibr)$, and since $ (\Labr,\Labbr,\psibr)$ satisfies \eqref{system0}, we deduce, taking the difference of \eqref{system} and \eqref{system0},
\bea
\begin{split}\label{lastDBBb}
-\f12 \DDbr(s)+H(\BBbr,\Bbbr,\Pbr)(s)+M(\BBbr,\BBbr,\DDbr,\Pbr)(s)&=H(0,0,\Pbr)(s)+M(0,0,0,\Pbr)(s),\\
\BBbr(s)+N(\BBbr,\Bbbr,\DDbr,\Pbr)(s)&=N(0,0,0,\Pbr)(s),\\
\Bbbr(s)+\Nb(\BBbr,\Bbbr,\DDbr,\Pbr)(s)&=\Nb(0,0,0,\Pbr)(s),    
\end{split}
\eea
where
\beaa 
\Pbr&:=&(\Labr,\Labbr,\psibr).
\eeaa 
The first equation and the properties of $M$ and $H$ stated in Proposition \ref{LaLabprop} imply that
\beaa
|\DDbr(s) | &\les& |H(\BBbr,\Bbbr,\Pbr)(s)-H(0,0,\Pbr)(s)|+|M(\BBbr,\Bbbr,\DDbr,\Pbr)(s)-M(0,0,0,\Pbr)(s)|\\
&\les&\sup_{\overset{\circ}{I}}|\BBbr(s)|+\sup_{\overset{\circ}{I}}|\Bbbr(s)|+\epgf\sup_{\overset{\circ}{I}}|\DDbr(s)|.
\eeaa
Taking the supreme over ${\overset{\circ}{I}}$, for $\epg$ small enough, we obtain
\bea\label{DleqBBb}
\sup_{\overset{\circ}{I}}|\DDbr(s)|&\les& \sup_{\overset{\circ}{I}}|\BBbr(s)|+\sup_{\overset{\circ}{I}}|\Bbbr(s)|.
\eea
The properties of $N,\Nb$ imply, together with \eqref{DleqBBb}, that
\beaa
|N(\BBbr,\Bbbr,\DDbr,\Pbr)(s)- N(0,0,0,\Pbr)(s)|&\les&\epgf \left(\sup_{\overset{\circ}{I}}|\BBbr(s)|+\sup_{\overset{\circ}{I}} |\Bbbr(s)|\right),\\
|\Nb(\BBbr,\Bbbr,\DDbr,\Pbr)(s)- \Nb(0,0,0,\Pbr)(s)|&\les& \epgf \left(\sup_{\overset{\circ}{I}}|\BBbr(s)|+\sup_{\overset{\circ}{I}}|\Bbbr(s)|\right).
\eeaa
Hence, the second and third equations of \eqref{lastDBBb} imply that
\beaa
\sup_{\overset{\circ}{I}}|\BBbr(s)|+\sup_{\overset{\circ}{I}}|\Bbbr(s)|\les \epg^\f12 \left(\sup_{\overset{\circ}{I}}|\BBbr(s)|+\sup_{\overset{\circ}{I}}|\Bbbr(s)|\right),
\eeaa
and thus, for $\epg$ small enough,
\bea\label{vanishBBb}
\BBbr=0,\qquad \Bbbr=0,\quad \mbox{ on }\Si_0.
\eea 
Finally, \eqref{DleqBBb} and \eqref{vanishBBb} imply that
\bea\label{vanishD}
\DDbr=0,\qquad \mbox{ on }\Si_0.
\eea
In view of the definition of $B$, $\Bb$ and $D$, this implies that the desired identities \eqref{4.7} and \eqref{4.10} hold on $\Si_0$.\\ \\
\noindent{\bf Step 3.} $\Si_0$ satisfies the statements 1,2,4,5 of Theorem \ref{mthm} by Proposition \ref{propositionSigma}. Also, it satisfies the statements 3,6 of Theorem \ref{mthm} by Step 2. Thus, it only remains to derive the estimates in the statement 7 which we recall below for convenience:
\bea\label{recall7}
\Vert (f,\fb,\ovla)\Vert_{\hk_{s_{max}+1}(\S)}+\Vert\dk(f,\fb,\ovla)\Vert_{\hk_{s_{max}}(\S)}\les \dg,
\eea
To this end, we first apply Proposition \ref{th24} to obtain
\bea\label{nuFdgcor}
\Vert F\Vert_{\hk_{s_{max}+1}(\S)}+\Vert\nab^\S_{\nu} F\Vert_{\hk_{s_{max}}(\S)} &\les& \dg,
\eea
where we used \eqref{vanishBBb} and \eqref{vanishD}.
Next, we recall the following null transformation formulae \eqref{e4ffbla}
\beaa 
\la^{-2}\xi^\S &=& \xi + \f12 \nabS_{\la^{-1}e_4^\S}f+r^{-1}O(F)+\lot,\\
\zeS&=& \ze- \nabS(\log \la)+r^{-1}O(F)+\lot,\\
\etabS&=& \etab+\f12\nabS_{\la^{-1}e^\S_4}\fb+r^{-1}O(F)+\lot,\\
\la^{-1}\omS&=& \om -\f12 \la^{-1} e_4^\S(\log\la)+r^{-1}O(F)+\lot,
\eeaa 
where $\lot$ denotes terms decay better. Combining with \eqref{transversalityru}, we deduce
\bea\label{e4Fdg}
\| \nabS_{e_4^\S}(F)\|_{\hk_{s_{max}}(\S)}\les r^{-1}\| F\|_{\hk_{s_{max}+1}(\S)}\les r^{-1}\dg.
\eea
Then, we have from \eqref{nuFdgcor} and \eqref{e4Fdg}
\begin{equation} \label{e3Fdg}
\| \nabS_{e_3^\S}(F)\|_{\hk_{s_{max}}(\S)}\les \| \nabS_{\nu^\S}(F)\|_{\hk_{s_{max}}(\S)}+ \|\bS \nabS_{e_4^\S}(F)\|_{\hk_{s_{max}}(\S)}\les\dg.
\end{equation}
Combining \eqref{nuFdgcor}, \eqref{e4Fdg} and \eqref{e3Fdg}, we obtain \eqref{recall7} as stated. This concludes the proof of Theorem \ref{mthm}.
\subsection{Proof of Corollary \ref{coro25}}\label{sec4.5}
{\bf Step 1.} Consider first the simpler case where
\beaa
\|(f,\fb,\ovla)\|_{\hk_{s_{max}+1}(\S_0)}&\les&\dg,
\eeaa
so that the estimates \eqref{Fest} holds true for $\S_0$. We then proceed exactly as Proposition \ref{th24} to derive the estimates for $\nab^{\S_0}_\nu(f,\fb,\ovla)$ for our distinguished sphere $\S_0$. Note that $\S_0$ can be viewed as a deformation of the unique background sphere sharing the same south pole. In the sequel, we denote
\beaa 
\S:=\S_0,\qquad \nu:=\nu^\S=\nu^{\S_0}.
\eeaa 
The proof of the estimates for $(\nabS_\nu)^l(f,\fb,\ovla)$ in the cases $l\geq 2$ is similar so we only provide a sketch:
\begin{enumerate}
\item Elliptic estimates.\\
We commute the GCM system \eqref{eqGCMl} by $(\nabS_\nu)^{l}(F)$ and obtain the analog of \eqref{nuGCM} for $(\nabS_\nu)^l F$. All the commutators can be estimated by Lemma \ref{commfor} as before, remark that
\beaa 
\nabS_\nu\left([\nabS_\nu,\divS]f\right)&=&\nabS_\nu\left(\frac{2}{\rS}\divS f+\GabS\c\nabS_\nu(f)\right)+\lot\\
&=&\nu\left(\frac{2}{\rS}\right)(\divS f)+\frac{2}{\rS}\nabS_\nu(\divS f)\\
&+&\nabS_\nu(\GabS)\c\nabS_\nu(f)+\GabS\c(\nabS_\nu)^2(f)+\lot
\eeaa 
Thus, in the cases $l\geq 2$, we obtain the new terms $(\nabS_\nu)^{\leq l-1}(\etaS,\xibS,\ombS)$ from the commutators. Notice also that
\beaa
\nu^2\left(\frac{2}{\rS}\ovla\right)&=&\frac{2}{\rS}\nu^2(\ovla)+2\nu\left(\frac{2}{\rS}\right)\nu(\ovla)+\nu^2\left(\frac{2}{\rS}\right)\ovla\\
&=&\frac{2}{\rS}\nu^2(\ovla)+r^{-2}\nu(F)-2\nu\left(\frac{\bS+\OmbS}{(\rS)^2}\right)\ovla\\
&=&\frac{2}{\rS}\nu^2(\ovla)+r^{-2}\nu(F)+r^{-2}\nu(\bS+\OmbS)F+r^{-3}F.
\eeaa
Recalling \eqref{zbO}, in the cases $l\geq 2$, we have the new terms $(\nabS_\nu)^{\leq l-1}(\bS,\zS,\OmbS)$ in the R.H.S. of the analog of \eqref{nuGCM} for $(\nabS_\nu)^{l}(F)$. Applying Proposition \ref{Thm.GCMSequations-fixedS:contraction}, we infer
\bea
\begin{split}\label{ffbovlacl}
&\left\Vert (\nab^\S_{\nu})^l (f,\fb),\nu^l(\ovla)-\ov{\nu^l(\ovla)}^\S \right\Vert_{\hk_{K-l}(\S)}\les\dg+ \left|\left( \divS (\nab^\S_{\nu})^l f \right)_{\ell=1} \right|+\left|\left(\divS (\nab^\S_{\nu})^l\fb\right)_{\ell=1}\right|\\
&+\dg r^{-1} \sum_{j=1}^l\Vert \nu^{j-1}(\bS,\zS,\OmbS)\Vert_{\hk_{K-j+1}(\S)}+\dg\sum_{j=1}^{l}\Vert\nu^{j-1}(\etaS,\xibS,\ombS)\Vert_{\hk_{K-j+1}(\S)} \\
&+\sum_{j=1}^l\Vert(\nab^\S_{\nu})^{j-1}(F)\Vert_{\hk_{K-j+1}(\S)}+r^{-1}\sum_{j=1}^l\Vert(\nab^\S_{\nu})^{j-1}(\ovb)\Vert_{\hk_{K-j+1}(\S)}\\
&+(\epg+r^{-1})\left(\Vert(\nabS_\nu)^l(F)\Vert_{\hk_{K-l}(\S)}+r^{-1}\Vert\nu^l(\ovb)\Vert_{\hk_{K-l}(\S)}\right),
\end{split}
\eea
and
\bea
\begin{split}\label{ovovlal}
&r\left|\ov{\nu^l(\ovla)}^\S\right|\les\dg+\left|\left( \divS (\nab^\S_{\nu})^l f\right)_{\ell=1} \right|+\left|\left(\divS (\nab^\S_{\nu})^l\fb\right)_{\ell=1} \right|+|b_0^{(l)}|\\
&+\dg r^{-1} \sum_{j=1}^l\Vert \nu^{j-1}(\bS,\zS,\OmbS)\Vert_{\hk_{K-j+1}(\S)}+\dg\sum_{j=1}^{l}\Vert\nu^{j-1}(\etaS,\xibS,\ombS)\Vert_{\hk_{K-j+1}(\S)} \\
&+\sum_{j=1}^l\Vert(\nab^\S_{\nu})^{j-1}(F)\Vert_{\hk_{K-j+1}(\S)}+r^{-1}\sum_{j=1}^l\Vert(\nab^\S_{\nu})^{j-1}(\ovb)\Vert_{\hk_{K-j+1}(\S)}\\
&+(\epg+r^{-1})\left(\Vert(\nabS_\nu)^l(F)\Vert_{\hk_{K-l}(\S)}+r^{-1}\Vert\nu^l(\ovb)\Vert_{\hk_{K-l}(\S)}\right),
\end{split}
\eea
where
\beaa 
b_0^{(l)}&:=&\ov{\nu^l(\ovb)}^\S.
\eeaa 
Applying Lemma \ref{elliptic1} for the equation of $\nu^l(\ovb)$, we obtain
\bea\label{ovb0nul}
\begin{split}
&\left\Vert\nu^l(\ovb)-\ov{\nu^l(\ovb)}^\S\right\Vert_{\hk_{K-l}(\S)}\les r\dg+r\Vert (\nabS_\nu)^l(F)\Vert_{\hk_{K-l}(\S)}+(\epg+r^{-1})\Vert\nu^l(\ovb)\Vert_{\hk_{K-l}(\S)}\\
&+\dg r^{-1}\sum_{j=1}^l\Vert \nu^{j-1}(\bS,\zS,\OmbS)\Vert_{\hk_{K-j+1}(\S)}+\dg\sum_{j=1}^{l}\Vert\nu^{j-1}(\etaS,\xibS,\ombS)\Vert_{\hk_{K-j+1}(\S)} \\
&+\sum_{j=1}^l\Vert(\nab^\S_{\nu})^{j-1}(F)\Vert_{\hk_{K-j+1}(\S)}+r^{-1}\sum_{j=1}^l\Vert(\nab^\S_{\nu})^{j-1}(\ovb)\Vert_{\hk_{K-j+1}(\S)}.\\
\end{split}
\eea
\item Estimates for the $\ell=1$ modes of $(\nabS_\nu)^l(f,\fb)$.\\
Applying Lemmas \ref{commfor}, \ref{dint}, \ref{nunununu} and $B=0$, we infer, since $\nu$ is tangent to $\Si_0$,
\bea
\begin{split}\label{nuB}
0&=\nu(B(s))=\nu\left(\int_\S(\divS\etaS)\JpS\right)\\
&=\zS\int_\S(\zS)^{-1}\left(\nu\left((\divS\etaS)\JpS\right)+(\kabS+\bS\kaS)(\divS\etaS)\JpS\right)\\
&=\zS\int_\S(\zS)^{-1}\left(\divS(\nabS_\nu\etaS)+[\nabS_\nu,\divS]\etaS+(\kabS+\bS\kaS)(\divS\etaS)\right)\JpS.    
\end{split}
\eea
Recall from \eqref{divetadivxibpr} that we have
\beaa
\nu\left(\int_S (\div\eta)\Jp\right)&=&O(\dg).
\eeaa
Similarly, applying Lemmas \ref{commfor}, \ref{lemma:comparison-gaS-ga}, \ref{Lemma:coparison-forintegrals}, \ref{dint}, \ref{nunununu} and
\beaa 
(\bS-b,\,\kabS-\kab,\,\zS-z,\,\OmbS-\Omb,\,\kaS-\ka)=O(\dg),
\eeaa 
which is proved in Proposition \ref{th24}, combining with \eqref{nuB} and the fact that $\JpS$ verifies \eqref{JJpdiff}, we deduce
\beaa 
\int_\S \left(\divS(\nabS_\nu(\etaS-\eta))+[\nabS_\nu,\divS](\etaS-\eta)+(\kabS+\bS\kaS)\divS(\etaS-\eta)\right)\JpS=O(\dg).
\eeaa 
Recalling from Lemma \ref{nunununu} that
\beaa 
\nabS_\nu(f)&=&\etaS-\eta+r^{-1}O(F)+\lot,\\
(\nabS_\nu)^2 (f)&=&\nabS_\nu(\etaS-\eta)+r^{-1}O(F)\nu(\bS)+r^{-1}\nabS_\nu(F)+\lot,
\eeaa 
we deduce
\beaa
(\kabS+\bS\kaS)(\divS(\etaS-\eta))=(\kabS+\bS\kaS)(\divS\nabS_\nu(f))+r^{-3}O(\dg)=r^{-3}O(\dg),
\eeaa
and from Lemma \ref{commfor} that
\beaa 
[\nabS_\nu,\divS](\etaS-\eta)&=&\frac{2}{\rS}\divS(\etaS-\eta)+\GabS\c\nabS_\nu(\etaS-\eta)+\lot\\
&=&r^{-3}O(\dg)+O(\epg)r^{-1}(\nabS_\nu)^2(F)+r^{-3}O(\dg)\nu(\bS).
\eeaa
Thus, we obtain
\beaa 
\int_\S \left(\divS(\nabS_\nu(\etaS-\eta)))\right)\JpS=O(\dg)+r O(\epg)(\nabS_\nu)^2(F)+O(\dg)\nu(\bS).
\eeaa 
Hence, we have
\beaa 
\left|\int_\S \divS (\nab^\S_{\nu})^2(f) \JpS \right|&\les& \dg+r^{-1}\dg\Vert\nu(\bS,\zS,\OmbS)\Vert_{\hk_{K-1}(\S)}+\Vert (\nab^\S_{\nu})F\Vert_{\hk_{K-1}(\S)}\\
&&+\epg\Vert (\nab^\S_{\nu})^{2}(F)\Vert_{\hk_{K-2}(\S)}.
\eeaa 
Similarly, we have
\beaa 
\left|\int_\S \divS (\nab^\S_{\nu})^2(\fb) \JpS \right|&\les& \dg+r^{-1}\dg\Vert\nu(\bS,\zS,\OmbS)\Vert_{\hk_{K-1}(\S)}+\Vert (\nab^\S_{\nu})(F)\Vert_{\hk_{K-1}(\S)}\\
&&+\epg\Vert (\nab^\S_{\nu})^{2}(F)\Vert_{\hk_{K-2}(\S)}.
\eeaa 
These are the desired estimates for $l=2$. The cases of $l\geq 3$ are similar. Hence, we have for $l\geq 2$
\bea\label{ffbnul}
\begin{split}
&\left|\left(\divS (\nab^\S_{\nu})^l(f)\right)_{\ell=1}\right|+\left|\left(\divS (\nab^\S_{\nu})^l(\fb)\right)_{\ell=1}\right|\\
&\les\dg+r^{-1}\dg\sum_{j=1}^l\Vert\nu^{j-1}(\bS,\zS,\OmbS)\Vert_{\hk_{K-j+1}(\S)}+\sum_{j=1}^l\Vert(\nab^\S_{\nu})^{j-1}(F)\Vert_{\hk_{K-j+1}(\S)}\\
&+\epg\Vert(\nab^\S_{\nu})^{l}(F)\Vert_{\hk_{K-l}(\S)}.    
\end{split}
\eea
\item Boundedness of $\nu^{\leq l-1}(\bS,\zS,\OmbS)$ and $(\nabS_\nu)^{\leq l-1}(\etaS,\xibS,\ombS)$.\\ 
Recall that the $\ell=1$ modes of $\nabS_\nu(\etaS,\xibS)$ are estimated in the previous step. Then, we commute the equations in the proof of Lemmas \ref{etaxibomb} and \ref{ombpjz} with $\nu$ to deduce
\begin{align}
\begin{split}\label{nuetaxibomb}
&\left\|\nabS_\nu(\etaS),\nabS_\nu(\xibS),\nabS_\nu(\ombS)\right\|_{\hk_{K-1}(\S)}\\
&\les \epg +r^{-1}\dg\sum_{j=1}^2\Vert\nu^{j-1}(\bS,\zS,\OmbS)\Vert_{\hk_{K-j+1}(\S)}+\sum_{j=1}^2\Vert (\nab^\S_{\nu})^{j}(F)\Vert_{\hk_{K-j}(\S)}.    
\end{split}
\end{align}
We commute equations \eqref{zombsea} and \eqref{zbO} with $\nu$ to obtain
\begin{align}
\begin{split}\label{boundednesseveryone}
&\left\|\nu(\bS)-\ov{\nu(\bS)},\nu(\zS)-\ov{\nu(\zS)},\nu(\OmbS)-\ov{\nu(\OmbS)}\right\|_{\hk_{K-1}(\S)}\\
&\les r \left\|\nabS_\nu(\etaS),\nabS_\nu(\xibS)\right\|_{\hk_{K-1}(\S)}\\
&\les\epg r+\dg\sum_{j=1}^2\Vert\nu^{j-1}(\bS,\zS,\OmbS)\Vert_{\hk_{K-j+1}(\S)}+r\sum_{j=1}^2\Vert (\nab^\S_{\nu})^{j}(F)\Vert_{\hk_{K-j}(\S)}.
\end{split}
\end{align}
To estimate $\ov{\nu(\bS)}$, we apply Lemma \ref{dint} and $D=0$ to deduce
\beaa 
\nu(\ov{\bS})&=&-\nu\left(\frac{2\mS}{\rS}\right)=-\frac{1}{16\pi}\nu\left(\int_\S \kaS\kabS\right)=O(1).
\eeaa 
On the other hand
\beaa 
\nu(\ov{\bS})&=&\frac{1}{|S|}\nu\left(\int_\S\bS\right)-\frac{\nu(|S|)}{|S|}\ov{\bS}\\
&=&\frac{1}{|S|}\zS\int_\S(\zS)^{-1}\left(\nu(\bS)+(\kabS+\bS\kaS)\bS\right)+O(1)\\
&=&\frac{1}{|S|}\zS\int_\S \ov{(\zS)^{-1}}\left(\nu(\bS)\right)+O(1).
\eeaa 
Combining these three estimates, we obtain
\bea\label{nubsbound}
\ov{\nu(\bS)}=O(1).
\eea 
To prove the boundedness of $\ov{\nu(\zS)}$, we start from 
\beaa
-\zS=\OmbS+\bS=\nu(\rS)=\frac{\rS}{2}\zS\ov{(\zS)^{-1}(\kaS+\bS\kabS)},
\eeaa
which implies
\beaa
\int_\S (\zS)^{-1}(\kaS+\bS\kabS)=-8\pi\rS.
\eeaa
Hence, we have
\bea\label{zsdiff}
\nu\left(\int_\S (\zS)^{-1}(\kaS+\bS\kabS)\right)=-8\pi\nu(\rS)=O(1).
\eea
Then, applying Lemma \ref{dint} and \eqref{nubsbound}, we can deduce $$|\ov{\nu((\zS)^{-1})}|\les 1+\sum_{j=1}^2\Vert (\nab^\S_{\nu})^{j}(F)\Vert_{\hk_{K-j}(\S)},$$
details are left to the reader. Then \eqref{boundednesseveryone} and \eqref{zbO} imply that, for $\dg$ small enough
\bea
r^{-1}\left\|\nu\left(\bS,\zS,\OmbS\right)\right\|_{\hk_{K-1}(\S)}&\les&1+\sum_{j=1}^2\Vert (\nab^\S_{\nu})^{j}(F)\Vert_{\hk_{K-j}(\S)}.
\eea
Similarly, we can prove that
\bea
\begin{split}\label{boundedbs}
&\left\|(\nabS_\nu)^{l-1}(\etaS,\xibS,\ombS)\right\|_{\hk_{K-l}(\S)}+r^{-1}\left\|\nu^{l-1}(\bS,\zS,\OmbS)\right\|_{\hk_{K-l}(\S)}\\
&\les 1+\sum_{j=1}^l\Vert (\nab^\S_{\nu})^{j}(F)\Vert_{\hk_{K-j}(\S)}.    
\end{split}
\eea
Notice that \eqref{ffbovlacl}, \eqref{ffbnul} and \eqref{boundedbs} imply
\bea 
\begin{split}\label{nulllffbovla}
&\left\Vert (\nab^\S_{\nu})^l (f,\fb),\nu^l(\ovla)-\ov{\nu^l(\ovla)}^\S \right\Vert_{\hk_{K-l}(\S)} \\
&\les\dg+\sum_{j=1}^l\Vert(\nab^\S_{\nu})^{j-1}(F)\Vert_{\hk_{K-j+1}(\S)}+r^{-1}\sum_{j=1}^l\Vert(\nab^\S_{\nu})^{j-1}(\ovb)\Vert_{\hk_{K-j+1}(\S)}\\
&+(\epg+r^{-1})\left(\Vert(\nabS_\nu)^l(F)\Vert_{\hk_{K-l}(\S)}+r^{-1}\Vert\nu^l(\ovb)\Vert_{\hk_{K-l}(\S)}\right).
\end{split}
\eea
\item Estimate for $b_0^{(l)}$.\\
We need to prove that
\bea\label{b0l}
b_0^{(l)}:=\ov{\nu^l(\ovb)}^\S=\ov{\nu^l(r-\rS)}=O(\dg).
\eea 
Similar to \eqref{nbsl}, we can deduce
\begin{align*}
\nu^l(r-\rS)&=-2\nu^{l-1}\left(z\ov{z^{-1}}-\zS\ov{(\zS)^{-1}}\right)+O(\epg)\ov{\nu^{l-1}(z-\zS)}\\
&+O(\EE_l)+O\left((\nabS_\nu)^{\leq l-1}(F)\right)+O({\nu^{\leq l-1}(r-\rS)}),
\end{align*}
where $\EE_l$ is defined in \eqref{defEE}. Similar to \eqref{zzSrrS}, we can obtain
\beaa
\ov{\nu^{l-1}(z-\zS)}=O(\epg)\ov{\nu^{l-1}(z-\zS)}+O(\EE_l).
\eeaa
Thus, we have
$$\ov{\nu^{l-1}(z-\zS)}=O(\EE_l).$$
By iteration, we conclude \eqref{b0l}. The details of the proof is tedious but straightforward and is similar to the case of $l=1$. We leave to the reader. Notice that \eqref{ovovlal}, \eqref{ffbnul}, \eqref{boundedbs} and \eqref{b0l} imply
\bea
\begin{split}\label{nulllovovla}
\left\Vert\ov{\nu^l(\ovla)}^\S \right\Vert_{\hk_{K-l}(\S)}&\les\dg+\sum_{j=1}^l\Vert(\nab^\S_{\nu})^{j-1}(F)\Vert_{\hk_{K-j+1}(\S)}+r^{-1}\sum_{j=1}^l\Vert(\nab^\S_{\nu})^{j-1}(\ovb)\Vert_{\hk_{K-j+1}(\S)}\\
&+(\epg+r^{-1})\left(\Vert(\nabS_\nu)^l(F)\Vert_{\hk_{K-l}(\S)}+r^{-1}\Vert\nu^l(\ovb)\Vert_{\hk_{K-l}(\S)}\right).
\end{split}
\eea 
\item Conclusion.\\
Combining all the above estimates, we obtain for $\epg$ small enough and $\ovr$ large enough,
\beaa
\Vert (\nab^\S_{\nu})^l F\Vert_{\hk_{K-l}(\S)}+r^{-1}\Vert\nu^l(\ovb)\Vert_{\hk_{K-l}(\S)}\les\dg+\sum_{j=0}^{l-1} \Vert(\nabS_{\nu})^j F\Vert_{\hk_{K-j}(\S)}+r^{-1}\sum_{j=0}^{l-1}\Vert\nu^j(\ovb)\Vert_{\hk_{K-j}(\S)},
\eeaa
which, together with \eqref{Fest}, yields by iteration the desired estimates for all tangential derivatives
\bea\label{finalF}
\Vert (\nab^\S_{\nu})^l F \Vert_{\hk_{K-l}(\S)} &\les& \dg, \qquad \forall \; l\leq K.
\eea
Notice that \eqref{finalF} implies for $1\leq l\leq K$
\beq
\left\|(\nabS_\nu)^{l-1}(\etaS,\xibS,\ombS)\right\|_{\hk_{K-l}(\S)}+r^{-1}\left\|\nu^{l-1}(\bS,\zS,\OmbS)\right\|_{\hk_{K-l}(\S)}=O(1).
\eeq
Next, remark that the estimates for $(\nabS_4)^l(F)$ follows directly from the transversality conditions \eqref{transversalityru} and the null transformation formulas \eqref{e4ffbla}. Then, the estimates for $(\nabS_3)^l(F)$ follows directly from $\nu=e_3^\S+\bS e_4^\S$ and the estimates for $\nu^{\leq l}(\bS)$. 
\end{enumerate}
This concludes the proof of the statement 1 of Corollary \ref{coro25}.\\ \\
{\bf Step 2.} It remains to prove Corollary \ref{coro25} in the more difficult case where
\bea\label{anormal}
\| f\|_{\hk_{s_{max}+1}(\S_0)}+(r^{\S_0})^{-1} \|(\fb,\ovla)\|_{\hk_{s_{max}+1}(\S_0)} &\les& \dg.
\eea
In view of the control \eqref{anormal} for $(f,\fb,\ovla)$, we may apply Lemma 7.3 in \cite{KS:Kerr1}, which yields
\beaa
\left|\frac{r^{\S_0}}{\ovr}-1\right|+\sup_{\S_0}\left|\frac{r^{\S_0}}{r}-1\right| &\les& \dg
\eeaa
so that $r$ and $r^{\S_0}$ are comparable, and hence
\bea\label{anomalous}
\| f\|_{\hk_{s_{max}+1}(\S_0)}+r^{-1} \|(\fb,\ovla)\|_{\hk_{s_{max}+1}(\S_0)} &\les& \dg.
\eea
Next, we introduce as in Proposition \ref{th24} the notations $K=s_{max}+1$ and $\nu:=\nu^{\S_0}$. Note that $\S_0$ can be viewed as a deformation of the unique background sphere sharing the same south pole. We proceed exactly as Step 1 to derive the desired estimates for our distinguished sphere $\S:=\S_0$.\\\\
In the following, we revisit all the terms estimated in Step 1, and explain that we can obtain the same estimates for $(\nabS_\nu)^l(F)$ when $l\geq 1$ under the weaker assumption \eqref{anomalous}.
\begin{enumerate}
    \item Commutators.\\
Applying Lemma \ref{commfor}, we have
\beaa
[\nabS_\nu,\nabS]F&=&\frac{2}{r}\nabS(F)+\GabS\nabS_\nu (F)+r^{-1}\GabS\c\dk^{\leq 1}(F),
\eeaa
which implies\footnote{Recall that $\nabS_4(F)$ can be easily estimated by \eqref{e4ffbla}.}
\beaa
\|[\nabS_\nu,\nabS]F\|_{\hk_l(\S)}&\les& \frac{1}{r^2}\| F\|_{\hk_{l+1}(\S)}+\frac{\epg}{r}\|\nabS_\nu(F)\|_{\hk_l(S)}.
\eeaa
Similarly, by Lemma \ref{commfor}, we have
\beaa 
\Vert [\nabS_\nu,\De^\S]F\Vert_{\hk_l(\S)}&\les &\frac{1}{r^3}\Vert F\Vert_{\hk_{l+2}(\S)}+\frac{\epg}{r^2}\|\nabS_\nu(F)\|_{\hk_l(S)}.
\eeaa 
Observe that the terms of $F$ on the R.H.S. gain a power of $r^{-1}$ when compared to the terms of $\nabS_\nu(F)$, which is consistent with the anomalous behavior of $(\fb,\ovla)$\footnote{In \eqref{atj}, the decay of $(\fb,\ovla)$ lose a power of $r^{-1}$ when compared to the decay of $f$, it is called anomalous behavior of $(\fb,\ovla)$}.
\item Coefficients $(\Cbdot_0,\Cbpdot)$.\\
Recalling \eqref{coefficientsestimates}, we have
\beaa
(\Cbdot_0,\Cbpdot)&=&r^{-1}O(\dg),
\eeaa
according to the anomalous behavior of $(\fb,\ovla)$. We have
\beaa 
&&\nu\left(\frac{1}{2\rS}\left(\Cbdot_0+\sum_p\Cbpdot\JpS\right)\right)\\
&=&O\left(\frac{1}{r^2}\right)\left(\Cbdot_0+\sum_p\Cbpdot\JpS\right)+\frac{1}{2\rS}\left(\nu(\Cbdot_0)+\sum_p\nu(\Cbpdot)\JpS\right).
\eeaa
Remark that the first term on the R.H.S. has order $r^{-3}O(\dg)$, which is consistent with the estimates \eqref{hhhhh}. Thus, we can obtain the same estimates as Step 1.\footnote{The second term on the R.H.S. appears on the L.H.S when applying Proposition \ref{Thm.GCMSequations-fixedS:contraction}.}
\item Error terms in $(h_1,\hb_1,h_2,\hb_2,h_3,h_4)$.\\
We recall Definition \ref{Definition:errorterms-prime}:
\beaa
r\err_1&=&F\c (r\Ga_b)+F\c (r\nabS)^{\leq 1}F,\\
r^2\err_2&=&(r\nabS)^{\leq 1}(r\err_1)+F\c r\dk \Ga_b.
\eeaa
According to the anomalous behavior of $(\fb,\ovla)$, we lose one power of $r^{-1}$ in $F\c(r\Ga_b)$. But if we inspect all the error terms in \eqref{Generalizedsystem} carefully (see Lemma 4.3 and Corollary 4.6 in \cite{KS:Kerr1} for their expressions), we can write that
\beaa
\err_1&=& f\c \Ga_b+(\fb,\ovla)\c \Ga_g+r^{-1}F\c F+ F\c \nabS F, \\
\err_2&=& (r\nabS)^{\leq 1}(r\err_1)+f\c r\dk \Ga_g+(\fb,\ovla)\c r\dk \Ga_b.
\eeaa
According to Assumption {\bf A1} and \eqref{anomalous}, we have
\beaa
\nu(\err_1)&=& \nu(F)\c\Ga_b+r^{-2}O(\fb,\ovla)+r^{-1}O(f)+r^{-1}\dkb^{\leq 1}(F\c\nu(F))+r^{-2}F\c F.
\eeaa
Thus, we gain a power of $r^{-1}$ for $(\fb,\ovla)$ when comparing to the other terms.
The estimate for $\nu(\err_2)$ is similar.
\item Nonlinear terms in $(h_1,\hb_1,h_2,\hb_2,h_3,h_4)$.\\
The nonlinear terms include (see Remark 4.10 in \cite{KS:Kerr1}):
\beq\label{nonlinear1}
\frac{2}{\rS}\ovla,\quad\frac{2}{(\rS)^2}\ovla,\quad\left(\ka-\frac{2}{\rS}\right)\ovla,\quad \left(\kab+\frac{2}{\rS}\right)\ovla,\quad\left(V-\frac{2}{(\rS)^2}\right)\ovla,
\eeq 
and
\bea\label{nonlinear2}
\frac{2(r-\rS)^2}{r(\rS)^2},\qquad \ddot{\Cb}_0+\sum_p\ddot{\Cb}^{(p)}\JpS,\qquad \ddot{M}_0+\sum_p\ddot{M}^{(p)}\JpS,
\eea
where
\beaa 
V&:=&-\left(\f12\ka\kab+\ka\omb+\kab\om\right).
\eeaa
We also have
\beaa
\nu\left(\frac{2}{\rS}\ovla\right)&=&-\frac{2\nu(\rS)}{(\rS)^2}\ovla+\frac{2}{\rS}\nu(\ovla)=O\left(\frac{1}{r^2}\right)\ovla+O\left(\frac{1}{r}\right)\nu(\ovla).
\eeaa 
Remark that the first term on the R.H.S. gains a power of $r^{-1}$. The estimates for the other terms in \eqref{nonlinear1} are similar. The first term in \eqref{nonlinear2} can be estimated as before. The other two terms in \eqref{nonlinear2} can be estimated by Corollary 5.20 in \cite{KS:Kerr1}, Recalling that their estimates gain a power of $r^{-1}$ for $\fb$ and are independent on $\ovla$.
\item The remaining terms in $(h_1,\hb_1,h_2,\hb_2,h_3,h_4)$.\\
According to Remark 4.11 in \cite{KS:Kerr1}, the remaining terms in $(h_1,\hb_1,h_2,\hb_2,h_3,h_4)$ either depend on $(\kadot,\kabdot,\mudot)$ or contain an additional power of $r^{-1}$ compared to the other terms. The latter can be ignored since we obtain the desired estimate despite the anomalous behavior of $(\fb,\ovla)$.\\ \\
Notice that $\nu^l(\kadot,\kabdot,\mudot)$ can be estimated by Lemma \ref{lemmacorollary} as before, Recalling that its proof gains a power of $r^{-1}$ for $\dg$, which is consistent with the anomalous behavior of $(\fb,\ovla)$.
\item Estimate for $|b_0|$.\\
To estimate $b_0$, we proceed exactly as Proposition \ref{th24}, just Recalling that \eqref{kabSkabest} also holds under the weaker assumption for $(\fb,\ovla)$.
\end{enumerate}
In summary, we can obtain, as in Step 1, the following estimate
\bea\label{nuFdg0}
\sum_{j=1}^K\|(\nabS_{\nu})^j(F)\|_{\hk_{K-j}(\S_0)}&\les& \dg.
\eea
To complete the proof of statement 2, we use the transversality conditions \eqref{transversalityru} and the null transformation formulas \eqref{e4ffbla} to deduce the estimates of $\nabS_4(F)$, and then recover the estimates of $\nabS_3(F)$ from $\nu=e_3^\S+\bS e_4^\S$. This concludes the proof of Corollary \ref{coro25}.
\appendix
\section{Proof of Lemma \ref{lemmaapp}}\label{nunu}
We denote
\beaa
X_0&:=&\Psi'(s)\pr_u+\pr_s.
\eeaa 
Notice that $X_0$ is tangent to $\Si_\#$ and recall that $\nu_\#$ is the unique tangent vectorfield to $\Si_\#$, normal to $S$ such that $\g(\nu_\#,e_4)=-2$. Together with \eqref{eq:decompositionofnullframeoncoordinatesframeforbackgroundfoliation}, we deduce
\bea
\begin{split}\label{X0andnu}
X_0&=\Psi'(s)\pr_u+\pr_s=\Psi'(s)\left(\frac{1}{z}e_3-\frac{\Omb}{z}e_4-\frac{2}{z}\Bb^a\pr_{y^a}\right)+e_4\\
&=\frac{\Psi'(s)}{z}e_3+\left(1-\frac{\Omb\Psi'(s)}{z}\right)e_4-\frac{2\Psi'(s)}{z}\Bb^a\pr_{y^a}\\
&=\frac{\Psi'(s)}{z}\nu_\#-\frac{2\Psi'(s)}{z}\Bb^a\pr_{y^a},
\end{split}
\eea
where
\bea \label{nudiesebdiese}
\nu_\#=e_3+b_\# e_4,\qquad b_\#=\frac{z}{\Psi'(s)}\left(1-\frac{\Omb\Psi'(s)}{z}\right).
\eea 
Notice that $\Phi^{\Jt}_\#(X_0,\pr_{y^1},\pr_{y^2})$ forms a basis of the tangent space of $\Si_{\Jt}$. We have
\beaa
\Phi^{\Jt}_\#(X_0)&=&\left(\Psi'(s)+X_0\big(U^{\Jt}\big)\right)\pr_u+\left(1+X_0\big(S^{\Jt}\big)\right)\pr_s,\\
\Phi^{\Jt}_\#(\pr_{y^a})&=&\pr_{y^a}\big(U^{\Jt}\big)\pr_{u}+\pr_{y^a}\big(S^{\Jt}\big)\pr_{s}+\pr_{y^a}.
\eeaa
By Lemma \ref{Lemma:Generalframetransf} and \eqref{eq:decompositionofnullframeoncoordinatesframeforbackgroundfoliation}, we can express $\nu^\Jt$ as follows:
\beaa 
\nu^{\Jt}&=&e_3^{\Jt}+b^{\Jt}e_4^{\Jt}\\
&=&e_3+b^{\Jt}e_4+O(F^{\Jt})(e_1,e_2,e_3,e_4)\\
&=&z\pr_u+\Omb\pr_s+2\Bb^a\pr_{y^a}+b^{\Jt}\pr_s+O(F^{\Jt})(e_1,e_2,e_3,e_4)\\
&=&A_{\Jt}\Phi^{\Jt}_\#(X_0)+B^a_{\Jt}\Phi^{\Jt}_\#(\pr_{y^a}),
\eeaa 
for suitable $A_{\Jt}$ and $B^a_{\Jt}$. In order to compute $A_{\Jt}$ and $B^a_{\Jt}$, we write
\beaa
A_{\Jt}\Phi^{\Jt}_\#(X_0)+B^a_{\Jt}\Phi^{\Jt}_\#(\pr_{y^a})&=&\left(A_{\Jt}\Psi'(s)+A_{\Jt}X_0(U^{\Jt})+B^a_{\Jt}\pr_{y^a}(U^{\Jt})\right)\pr_u\\
&+&\left(A_{\Jt}+A_{\Jt}X_0(S^{\Jt})+B^a_{\Jt}\pr_{y^a}(S^{\Jt})\right)\pr_s+B^a_{\Jt}\pr_{y^a}.
\eeaa
Comparing the coefficients of $\pr_u$ and $\pr_{y^a}$ with the above expression of $\nu^\Jt$, we obtain
\beaa 
A_{\Jt}&=&\frac{z}{\Psi'(s)}+A_0(U^\Jt,F^{\Jt}),\\
B^a_{\Jt}&=&2\Bb^a+B_0(F^{\Jt}),
\eeaa 
where 
\bea\label{A0B0prop}
A_0(U,F)=O\left(\dkb(U),X_0(U),F\right),\qquad B_0(F)=O(F).
\eea
Thus, we have
\bea\label{nunnbsl}
\nu^\Jt_\#=A_{\Jt}^\# X_0+ (B^a_{\Jt})^\# \pr_{y^a},
\eea 
where
\begin{align}
\begin{split}\label{Annbsl}
A_{\Jt}^\# =\frac{z\circ\Phi^{\Jt}}{\Psi'(s)}+A_0\circ\Phi^{\Jt},\qquad (B^a_{\Jt})^\# =2\Bb^a\circ\Phi^{\Jt}+B_0\circ\Phi^{\Jt}.    
\end{split}
\end{align} 
In view of \eqref{X0andnu}, \eqref{nunnbsl} and \eqref{Annbsl}, we infer
\beaa
\nu^\Jt_\#&=&\left(\frac{z\circ\Phi^{\Jt}}{\Psi'(s)}+A_0\circ\Phi^\Jt\right)X_0+\left(2\Bb^a\circ\Phi^{\Jt}+B_0\circ\Phi^\Jt\right)\pr_{y^a}\\
&=&\left(\frac{z\circ\Phi^{\Jt}}{\Psi'(s)}+A_0\circ\Phi^\Jt\right)\left(\frac{\Psi'(s)}{z}\nu_\#-\frac{2\Psi'(s)}{z}\Bb^a\pr_{y^a}\right)+\left(2\Bb^a\circ\Phi^{\Jt}+B_0\circ\Phi^\Jt\right)\pr_{y^a}\\
&=&\left(\frac{z\circ\Phi^{\Jt}}{z}+\frac{\Psi'(s)}{z}A_0\circ\Phi^\Jt\right)\nu_\#+\left(2\Bb^a\left(\frac{z-z\circ\Phi^{\Jt}}{z}\right)+2(\Bb^a\circ\Phi^{\Jt}-\Bb^a)+D_0\right)\pr_{y^a},
\eeaa
where
\bea\label{D0prop}
D_0=-\frac{2\Psi'(s)}{z}\Bb^a(A_0\circ\Phi^\Jt)+(B_0\circ\Phi^\Jt).
\eea
Thus, we obtain
\bea 
\begin{split}
A&=\frac{z\circ\Phi^{\Jt}-z}{z}+\frac{\Psi'(s)}{z}A_0\circ\Phi^\Jt,\\   
B^a&=2\Bb^a\left(\frac{z-z\circ\Phi^{\Jt}}{z}\right)+2(\Bb^a\circ\Phi^{\Jt}-\Bb^a)+D_0.
\end{split}
\eea 
Applying Taylor formula, recalling $\zc\in r\Ga_b$, $\Bb\in r^{-1}\Ga_b$, \eqref{A0B0prop} and \eqref{D0prop}, we have
\beaa 
A(U^\Jt,S^\Jt,F^{\Jt,\#})=O(\dk_\#^{\leq 1}(U^\Jt,S^\Jt),F^{\Jt,\#}),\qquad B(U^\Jt,S^\Jt,F^{\Jt,\#})=\frac{\epg}{r^2}O(\dk_\#^{\leq 1}(U^\Jt,S^\Jt),F^{\Jt,\#}).
\eeaa 
This concludes the proof of Lemma \ref{lemmaapp}.
\section{Proof of Lemma \ref{lemmaappa}}\label{AAAA}
Firstly, notice that \eqref{eq:ThmGCMS1}, \eqref{eq:ThmGCMS4} and Lemma \ref{lemma:comparison-gaS-ga} imply
\bea \label{departpoint}
r^{-1}\|(U^{\Jt},S^{\Jt})\|_{\hk_{s_{max}+1}(S)}+\|(f^{\Jt,\#},\fb^{\Jt,\#},\ovla^{\Jt,\#})\|_{\hk_{s_{max}+1}(S)}\les \dg.
\eea
Recall that the GCM system \eqref{nuGCM} is defined on $\S:=\S[\Psi(s),s,\La(s),\Lab(s),\Jt(s)]$. We pull it back by $\Phi^\Jt$ to obtain a GCM system for $\nab^{\S,\#}_{\nu_\#^\Jt}(f^{\Jt,\#},\fb^{\Jt,\#},\ovla^{\Jt,\#})$ on $S:=S(\Psi(s),s)$. Similar as \eqref{ffbovlac} and \eqref{ovbc}, we have, for $\epg$ small enough and $\ovr$ large enough
\beq\label{ffbdiese}
\|\nab^{\S,\#}_{\nu^\Jt_\#}(f^{\Jt,\#},\fb^{\Jt,\#})\|_{\hk_{s}(S)}\les \dg +\left|\left(\div^{\S,\#}\nab^{\S,\#}_{\nu^\Jt_\#}f^{\Jt,\#}\right)_{\ell=1}\right|+\left|\left(\div^{\S,\#}\nab^{\S,\#}_{\nu^\Jt_\#}\fb^{\Jt,\#}\right)_{\ell=1}\right|,
\eeq
where the $\ell=1$ modes are computed w.r.t. $\Jt^{(p)}(s)$. Recall from Lemma \ref{st15} that
\beaa 
|\nu^\Jt(\La(s))|&\les &|\La'(s)|\les \dg.
\eeaa 
Denoting
\beaa 
\Jt^{(\S,p)}&:=&\left((\Phi^\Jt)^{-1}\right)^\#(\Jt^{(p)}),
\eeaa 
we have from \eqref{nuJO1}
\bea \label{nuJO2}
\left|\nu^\Jt (\Jt^{(\S,p)})\right|=\left|\Phi^\Jt_\#(\nu^\Jt_\#)\left(\left((\Phi^\Jt)^{-1}\right)^\#\Jt^{(p)}\right)\right|=|\nu^\Jt_\#(\Jt^{(p)})|\leq 1,\quad\mbox{ on }\Si_\#.
\eea
Applying Lemmas \ref{commfor}, \ref{dint} and \eqref{nuJO2}, we infer
\beaa 
\nu^\Jt(\La(s))&=&\nu^\Jt \left(\int_\S \left(\divS f^{\Jt}\right)\Jt^{(\S,p)} \right)\\
&=&\zS\int_\S (\zS)^{-1}\left[\nu^\Jt\left(\left(\divS f^{\Jt}\right)\Jt^{(\S,p)}\right)+(\kaS+\bS \kabS)\left(\divS f^{\Jt}\right)\Jt^{(\S,p)}\right]\\
&=&\zS\int_\S(\zS)^{-1}\nu^\Jt\left(\divS f^{\Jt}\right)\Jt^{(\S,p)}+\zS\int_\S(\zS)^{-1} \left(\divS f^{\Jt}\right)\nu^\Jt(\Jt^{(\S,p)})+r^{-1}O(\dg)\\
&=&\zS\int_\S (\zS)^{-1}\left(\divS \nabS_{\nu^\Jt} f^{\Jt}\right)\Jt^{(\S,p)}+\zS\int_\S(\zS)^{-1}\left( [\nabS_{\nu^\Jt},\divS]f^{\Jt}\right)\Jt^{(\S,p)}+O(\dg)\\
&=&\zS\int_\S (\zS)^{-1}\left(\divS \nabS_{\nu^\Jt} f^{\Jt}\right)\Jt^{(\S,p)}+\epg r O\left(\nabS_{\nu^\Jt}f^\Jt\right)+O(\dg)\\
&=&\int_\S\left(\divS \nabS_{\nu^\Jt}f^{\Jt}\right)\Jt^{(\S,p)}+\epg r O\left(\nabS_{\nu^\Jt}f^\Jt\right)+O(\dg),
\eeaa 
which implies
\beaa 
\left|\left(\div^{\S,\#}\nab^{\S,\#}_{\nu^\Jt_\#}f^{\Jt,\#}\right)_{\ell=1}\right|&=&\sum_p\left|\int_\S \left(\divS \nabS_{\nu^\Jt} f^{\Jt}\right)\Jt^{(\S,p)}\right|\\
&\les&|\nu^\Jt(\La(s))|+\dg+\epg\|\nab_{\nu^\Jt}^{\S}(f^{\Jt})\|_{\hk_{s}(\S)}\\
&\les&\dg+\epg\|\nab_{\nu^\Jt_\#}^{\S,\#}(f^{\Jt,\#})\|_{\hk_{s}(S)}.
\eeaa
Similarly, we have
\beaa 
\left|\left(\div^{\S,\#}\nab^{\S,\#}_{\nu^\Jt_\#}\fb^{\Jt,\#}\right)_{\ell=1}\right|&\les&\dg+\epg\|\nab_{\nu^\Jt_\#}^{\S,\#}(\fb^{\Jt,\#})\|_{\hk_{s}(S)}.
\eeaa 
Thus, \eqref{ffbdiese} implies that for $\epg$ small enough
\bea\label{l=1moxin}
\|\nab^{\S,\#}_{\nu^\Jt_\#}(f^{\Jt,\#},\fb^{\Jt,\#})\|_{\hk_{s}(S)}\les\dg.
\eea 
Next, we recall that
\bea \label{UUSS}
\pr_{y^a}U^\Jt = \left(\UU(f^\Jt,\fb^\Jt,\Ga)_b Y^b_{(a)} \right)^\#,\qquad \pr_{y^a}S^\Jt = \left(\SS(f^\Jt,\fb^\Jt,\Ga)_b Y^b_{(a)} \right)^\#,
\eea 
where 
\beaa 
\UU(f,\fb,\Ga)=f+O(\epg)(f,\fb),\qquad \SS(f,\fb,\Ga)=\f12\left(-\Up f+\fb\right)+O(\epg)(f,\fb),
\eeaa 
see Proposition 5.14 and Remark 5.15 in \cite{KS:Kerr1}. Commuting \eqref{UUSS} with $\nu_\#^\Jt$ and using Lemma \ref{commfor}, \eqref{departpoint} and \eqref{l=1moxin}, we obtain
\bea\label{USffb}
r^{-1}\|\dkb(\nu_\#^\Jt(U^\Jt,S^\Jt))\|_{\hk_s(S)}\les\dg+\|\nab_{\nu_\#^\Jt}^{\S,\#}(f^{\Jt,\#},\fb^{\Jt,\#})\|_{\hk_s(S)}\les\dg.
\eea
We deduce
\bea \label{xing}
r^{-1}\|\nu_\#^\Jt(U^\Jt,S^\Jt)\|_{\hk_s(S)}\les\dg+\left|\nu_\#^\Jt(U^\Jt,S^\Jt)\big|_{\ga(s)}\right|,
\eea 
where $\ga(s)$ is the curve of South Poles of $S(\Psi(s),s)$.\\\\
Next, we estimate $\nu_\#^\Jt(U^\Jt,S^\Jt)\big|_{\ga(s)}$. Recall that we have 
\beaa 
U^\Jt(\ga(s))=0,\qquad S^\Jt(\ga(s))=0.
\eeaa 
Then, we have
\beaa 
\nu_{SP}(U^\Jt,S^\Jt)\big|_{\ga(s)}=0,
\eeaa 
where $\nu_{SP}$ is the tangent vector field along $\ga(s)$. Notice that $\nu_{SP}$ is parallel to $X_0=\Psi'(s)\pr_u+\pr_s$. Combining with \eqref{X0andnu} and \eqref{departpoint}, we obtain
\bea\label{UJtSJtdg}
\nu_\#(U^\Jt,S^\Jt)\big|_{\ga(s)}=2\Bb^a\pr_{y^a}(U^\Jt,S^\Jt)\big|_{\ga(s)}=O(\dg).
\eea
Applying \eqref{nujnbsl}, \eqref{departpoint} and \eqref{UJtSJtdg}, we infer
\bea\label{nuUSgj}
\nu_\#^\Jt(U^\Jt,S^\Jt)\big|_{\ga(s)}=O(\dg).
\eea
Thus, \eqref{xing} and \eqref{nuUSgj} yield
\bea\label{nuJtUJtSJt}
r^{-1}\|\nu^\Jt_\#(U^\Jt,S^\Jt)\|_{\hk_{s}(S)}\les\dg.
\eea
To conclude, it remains to prove \eqref{nuJtUJtSJt} with $\nu^\Jt_\#$ replaced by $\nu_\#$. To this end, we consider the connected open component $\varpi_\la \subset S$ centered at $\ga(s)$, i.e.
\beaa 
\varpi_\la&:=&\{p\in S/\,d(p,\ga(s))\leq \la\},
\eeaa 
where $d$ is the geodesic distance on $S$ induced by $\g$ and $\la>0$ is a constant. Define $\Om\subset(0,+\infty)$ the set of $\la$ such that the following holds:
\bea \label{bootjiashe}
r^{-1}\|\nu_\#(U^\Jt,S^\Jt)\|_{\hk_{s}(\varpi_\la)}\leq\dg^\f12.
\eea 
Notice that \eqref{UJtSJtdg} implies that \eqref{bootjiashe} holds for $\varpi_\la$ when $\la$ small enough, thus $\Om$ is non-empty. Note also that $\Om$ is closed. Recalling \eqref{departpoint} and \eqref{bootjiashe}, we have for $\la\in\Om$
\beaa 
\|A(U^\Jt,S^\Jt,F^{\Jt,\#})\|_{\hk_s(\varpi_\la)}\les r\dg^\f12,\qquad \|B(U^\Jt,S^\Jt,F^{\Jt,\#})\|_{\hk_s(\varpi_\la)}\les \epg\dg^\f12 r^{-1}.
\eeaa 
Together with \eqref{nujnbsl}, \eqref{departpoint} and \eqref{nuJtUJtSJt}, we have for $\dg$ small enough
\beq\label{finalboot}
\|\nu_\#(U^\Jt,S^\Jt)\|_{\hk_{s}(\varpi_\la)}\les \|\nu_\#^\Jt(U^\Jt,S^\Jt)\|_{\hk_{s}(\varpi_\la)}+\|\pr_{y^a}(U^\Jt,S^\Jt)\|_{\hk_{s}(\varpi_\la)}\les r\dg\leq\f12 r\dg^\f12.
\eeq
Thus, we have $\la+\ep\in\Om$ for $\ep$ small enough. By definition of $\Om$, we obtain $(0,\la+\ep)\subset\Om$, which implies that $\Om$ is open in $(0,+\infty)$. Hence $\Om$ is a open-closed non-empty subset of $(0,+\infty)$, which implies $\Om=(0,+\infty)$. Since $S$ is compact, there exists a $\la\in\Om$ large enough such that $\varpi_\la=S$. Finally, injecting $\varpi_\la=S$ into \eqref{finalboot}, we obtain
\bea 
r^{-1}\|\nu_\#(U^\Jt,S^\Jt)\|_{\hk_{s}(S)}\les\dg.
\eea 
This concludes the proof of Lemma \ref{lemmaappa}.
\section{Proof of Lemma \ref{lemmaappb}}\label{BBBB}
Notice that \eqref{cor6.11} is a direct consequence of Corollary 6.11 in \cite{KS:Kerr1}. Also, the proof of \eqref{nucor6.11} is similar to \eqref{cor6.11} and Lemma \ref{lemmaappa}, so we only provide a sketch.\\ \\
We denote
\beaa 
\de H^\#&:=&H^{\Jt,\#}-H^{\Jh,\#}, \qquad H\in\{f,\fb,\ovla,\ovb\},\\
\de h&:=&h^{\Jt}-h^{\Jh},\qquad\qquad\;\;\, h\in\left\{\Cbdot_0,\,\Cbpdot,\,M_0,\,\Mp,\,h_1,\,\hb_1,\,h_2,\,\hb_2,\,h_3,\,h_4\right\}.
\eeaa 
We recall the following GCM system, see (C.1)-(C.5) in \cite{KS:Kerr1},
\bea
\begin{split}\label{GCMde}
    \curl^{\St,\#}\de f&=\de h_1,\\
    \curl^{\St,\#}\de \fb&=\de \hb_1,\\
    \div^{\St,\#}\de f+\frac{2}{\rS}\de\ovla-\frac{2}{(\rS)^2}\de \ovb &=\de h_2,\\
    \div^{\St,\#}\de \fb+\frac{2}{\rS}\de\ovla+\frac{2}{(\rS)^2}\de \ovb &=\de \hb_2,\\
    \left(\De^{\St,\#}+\frac{2}{(\rS)^2}\right)\de\ovla&=\de M_0+\sum_p\de\Mp\Jp+\frac{1}{2\rS}\left(\de\Cb_0+\sum_p\de \Cbp\Jp\right)\de h_3,\\
    \De^{\St,\#}\de\ovb-\frac{1}{2}\div^{\St,\#}(\de \fb-\de f)&=\de h_4,
\end{split}
\eea
and 
\bea 
(\div^{\St,\#}\de f)_{\ell=1}=\de \La,\qquad (\div^{\St,\#}\de\fb)_{\ell=1}=\de \Lab.
\eea 
See Appendix C in \cite{KS:Kerr1} for the structures of $(\de h_1,\de \hb_1, \de h_2,\de \hb_2, \de h_3, \de h_4)$. We can deduce the following analog of \eqref{hhhhh}:
\beq
\|\nu^\Jt_\#(\de h_1,\de\hb_1,\de h_2,\de\hb_2,\de h_4)\|_{\hk_{s}(S)}+r\|\nu^\Jt_\#(\de h_3)\|_{\hk_{s}(S)}\les \epg r^{-2}\|(\nu_\#^\Jt)^{\leq 1}(\Jt-\Jh)\|_{\hk_{s}(S)}.
\eeq 
We commute the GCM system \eqref{GCMde} with $\nu^\Jt_\#$ to obtain the following analog of \eqref{ffbdiese}:
\begin{align}
\begin{split}\label{nabnuj}
\left\|\nab^{\St,\#}_{\nu^\Jt_\#}(\de f,\de\fb)\right\|_{\hk_{s+1}(S)}&\les \epg r^{-1}\|\Jt-\Jh\|_{\hk_{s}(S)}+\epg r^{-1}\|\nu_\#^\Jt(\Jt-\Jh)\|_{\hk_{s}(S)}\\
&+\left|\left(\div^{\St,\#}\nab^{\S,\#}_{\nu^\Jt_\#}(\de f)\right)_{\ell=1}\right|+\left|\left(\div^{\St,\#}\nab^{\St,\#}_{\nu^\Jt_\#}(\de \fb)\right)_{\ell=1}\right|.
\end{split}
\end{align}
Recalling (B.25) in \cite{KS:Kerr1} for a similar structure of $(\de \La,\de \Lab)$, we can deduce that
\begin{align}
\begin{split}
|\nu^\Jt_\#(\de \La,\de\Lab)| &\les \dg \left\|\nab^{\St,\#}_{\nu^\Jt_\#}(\de f,\de\fb)\right\|_{\hk_s(S)}+\left\|(\de f,\de\fb)\right\|_{\hk_s(S)}\\
&+r^{-1}(r^{-1}+\epg)\left\|\nu_\#^\Jt(\de U,\de S)\right\|_{\hk_s(S)}+r^{-1}\left\|(\de U,\de S)\right\|_{\hk_s(S)}.    
\end{split}
\end{align}
Then, we proceed as in \eqref{l=1moxin} to obtain
\beaa
&&\left|\left(\div^{\St,\#}\nab^{\S,\#}_{\nu^\Jt_\#}(\de f)\right)_{\ell=1}\right|+\left|\left(\div^{\St,\#}\nab^{\St,\#}_{\nu^\Jt_\#}(\de \fb)\right)_{\ell=1}\right|\\
&\les&\epg\left\|\nab^{\St,\#}_{\nu^\Jt_\#}(\de f,\de\fb)\right\|_{\hk_s(S)}+\|(\de f,\de\fb)\|_{\hk_s(S)}+r^{-1}(r^{-1}+\epg)\left\|\nu_\#^\Jt(\de U,\de S)\right\|_{\hk_s(S)}+r^{-1}\left\|(\de U,\de S)\right\|_{\hk_s(S)}.    
\eeaa
Injecting it in \eqref{nabnuj} and applying \eqref{cor6.11}, for $\epg$ small enough we obtain
\bea 
\begin{split}\label{C6}
\left\|\nab^{\St,\#}_{\nu^\Jt_\#}(\de f,\de\fb)\right\|_{\hk_{s+1}(S)}&\les \epg r^{-1}\|\Jt-\Jh\|_{\hk_{s}(S)}+\epg r^{-1}\|\nu_\#^\Jt(\Jt-\Jh)\|_{\hk_{s}(S)}\\
&+r^{-1}(r^{-1}+\epg)\left\|\nu_\#^\Jt(\de U,\de S)\right\|_{\hk_s(S)}.    
\end{split}
\eea 
Then, relying \eqref{UUSS} and using \eqref{cor6.11}, we easily obtain the following analog of \eqref{USffb}, 
\bea \label{xingxing}
r^{-1}\|\dkb(\nu_\#^\Jt(\de U,\de S))\|_{\hk_s(S)}\les \epg r^{-1}\|\Jt-\Jh\|_{\hk_s(S)}+\left\|\nab^{\St,\#}_{\nu_\#^\Jt}(\de f,\de\fb)\right\|_{\hk_s(S)}.
\eea 
Then, \eqref{C6} and \eqref{xingxing} imply, for $\ovr$ large enough and $\epg$ small enough
\bea \label{xingxingxing}
\begin{split}
&r^{-1}\|\nu^\Jt_\#(\de U,\de S)\|_{\hk_{s+1}(S)}+\left\|\nab^{\St,\#}_{\nu^\Jt_\#}(\de f,\de\fb)\right\|_{\hk_{s+1}(S)}\\
\les&\; \epg r^{-1}\|\Jt-\Jh\|_{\hk_{s}(S)}+\epg r^{-1}\|\nu_\#^\Jt(\Jt-\Jh)\|_{\hk_{s}(S)}+r^{-1}\left|\nu^\Jt_\#(\de U,\de S)\big|_{\ga(s)}\right|.
\end{split}
\eea
Recalling \eqref{UUSS}, proceeding as \eqref{UJtSJtdg} and \eqref{nuUSgj}, we obtain the following estimate:
\bea \label{zuihou}
\nu^\Jt_\#(\de U,\de S)\big|_{\ga(s)}=O(\dg).
\eea 
Finally \eqref{xingxingxing} and \eqref{zuihou} yield \eqref{nucor6.11}. This concludes the proof of Lemma \ref{lemmaappb}.
\section{Proof of Lemma \ref{transportlemma}}\label{lemmaappd}
For any integer $k$, applying Lemma \ref{nonpo} we infer
\beaa 
\nu_\#\left(\int_S |\dkb^k h|^2\right)&=&(e_3+ b_\# e_4)\left(\int_S |\dkb^k h|^2\right)\\
&=&z\int_S\left( z^{-1}e_3(|\dkb^k h|^2)-z^{-1}\Omb e_4(|\dkb^k h|^2)+ z^{-1}\kab |\dkb^k h|^2-z^{-1}\Omb\ka |\dkb^k h|^2\right)\\
&+&\Omb\int_S \left(e_4(|\dkb^k h|^2)+\ka |\dkb^k h|^2\right)+b_\#\int_S \left(e_4 (|\dkb^k h|^2)+ \ka |\dkb^k h|^2\right).
\eeaa 
Recalling from \eqref{nudiesebdiese} that $b_\#+\Omb=\frac{z}{\Psi'(s)}$, we obtain
\beaa 
\nu_\#\left(\int_S |\dkb^k h|^2\right)&=&z\int_S\left( z^{-1}e_3(|\dkb^k h|^2)-z^{-1}\Omb e_4(|\dkb^k h|^2)+ z^{-1}\kab |\dkb^k h|^2-z^{-1}\Omb\ka |\dkb^k h|^2\right)\\
&+&z\int_S z^{-1}(b_\#+\Omb)\left(e_4(|\dkb^k h|^2)+\ka |\dkb^k h|^2\right)\\
&=&z\int_S z^{-1}\left(\nu_\#(|\dkb^k h|^2)+(\kab+b_\#\ka) |\dkb^k h|^2\right)\\
&=&z\int_S z^{-1}\nu_\#(|\dkb^k h|^2)+ O(r^{-1})\|h\|_{\hk_k(S)}^2.
\eeaa
Applying Lemmas \ref{commfor}, \ref{lemmaapp}, \ref{lemmaappa}, $\zc\in r\Ga_b$ and the  divergence theorem, we infer
\beaa 
z\int_S z^{-1}\nu_\#(|\dkb^k h|^2)&=&z\int_S z^{-1}(1+A)^{-1}\nu_\#^\Jt(|\dkb^k h|^2)-z\int_S z^{-1}(1+A)^{-1}B^a\pr_{y^a}(|\dkb^k h|^2)\\
&=&z\int_S z^{-1}(1+A)^{-1}\nu_\#^\Jt(|\dkb^k h|^2)+z\int_{S}\div\left(z^{-1}(1+A)^{-1}B\right)|\dkb^k h|^2\\
&=&z\int_S z^{-1}(1+A)^{-1}\nu_\#^\Jt(|\dkb^k h|^2)+O(\dg)\|h\|_{\hk_k(S)}^2.
\eeaa 
Combining the last two identities and applying Lemma \ref{commfor}, we infer
\beaa 
\left|\nu_\#\left(\int_S |\dkb^k h|^2\right)\right|&\les& \int_S\left|\nu_\#^\Jt(|\dkb^k h|^2)\right|+\|h\|_{\hk_k(S)}^2\les\left\|\nu_\#^\Jt(h)\right\|_{\hk_k(S)}\|h\|_{\hk_k(S)}+\|h\|_{\hk_k(S)}^2,
\eeaa 
which implies
\beaa 
\left|\nu_\#\left(\|h\|_{\hk_k(S)}\right)\right|&\les&\left\|\nu_\#^\Jt(h)\right\|_{\hk_k(S)}+\|h\|_{\hk_k(S)}.
\eeaa
Integrating it from $S=\ovS$ and recalling that $|\ovI|\les\epg$, we infer
\beaa 
\|h\|_{\hk_k(S)}-\|h\|_{\hk_k(\ovS)}&\les&\epg\sup_S\left\|\nu_\#^\Jt(h)\right\|_{\hk_k(S)}+\epg\sup_S\|h\|_{\hk_k(S)}.
\eeaa 
Taking the supremum over $S\subset\Si_\#$, for $\epg$ small enough, we have
\beaa 
\sup_{S\subset\Si_\#}\|h\|_{\hk_k(S)} \leq (1+O(\epg))\|h\|_{\hk_k(\ovS)}+\sup_{S\subset\Si_\#}\left\|\nu_\#^\Jt(h)\right\|_{\hk_k(S)}.
\eeaa
This concludes the proof of Lemma \ref{transportlemma}.
\section*{Declarations}
\addcontentsline{toc}{section}{Declarations}
{\bf Acknowledgements.} The author is very grateful to J\'er\'emie Szeftel for his support, discussions, encouragements and patient guidance.\\ \\
{\bf Funding.} This work was partially supported by the ERC grant ERC-2016 CoG 725589 EPGR.\\ \\
{\bf Competing Interest statements.} Conflict of interest does not exist in the manuscript.


\begin{thebibliography}{99}
\addcontentsline{toc}{section}{References}
\bibitem{berger} M. Berger, B. Gostiaux, \textit{Differential geometry: manifolds, curves, and surfaces}, Graduate Texts in Mathematics 115, Springer, (1987).

\bibitem{Ch-Kl} D. Christodoulou, S. Klainerman, \textit{The global nonlinear stability of the Minkowski space}, Princeton University Press (1993).

\bibitem{KN} S. Klainerman, F. Nicol\`o, \textit{The Evolution Problem in General Relativity}, Progress in Mathematical Physics, Vol. \textbf{25}, (2003).

\bibitem{KS} S. Klainerman, J. Szeftel, \textit{Global nonlinear stability of Schwarzschild spacetime under polarized perturbations}. Annals of Mathmatics Studies 210, Princeton University Press, (2020).

\bibitem{KS:Kerr1}  S. Klainerman, J. Szeftel, \textit{Construction of GCM spheres in perturbations of Kerr}, Ann. PDE \textbf{8} (2), Art. 17, 153 pp, 2022.

\bibitem{KS:Kerr2}  S. Klainerman, J. Szeftel, \textit{Effective results on uniformization and intrinsic GCM spheres in perturbations of Kerr}, Ann. PDE \textbf{8} (2), Art. 18, 89 pp, 2022.

\bibitem{KS:main} S. Klainerman, J. Szeftel, \textit{Kerr Stability for small angular momentum}, arXiv:2104.11875.
\end{thebibliography}
\end{document}